\theoremstyle{plain}
\newtheorem{thm}{Theorem}[section] 
\newtheorem{lemma}[thm]{Lemma}
\newtheorem{cor}[thm]{Corollary}
\newtheorem{prop}[thm]{Proposition}
\newtheorem{thmintro}{Theorem}
\newtheorem{corintro}[thmintro]{Corollary}
\newtheorem{propintro}[thmintro]{Proposition}
\newtheorem{claim}[thm]{Claim}
\theoremstyle{definition}
\newtheorem{defn}[thm]{Definition}
\newtheorem{rem}[thm]{Remark}
\newtheorem{ex}[thm]{Example}
\newtheorem{questintro}[thmintro]{Question}
\newtheorem{remintro}[thmintro]{Remark}
\newtheorem{conj}[thmintro]{Conjecture}
\newtheorem{notation}[thm]{Notation}
\newtheorem{assumption}[thm]{Standing assumption}
\newtheorem{ax}{Axiom}
\newenvironment{claimproof}{\begin{proof}}{\end{proof}}
\newcommand{\Z}{\mathbb{Z}}
\newcommand{\R}{\mathbb{R}}
\newcommand{\Cay}[2]{\operatorname{Cay}\left(#1,#2\right)}
\newcommand{\frakS}{\mathfrak S}
\newcommand{\diam}{\mathrm{diam}}
\newcommand{\C}{\mathcal C}
\newcommand{\ov}[1]{\overline{#1}}
\newcommand{\U}{\mathcal{U}}
\newcommand{\dist}{\mathrm{d}}
\newcommand{\nest}{\sqsubseteq}
\newcommand{\propnest}{\sqsubsetneq}
\newcommand{\orth}{\bot}
\newcommand{\transverse}{\pitchfork}
\newcommand{\link}{\operatorname{Lk}}
\newcommand{\Stab}[2]{\operatorname{Stab}_{#1}(#2)}
\newcommand{\duaug}[2]{{#1}^{+{#2}}}
\newcommand{\Sat}{\mathrm{Sat}}
\newcommand{\Act}{\operatorname{Act}}
\newcommand{\N}{\mathcal{N}}
\newcommand{\B}{\mathcal{B}}
\newcommand{\W}{\mathcal{W}}
\newcommand{\calS}{\mathcal{S}}
\newcommand{\gate}{\mathfrak{g}}
\newcommand{\p}{\mathfrak{p}}
\newcommand{\h}{\widehat}
\newcommand{\MCG}{\mathcal{MCG}^\pm}
\newcommand{\Squid}{\operatorname{Cone}}
\newcommand{\tsh}[1]{\left\{\kern-.7ex\left\{#1\right\}\kern-.7ex\right\}}
\renewcommand{\angle}{\sphericalangle}
\title[Short HHG II]{Short hierarchically hyperbolic groups II:\\
Quotients and the Hopf property for Artin groups}
\author[G. Mangioni]{Giorgio Mangioni}
    \address{(Giorgio Mangioni) Maxwell Institute and Department of Mathematics, Heriot-Watt University, Edinburgh, UK}
    \email{gm2070@hw.ac.uk}
\author[A. Sisto]{Alessandro Sisto}
    \address{(Alessandro Sisto) Maxwell Institute and Department of Mathematics, Heriot-Watt University, Edinburgh, UK}
    \email{a.sisto@hw.ac.uk}
\begin{document}

\begin{abstract}
We prove that most Artin groups of large and hyperbolic type are Hopfian, meaning that every self-epimorphism is an isomorphism. The class covered by our result is generic, in the sense of Goldsborough-Vaskou. Moreover, assuming the residual finiteness of certain hyperbolic groups with an explicit presentation, we get that all large and hyperbolic type Artin groups are residually finite. We also show that ``most'' quotients of the five-holed sphere mapping class group are hierarchically hyperbolic, up to taking powers of the normal generators of the kernels. 
\\
The main tool we use to prove both results is a Dehn-filling-like procedure for short hierarchically hyperbolic groups (these also include e.g. non-geometric 3-manifolds, and triangle- and square-free RAAGs). 
\end{abstract}
\maketitle


\setcounter{tocdepth}{1}
\tableofcontents

\section*{Introduction}

By work of Osin \cite{Osin_Dehn_Fill}, and independently Groves and Manning \cite{GM}, if a group $G$ is hyperbolic relative to a subgroup $P$, and $N\unlhd P$ is a ``sufficiently deep'' normal subgroup, then $G/\langle\langle N\rangle\rangle$ is hyperbolic relative to $P/N$. This quotient procedure is called \emph{Dehn filling}, as it abstracts Thurston's Dehn filling procedure for cusped hyperbolic 3-manifolds \cite{Thurston_topology3m}. In this paper, we develop an analogue Dehn filling technique for \emph{short hierarchically hyperbolic groups}, first introduced in \cite{Short_HHG:I}. This family includes the mapping class group of the five-holed sphere, extra-large Artin groups, fundamental groups of non-geometric 3-manifolds, right-angled Artin groups without triangles and squares in their defining graphs, and many others. Our tools allow us to construct hyperbolic quotients of these groups; in turn this has several applications, the main one being related to the Hopf property for Artin groups, as we now explain.

\subsection*{Hopf property for Artin groups}
The motivation of our Dehn filling machinery is to “lift” properties from the quotients to the groups themselves. In fact, we study when a group $G$ within our class is \emph{Hopfian}, which means that every surjective homomorphism $G\to G$ is an isomorphism. This property is very useful to produce automorphisms of $G$, because, while it is usually easy to check that a map is surjective (it is enough that a generating set is contained in the image), proving injectivity can be quite fiddly.

Within our class, we prove the Hopf property for “most” Artin groups of \emph{large and hyperbolic type}, by which we mean that all edge labels in the defining graph are at least $3$ and there is no triangle whose labels are all $3$. The exact statement is Theorem~\ref{thm:XL_Hopfian}, from which we extract some special cases. First, recall that an Artin group is \emph{even} if all edge labels in its defining graph are even. 

\begin{thmintro}\label{thmintro:even_hopf}
    Let $A_\Gamma$ be an even large-type Artin group. Then $A_\Gamma$ is Hopfian.
\end{thmintro}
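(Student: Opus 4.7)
The plan is to derive Theorem~\ref{thmintro:even_hopf} as a special case of the main Hopfianity result Theorem~\ref{thm:XL_Hopfian}, by checking that every even large-type Artin group lies in the generic class covered by that theorem. The proof of Theorem~\ref{thm:XL_Hopfian} is in turn expected to rely on the Dehn filling machinery for short hierarchically hyperbolic groups developed earlier in the paper, together with the special arithmetic of the even dihedral parabolics which lets one build enough well-behaved hyperbolic quotients.

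First I would recall that a large-type Artin group $A_\Gamma$ whose defining graph has no triangle with all labels equal to $3$ carries a short HHG structure in the sense of \cite{short_HHG:I}, whose nest-maximal pieces are the dihedral parabolics $A_{m_e}$ indexed by the edges $e$ of $\Gamma$. The Dehn filling theorem of the present paper produces hyperbolic quotients of $A_\Gamma$ by taking normal closures of high powers of natural ``peripheral'' elements living in the centers of those dihedral factors.

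The crucial feature I would exploit is that when every edge label $m_e = 2k_e$ is even, the dihedral Artin group $A_{m_e} = \langle a, b \mid (ab)^{k_e} = (ba)^{k_e}\rangle$ has center generated by the short element $(ab)^{k_e}$ and surjects onto $\Z^2$. Consequently, any central Dehn filling quotient of $A_{m_e}$ is virtually abelian, and in particular residually finite. I would use this to show that the Dehn filling quotients $\bar A_\Gamma$ of $A_\Gamma$ produced by the machinery are not merely hyperbolic but also residually finite, either by a direct combinatorial argument relying on the two independent homomorphisms $A_{m_e}\to\Z$, or by upgrading a known cubulation of even Artin groups through the filling. Once a separating family of residually finite hyperbolic quotients is in place, $A_\Gamma$ itself is residually finite, and Hopfianity follows from Malcev's theorem for finitely generated groups.

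The main obstacle I anticipate is precisely this residual finiteness step for the Dehn filling quotients. This is the delicate feature that distinguishes the even case from the general large and hyperbolic type setting: in the odd case, the dihedral pieces $A_{2k+1}$ have rank-one abelianization and their shortest central element is roughly twice as long, so the Dehn filling quotients can be genuinely non-elementary hyperbolic groups whose residual finiteness is no longer automatic, whence the auxiliary assumption on residual finiteness of specific hyperbolic groups highlighted in the abstract. Showing that the even structure alone furnishes residual finiteness uniformly at every Dehn filling level is where most of the technical work would lie.
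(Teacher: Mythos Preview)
Your opening reduction is correct: Theorem~\ref{thmintro:even_hopf} is indeed a corollary of Theorem~\ref{thm:XL_Hopfian}. What you omit is the actual verification of the hypotheses. Since every edge label is even and at least $3$, all labels are at least $4$, so any triangle satisfies $1/m_{ab}+1/m_{bc}+1/m_{ca}\le 3/4<1$ and the group is of hyperbolic type. Moreover, with no odd edges every odd component is a single vertex, so $\Gamma_{OC}\cong\Gamma$; a hanging component is then a vertex of valence one in $\Gamma$, which is automatically a needle. That is the entire content of the deduction, and you should state it.

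The rest of your proposal, your speculation about how Theorem~\ref{thm:XL_Hopfian} is proved, is off the mark in two ways. First, the specific claim that a central Dehn filling of a dihedral $A_{2k}$ is virtually abelian is false: the quotient $A_{2k}/Z(A_{2k})$ is a free product of infinite and finite cyclic groups, hence contains a nonabelian free subgroup. Second, and more importantly, the paper never establishes residual finiteness of the hyperbolic quotients, even in the even case; residual finiteness of large-type Artin groups remains open. The paper's mechanism is entirely different: it uses the criterion of Lemma~\ref{lem:enough} (``enough Hopfian quotients''), producing for each self-epimorphism $\phi$ and each nontrivial $g_0$ a quotient on which $\phi$ descends and which is hyperbolic, or relatively hyperbolic with peripherals that are $\Z$-central extensions of hyperbolic groups. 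These quotients are Hopfian by \cite{ReinfeldWeidmann} and Theorem~\ref{prop:rel_hyp_hopf}, not by residual finiteness. The hard work is tracking where $\phi$ sends cyclic directions, using the abelianisation and the structure of centralisers, so that the kernel of the Dehn filling can be chosen $\phi$-invariant; the even hypothesis plays no special role beyond placing the group in the class covered by Theorem~\ref{thm:XL_Hopfian}.
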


For the second application, we say that an Artin group has a \emph{single odd component} if every two vertices in the defining graph are connected by a combinatorial path with odd labels. This is a natural notion, as two generators of an Artin group are conjugate if and only if the corresponding vertices are connected by an odd path \cite{paris_artin_groups}. 

\begin{thmintro}\label{thmintro:odd_hopf}
    Let $A_\Gamma$ be an Artin group of large and hyperbolic type with a single odd component. Then $A_\Gamma$ is Hopfian.
\end{thmintro}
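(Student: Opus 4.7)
My plan is to derive this theorem from the general Hopfianness criterion Theorem~\ref{thm:XL_Hopfian}, whose engine is the Dehn filling procedure for short HHGs developed in this paper. The argument I would lay out proceeds in three stages.

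\emph{Stage 1: A reservoir of Hopfian quotients.} By the companion paper \cite{short_HHG:I}, every Artin group of large and hyperbolic type carries a short HHG structure whose ``peripheral'' subgroups are the dihedral Artin subgroups $\langle a_i,a_j\rangle$ associated to the edges of $\Gamma$. First I would invoke the main Dehn filling theorem of the present paper to produce a plentiful family of hyperbolic quotients $A_\Gamma/N$ in which any prescribed finite subset survives. I would then arrange for these quotients to be Hopfian, e.g.\ by Sela's theorem in the torsion-free hyperbolic case or via Wise-type residual finiteness of quasiconvex quotients.

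\emph{Stage 2: Rigidity of the standard generators.} Next, given a surjection $\phi: A_\Gamma \to A_\Gamma$, I would exploit the single odd component hypothesis in two ways. Easily, by \cite{paris_artin_groups} all standard generators are conjugate, so the abelianization is $\Z$ and the induced map on $H_1$ is an isomorphism sending each $a_i$ to $\pm 1$. More substantially, I would use the short HHG structure to show that each $\phi(a_i)$ is conjugate to a standard generator: the dihedral peripherals are recognisable from the HHG data (as maximal parabolics with loxodromic behaviour in the appropriate quotient hyperbolic spaces), and surjectivity of $\phi$, together with the fact that all generators lie in a single conjugacy class, forces $\phi(a_i)$ to land up to conjugation on a standard generator.

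\emph{Stage 3: Descent.} With $\phi$ so normalised, the filling kernels $N$ generated by powers of peripheral elements can be chosen $\phi$-invariant, so $\phi$ descends to surjective endomorphisms $\bar\phi$ of the Hopfian quotients from Stage~1. Each $\bar\phi$ is then injective, whence $\ker\phi \subseteq N$; choosing a cofinal family of fillings with trivial intersection of kernels yields $\ker\phi = 1$.

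The hard part will be Stage~2: genuinely locating $\phi(a_i)$ inside the standard-generator conjugacy class. The single odd component hypothesis reduces the conjugacy-class bookkeeping from potentially many classes to a single one, but I still need the Dehn filling machinery to certify that $\phi(a_i)$ cannot drift away from the peripherals—this is precisely where the hypotheses of the general Theorem~\ref{thm:XL_Hopfian} should be verified.
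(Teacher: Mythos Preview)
Your overall architecture (produce Hopfian Dehn-filling quotients, show $\phi$ descends, apply Lemma~\ref{lem:enough}) matches the paper's, but Stage~2 contains a genuine gap that the paper does \emph{not} resolve the way you propose.

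You assert that surjectivity plus the single conjugacy class of generators forces $\phi(a_i)$ to be conjugate to a standard generator. The paper does not prove this, and its proof of Proposition~\ref{prop:odd} is structured precisely around the possibility that it fails. What the paper actually controls is the image $\phi^n(C(a))$ of the centraliser, viewed as a $\mathbb Z$-central extension with base $B_n$, and then splits into cases. In case~\textbf{A} ($B_n$ always non-elementary) one does get, via Lemma~\ref{lem:central_ext_peripheral_plus}, that $\phi(a)$ lands in a cyclic direction, and your Stage~3 goes through. But in case~\textbf{B} the image $\phi(C(a))$ is only virtually $\mathbb Z^2$: the paper must either locate a $\phi$-invariant $\mathbb Z^2$ subgroup by hand (sub-case~\textbf{B1}) and mod out a tailored pair of powers, or derive a contradiction to surjectivity by propagating along $\ov X$ (sub-case~\textbf{B2}). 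In case~\textbf{C} the image is virtually cyclic; here the paper does not produce a short-HHG Dehn filling at all, but instead shows that suitable powers of all $z_{cd}$ become equal to a power of $a$, and the resulting quotient is a $\mathbb Z$-central extension of a hyperbolic group, Hopfian by Theorem~\ref{prop:rel_hyp_hopf}.

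So the missing idea is exactly this case analysis: nothing in your outline rules out $\phi(C(a))$ collapsing to something virtually abelian, and when that happens the ``kernels generated by powers of peripheral elements'' are not automatically $\phi$-invariant in the naive sense you describe. (Minor point: the vertex stabilisers in the short HHG structure are not only the dihedrals $A_{ab}$ but also the centralisers $C(a)$; the latter are what the argument tracks.)
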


As explained in Remark~\ref{rem:generic_artin}, the aforementioned class of Artin groups is generic in the sense of \cite{GV_random}, so we obtain:

\begin{corintro}\label{corintro:generic}
    A generic Artin group is Hopfian.
\end{corintro}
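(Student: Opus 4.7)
The plan is to derive the corollary as a direct consequence of Theorem~\ref{thmintro:odd_hopf} combined with the genericity assertion that will be established in Remark~\ref{rem:generic_artin}: namely, that the class of Artin groups which are of large and hyperbolic type and have a single odd component is generic in the Goldsborough--Vaskou random model \cite{GV_random}. Once that genericity statement is in hand, the corollary is immediate, since a generic Artin group asymptotically almost surely satisfies the hypotheses of Theorem~\ref{thmintro:odd_hopf} and is therefore Hopfian.

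The real substance lies in verifying genericity, and the approach is to treat the three required conditions one by one. First, being of large type, i.e.\ having every edge label at least $3$, either is built directly into the Goldsborough--Vaskou model or else follows from a union bound over the $O(n^2)$ potential edges under the assumption that a label equal to $2$ occurs with small probability. Second, being of hyperbolic type amounts to excluding triangles all of whose labels are $3$; a first-moment computation bounds the expected number of such triangles by $O(n^3 p^3 q^3)$, where $p$ is the edge density of the random defining graph and $q$ the probability that a label equals $3$, and one arranges the parameters so that this goes to $0$. Third, the single-odd-component condition requires connectivity of the subgraph spanned by odd-labelled edges, which is a standard Erd\H{o}s--R\'enyi-type question resolved by bounding the probability of a nontrivial cut.

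The main obstacle, as I see it, is not any of the individual verifications but rather making them compatible: the label distribution of the random model must put enough mass on odd labels to force connectivity of the odd subgraph, yet little enough mass on $3$ to avoid the forbidden $(3,3,3)$-triangles, while simultaneously allowing essentially no label equal to $2$. Provided the Goldsborough--Vaskou model is calibrated with this balance in mind (which is precisely what Remark~\ref{rem:generic_artin} will confirm), all three events occur simultaneously with probability tending to $1$, and the corollary follows from Theorem~\ref{thmintro:odd_hopf} applied to each representative in this generic subclass.
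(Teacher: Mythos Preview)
Your proposal is correct and follows essentially the same route as the paper: reduce to Theorem~\ref{thmintro:odd_hopf} by showing that ``large hyperbolic type with a single odd component'' is generic in the Goldsborough--Vaskou model, handling the odd-component condition via Erd\H{o}s--R\'enyi connectivity. The paper streamlines your first two steps by simply citing from \cite{GV_random} that \emph{extra-large} (all labels $\ge 4$) is already generic, which disposes of both large type and hyperbolic type at once; and the ``compatibility obstacle'' you flag is in fact a non-issue, since in this model the odd-label probability is always roughly $1/2$ regardless of how large one takes $f_0$, so enlarging $f_0$ to kill labels $2$ and $3$ does not interfere with odd connectivity.
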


The Corollary was also very recently obtained in \cite{BMV}, where, by completely different means, the authors identify two other classes which turn out to be generic (see Remark~\ref{rem:bmv} for a comparison between the families).
\par\medskip
In \cite{barak2024equational}, Barak proved that all HHG satisfying a technical condition are equationally Noetherian, hence Hopfian (see e.g. \cite[Corollary 3.14 and Theorem D]{Groves_Hull:eq_noetherian} for the implication). The further requirement however, is not satisfied by our groups, as discussed in Remark~\ref{rem:no_we_cant_barak}.

\subsubsection*{Towards residual finiteness}
It is a widespread belief that all Artin groups are residually finite \cite[Problem 8.05]{aim}. Residual finiteness, which would imply the Hopf property, has only ben established for certain classes of Artin groups; in fact, to the best of our knowledge, it is not known whether generic Artin groups are residually finite (see Remark~\ref{rmk:resfin} for an overview). In Figure~\ref{fig:comparison} we even exhibit a four-generated Artin group which is Hopfian by Theorem~\ref{thmintro:odd_hopf} but is not known to be residually finite.

However, our Theorem~\ref{thm:XL_Hopfian} is proven by constructing “sufficiently many” hyperbolic quotients, of which an explicit presentation is given in Remark~\ref{rmk:Shephard}. All hyperbolic groups are Hopfian by e.g. \cite[Corollary 6.13]{ReinfeldWeidmann}, and if our quotients were in fact residually finite, then our Artin groups would be as well. We summarise this in the following:

\begin{thmintro}\label{thmintro:artin_reshyp}
    Let $A_\Gamma$ be an Artin group of large and hyperbolic type. Then $A_\Gamma$ is residually hyperbolic. Furthermore, if all the hyperbolic groups from Remark~\ref{rmk:Shephard} are residually finite, then $A_\Gamma$ is residually finite.
\end{thmintro}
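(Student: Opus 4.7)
The plan is to derive both claims from the Dehn-filling-like procedure for short HHGs that underpins Theorem~\ref{thm:XL_Hopfian}. The key input is a family of quotients $A_\Gamma \twoheadrightarrow Q_{\bar N}$, parametrised by a tuple $\bar N$ of filling exponents (one per edge of $\Gamma$, or more generally per orbit of fillable subgroups), where $Q_{\bar N}$ admits the explicit Shephard-type presentation recorded in Remark~\ref{rmk:Shephard} and is hyperbolic once each entry of $\bar N$ is sufficiently large. This is exactly the family of hyperbolic groups whose residual finiteness is hypothesised in the second half of the theorem, so the proof becomes a matter of combining the filling theorem with these assumptions.

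For the first assertion, I would fix a nontrivial $g \in A_\Gamma$ and invoke the short-HHG Dehn filling theorem to produce a threshold $N_0 = N_0(g)$ such that, whenever every entry of $\bar N$ is at least $N_0$, the quotient $Q_{\bar N}$ is hyperbolic and $\pi_{\bar N}(g) \ne 1$, where $\pi_{\bar N}\colon A_\Gamma \twoheadrightarrow Q_{\bar N}$ is the filling map. This uses the standard ``generic filling misses any prescribed element'' feature shared by all Dehn-filling-type statements: the kernel is normally generated by large enough powers of specified elements, and such powers eventually escape any fixed ball, so they cannot coincide with any conjugate of $g$. Choosing any such $\bar N$ exhibits $g$ in a hyperbolic quotient, which is the content of residual hyperbolicity.

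For the residual finiteness statement, suppose that every $Q_{\bar N}$ from Remark~\ref{rmk:Shephard} is residually finite. Given nontrivial $g \in A_\Gamma$, the previous paragraph yields $\pi_{\bar N}\colon A_\Gamma \twoheadrightarrow Q_{\bar N}$ hyperbolic with $\pi_{\bar N}(g) \ne 1$; by the assumed residual finiteness of $Q_{\bar N}$ there exists a homomorphism $\psi\colon Q_{\bar N} \to F$ to a finite group with $\psi(\pi_{\bar N}(g)) \ne 1$. The composition $\psi \circ \pi_{\bar N}\colon A_\Gamma \to F$ then separates $g$ from the identity, and since $g$ was arbitrary, $A_\Gamma$ is residually finite.

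The only real obstacle is to have the short-HHG Dehn filling theorem in a form strong enough both to guarantee hyperbolicity of the fillings of $A_\Gamma$ and to identify these fillings with the Shephard-type groups of Remark~\ref{rmk:Shephard}; these are precisely the results the rest of the paper (together with \cite{short_HHG:I}) is devoted to establishing, and granting them the argument reduces to the short composition of quotient maps above.
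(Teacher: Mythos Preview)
Your proposal is correct and follows essentially the same route as the paper: the residual hyperbolicity is Corollary~\ref{cor:fullres_hyp} specialised to Artin groups (which are short HHGs by \cite{short_HHG:I}), and the residual finiteness under the stated hypothesis is the obvious composition of quotient maps. One minor remark: the hyperbolic quotients in Remark~\ref{rmk:Shephard} are presented with a single filling parameter $N$ rather than a tuple $\bar N$, and the ``escapes any fixed ball'' heuristic you give for why $g$ survives is made precise in the paper via the rotating-families machinery (Corollary~\ref{lem:separating_filling}) rather than a direct metric argument, but neither point affects the validity of your outline.
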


In the same Remark~\ref{rmk:Shephard}, we relate residual finiteness of $A_\Gamma$ to residual finiteness of certain \emph{Shephard groups}. This class, named after G. Shephard \cite{shephard}, generalises both Coxeter and Artin groups, as one is allowed to require that some of the generators in an Artin presentation have finite order. 

\begin{remintro}\label{rem:comparison with goldman}
In \cite{goldman20242dimensionalshephardgroups} Goldman proved that any large and hyperbolic type Artin group $A_\Gamma$ is residually hyperbolic. Our work and Goldman's are independent, use different tools, and were carried out simultaneously. However, we both obtain the following intermediate result: for a suitable choice of a positive integer $N$, the Shephard group obtained by killing the $N$-th power of all standard generators of $A_\Gamma$ is hyperbolic relative to the images of the dihedral subgroups. As a consequence, this group admits hyperbolic ``Dehn filling'' quotients, either by our Theorem \ref{thmintro:dehnfill} below or by the ``standard'' relatively hyperbolic Dehn filling Theorem. Furthermore, under the further assumption that $\Gamma$ is triangle-free, Goldman was able to obtain residual finiteness \cite[Corollary G]{goldman20242dimensionalshephardgroups}. 
\end{remintro}

\subsection*{Quotients of $\MCG(S_{0,5})$} Only a handful of mapping class  groups of finite-type surfaces are short HHG, including that of the five-holed sphere, which we focus on here. There are various ways to take quotients of mapping class groups that yield hierarchically hyperbolic groups, including quotients by powers of pseudo-Anosovs \cite{hhs_asdim} and quotients by powers of Dehn twists \cite{BHMS}. These quotients have been further studied by exploiting hierarchical hyperbolicity to obtain quasi-isometric and algebraic rigidity results \cite{rigidity_mcg_mod_dt, mangioni2023rigidityresultslargedisplacement}. They have also notably been used to relate famous questions about profinite properties of mapping class groups and profinite rigidity of certain 3-manifolds to residual finiteness of certain hyperbolic groups \cite{BHMS, Wilton}. 

In this context, we previously asked, roughly, whether given any finite collection of elements of a mapping class group one can quotient by suitable powers and obtain a hierarchically hyperbolic group, see \cite[Question 3]{rigidity_mcg_mod_dt}. We provide an almost complete answer for $\MCG(S_{0,5})$:

\begin{thmintro}[{see Theorem~\ref{thm:S5_quotient}}]\label{thmintro:mcg}
    Let $S=S_{0,5}$, and let $g_1,\dots,g_l\in \MCG(S)$. 
    Suppose that every partial pseudo-Anosov $g_i$ has no hidden symmetries. Then there exists $N\in\mathbb{N}-\{0\}$ such that, for all $K_1,\dots, K_l\in \mathbb{Z}-\{0\}$ we have that $\MCG(S)/\langle\langle \{g_i^{K_i N}\}\rangle\rangle$ is hierarchically hyperbolic.
\end{thmintro}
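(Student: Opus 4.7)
The plan is to deduce this as an application of the general Dehn filling theorem for short HHGs developed earlier in the paper, applied to the specific short HHG structure on $\MCG(S_{0,5})$ whose top-level hyperbolic space is the curve graph $\mathcal{C}(S_{0,5})$. The proper domains of this structure correspond to (isotopy classes of) essential simple closed curves, giving rise to two types of non-trivial vertex stabilizers: annular domains, whose stabilizers are generated by Dehn twists together with the complementary mapping class group, and the complementary four-holed sphere subsurfaces, which harbor the partial pseudo-Anosovs. Thus every $g_i$ is either loxodromic on $\mathcal{C}(S)$ (fully supported pseudo-Anosov), or preserves a proper domain $U_i$ corresponding to some essential curve.

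First I would split the $g_i$ into those that act loxodromically on $\mathcal{C}(S)$ and those that stabilize some proper domain. For the loxodromic $g_i$, one feeds them directly into the short HHG Dehn filling theorem: some uniform power is loxodromic with sufficiently large translation length, and modding out its normal closure, together with the normal closures of the other elements, yields a quotient which is still hierarchically hyperbolic. For $g_i$ preserving a proper domain $U_i$, one first needs to quotient the subgroup associated with $U_i$, i.e.\ the stabilizer of the curve, in a way that is compatible with the HHG structure, and then propagate the quotient to $\MCG(S)$ via the short HHG Dehn filling machinery.

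The stabilizers of proper domains in $\MCG(S_{0,5})$ are, up to finite index, products of a cyclic Dehn twist factor with the mapping class group of the complementary four-holed sphere, which is virtually free. For $g_i$ a power of a Dehn twist this step is standard (compare \cite{BHMS}): some large power $g_i^N$ yields a hierarchically hyperbolic quotient of the annular stabilizer, and more importantly a controlled quotient of $\MCG(S)$. For $g_i$ a partial pseudo-Anosov the relevant factor is virtually free (so in particular hyperbolic and residually finite), and any element has some power that produces a hyperbolic quotient of that factor. Here the \emph{no hidden symmetries} hypothesis is exactly what is needed to ensure that $g_i$ is not conjugated into another domain's stabilizer by some element of $\MCG(S)$, so that the normal closure inside the full group can be computed from the normal closure inside the stabilizer of $U_i$, without accidental identifications across domains. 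A common multiple $N$ of the powers required by each $g_i$ works for all of them simultaneously, and by construction $K_i$ can be absorbed freely.

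The main obstacle I expect is controlling the interaction between the different $g_i$'s under the short HHG Dehn filling theorem: one must verify that the combined collection $\{g_i^{K_i N}\}$ satisfies the hypotheses of that theorem uniformly, and that the no-hidden-symmetries condition is preserved throughout the filling procedure (so that the HHG structure on the quotient is well-defined and the induced structure on the descendants of each domain remains a legitimate HHS structure). Once this compatibility is established, the conclusion of being hierarchically hyperbolic follows directly from the general Dehn filling theorem, and there should be no further obstruction specific to $S_{0,5}$ beyond the hypothesis already assumed in the statement.
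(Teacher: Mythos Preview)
Your broad idea of reducing to the Dehn filling theorem is correct, but two genuine gaps prevent the argument from going through as written.

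First, the Dehn filling theorem (Theorem~\ref{thm:quotient_short_HHG}) only allows you to quotient by subgroups of the \emph{cyclic directions} $Z_v$, not by arbitrary elements. In the standard short HHG structure on $\MCG(S_{0,5})$, the cyclic directions are generated by Dehn twists; a pseudo-Anosov is not a cyclic direction, nor is a partial pseudo-Anosov. The paper handles this not by ``feeding them directly into the theorem'' but by first \emph{modifying the short HHG structure} (via \cite[Propositions~4.3 and~4.5]{short_HHG:I}) so that suitable powers of these elements become cyclic directions in the new structure. This is where the no-hidden-symmetries hypothesis actually enters: it is the algebraic condition guaranteeing that \cite[Remark~4.6]{short_HHG:I} applies, so that a partial pseudo-Anosov can be promoted to a cyclic direction. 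Your interpretation---that the hypothesis prevents $g_i$ from being conjugated into another domain's stabiliser---is not what the definition says and not how it is used.

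Second, you do not address multitwists $\tau_\gamma^a\tau_\beta^b$ with $a,b\neq 0$. These lie in edge groups, not in a single cyclic direction, and there is no structure modification that turns them into one. The paper treats this case by an entirely separate argument: one passes to a further hyperbolic quotient $G_2$ obtained by killing all cyclic directions, and then shows that the intermediate quotient $G_1$ is a $\mathbb{Z}$-central extension of $G_2$ (using that all Dehn twists in $S_{0,5}$ are conjugate), whence $G_1$ is HHG by \cite[Corollary~4.3]{HRSS_3manifold}. Your outline gives no mechanism for this case, and the Dehn filling theorem alone cannot supply one.
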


Here, not having hidden symmetries is a technical condition which only partial pseudo-Anosovs can satisfy, see Definition~\ref{defn:hiddensymm_general}. Unfortunately we needed this additional requirement due to fine algebraic reasons, but we do not think that it is necessary, and in fact we believe that there is now enough evidence to upgrade our question to a conjecture:

\begin{conj}
    Let $S$ be any surface of finite type, and let $g_1,\dots,g_l\in \MCG(S)$. There exists $N\in\mathbb{N}-\{0\}$ such that, for all $K_1,\dots, K_l\in \mathbb{Z}-\{0\}$, the group $\MCG(S)/\langle\langle \{g_i^{K_i N}\}\rangle\rangle$ is hierarchically hyperbolic.
\end{conj}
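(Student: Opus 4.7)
The plan is to prove the conjecture by induction on the complexity $\xi(S)$ of $S$, using the short HHG Dehn filling machinery developed in this paper as the core inductive tool. The base cases are the surfaces for which $\MCG(S)$ is itself a short HHG, most notably $S_{0,5}$ (handled, modulo the hidden symmetries hypothesis, by Theorem~\ref{thmintro:mcg}), together with the other sporadic and reducible low-complexity cases.

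For the inductive step on a higher complexity surface $S$, the first move is a normalisation via Nielsen--Thurston: after replacing each $g_i$ by a uniform power, I can assume each $g_i$ preserves its canonical reduction multicurve $\mu_i$ and acts on each complementary component as either a pseudo-Anosov or trivially. Elements that are pseudo-Anosov on all of $S$ are modded out directly by \cite{hhs_asdim}. For the remaining $g_i$, supported on a proper subsurface $Y_i\subsetneq S$, I would organise them by their supports and invoke the inductive hypothesis on the relevant factor of $\mathrm{Stab}(\mu_i)\cong\prod_j\MCG(Y_{i,j})$ to find suitable powers for which the corresponding quotient of the stabiliser is an HHG. The remaining step is to promote these ``local'' quotients on product regions into a global HHG quotient of $\MCG(S)$. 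I would attempt this through an iterated ``relative HHS Dehn filling'' principle, modelled on this paper's short HHG Dehn filling but executed along the family of product regions indexed by multicurves: each such product region is, by induction, a quotientable HHG factor of the HHS structure of $\MCG(S)$, and the short HHG technology should apply within each, with subsurface projection control ensuring compatibility across product regions.

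The main obstacle, already visible as the hidden symmetries hypothesis in Theorem~\ref{thmintro:mcg}, is the interaction between a partial pseudo-Anosov $g_i$ and the elements of $\MCG(S)$ that permute isotopy classes of subsurfaces of $Y_i$ in subtle ways. These can cause conjugates of $g_i^{K_i N}$ to acquire small but nonzero subsurface projections onto transverse domains, spoiling the Behrstock inequality and bounded geodesic image axioms of the putative quotient HHS structure. I expect that bypassing this will require either passing to a finite-index normal subgroup of $\MCG(S)$ in which all relevant hidden symmetries are killed, proving the conjecture there and transferring back via HHG inheritance for finite extensions, or strengthening the short HHG Dehn filling theorem itself to accept stabiliser-equivariant data as input. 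The second route would yield the cleanest statement and is the one I would attempt first; its feasibility depends on whether the projection estimates underlying the short HHG Dehn filling extend uniformly across an orbit under the automorphism group of the support.
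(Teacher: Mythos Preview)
The statement you are attempting to prove is a \emph{conjecture}, not a theorem: the paper does not prove it, and explicitly says it is open already for quotients by powers of non-separating Dehn twists. So there is no ``paper's own proof'' to compare against.

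Your proposal has a genuine gap, and it is not where you think it is. You identify the hidden symmetries hypothesis as the main obstacle, but that is a secondary technical issue specific to the short HHG machinery for $S_{0,5}$. The deeper obstruction, which the paper spells out in the paragraph ``Comparison with known Dehn Filling results'', is the following. For a surface $S$ of higher complexity, if you mod out powers of Dehn twists around only \emph{some} curves, then the image in the quotient of the stabiliser of a curve $\gamma$ not in your collection is a $\mathbb{Z}$-central extension of a group that is no longer hyperbolic (it is itself a mapping class group quotient of positive complexity). For this stabiliser image to be an HHG --- which is forced on it if it is to be a product region in any HHS structure on the quotient --- its Euler class must be bounded, by \cite[Corollary 4.3]{HRSS_3manifold}. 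In the $S_{0,5}$ case the base is hyperbolic and boundedness follows from Neumann--Reeves, but in higher complexity there is no known reason for this to hold. Your ``iterated relative HHS Dehn filling principle'' simply assumes this away: the inductive hypothesis gives you HHG structures on the factors of $\mathrm{Stab}(\mu_i)$, but it does not tell you that the relevant central extensions remain HHGs after you have taken the quotient, and that is exactly what you need to glue the local structures into a global one.

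A second issue is that your base case is not actually established. Theorem~\ref{thmintro:mcg} covers $S_{0,5}$ only under the no-hidden-symmetries hypothesis, so even the $\xi=2$ case of the conjecture is open. Your inductive scheme therefore has neither a complete base case nor a working inductive step.
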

To the best of our knowledge, the conjecture is open even for quotients by suitable powers of non-separating Dehn Twists. What makes the conjecture interesting is that tackling it should lead to a more complete theory of Dehn fillings for hierarchically hyperbolic groups, and in turn this should have many applications beyond those presented in this paper and \cite{BHMS, Wilton}.

\subsection*{Dehn fillings of short HHG}
\subsubsection*{Main result}
Roughly, a short HHG contains specified subgroups which are $\mathbb Z$-central extensions of hyperbolic groups. We call \emph{cyclic directions} the kernels of these extensions (see Subsection~\ref{defn:short_HHG} for the full definition of a short HHG). For the five-holed sphere mapping class group, the specified extensions are curve stabilisers, and cyclic directions are generated by Dehn twists. 

Short HHG are examples of \emph{hierarchically hyperbolic groups}, first introduced in \cite{HHS_I}. The only consequence of hierarchical hyperbolicity that the reader should bear in mind for this Introduction is that a short HHG is hyperbolic if the list of specified subgroups is empty. Hence, in order to make a short HHG “more hyperbolic”, the idea is to take a generator of a cyclic direction and quotient by a power of it, as such an element has “large” centraliser and in a hyperbolic group this is allowed only if the element has finite order. With this in mind, we now present the main technical result of this paper, which should be thought of as an analogue of the relatively hyperbolic Dehn filling theorem:

\begin{thmintro}[{see Theorem~\ref{thm:quotient_short_HHG}}]\label{thmintro:dehnfill}
Let $G$ be a short HHG, and let $g_1,\dots,g_n$ be generators of some of its cyclic directions. Then there exists $M\in\mathbb{N}-\{0\}$ such that, for all choices $k_i\in \Z-\{0\}$, the quotient $\ov{G}= G/\langle\langle \{g_i^{k_iM}\}\rangle\rangle$ is a short HHG.
\end{thmintro}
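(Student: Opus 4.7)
The plan is to develop a hierarchical analogue of the group-theoretic Dehn filling theorem of Osin and Groves-Manning, tailored to the short HHG structure on $G$. Recall that each specified stabiliser $H_i = \Stab{G}{U_i}$ sits in a central extension $1\to\langle g_i\rangle\to H_i\to Q_i\to 1$ with $Q_i$ hyperbolic, and that $g_i$ lies in the kernel of the action of $H_i$ on the associated hyperbolic space $\C U_i$. The goal is to show that for $M$ large enough the quotient $\ov{G}=G/\langle\langle \{g_i^{k_iM}\}\rangle\rangle$ retains both the short and the hierarchical structure: the image of $H_i$ becomes a finite central extension of $Q_i$, hence hyperbolic, and can be removed from the list of specified subgroups, while the remaining cyclic directions survive intact and organise the new short HHG structure on $\ov{G}$.

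I would carry out the proof in three stages. First, choose $M$ so that a \emph{no-collapse} statement holds: for every choice of $k_i\neq 0$, the normal closure $\langle\langle \{g_i^{k_iM}\}\rangle\rangle$ meets each $H_i$ exactly in $\langle g_i^{k_iM}\rangle$, and its image in each unfilled specified subgroup is trivial, so these subgroups embed unchanged in $\ov{G}$. Second, build the HHS structure on $\ov{G}$: keep the index set $\frakS$ (modulo the descended $G$-action), replace each $\C U$ by the quotient obtained by coning off orbits of the relevant $g_i^{k_iM}$, and verify that the resulting spaces are uniformly hyperbolic and that the projections $\pi_U$ and relative projections $\rho^U_V$ descend consistently. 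Third, verify the HHS and short axioms on $\ov{G}$: consistency, bounded geodesic image, uniqueness, partial realisation, and finite complexity transfer from $G$ because the filled hyperbolic spaces retain uniform hyperbolicity constants and the projections are coarsely unchanged, while the short structure is preserved because the no-collapse step ensures the unfilled stabilisers are unaffected and the filled ones collapse onto hyperbolic groups.

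The main obstacle is the first stage, the hierarchical no-collapse argument. In the relatively hyperbolic setting this is the heart of Osin and Groves-Manning, which exploits the acylindrical action on the coned-off Cayley graph. Here the correct substitute combines the action of $G$ on the top-level hyperbolic space $\C S$ with, inside each $H_i$, the action of $Q_i$ on $\C U_i/\langle g_i\rangle$. The constant $M$ must be chosen in terms of filling constants from all of these actions simultaneously, and the argument has to show that any element of $\langle\langle\{g_i^{k_iM}\}\rangle\rangle$ lying in the wrong place can be pushed down the hierarchy until it produces a short filling relation in some $Q_i$, contradicting the depth of the filling. Once this step is in place, the transfer of both structures to the quotient should be essentially mechanical, as the constituent operations (quotienting a central extension by a central element, coning off an element in a hyperbolic space, descending coarse projections) are individually well understood, and the hierarchical bookkeeping of the short HHG structure ensures they assemble coherently.
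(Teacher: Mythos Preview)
Your outline has the right overall shape, but there are two genuine gaps, one of which is a concrete error.

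First, your no-collapse statement is incorrect as stated. You claim that the normal closure has trivial image in each \emph{unfilled} specified subgroup, so these embed unchanged in $\ov G$. This is false in the short HHG setting: if $v$ is an unfilled vertex but $v$ is $\ov X$-adjacent to a filled vertex $w$, then $Z_w$ acts trivially on $\link_{\ov X}(w)\ni v$, so $\Gamma_w=\langle g_w^{k_wM}\rangle$ lies in $\Stab{G}{v}$. The paper computes (Corollary~\ref{cor:N_cap_Stabv}) that $\N\cap\Stab{G}{v}=\langle\Gamma_w : w\in\operatorname{Star}_{\ov X}(v)\cap G\B\rangle$, which is typically an infinite free product. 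So the quotient $\Stab{\ov G}{[v]}$ is a genuine quotient of $\Stab{G}{v}$, and one must prove that its base $H_v/\p_v(\N_v)$ is again hyperbolic; the paper does this via the relatively hyperbolic Dehn filling theorem (Lemma~\ref{lem:Hv/Nv_rel_hyp}). Your framework of ``filled stabilisers collapse, unfilled ones survive unchanged'' does not match the actual structure.

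Second, and more seriously, you are missing the two technical engines that make the proof work. The first is the machinery of \emph{composite rotating families} (Section~\ref{sec:rotating}): this is what replaces the Groves--Manning/Osin argument and provides the shortening pairs needed to lift geodesic $k$-gons from $\ov X^{+\W}/\N$ back to $\ov X^{+\W}$, which in turn gives uniform hyperbolicity of the quotient coordinate spaces and the description of $\N\cap\Stab{G}{v}$. Your sketch ``push down the hierarchy until it produces a short filling relation'' is not this argument. The second is the \emph{preparation step} (Subsection~\ref{subsec:preparation}): before taking the quotient, the paper modifies the quasilines $L_v$ using new quasimorphisms that vanish on $\N\cap E_v$, so that $\N$ acts with uniformly bounded orbits on them. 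Without this, the unfilled quasilines $L_{[v]}$ in the quotient need not be quasilines at all. The introduction flags this as ``the most novel and difficult part'' of the construction, and your proposal does not address it.
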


In fact, the Theorem also gives a natural short HHG structure on $\ov G$, where the cyclic directions are images of the cyclic directions of $G$ that do not contain conjugates of the $g_i$. Thus, a particularly interesting case is where we take quotients by powers of generators of all cyclic directions, hence obtaining a hyperbolic group. A refinement of the above yields the following, which readily implies Theorem \ref{thmintro:artin_reshyp}. Recall that a group $G$ is \emph{fully residually $P$} for some property $P$ if, for every finite subset $F\subset G$, there exists a quotient $G\to \ov G$ where $F$ injects, and such that $\ov G$ enjoys $P$.

\begin{corintro}[{see Corollary~\ref{cor:fullres_hyp}}]\label{corintro:fullres}
    Short HHG are fully residually hyperbolic.
\end{corintro}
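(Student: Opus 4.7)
The plan is to reduce the statement to Theorem~\ref{thmintro:dehnfill} by modding out generators of \emph{all} cyclic directions at once, and then to run a standard Dehn-filling injectivity argument to preserve any prescribed finite set. By the definition of a short HHG, the conjugation action of $G$ has only finitely many orbits on the set of cyclic directions, so I would fix generators $g_1,\dots,g_n$ of representatives of these orbits, ensuring that every cyclic direction of $G$ contains a conjugate of some $g_i$. Applying Theorem~\ref{thmintro:dehnfill} to $(g_1,\dots,g_n)$ produces an integer $M$ such that $\ov G = G/\langle\langle\{g_i^{k_iM}\}\rangle\rangle$ is a short HHG for every choice of $k_i\in\Z-\{0\}$. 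By the accompanying description of the inherited short HHG structure (the cyclic directions downstairs are those of $G$ that do \emph{not} contain a conjugate of any $g_i$), the quotient $\ov G$ comes equipped with the \emph{empty} collection of cyclic directions, and hence is hyperbolic.

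Given a finite $F\subset G$, it remains to choose the $k_i$ so that $F$ injects into $\ov G$. I would split $F\setminus\{1\}$ into two parts. For each $f$ conjugate into some $\langle g_i\rangle$, write $f = h g_i^{a} h^{-1}$; its image in $\ov G$ vanishes precisely when $k_iM \mid a$, and only finitely many such exponents arise from $F$, so taking $k_iM$ strictly larger than all of them in absolute value handles this case. For each $f$ that is not conjugate into any $\langle g_i\rangle$, I would invoke the Greendlinger-style injectivity statement accompanying the construction in Theorem~\ref{thm:quotient_short_HHG}: shortest representatives of nontrivial elements of the filling kernel $\langle\langle\{g_i^{k_iM}\}\rangle\rangle$ must traverse subwords inside some $\langle g_i\rangle$ whose lengths grow with the $k_i$, so for all sufficiently large $k_i$ the set $F$ is disjoint from the kernel.

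The main obstacle is the second half of the injection argument, which is the hierarchically hyperbolic analogue of the classical Osin / Groves--Manning Dehn filling injectivity lemma. In the short HHG setting I expect this to sit inside the proof of Theorem~\ref{thm:quotient_short_HHG} itself, requiring combinatorial control over how letters from the cyclic directions can appear along any word representing a nontrivial element of the kernel; granted that control, one can force representatives of $F$ to be too short to be killed. Once both cases are handled, choosing each $k_i$ large enough both hyperbolises $\ov G$ and keeps $F$ intact, giving full residual hyperbolicity.
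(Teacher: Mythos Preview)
Your overall strategy matches the paper's: take a \emph{full} kernel (one hitting every cyclic direction), observe that the resulting short HHG quotient has no cyclic directions left and is therefore hyperbolic, and arrange the exponents so that the prescribed finite set $F$ survives. This is exactly Corollary~\ref{cor:fullres_hyp}, which in turn rests on Corollary~\ref{cor:deep_full_is_hyp} for hyperbolicity and Corollary~\ref{lem:separating_filling} for injectivity of $F$.

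Two points are worth flagging. First, your case split is unnecessary and in fact your ``easy'' case is not self-contained: the claim that $hg_i^ah^{-1}$ dies \emph{precisely} when $k_iM\mid a$ already requires knowing that $\langle g_i\rangle\cap\N=\langle g_i^{k_iM}\rangle$, which is itself a nontrivial consequence of the filling machinery (compare Lemma~\ref{lem:NV_cap_Z_v} and Corollary~\ref{cor:N_cap_Stabv}). The paper instead treats all of $F$ uniformly via a single injectivity statement, Lemma~\ref{lem:separating_rotating_fam}: for each $g\neq1$ there is a threshold on the rotation control above which $g\notin\N$. Second, the mechanism behind that lemma is not a Greendlinger/small-cancellation argument but the composite rotating family technology of Section~\ref{sec:rotating}---specifically uniform properness of the projection system together with the shortening-pair Lemma~\ref{lem:reduction_for_original_rotating_fam}. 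So the ``main obstacle'' you identify is real, but it is resolved by rotating families rather than by tracking subwords in cyclic directions.
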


\subsubsection*{Tools and techniques}
The proof of Theorem~\ref{thmintro:dehnfill} combines two approaches. Firstly, as explored in \cite{Short_HHG:I}, the hierarchical structure of a short HHG can be modified by constructing suitable quasimorphisms on the specified $\Z$-central extensions (see Subsection~\ref{subsec:squid} for further details). In Subsection~\ref{subsec:preparation} we take advantage of this flexibility to make the structure of a short HHG ``as compatible as possible'' with the quotient projection. Secondly, we adapt the machinery of \emph{rotating families}, first introduced in \cite{dahmani:rotating}, to short HHG. Mimicking arguments from \cite{dfdt} and \cite{BHMS}, these tools allow one to lift certain combinatorial configurations from the quotient $\ov G$ to the original group $G$. Each HHG axiom for $\ov G$ follows from the corresponding statement for $G$, which is already hierarchically hyperbolic. The most novel and difficult part of this construction, compared with \cite{BHMS} and other papers, is that the HHS structure of the quotients involves quasilines coming from the aforementioned quasimorphisms; therefore, new ideas are required to construct suitable retractions onto those, as detailed in Subsection~\ref{subsubsec:qi_emb_quot}.

\subsubsection*{Comparison with known Dehn Filling results}
In \cite{BHMS}, the authors study quotients of mapping class groups of arbitrary finite-type surfaces by suitably large powers of \emph{all} Dehn twists, which as mentioned above should be thought of as the equivalent of what we call central directions. On the one hand, our Theorem~\ref{thmintro:dehnfill} applies to quotients by suitably large powers of \emph{any collection} of central directions. On the other, our techniques fail if we consider the quotient of a non-short mapping class group by large enough powers of some Dehn twists, such as only those around non-separating curves. The problem is roughly as follows: in any reasonable candidate HHS structure for the quotient, the image of a curve stabiliser will be a product region, and therefore hierarchically hyperbolic itself. Now, if a curve does not lie in the chosen collection, then the quotient image of its stabiliser will be a $\Z$-central extension, and by \cite[Corollary 4.3]{HRSS_3manifold} a necessary condition for it to be hierarchically hyperbolic is that its Euler class is \emph{bounded}. In the case of the five-holed sphere, the quotient extension has hyperbolic base, and so is bounded by \cite{neumannreeves}; however this is not necessarily true for surfaces of higher complexity. 

Motivated by this setup, in future work we will explore under which conditions a quotient of a $\Z$-central extension of a group $G$ remains bounded \cite{inpreparation}.

\subsubsection*{A criterion for hopficity}
The fundamental tool in our study of the Hopf property for short HHG is the following criterion (stated here in slightly simplified form), whose proof is straightforward but for which we could not find a suitable reference:
\begin{propintro}[{see Lemma~\ref{lem:enough}}]
Suppose that $G$ has \emph{enough Hopfian quotients}, meaning that, for every surjective homomorphism $\phi: G\to G$ and $g_0\in G-\{1\}$, there exists a quotient $H$ of $G$, say with quotient map $q$ such that:
\begin{itemize}
 \item $q(g_0)\neq 1$,
 \item $H$ is Hopfian,
 \item $\phi$ induces a homomorphism $\psi\colon H\to H$.
\end{itemize}
Then $G$ is Hopfian.
\end{propintro}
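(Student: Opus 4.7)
The plan is to argue by contradiction: assume $\phi\colon G\to G$ is a surjective homomorphism that fails to be injective, and use the hypothesis to transport the failure of injectivity to a Hopfian quotient, producing a contradiction with the Hopf property of $H$.

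Concretely, pick $g_0\in \ker(\phi)\setminus\{1\}$ and apply the ``enough Hopfian quotients'' assumption to the pair $(\phi,g_0)$, obtaining a surjection $q\colon G\to H$ with $q(g_0)\neq 1$, with $H$ Hopfian, and with an induced homomorphism $\psi\colon H\to H$ characterised by the identity
\[
\psi\circ q \;=\; q\circ\phi.
\]
The first step is to check that $\psi$ is surjective. This is immediate from the above identity and surjectivity of $\phi$ and $q$: indeed $\psi(H)=\psi(q(G))=q(\phi(G))=q(G)=H$. Since $H$ is Hopfian, $\psi$ must then be an isomorphism, in particular injective.

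The second step is to derive the contradiction. Evaluating the identity $\psi\circ q=q\circ \phi$ at $g_0$ gives $\psi(q(g_0))=q(\phi(g_0))=q(1)=1$; but $q(g_0)\neq 1$, contradicting injectivity of $\psi$. Hence no such $g_0$ exists, $\phi$ is injective, and therefore an isomorphism, proving that $G$ is Hopfian.

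There is no real obstacle here beyond unpacking what ``$\phi$ induces $\psi$'' means: it is precisely the statement that $\ker(q)\subseteq \ker(q\circ \phi)$, which is what is needed to obtain the factorisation $\psi\circ q=q\circ \phi$ and thus to close the diagram. Everything else is a one-line diagram chase. The content of the criterion is therefore entirely in the existence clause of the hypothesis; the implication itself is essentially formal.
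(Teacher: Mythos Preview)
Your proof is correct and follows essentially the same argument as the paper's: both establish that the induced map $\psi$ is a surjection of the Hopfian group $H$, hence an isomorphism, and then use the commuting square $\psi\circ q = q\circ\phi$ together with $q(g_0)\neq 1$ to conclude $\phi(g_0)\neq 1$. The only cosmetic difference is that you phrase it as a proof by contradiction (choosing $g_0\in\ker\phi$), while the paper argues directly that $\phi(g_0)\neq 1$ for arbitrary $g_0\neq 1$; the underlying diagram chase is identical.
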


Now let $G$ be a short HHG, and let $\phi$, $g_0$ be as above. If $\phi$ maps central directions to central directions, then it induces a map of some hyperbolic (hence Hopfian \cite[Corollary 6.13]{ReinfeldWeidmann}) Dehn filling quotient $H$, obtained by annihilating suitable powers of all central directions, and by Corollary~\ref{corintro:fullres} we can also assume that $g_0$ survives in $H$. This is not always the case; however, $\phi$ often preserves “enough” central directions, and the Dehn filling quotient by those directions will still be “hyperbolic enough” to be Hopfian. More precisely, we shall make use of hopficity of certain relatively hyperbolic groups \cite{Groves_Hull:eq_noetherian} and the following fact, which we highlight as it is of independent interest.
\begin{thmintro}[{see Theorem \ref{thm:rel_hyp_hopf}}]
Let $G$ be hyperbolic relative to $\Z$-central extensions of hyperbolic groups (including the case that $G$ itself is such an extension). Then $G$ is Hopfian.
\end{thmintro}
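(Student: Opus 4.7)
The plan is to apply the ``enough Hopfian quotients'' criterion of Lemma~\ref{lem:enough}. Given a surjective endomorphism $\phi\colon G\to G$ and a nontrivial $g_0\in G$, I will construct a Hopfian quotient $H=G/N$ in which $g_0$ survives and to which $\phi$ descends. Let $P_1,\dots,P_r$ be the peripheral subgroups of $G$, each fitting in a central extension $1\to Z_i\to P_i\to Q_i\to 1$ with $Z_i=\langle z_i\rangle\cong\Z$ central and $Q_i$ hyperbolic; the degenerate case $r=1$ with $P_1=G$ covers the case that $G$ itself is such an extension. The natural candidate for $H$ is the Dehn filling $G/\langle\langle z_1^{m_1},\dots,z_r^{m_r}\rangle\rangle$ for positive exponents $m_i$ to be chosen. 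By the relatively hyperbolic Dehn filling theorem \cite{Osin_Dehn_Fill, GM}, once the $m_i$ are sufficiently large and divisible, $H$ is relatively hyperbolic with peripherals $\bar P_i=P_i/\langle z_i^{m_i}\rangle$; each $\bar P_i$ is a $\Z/m_i$-central extension of $Q_i$, hence hyperbolic. Since a group hyperbolic relative to hyperbolic peripherals is itself hyperbolic, $H$ is hyperbolic and therefore Hopfian by \cite[Corollary 6.13]{ReinfeldWeidmann}.

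Two conditions remain: the survival of $g_0$ in $H$, and the descent of $\phi$. The first is the standard escape property of Dehn fillings and holds automatically for deep enough fillings. For the second we need $\phi(z_i^{m_i})\in N:=\langle\langle z_1^{m_1},\dots,z_r^{m_r}\rangle\rangle$ for every $i$, and the essential input is a centralizer analysis. Since $z_i$ is central in $P_i$, the image $\phi(z_i)$ commutes with the full subgroup $\phi(P_i)$. When $P_i$ is not virtually cyclic and $\phi(P_i)$ is non-elementary, the centralizer $C_G(\phi(z_i))$ is non-elementary, forcing $\phi(z_i)$ to be parabolic in $G$, since in a relatively hyperbolic group any infinite-order element with non-virtually-cyclic centralizer is conjugate into a peripheral. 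A refinement using that the centralizer in a hyperbolic group of a non-elementary subgroup is finite then shows that some positive power $\phi(z_i)^{k_i}$ lands in a conjugate of the central direction $Z_{j(i)}$ of a peripheral $P_{j(i)}$, giving $\phi(z_i)^{k_i}=g_i z_{j(i)}^{\ell_i} g_i^{-1}$ for some $g_i\in G$, $\ell_i\in\Z$.

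With this structural information, choose the $m_i$ as a sufficiently divisible common multiple of the $k_i$, simultaneously arranging that $\ell_i m_i/k_i$ is a multiple of $m_{j(i)}$ for every $i$, and that the $m_i$ are large enough for $g_0$ to survive. Then $\phi(z_i^{m_i})=g_i z_{j(i)}^{\ell_i m_i/k_i} g_i^{-1}\in N$, so $\phi$ descends to $H$, and the criterion yields hopficity of $G$. The main obstacle is the centralizer argument itself: one must control $\phi(z_i)$ and rule out that it becomes loxodromic or otherwise escapes the peripheral structure. The delicate sub-case is when $\phi|_{P_i}$ collapses $P_i$ to a virtually cyclic image, so that the centralizer reasoning above is not directly available; here one must either iterate $\phi$ to stabilise the peripheral structure, or fall back on the hopficity of relatively hyperbolic groups with equationally Noetherian peripherals from \cite{Groves_Hull:eq_noetherian} applied to an auxiliary quotient. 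This is exactly the point at which the $\Z$-central hypothesis on the peripherals is used essentially, since it is what guarantees that the distinguished central directions are canonically visible to the centralizer structure of $G$.
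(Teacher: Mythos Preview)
Your overall strategy---use Lemma~\ref{lem:enough} and construct the Hopfian quotient as a Dehn filling killing powers of the central directions---matches the paper's. However, there is a genuine gap, precisely at the ``delicate sub-case'' you flag but do not resolve.

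The difficulty is this: to control $\phi(z_i)$ via centralisers you need $\phi(P_i)/\langle\phi(z_i)\rangle$ to be non-virtually-cyclic, and for a general $\Z$-central extension $P_i$ this can fail. Your proposed remedies (iterate $\phi$, or invoke \cite{Groves_Hull:eq_noetherian} on an auxiliary quotient) are not developed, and it is not clear how either would close the argument in the form you set up, where you try to kill \emph{all} central directions simultaneously. The paper's resolution is structural: it separates the peripherals into \emph{virtual products} (virtually $\Z\times\text{hyperbolic}$) and \emph{twisted} ones (non-virtually-trivial extensions). The base case---all peripherals virtual products---is Hopfian directly by the equationally Noetherian machinery of \cite{Groves_Hull:eq_noetherian}. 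For the inductive step, Lemma~\ref{lem:non_trivial_quot_ext} guarantees that the image of a twisted peripheral under $\phi$ is again a non-virtually-trivial extension, so the collapse you worry about \emph{cannot occur for twisted peripherals}. One then needs to handle only a single twisted $P$: after iterating $\phi$ (pigeonhole on finitely many twisted peripherals), $\phi(z_P)$ lands in a conjugate of the centre of $P$ itself, and modding out $z_P^{N_P}$ reduces the number of twisted peripherals by one.

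Two further points. First, your exponent bookkeeping is fragile: with $k_i>1$ the simultaneous divisibility conditions $m_{j(i)}\mid \ell_i m_i/k_i$ may be unsatisfiable (think of a cycle in $i\mapsto j(i)$ where $\prod k_i \nmid \prod \ell_i$). The paper's centraliser argument actually gives $k_i=1$: since $\phi(P)\cap P'$ is not virtually abelian, the image of $\phi(z_P)$ in the hyperbolic quotient $P'/Z(P')$ has non-virtually-cyclic centraliser and must therefore be trivial, so $\phi(z_P)\in Z(P')$ on the nose. Second, by treating one peripheral at a time the paper avoids your global compatibility system entirely; the inductive setup is what makes the argument go through.
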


\subsection*{Future directions}
It is natural to ask whether, using a version of our argument or otherwise, one can in fact show that all short HHG are Hopfian:

\begin{questintro}
    Are all short HHG Hopfian? More ambitiously, are all HHG Hopfian?
\end{questintro}

But in fact, this is already a major question in the context of Artin groups:

\begin{questintro}
    Are all large hyperbolic type Artin groups Hopfian?
\end{questintro}

Going one step further, given the connection to residual finiteness of certain hyperbolic groups, the following question could have important consequences:

\begin{questintro}\label{quest:artin_rf}
    Are all large hyperbolic type Artin groups residually finite?
\end{questintro}

As explained in Theorem~\ref{thmintro:artin_reshyp}, a positive answer to Question~\ref{quest:artin_rf} would follow from residual finiteness of an explicit collection of hyperbolic groups, so we ask:

\begin{questintro}
    Are all hyperbolic groups from Remark~\ref{rmk:Shephard} residually finite?
\end{questintro}

Finally, we believe that there are many more examples of short HHG, and in fact we believe that the following question has a positive answer:

\begin{questintro}
    Is a random quotient of a short HHG, for a suitable notion of randomness, again a short HHG?
\end{questintro}

\subsection*{Outline}
Section~\ref{sec:what_is_CHHS} contains background on combinatorial hierarchically hyperbolic groups, of which short HHG are an instance. In Section~\ref{sec:shortHHG}, we describe the family of short HHG, and collect results about them from \cite{Short_HHG:I}. The class of Dehn filling quotients we consider is then made precise in Subsection~\ref{sec:large_dt}. 
\par\medskip

In Section~\ref{sec:rotating} we adapt the machinery of rotating families from \cite{dahmani:rotating, dfdt} to our quotients of interest. We then construct a short HHG structure for the quotients in Section~\ref{sec:quotient_is_short}: see in particular Theorem~\ref{thm:quotient_short_HHG}, which is Theorem~\ref{thmintro:dehnfill} from the Introduction. As a by-product, in Subsection~\ref{subsec:reshyp} we prove residual hyperbolicity of short HHG: see Corollary~\ref{cor:fullres_hyp}, which is Corollary~\ref{corintro:fullres} above.

\par\medskip
Section~\ref{sec:hopf} develops tools to study self-epimorphisms of short HHG. An example of how they are put into practice is presented in Subsection~\ref{subsec:HNN}, where we prove the Hopf property for certain HNN extensions of the direct product of $\Z$ and a free group: see Example~\ref{example:zxF}. The same techniques are then pushed further in Section~\ref{subsec:example_artin}, where we prove that many Artin groups of large and hyperbolic type are Hopfian: see Theorem~\ref{thm:XL_Hopfian} for the full statement, which encompasses both Theorems~\ref{thmintro:even_hopf} and~\ref{thmintro:odd_hopf}. 
\par\medskip
In Section~\ref{sec:mcg_quot} we prove hierarchical hyperbolicity of many quotients of the five-holed sphere mapping class group: see Theorem~\ref{thm:S5_quotient}, which is Theorem~\ref{thmintro:mcg}. 

\subsection*{Acknowledgements}
We are grateful to Oli Jones for suggesting an enlightening way to shorten some proofs of the Hopf property. We also thank Giovanni Sartori and Nicolas Vaskou for answering many questions about Artin groups. Special thanks go to Martín Blufstein and Alexandre Martin for fruitful discussions. Finally, we thank the referee for useful comments, both on the mathematical content and on the exposition.

\section{Combinatorial HHS}\label{sec:what_is_CHHS} 
In this section we recall the definition of a combinatorial HHS and its hierarchically hyperbolic structure, as first introduced in \cite{BHMS}. The reader might want to refer to \cite[Section 1]{BHMS}, which contains discussion of all the various notions we recall below. Also, the reader might find \cite{converse} useful, as in there it is explained how to create a combinatorial HHS structure from an HHS structure (in many cases).

\begin{defn}[Induced subgraph]
    Let $X$ be a simplicial graph. Given a subset $S\subseteq X^{(0)}$ of the set of vertices of $X$, the subgraph \emph{spanned} by $S$ is the complete subgraph of $X$ with vertex set $S$.
\end{defn}

\begin{defn}[Join, link, star]\label{defn:join_link_star}
Given disjoint simplices $\Delta,\Delta'$ of $X$, we let $\Delta\star\Delta'$ denote the simplex spanned by $\Delta^{(0)}\cup\Delta'^{(0)}$, if it exists. For each simplex $\Delta$, the \emph{link} $\link(\Delta)$ is the union of 
all simplices $\Sigma$ of $X$ such that $\Sigma\cap\Delta=\emptyset$ and $\Sigma\star\Delta$ is a simplex of $X$.  Observe that $\link(\Delta)=\emptyset$ if and only if $\Delta$ is a maximal simplex. The \emph{star} of $\Delta$ is $\operatorname{Star}(\Delta)\coloneq \link(\Delta)\star\Delta$, i.e. the union of all simplices of $X$ that contain $\Delta$.
\end{defn}

\begin{defn}[$X$--graph, $\W$--augmented graph]\label{defn:X_graph}
An \emph{$X$--graph} is a graph $\W$ whose vertex set is the set of all maximal simplices of $X$.

For a simplicial graph $X$ and an $X$--graph $\W$, the \emph{$\W$--augmented graph} $\duaug{X}{\W}$ is the graph defined as follows:
\begin{itemize}
     \item the $0$--skeleton of $\duaug{X}{\W}$ is $X^{(0)}$;
     \item if $v,w\in X^{(0)}$ are adjacent in $X$, then they are adjacent in $\duaug{X}{\W}$; 	
     \item if two vertices in $\W$ are adjacent, then we consider $\sigma,\rho$, the associated maximal simplices of $X$, and in $\duaug{X}{\W}$ we connect each vertex of $\sigma$ to each vertex of $\rho$.
\end{itemize}
We equip $\W$ with the usual path-metric, in which each edge has unit length, and do the same for $\duaug{X}{\W}$.
\end{defn}

\begin{defn}[Equivalence between simplices, saturation]\label{defn:simplex_equivalence}
For $\Delta,\Delta'$ simplices of $X$, we write $\Delta\sim\Delta'$ to mean $\link(\Delta)=\link(\Delta')$. We denote the $\sim$--equivalence class of $\Delta$ by $[\Delta]$.
Let $\Sat(\Delta)$ denote the set of vertices $v\in X$ for which there exists a simplex $\Delta'$ of $X$ such that $v\in\Delta'$ and $\Delta'\sim\Delta$, i.e.
$$\Sat(\Delta)=\left(\bigcup_{\Delta'\in[\Delta]}\Delta'\right)^{(0)}.$$
We denote by $\frakS$ the set of $\sim$--classes of non-maximal simplices in $X$.
\end{defn}

\begin{defn}[Complement, link subgraph]\label{defn:complement}
Let $W$ be an $X$--graph.  For each simplex $\Delta$ of $X$, let $\C(\Delta)$ be the induced subgraph of $\duaug{X}{W}$ spanned by $\link(\Delta)^{(0)}$, which we call the \emph{augmented link} of $\Delta$. Also, let $Y_\Delta$ be the subgraph of $\duaug{X}{W}$ induced by the set of vertices $X^{(0)}-\Sat(\Delta)$.
\end{defn}

Note that $\C  (\Delta)=\C  (\Delta')$ whenever $\Delta\sim\Delta'$. (We emphasise that we are taking links in $X$, not in $\duaug{X}{\W}$, and then considering the subgraphs of $Y_\Delta$ induced by those links.)

\begin{defn}[Combinatorial HHS]\label{defn:combinatorial_HHS}
A \emph{combinatorial HHS} $(X,\W)$ consists of a simplicial graph $X$ and an $X$--graph $\W$ satisfying the following conditions:
\begin{enumerate}
    \item \label{item:chhs_flag} There exists $n\in\mathbb N$ such that any chain $\link(\Delta_1)\subsetneq\dots\subsetneq\link(\Delta_i)$, where each $\Delta_j$ is a simplex of $X$, has length at most $n$;
    \item \label{item:chhs_delta} There is a constant $\delta$ so that for each non-maximal simplex $\Delta$, the subgraph $\C  (\Delta)$ is $\delta$--hyperbolic and $(\delta,\delta)$--quasi-isometrically embedded in $Y_\Delta$, where $Y_\Delta$ is as in Definition~\ref{defn:complement};
    \item \label{item:chhs_join} Whenever $\Delta$ and $\Sigma$ are non-maximal simplices for which there exists a non-maximal simplex $\Gamma$ such that $\link(\Gamma)\subseteq\link(\Delta)\cap \link(\Sigma)$, and $\diam(\C   (\Gamma))\geq \delta$, then there exists a simplex $\Pi$ which extends $\Sigma$ such that $\link(\Pi)\subseteq \link(\Delta)$, and all $\Gamma$ as above satisfy $\link(\Gamma)\subseteq\link(\Pi)$;
    \item \label{item:C_0=C} If $v,w$ are distinct non-adjacent vertices of $\link(\Delta)$, for some simplex $\Delta$ of $X$, contained in $\W$-adjacent maximal simplices, then they are contained in $\W$-adjacent simplices of the form $\Delta\star\Sigma$.
\end{enumerate}
\end{defn}

\begin{defn}[Nesting, orthogonality, transversality]\label{defn:nest_orth}
Let $X$ be a simplicial graph.  Let $\Delta,\Delta'$ be non-maximal simplices of $X$.  Then:
\begin{itemize}
     \item $[\Delta]\nest[\Delta']$ if $\link(\Delta)\subseteq\link(\Delta')$;
     \item $[\Delta]\orth[\Delta']$ if $\link(\Delta')\subseteq \link(\link(\Delta))$.
\end{itemize}
If $[\Delta]$ and $[\Delta']$ are neither $\orth$--related nor $\nest$--related, we write 
$[\Delta]\transverse[\Delta']$.
\end{defn}

\begin{defn}[Projections]\label{defn:projections}
Let $(X,\W)$ be a combinatorial HHS.  

Fix $[\Delta]\in\frakS$ and define a map $\pi_{[\Delta]}:\W\to 2^{\C  (\Delta)}$ as follows. Let
$$p:Y_\Delta\to2^{\C  (\Delta)}$$
be the coarse closest point projection, i.e. 
$$p(x)=\{y\in\C  (\Delta):\dist_{Y_\Delta}(x,y)\le\dist_{Y_\Delta}(x,\C  (\Delta))+1\}.$$

Suppose that $w$ is a vertex of $\W$, so $w$ corresponds to a unique simplex $\Sigma_w$ of $X$. Now, \cite[Lemma 1.15]{BHMS} states that the intersection $\Sigma_w\cap Y_\Delta$ is non-empty and has diameter at most $1$. Define $$\pi_{[\Delta]}(w)=p(\Sigma_w\cap Y_\Delta).$$

We have thus defined  $\pi_{[\Delta]}:\W^{(0)}\to 2^{\C  (\Delta)}$. If $v,w\in \W$ are joined by an edge $e$ of $\W$, then $\Sigma_v,\Sigma_w$ are joined by edges in $\duaug{X}{\W}$, and we let
$$\pi_{[\Delta]}(e)=\pi_{[\Delta]}(v)\cup\pi_{[\Delta]}(w).$$

Now let $[\Delta],[\Delta']\in\frakS$ satisfy $[\Delta]\transverse[\Delta']$ or $[\Delta']\propnest [\Delta]$. Let $$\rho^{[\Delta']}_{[\Delta]}=p(\Sat(\Delta')\cap Y_\Delta),$$ where $p:Y_\Delta\to\C  (\Delta)$ is coarse closest-point projection. 

Let $[\Delta]\propnest [\Delta']$. Let $\rho^{[\Delta']}_{[\Delta]}:\C  (\Delta')\to \C  (\Delta)$ be defined as follows.  On $\C  (\Delta')\cap Y_\Delta$, it is the restriction of $p$ to $\C  (\Delta')\cap Y_\Delta$. Otherwise, it takes the value $\emptyset$.
\end{defn}

We are finally ready to state the main theorem of \cite{BHMS}:
\begin{thm}[HHS structures for $X$--graphs]\label{thm:hhs_links}
Let $(X,\W)$ be a combinatorial HHS. The above data defines a \emph{hierarchically hyperbolic space} (HHS) structure on $\W$, in the sense of \cite{HHS_II}.

Moreover, let $G$ be a group acting on $X$ with finitely many orbits of subcomplexes of the form $\link(\Delta)$, where $\Delta$ is a simplex of $X$. Suppose moreover that the action on maximal simplices of $X$ extends to an action on $\W$, which is metrically proper and cobounded. Then $G$ is a \emph{hierarchically hyperbolic group} (HHG), in the sense of \cite{HHS_II}.
\end{thm}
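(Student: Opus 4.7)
The plan is to verify the HHS axioms one by one for the index set $\frakS$ with the hyperbolic spaces $\C([\Delta])$ and the projections defined in Definitions~\ref{defn:projections}, and then derive the HHG statement via the equivariance of the whole structure. First I would check that the proposed structure is well-defined: since $\C([\Delta])$ depends only on $\link(\Delta)$, the classes $[\Delta]$ determine both the hyperbolic spaces and the relations $\nest$, $\orth$, $\transverse$; the saturation $\Sat(\Delta)$ likewise depends only on $[\Delta]$, so the relative projections $\rho^{[\Delta']}_{[\Delta]}$ are unambiguous. One also needs that $\W$ is quasi-isometric to $\duaug{X}{\W}$ via the map sending a maximal simplex to any of its vertices: this uses that each maximal simplex has uniformly bounded diameter in $\duaug{X}{\W}$, which in turn relies on condition~\eqref{item:chhs_delta}.

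Next I would verify the quantitative HHS axioms. Hyperbolicity of the $\C([\Delta])$ is direct from~\eqref{item:chhs_delta}. For the coarse Lipschitz property of $\pi_{[\Delta]}$, I would use condition~\eqref{item:C_0=C}: if two maximal simplices are $\W$-adjacent, condition~\eqref{item:C_0=C} lets one replace the edge by a bounded-length path through simplices of the form $\Delta\star\Sigma$, which maps to a bounded set in $\C([\Delta])$ by the QI-embedding of $\C([\Delta])$ into $Y_\Delta$. Bounded geodesic image and the consistency axioms then follow by the usual hyperbolic-space arguments applied to coarse closest-point projection $p\colon Y_\Delta\to \C([\Delta])$, once~\eqref{item:chhs_delta} is in hand. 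Partial realization is essentially the content of condition~\eqref{item:chhs_join}: given pairwise orthogonal simplices with prescribed targets in their augmented links, condition~\eqref{item:chhs_join} produces the common extension $\Pi$ needed to realise all targets simultaneously. Finite complexity is condition~\eqref{item:chhs_flag} verbatim.

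The technically heaviest axioms are large links and uniqueness, which I would tackle together. For large links one argues that a geodesic between vertices $v,w\in \W$ whose projection to $\C([\Delta])$ is large must pass through simplices whose saturations are hit; the finite complexity bound from~\eqref{item:chhs_flag} then caps the number of distinct $[\Delta']\propnest [\Delta]$ which can carry a large projection. Uniqueness follows from a realisation/combinatorial path construction in $\duaug{X}{\W}$: if all projections of $v,w$ to all $\C([\Delta])$ are uniformly close, one builds a bounded-length path in $\duaug{X}{\W}$ between them by choosing, at each step, a simplex of $X$ whose link contains a witnessing common projection, and promotes this to a path in $\W$ using~\eqref{item:C_0=C}. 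I expect this is where the main obstacle lies, since it requires carefully threading the combinatorial data through the hyperbolic geometry of the $\C([\Delta])$.

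For the HHG statement, the action of $G$ on $X$ permutes simplices, preserves $\sim$, and therefore acts on $\frakS$ respecting $\nest$, $\orth$, $\transverse$. Finitely many orbits of links translate directly into finitely many orbits of domains, and the action extends to isometries of each $\C([\Delta])$ equivariantly with the projections. Metric properness and coboundedness of the $G$-action on $\W$ then upgrade the HHS structure on $\W$ to an HHG structure on $G$.
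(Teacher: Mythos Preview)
The paper does not prove this theorem at all: it is stated as ``the main theorem of \cite{BHMS}'' and cited without argument, so there is no in-paper proof to compare against. Your outline is, in broad strokes, the strategy that \cite{BHMS} carries out, but you should be aware that what you have written is a plausibility sketch rather than a proof. In particular, your treatment of large links and uniqueness is too vague to be checkable: the actual arguments in \cite{BHMS} for these axioms are substantially more delicate than ``the finite complexity bound caps the number of domains'' or ``build a bounded-length path by choosing simplices,'' and several of your attributions are off (for instance, partial realisation in \cite{BHMS} does not come from condition~\eqref{item:chhs_join}; that condition is used for the container/orthogonality and large links machinery). If you want to genuinely prove the theorem you should consult \cite{BHMS} directly, since the present paper treats it as a black box.
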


\begin{defn}\label{def:combHHG}
We will say that a group $G$ satisfying the assumptions of Theorem~\ref{thm:hhs_links} is a \emph{combinatorial} HHG. The hierarchically hyperbolic structure comes with a constant $E$, called the \emph{HHS constant}, which we can assume to be larger than both $n$ and $\delta$.
\end{defn}

\section{Definition and results on short HHG}\label{sec:shortHHG}
This Section recaps the definition and properties of short HHG, as introduced in \cite{Short_HHG:I}. 

\begin{defn}[Blowup graph]\label{defn:blowup} Let $\ov X$ be a simplicial graph, whose vertices are labelled by graphs $\{L_v\}_{v\in\ov{X}^{(0)}}$.
The \emph{blowup} of $\ov{X}$, with respect to the collection $\{L_v\}$, is the graph $X$ obtained from $\ov{X}$ by replacing every vertex $v$ with the \emph{cone} $\Squid(v)=v * (L_v)^{(0)}$. Two cones $\Squid(v)$ and $\Squid(w)$ span a join in $X$ if and only if $v,w$ are adjacent in $\ov{X}$, and are disjoint otherwise. In particular, there is a Lipschitz retraction $p\colon X\to \ov{X}$ mapping every $\Squid(v)$ to its tip $v$.
\end{defn}

\begin{figure}[htp]
    \centering
    \includegraphics[width=\textwidth, alt={The cones of two adjacent vertices span a join}]{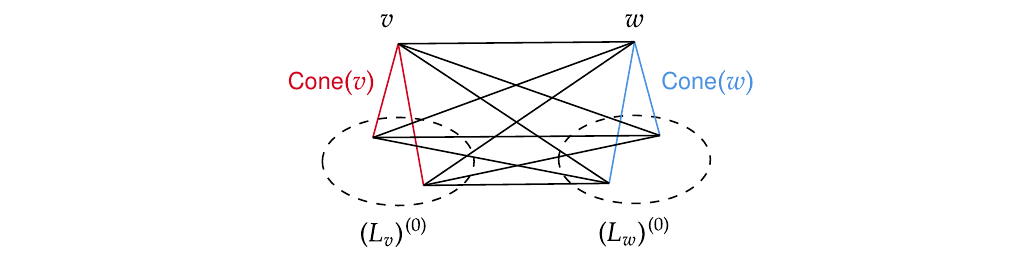}
    \caption{The blowup of two adjacent vertices of $\ov{X}$.}
    \label{fig:blowup}
\end{figure}

For every simplex $\Delta\subset X$, let $\ov\Delta=p(\Delta)$, which we call the \emph{support} of $\Delta$, and for every $v\in\ov\Delta$ let $\Delta_v=\Delta\cap \Squid(v)$. When describing a simplex $\Delta$, we shall put vertices belonging to the same $\Delta_v$ in parentheses: for example, if the vertices of $\Delta$ are $\{v,w,x\}$, where $v,w\in\ov X^{(0)}$ and $x\in (L_v)^{(0)}$, then we denote $\Delta$ by $\{(v,x),(w)\}$. Then, by inspection of the definition, one gets the following:
\begin{lemma}\label{cor:bounded_links}
Suppose that $\ov{X}$ is triangle-free, and that no component of $\ov{X}$ is a single point. Then, given a simplex $\Delta$ of $X$, one of the following holds:
\begin{enumerate}
\item \label{cor:bounded_links_emptyset} $\Delta=\emptyset$, and $\link_X(\Delta)=X$;
 \item \label{cor:bounded_links_edge} (Edge-type) $\Delta=\{(v,x)\}$, where $v\in \ov{X}^{(0)}$ and $x\in L_v$, and $\link_X(\Delta)=p^{-1}\link_{\ov X}(v)$;
 \item \label{cor:bounded_links_almostmax} (Triangle-type)  $\Delta=\{(v,x),(w)\}$, where $v,w\in \ov{X}^{(0)}$ are $\ov X$-adjacent and $x\in L_v$, and $\link_X(\Delta)=(L_w)^{(0)}$;
 \item \label{cor:bounded_links_maximal} $\Delta=\left\{(v,x), (w,y)\right\}$ is a maximal simplex, where $v,w\in \ov{X}^{(0)}$ are adjacent, $x\in L_v$ and $y\in L_w$, and $\link_X(\Delta)=\emptyset$. 
  \item \label{cor:bounded_links_bounded} $\link_X(\Delta)$ is a vertex, or a non-trivial join. In particular $\diam(\link_X(\Delta))\le 2$.
\end{enumerate}
\end{lemma}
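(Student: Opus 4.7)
The plan is to prove the lemma by routine case analysis on the support $\ov\Delta=p(\Delta)$ and on the shape of each $\Delta_v$. Two preliminary combinatorial observations set everything up. First, each squid $\Squid(v)=v\ast(L_v)^{(0)}$ is literally a star, so distinct leaves of $L_v$ are never adjacent in $X$; combined with the rule that squids whose tips are not $\ov X$-adjacent are disjoint, this forces $\Delta_v\in\{\emptyset,\{v\},\{x\},\{v,x\}\}$ for some $x\in(L_v)^{(0)}$. Second, the retraction $p$ sends cliques to cliques, so $\ov\Delta$ is a clique in $\ov X$, and triangle-freeness then gives $|\ov\Delta|\le 2$. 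Together these produce a finite explicit list of simplex-types in $X$ parameterised by $\ov\Delta$ and the $\Delta_v$'s, to which the lemma reduces.

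Items (1)--(4) fall out by direct link computations. Item (1) is by convention. For (2), $\Delta=\{(v,x)\}$: any $u\in\link_X(\Delta)$ must be adjacent to both $v$ and $x$, and since leaves of $\Squid(v)$ are pairwise non-adjacent, $u$ must lie in $\Squid(w)$ for some $w\sim_{\ov X}v$, yielding $\link_X(\Delta)=p^{-1}(\link_{\ov X}(v))$. For (3), $\Delta=\{(v,x),(w)\}$: adjacency to $x$ forces $u\in\Squid(v')$ for some $v'\sim_{\ov X}v$ with $v'\ne v$, and adjacency to $w$ either makes $v'=w$ or spawns a forbidden triangle $v$--$v'$--$w$ in $\ov X$; the leaves-non-adjacent rule then forces $u\in(L_w)^{(0)}$. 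Item (4) applies the same triangle-freeness argument symmetrically to both $x$ and $y$: no $u$ satisfies all four adjacency constraints, so the link is empty and $\Delta$ is maximal.

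Item (5) is the catch-all for the remaining simplex-types: $\{(v)\}$, $\{(x)\}$, $\{(v),(w)\}$ with $v\sim_{\ov X}w$, $\{(v),(y)\}$ with $y\in(L_w)^{(0)}$ and $v\sim_{\ov X}w$, $\{(x),(y)\}$, $\{(v,x),(y)\}$, $\{(x),(w,y)\}$, together with their $v\leftrightarrow w$ symmetric partners where applicable. In each subcase I would run the same recipe: test every vertex type against every element of $\Delta$, and invoke triangle-freeness of $\ov X$ to destroy any hypothetical three-squid configuration. The resulting link is always either a single vertex (e.g.\ $\link_X(\{(v,x),(y)\})=\{w\}$ and $\link_X(\{(x),(w,y)\})=\{v\}$) or a non-trivial join: $(L_v)^{(0)}\ast p^{-1}(\link_{\ov X}(v))$ for $\{(v)\}$; $\{v\}\ast p^{-1}(\link_{\ov X}(v))$ for $\{(x)\}$; $(L_v)^{(0)}\ast(L_w)^{(0)}$ for $\{(v),(w)\}$; $\{w\}\ast(L_v)^{(0)}$ for $\{(v),(y)\}$; and the two-vertex join $\{v,w\}$ for $\{(x),(y)\}$. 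Any non-trivial join has diameter at most two, which yields the stated bound $\diam(\link_X(\Delta))\le 2$.

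The main obstacle is purely organisational: there are roughly half a dozen subcases in item (5), and one has to be careful not to overlook any combination of $\Delta_v$-types. The only geometric input, used over and over, is triangle-freeness of $\ov X$ ruling out a vertex of $\ov X$ adjacent to two others that are themselves $\ov X$-adjacent.
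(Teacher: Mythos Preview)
Your proposal is correct and is exactly the routine inspection the paper alludes to with ``by inspection of the definition, one gets the following''; the paper gives no further argument. One tiny point worth making explicit: the hypothesis that no component of $\ov X$ is a single point is what guarantees $\link_{\ov X}(v)\ne\emptyset$, so that the joins you list in item~(5) are genuinely non-trivial (and hence have diameter $\le 2$); you use this but do not say so.
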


\begin{figure}[htp]
    \centering
    \includegraphics[width=\textwidth, alt={A simplex of edge type connects a vertex with a point in its cone. A simplex of triangle type is a triangle whose link is the base of a cone.}]{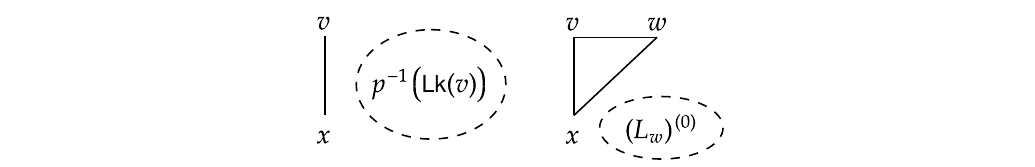}
    \caption{A simplex of edge-type (on the left) and a simplex of triangle-type (on the right). Their links are represented by the dashed areas.}
    \label{fig:links}
\end{figure}

\subsection{Definition}\label{defn:short_HHG}
Let $G$ be a combinatorial HHG, whose structure comes from the action on the combinatorial HHS $(X,\W)$. We say that $G$ is \emph{short} if it satisfies Axioms \eqref{short_axiom:graph}-\eqref{short_axiom:extension}-\eqref{short_axiom:cobounded} below.
\begin{ax}[Underlying graph]\label{short_axiom:graph} $X$ is obtained as a blowup of some graph $\ov{X}$, which is triangle- and square-free and such that no connected component of $\ov X$ is a point. Moreover, $\ov X$ is a $G$-invariant subgraph of $X$. 
\end{ax}

The above Axiom implies that the $G$-action on $X$ restricts to a $G$-action on $\ov X$ with finitely many $G$-orbits of edges.

\begin{ax}[Vertex stabilisers are cyclic-by-hyperbolic]\label{short_axiom:extension}
    For every $v\in\ov{X}^{(0)}$ there is an extension
        $$\begin{tikzcd}
        0\ar{r}&Z_v\ar{r}&\Stab{G}{v}\ar{r}{\p_v}&H_v\ar{r}&0
        \end{tikzcd}$$
        where $H_v$ is a hyperbolic group and $Z_v$ is a cyclic, normal subgroup of $\Stab{G}{v}$ which acts trivially on $\link_{\ov{X}}(v)$. We call $Z_v$ the \emph{cyclic direction} associated to $v$.
        
        Moreover, one requires that the family of such extensions is equivariant with respect to the $G$-action by conjugation; in particular, $Z_{gv}=gZ_vg^{-1}$ for every $v\in \ov{X}^{(0)}$ and $g\in G$.
\end{ax}

\begin{notation}\label{notation:lv_Uv}
    For every $v\in \ov{X}^{(0)}$, let $\ell_v$ be the domain associated to any triangle-type simplex whose link is $(L_v)^{(0)}$, and let $\U_v$ be the domain associated to any edge-type simplex supported on $v$.
\end{notation}

\begin{ax}[(Co)bounded actions]\label{short_axiom:cobounded}
For every $v\in \ov{X}^{(0)}$, the cyclic direction $Z_v$ acts geometrically on $\C\ell_v$ and with uniformly bounded orbits on $\C\ell_w$ for every $w\in\link_{\ov X}(v)$. In particular, $\C\ell_v$ is a quasiline if $Z_v$ is infinite cyclic, and uniformly bounded otherwise.
\end{ax}

We will denote a short HHG, together with its short structure, by $(G, \ov X, \W)$. The reader can find examples of short HHG in \cite[Sections 2.3 and 6]{Short_HHG:I}, among which figure the mapping class group of a five-punctured sphere, Artin groups of large and hyperbolic type, non-geometric graph manifold groups, and RAAGs on triangle- and square-free graphs.

\begin{defn}\label{defn:colourable}
    A short HHG $(G, \ov X, \W)$ is \emph{colourable} if there exists a partition of the vertices of $\ov{X}$ into finitely many colours, such that no two adjacent vertices share the same colour, and the $G$-action on $\ov X$ descends to an action on the set of colours.
\end{defn}
It is easily seen that the above property coincides with the more general notion of \emph{colourability} of a HHG, which requires the existence of a finite, $G$-invariant colouring \emph{of the whole domain set} (see e.g. \cite[Definition 4.15]{barak2024equational}).

\subsection{Properties}

\begin{rem}\label{rem:unbounded_dom_short_hhg}
    As a consequence of Lemma~\ref{cor:bounded_links}, if a simplex $\Delta\subseteq X$ is such that the associated coordinate space $\C (\Delta)$ is unbounded, then $$[\Delta]\in\{S\}\cup\{\ell_v\}_{v\in\ov{X}^{(0)}}\cup \{\U_v\}_{v\in\ov{X}^{(0)},\,|\link_{\ov X}(v)|=\infty}$$
    where $S=[\emptyset]$ is the $\nest$-maximal domain, and $\ell_v$, $\U_v$ are defined as in Notation~\ref{notation:lv_Uv}.
    
    Furthermore, by how nesting and orthogonality are defined in a combinatorial HHS (Definition~\ref{defn:nest_orth}), we see that:
    \begin{itemize}
        \item $\ell_v\orth \ell_w$ whenever $v\neq w$ are adjacent in $\ov X$, and are transverse otherwise; 
        \item $\ell_v\orth \U_v$; 
        \item $\ell_v\nest \U_w$ whenever $v\neq w$ are adjacent in $\ov X$;
        \item If $v$ has valence greater than one in $\ov X$, and $\dist_{\ov X}(v,w)\ge 2$, then $\U_v\transverse \ell_w$.
        \item If both $v$ and $w$ have valence greater than one in $\ov X$, and $\dist_{\ov X}(v,w)\ge 2$, then $\U_v\transverse \U_w$.
    \end{itemize}
    We avoided describing the slightly more complicated relations involving $\U_v$ when $v$ has valence one in $\ov X$, as we shall not need them.
\end{rem}

\begin{rem}\label{rem:coord_spaces_wrt_lines}
 The main coordinate space $\C S=X^{+\W}$ $G$-equivariantly retracts onto the \emph{augmented support graph} $\ov X^{+\W}$, obtained from $\ov{X}$ by adding an edge between $v$ and $w$ if they belong to $\W$-adjacent maximal simplices of $X$. In other words, $\C S$ is $G$-equivariantly quasi-isometric to a graph with vertex set $\ov X^{(0)}$, which contains $\ov X$ as a (non-full) $G$-invariant subgraph.
 
Similarly, for every $v\in \ov{X}^{(0)}$, $\C \U_v$ is $\Stab{G}{v}$-equivariantly quasi-isometric to the \emph{augmented link} $\link_{\ov{X}}(v)^{+\W}$, on which $Z_v$ acts trivially.
  \end{rem}

By \cite[Lemmas 2.10 and 2.11]{Short_HHG:I}, which are special cases of a more general phenomenon for combinatorial HHG, a short HHG satisfies several strengthened versions of the bounded geodesic image axiom, which we recall here.
\begin{notation}\label{notation:rho}
    Set $\rho^w_{w'}\coloneq \rho^{\ell_w}_{\ell_w'}$, which is defined as $\rho^{[\Delta]}_{[\Delta']}$ for any two simplices of triangle-type $\Delta=\left\{(v,x), (w)\right\}$ and $\Delta'=\left\{(v',x'), (w')\right\}$. This is a subset of diameter bounded by the HHS constant $E$. 
\end{notation}

\begin{lemma}[Strong BGI for projections on quasilines]\label{lem:strong_bgi_for_short}
    Whenever $u,v,w\in \ov X^{(0)}$, if both $\rho^u_w$ and $\rho^v_w$ are defined and at least $2E$-apart in $\C \ell_w$, then every geodesic $[u,v]\subset \ov X^{+\W}$ must pass through $\operatorname{Star}_{\ov{X}}(w)$.

    Similarly, whenever $u,v,w\in\link_{\ov{X}}(z)$, if both $\rho^u_w$ and $\rho^v_w$ are defined and at least $2E$-apart in $\C\ell_w$, then every geodesic $[u,v]\subset \link_{\ov{X}}(z)^{+\W}$ must pass through $w$.
\end{lemma}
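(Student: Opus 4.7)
The plan is to prove both parts by the bounded-geodesic-image axiom (BGI) inside a suitable coordinate space of the HHS structure, exploiting the identifications from Remark~\ref{rem:coord_spaces_wrt_lines}. For the first part, I would work inside the main coordinate space $\C S$, which is $G$-equivariantly quasi-isometric to $\ov X^{+\W}$; for the second, inside $\C \U_z$, which is $\Stab{G}{z}$-equivariantly quasi-isometric to $\link_{\ov{X}}(z)^{+\W}$. In both cases the target domain $\ell_w$ is properly nested into the ambient top-level domain, so BGI applies with respect to the relative projection $\rho^{\ell_w}_{S}$ (respectively $\rho^{\ell_w}_{\U_z}$).

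First I would identify the saturation $\Sat(\ell_w)$ with its combinatorial shadow. Unwinding Definition~\ref{defn:simplex_equivalence} together with the Triangle-type case of Lemma~\ref{cor:bounded_links}, the simplices $\sim$-equivalent to a triangle-type simplex with link $(L_w)^{(0)}$ are exactly the $\{(v,x),(w)\}$ with $v\in\link_{\ov{X}}(w)$ and $x\in L_v$. The vertices appearing in these simplices all lie in squids $\Squid(v)$ for $v\in\operatorname{Star}_{\ov{X}}(w)$, which collapse under the retraction $p\colon X\to\ov X$ (hence in the passage to $\ov X^{+\W}$) onto $\operatorname{Star}_{\ov{X}}(w)$ itself. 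Inside $\link_{\ov{X}}(z)^{+\W}$ the intersection of this saturation with the link collapses further to the single vertex $w$, as no vertex of $\link_{\ov{X}}(w)\setminus\{w\}$ can simultaneously lie in $\link_{\ov{X}}(z)$ since $\ov X$ is square-free by Axiom~\ref{short_axiom:graph}. I would then choose the constant $E$ of the statement to dominate both the BGI constant of the HHS $\W$ and the additive error of the two quasi-isometries above.

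The core step is a standard contradiction argument. Suppose that a geodesic $\gamma=(u=x_0,\dots,x_n=v)$ in $\ov X^{+\W}$ avoids $\operatorname{Star}_{\ov{X}}(w)$; then each $x_i$ lies outside the saturation of $\ell_w$, so every $\rho^{x_i}_w$ is well-defined. Lifting $\gamma$ to a uniform-quality quasigeodesic $\widetilde\gamma$ in $\C S$ via Remark~\ref{rem:coord_spaces_wrt_lines}, one sees that $\widetilde\gamma$ stays $E$-far from $\rho^{\ell_w}_{S}$, so BGI yields $d_{\C \ell_w}(\rho^u_w,\rho^v_w)\le E$, contradicting the hypothesis that this distance is at least $2E$. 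The same template, with $\C S$ replaced by $\C\U_z$ and $\operatorname{Star}_{\ov{X}}(w)$ replaced by $\{w\}$, settles the second part.

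The main obstacle I anticipate is the interplay between BGI, which is naturally phrased for geodesics inside a coordinate space, and the statement's geodesics, which live in a quasi-isometric combinatorial model. The factor of $2$ in the hypothesis $\ge 2E$ is what I expect to absorb the additive error coming from the quasi-isometry $\C S\simeq \ov X^{+\W}$, but verifying that this single factor suffices requires carefully tracking the BGI constant and the QI constants at once. A secondary check is that a vertex $x_i\in\gamma$ lying merely near (but not in) $\operatorname{Star}_{\ov{X}}(w)$ still projects to a bounded subset of $\C \ell_w$ — a routine combinatorial verification from Axiom~\ref{short_axiom:cobounded}.
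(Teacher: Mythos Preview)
This lemma is not proved in the present paper: it is quoted verbatim from the companion paper \cite{short_HHG:I} (as Lemma~2.10 there), so there is no ``paper's own proof'' to compare against. Your proposal is therefore an independent argument, not a reconstruction.

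As a standalone sketch, your strategy---use the HHS bounded-geodesic-image axiom for $\ell_w\sqsubsetneq S$ (resp.\ $\ell_w\sqsubsetneq \U_z$), after identifying $\C S\simeq \ov X^{+\W}$ and computing $p(\Sat(\ell_w))=\operatorname{Star}_{\ov X}(w)$---is the natural one, and your saturation computation is correct. The one genuine gap is the step ``$\widetilde\gamma$ stays $E$-far from $\rho^{\ell_w}_{S}$'': avoiding $\operatorname{Star}_{\ov X}(w)$ only guarantees distance $\geq 1$ from $\Sat(\ell_w)$ in $X^{+\W}$, not distance $\geq E$, so the standard BGI hypothesis is not immediately met. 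You would need either to invoke a version of BGI tailored to combinatorial HHS (where avoiding the saturation suffices), or to argue directly by chaining: show that consecutive vertices $x_i,x_{i+1}$ of the geodesic, being $\ov X^{+\W}$-adjacent and both outside $\operatorname{Star}_{\ov X}(w)$, have uniformly close $\rho$-points in $\C\ell_w$ (this is essentially closeness-in-inaction plus the fact that $\W$-adjacent maximal simplices have close projections). Either route closes the gap, but neither is quite the one-line appeal to BGI you wrote.
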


\begin{notation}
    For every $u,v,w\in\ov X^{(0)}$ such that $u\neq w$ and $v\neq v$, if $w$ has valence greater than one in $\ov X$ set 
$$\dist_{\link_{\ov X}(w)^{+\W}}(u, v)=\dist_{\link_{\ov X}(w)^{+\W}}\left( p(\rho^{\ell_u}_{\U_w}), p(\rho^{\ell_v}_{\U_w})\right),$$
where $p\colon X\to \ov X$ is the retraction. If instead $w$ has valence one then $\link_{\ov X}(w)$ is a point, and we set $\dist_{\link_{\ov X}(w)^{+\W}}(u, v)=0$.
\end{notation}

\begin{lemma}[Strong BGI for projections on links]\label{lem:strong_bgi_for_short_edge_type}
    Let $w\in\ov X^{(0)}$. For every $u,v\in \ov X^{(0)}-\{w\}$, if $\dist_{\link_{\ov X}(w)^{+\W}}(u, v)\ge 2E$, then every geodesic $[u,v]\subset \ov X^{+\W}$ must pass through $w$.
\end{lemma}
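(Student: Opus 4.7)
The plan is to prove this as an edge-type analogue of Lemma~\ref{lem:strong_bgi_for_short}, by applying the combinatorial HHS bounded geodesic image axiom directly to the domain $\U_w$ and observing that its saturation, once intersected with $\ov X^{(0)}$, collapses to $\{w\}$. By Lemma~\ref{cor:bounded_links}\eqref{cor:bounded_links_edge}, every edge-type simplex $\{(w,x)\}$ has link $p^{-1}\link_{\ov X}(w)$, which is independent of $x\in L_w$; hence all edge-type simplices supported on $w$ are $\sim$-equivalent (in the sense of Definition~\ref{defn:simplex_equivalence}) and together yield $\Sat(\U_w)=\{w\}\cup (L_w)^{(0)}=\Squid(w)^{(0)}$, so $\Sat(\U_w)\cap \ov X^{(0)}=\{w\}$.

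Next I would rephrase the hypothesis as a projection estimate in $\C\U_w$. For any $u\neq w$, Remark~\ref{rem:unbounded_dom_short_hhg} shows that $\ell_u$ is either nested in or transverse to $\U_w$ (never orthogonal, as that would force $u=w$), so the $\rho$-projection $\rho^{\ell_u}_{\U_w}$ is defined and, by Definition~\ref{defn:projections}, coarsely coincides with $\pi_{\U_w}$ applied to any maximal simplex through $u$. Using the $\Stab{G}{w}$-equivariant quasi-isometry $\C\U_w\simeq \link_{\ov X}(w)^{+\W}$ from Remark~\ref{rem:coord_spaces_wrt_lines}, the hypothesis $\dist_{\link_{\ov X}(w)^{+\W}}(u,v)\ge 2E$ translates into a large projection distance $\dist_{\C\U_w}(\pi_{\U_w}(u),\pi_{\U_w}(v))$, with the quasi-isometry constants absorbed into $E$.

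I would then invoke the BGI axiom for $(X,\W)$ at $[\Delta]=\U_w$: a large projection distance forces any geodesic in the top-level coordinate space $\C S=X^{+\W}$ between $u$ and $v$ to meet $\Sat(\U_w)$. Composing with the Lipschitz retraction $X^{+\W}\to \ov X^{+\W}$ of Remark~\ref{rem:coord_spaces_wrt_lines}, the same conclusion transfers to $\ov X^{+\W}$-geodesics: any $[u,v]\subset \ov X^{+\W}$ must cross $\Sat(\U_w)\cap \ov X^{(0)}=\{w\}$, which is the claimed statement.

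The main obstacle will be the bookkeeping of constants when passing between $\W$, $X^{+\W}$, $\ov X^{+\W}$, and $\C\U_w\simeq \link_{\ov X}(w)^{+\W}$: the threshold $2E$ in the hypothesis must be robust under these quasi-isometries and the retraction, which is the same kind of calculation already carried out for Lemma~\ref{lem:strong_bgi_for_short}, with $E$ chosen large enough to absorb the multiplicative and additive errors. The only genuine edge case is when $w$ has valence one in $\ov X$, where the definition sets $\dist_{\link_{\ov X}(w)^{+\W}}(u,v)=0$ and the hypothesis becomes vacuous.
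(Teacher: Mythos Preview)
The paper does not supply its own proof of this statement: it is imported verbatim from \cite[Lemma~2.11]{short_HHG:I}, so there is no in-paper argument to compare against. I will therefore only comment on your proposal on its own terms.

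Your computation of $\Sat(\U_w)\cap\ov X^{(0)}=\{w\}$ is correct, and the translation of the hypothesis into a large projection distance in $\C\U_w$ via Remark~\ref{rem:coord_spaces_wrt_lines} is fine. The problem is the step where you ``invoke the BGI axiom'' to conclude that a geodesic in $\C S=X^{+\W}$ must \emph{meet} $\Sat(\U_w)$. The abstract HHS bounded geodesic image axiom only forces such a geodesic to pass within distance $E$ of the bounded set $\rho^{\U_w}_S$; it does not give exact intersection. After retracting to $\ov X^{+\W}$ (where, as you implicitly use, $\ov X^{+\W}$ sits isometrically because the retraction $p$ is $1$-Lipschitz), you would only obtain that $[u,v]$ comes within bounded distance of $w$ in $\ov X^{+\W}$, not that it passes \emph{through} $w$. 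That last upgrade is exactly the content of the word ``strong'' in the name of the lemma, and it is not merely a matter of absorbing constants into $E$.

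To close the gap you need to exploit the particular combinatorics of the blowup rather than the abstract axiom: if $[u,v]\subset\ov X^{+\W}$ avoids $w$, then (since $\Sat(\U_w)\cap\ov X^{(0)}=\{w\}$) the geodesic lies entirely in $Y_{\U_w}$, and one must then argue that the coarse closest-point projection $Y_{\U_w}\to\C\U_w$ moves a uniformly bounded amount along each edge of $[u,v]$ \emph{and} that this suffices to bound $\dist_{\link_{\ov X}(w)^{+\W}}(u,v)$ below $2E$. This requires a coarse Lipschitz retraction onto $\C\U_w$, which is stronger than the quasi-isometric embedding of Axiom~\eqref{item:chhs_delta}; such a retraction is constructed by hand in \cite{short_HHG:I} using the explicit description of edge-type links, much as the present paper does in Lemma~\ref{lem:qi_emb_con_approach} for the quotient. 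Your outline points in the right direction but stops short of this argument.
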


Finally, recall that, given a hyperbolic group $H$ and an infinite cyclic subgroup $Q\le H$, there exists a maximal virtually cyclic subgroup $K_{H}(Q)$ containing $Q$.
\begin{lemma}[{\cite[Lemma 2.17]{Short_HHG:I}}]\label{lem:Hv_hyp_1}
       Let $(G, \ov X, \W)$ be a short HHG, $v\in \ov{X}^{(0)}$, and let $W$ be a collection of $\Stab{G}{v}$-orbit representatives of vertices in $\link_{\ov X}(v)$ with infinite cyclic directions. Then $H_v$ is hyperbolic relative to $\{K_{H_v}(\p_v(Z_w))\}_{w\in W}$. 
\end{lemma}

\subsection{Blowup materials}\label{subsec:squid}
The main technical result of \cite{Short_HHG:I} is that short HHG can be equivalently defined as those groups equipped with certain data, called \emph{blowup materials}. We first set up some notation.
\begin{defn}
    A finitely generated group $G$ is \emph{weakly hyperbolic} relative to a collection of conjugacy classes of subgroups if, given any finite generating set $S$ for $G$, and any choice of a subgroup $H_i$ in each conjugacy class, the coned-off Cayley graph $\Cay{G}{S\bigcup_i H_i}$ is hyperbolic.
\end{defn}
\begin{notation}[Product regions]\label{notation:pre_def_squid}
In what follows, $G$ is a group acting on a simplicial graph $\ov X$ with finitely many orbits of edges. Fix $V=\{v_1,\ldots, v_k\}$ a set of representatives of the $G$-orbits of vertices in $\ov{X}$. For every $v_i\in V$ and every $g\in G$, set $P_{gv_i}=g\Stab{G}{v_i}$, which we call the \emph{product region} associated to $gv_i$ (with respect to this choice of orbit representatives). We always see $P_v$ as a metric subspace of $G$, where the latter is equipped with the word metric with respect to any finite generating set. 
\end{notation}

The following definition is equivalent to \cite[Definition 3.9]{Short_HHG:I}, which we rephrase for improved clarity.
\begin{defn}[Blowup materials]\label{defn:squid_material}
Let $G$ be a finitely generated group, acting on a simplicial graph $\ov X$ with finitely many orbits of edges. For every $v\in \ov X^{(0)}$ let $Z_v\unlhd \Stab{G}{v}$ be a cyclic, normal subgroup; furthermore, whenever $Z_v$ is infinite, let $\phi_v\colon E_v\to \R$ be a homogeneous quasimorphism, where $E_v\unlhd \Stab{G}{v}$ is some finite-index, normal subgroup centralising $Z_v\cap E_v$. Let $V$ be a set of representatives for the $G$-orbits of vertices in $\ov X$, and for every $v\in \ov X^{(0)}$ let $P_v$ be the associated product region with respect to $V$, as in Notation~\ref{notation:pre_def_squid}.  

The above data defines \emph{blowup materials} on $G$ if the following conditions hold:
    \begin{enumerate}[start=0]
        \item\label{squid_material:equivariance} All subgroups and quasimorphisms are equivariant with respect to the $G$-action by conjugation.
        \item\label{squid_material:graph} $\ov{X}$ is triangle- and square-free, and no connected component of $\ov{X}$ is a single point.
        \item\label{squid_material:extensions}
        For every $v\in \ov X^{(0)}$, the quotient $H_v\coloneq \Stab{G}{v}/Z_v$ is hyperbolic.
        \item\label{squid_material:edge_stab} Whenever $e=\{v,w\}$ is an edge of $\ov{X}$, $Z_v$ fixes $w$, and $\Stab{G}{v}\cap\Stab{G}{w}$ contains $\langle Z_v, Z_w\rangle $ as a subgroup of finite index. Moreover, $\p_v(Z_w)$ is quasiconvex in $H_v$.
        \item\label{squid_material:big_papa} $G$ is weakly hyperbolic relative to $\{\Stab{G}{v}\}_{v\in \ov X}$.
        \item\label{squid_material:quasimorphisms} For every $v\in \ov X^{(0)}$ with $Z_v$ infinite, the quasimorphism $\phi_v$ is unbounded on $Z_{v}\cap E_{v}$ and trivial on $Z_w\cap E_{v}$ for every vertex $w\in\link_{\ov X}(v)$. 
        \item\label{squid_material:gates}
        There exist a constant $B\ge 0$ and, for every $v_i\in V$, a $B$-coarsely Lipschitz, $B$-coarse retraction $\gate_{v_i}\colon G\to 2^{E_{v_i}}$,
        with the following properties:
        \begin{enumerate}[label=(\roman*)]
            \item If $w\in\link_{\ov X}(v_i)$ then $\gate_{v_i}(P_w)\subseteq N_B(P_w)$.
            \item For every $i$ there exists a finite collection $\{w_i^1,\ldots, w_i^r\}\subseteq\link_{\ov X}(v_i)$ such that, whenever $\dist_{\ov X}(v_i, u)\ge 2$,  there exist $g\in\Stab{G}{v_i}$ and $1\le j\le r$ such that $\gate_{v_i}(P_u)\subseteq N_B\left(gZ_{w_i^j}\right)$.
        \end{enumerate} 
    \end{enumerate}
    We call $\ov X$ the \emph{support graph}, each $Z_v$ the \emph{cyclic direction} associated to $v$, and each $\gate_{v_i}$ the \emph{gate} onto $E_{v_i}$.
\end{defn}

\begin{thm}\label{thm:squidification}(\cite[Theorem 3.10]{Short_HHG:I})
    Let $G$ be a finitely generate group admitting blowup materials, with support graph $\ov X$. Then $G$ admits a short HHG structure, where $\ov X$ is the support graph and the cyclic direction associated to each $v\in\ov{X}^{(0)}$ is (a finite-index subgroup of) $Z_v$.
\end{thm}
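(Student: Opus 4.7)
The plan is to invoke Theorem~\ref{thm:hhs_links} by producing an explicit combinatorial HHG structure on $G$ whose simplicial graph realises the squid data, and then read Axioms~\ref{short_axiom:graph}--\ref{short_axiom:cobounded} off of this structure. First I would build the blowup $X$ from the support graph $\ov X$ (whose triangle-/square-free hypothesis from squid material~\ref{squid_material:graph} puts us in the regime of Lemma~\ref{cor:bounded_links}) by attaching at each vertex $v$ a link graph $L_v$: when $Z_v$ is infinite, take $L_v$ to be a quasi-line on which $Z_v \cap E_v$ acts geometrically through the homogeneous quasimorphism $\phi_v$ of squid material~\ref{squid_material:quasimorphisms} (for instance, vertices indexed by the integer levels of $\phi_v$ on a chosen $\Stab{G}{v}$-orbit); when $Z_v$ is finite, take $L_v$ to be a single point. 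The $X$--graph $\W$ would be built from the Cayley graph of $G$: assign to each $g \in G$ a maximal simplex of $X$ whose two coordinates are determined by applying the gates of squid material~\ref{squid_material:gates} to $g$ at two $\ov X$-adjacent orbit representatives close to $g$ in the coned-off graph coming from squid material~\ref{squid_material:big_papa}, and declare two simplices adjacent whenever the corresponding group elements differ by a generator. This makes the $G$-action on $\W$ metrically proper and cobounded by construction.

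The next step is to verify the four combinatorial HHS axioms of Definition~\ref{defn:combinatorial_HHS}. Axiom~\ref{item:chhs_flag} is immediate from the case analysis of Lemma~\ref{cor:bounded_links}. For Axiom~\ref{item:chhs_delta}, Remark~\ref{rem:unbounded_dom_short_hhg} reduces one to three types of $\sim$-classes with unbounded coordinate space:
\begin{itemize}
    \item $\C(\emptyset)$ is quasi-isometric to $\ov X^{+\W}$ (Remark~\ref{rem:coord_spaces_wrt_lines}), which is in turn quasi-isometric to the Cayley graph of $G$ coned off along vertex stabilisers, hence hyperbolic by squid material~\ref{squid_material:big_papa};
    \item $\C(\U_v)$ is quasi-isometric to $\link_{\ov X}(v)^{+\W}$, thus to a Cayley graph of $H_v$ coned off along the peripherals $\p_v(Z_w)$; hyperbolicity follows from the relative hyperbolicity of Lemma~\ref{lem:Hv_hyp_1} together with the quasi-convexity clause of squid material~\ref{squid_material:edge_stab}, via a Bowditch-style coning argument;
    \item $\C(\ell_v)$ is identified, through the chosen parametrisation of $L_v$, with the quasi-line produced by $\phi_v$.
\end{itemize}
The quasi-isometric embeddings of these spaces into the corresponding $Y_{[\Delta]}$ come from coarse Lipschitzness of the gates. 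Axioms~\ref{item:chhs_join} and~\ref{item:C_0=C} are then routine case-by-case checks driven by Lemma~\ref{cor:bounded_links} and the transversality/orthogonality/nesting catalogue of Remark~\ref{rem:unbounded_dom_short_hhg}.

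The three short HHG axioms then fall out almost for free: Axiom~\ref{short_axiom:graph} holds by design; Axiom~\ref{short_axiom:extension} is squid material~\ref{squid_material:extensions} after replacing each $Z_v$ by its finite-index subgroup $Z_v \cap E_v$, which is central in the normal finite-index subgroup $E_v$ and is precisely the subgroup that acts on the constructed $L_v$ (this is why the statement only claims a finite-index subgroup); Axiom~\ref{short_axiom:cobounded} follows because $\phi_v$ is unbounded on $Z_v \cap E_v$, yielding the geometric action on $\C\ell_v$, while $\phi_w$ is trivial on $Z_v \cap E_w$ for every $w \in \link_{\ov X}(v)$, yielding bounded orbits on the neighbouring quasi-lines.

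I expect the main obstacle to lie in the quasi-isometric embedding clause of Axiom~\ref{item:chhs_delta} for $\C(\ell_v)$ into $Y_{\ell_v}$: one must control the projection to the quasi-line coming from a product region $P_u$ with $u$ far from $v$ in $\ov X$. This is precisely what the ``far-away'' clause of squid material~\ref{squid_material:gates} is tailored to supply, via the containment $\gate_v(P_u) \subseteq N_B(g Z_{hv'})$ for some $hv' \in \link_{\ov X}(v)$; combining it with the vanishing of $\phi_v$ on $Z_{hv'} \cap E_v$ forces the projection of such a $P_u$ onto $\C\ell_v$ to be bounded. Making this bound effective and threading it through the various squid materials so as to produce the bounded geodesic image behaviour eventually formalised in Lemmas~\ref{lem:strong_bgi_for_short} and~\ref{lem:strong_bgi_for_short_edge_type} is the technical heart of the argument, and the place where the interplay between the gates, the quasimorphisms, and the relative hyperbolicity of $H_v$ needs to be calibrated most delicately.
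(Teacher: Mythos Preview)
This theorem is not proved in the present paper; it is quoted from the companion paper \cite[Theorem~3.10]{short_HHG:I}, so there is no proof here to compare against. Your outline is a plausible reconstruction of the strategy, and indeed matches the overall architecture one can infer from the hints scattered through Sections~\ref{sec:shortHHG} and~\ref{sec:quotient_is_short}: build a blowup $X$ of $\ov X$, put a $G$-equivariant $X$-graph $\W$ on it from the word metric, and verify Definition~\ref{defn:combinatorial_HHS} case by case using the squid materials.

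One concrete deviation worth flagging. You propose taking $(L_v)^{(0)}$ to be ``integer levels of $\phi_v$''. From what the present paper reveals about the construction in \cite{short_HHG:I} (see the beginning of Subsection~\ref{subsec:X_W_quot} and Remark~\ref{rem:weird_flex}), the vertex set of $L_v$ is actually the full coset $g\Stab{G}{v_i}$, and the quasi-line structure is obtained by \emph{collapsing} coarse level sets of $\phi_v$ when defining the $\W$-edges, not by thinning the vertex set. This matters for the equivariance bookkeeping (you want the full $\Stab{G}{v}$-action on $L_v$, not just the $E_v$-action) and for how the gates of squid material~\eqref{squid_material:gates}, which land in $E_v\subseteq\Stab{G}{v}$, interact with the simplices. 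Your variant may well be made to work, but it would require reconciling the $E_v$-indexed quasiline with the $\Stab{G}{v}$-action, which is exactly the sort of finite-index nuisance the statement's parenthetical ``(a finite-index subgroup of)'' is warning about.
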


\begin{prop}
\label{prop:short_is_squid}
(\cite[Proposition 4.1]{Short_HHG:I})
    A short HHG $(G, \ov X, \W)$ admits blowup materials, whose support graph is $\ov X$ and whose extensions $Z_v\to\Stab{G}{v}\to H_v$ are those from Axiom~\ref{short_axiom:extension}.
\end{prop}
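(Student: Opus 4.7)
The plan is to verify each of the six conditions in Definition~\ref{defn:squid_material} for a given short HHG $(G,\ov X)$. Items~\eqref{squid_material:graph} and~\eqref{squid_material:extensions} are direct restatements of Axioms~\ref{short_axiom:graph} and~\ref{short_axiom:extension}. For item~\eqref{squid_material:edge_stab}, quasiconvexity of $\p_v(Z_w)$ in $H_v$ is immediate from Lemma~\ref{lem:Hv_hyp_1}, since $\p_v(Z_w)$ is one of the peripheral subgroups of the given relatively hyperbolic structure on $H_v$. For the finite-index statement, I would argue that both $\langle Z_v, Z_w\rangle$ and $\text{P}\Stab{G}{e}$ act cocompactly on the product of quasilines $\C\ell_v\times \C\ell_w$ (using Axiom~\ref{short_axiom:cobounded}), with kernels of uniformly bounded size, forcing commensurability.

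For item~\eqref{squid_material:big_papa}, I would show that coning off the Cayley graph of $G$ over the family $\{\Stab{G}{v_i}\}_{v_i\in V}$ produces a space $G$-equivariantly quasi-isometric to $\C S$. By Remark~\ref{rem:coord_spaces_wrt_lines}, $\C S$ is quasi-isometric to the augmented support graph $\ov X^{+\W}$, and hyperbolicity of this space is built into the combinatorial HHS structure of $(X,\W)$. Each product region $P_{v_i}$ has image in $\ov X^{+\W}$ coarsely equal to $\{v_i\}$, so collapsing these regions to points recovers a bounded perturbation of $\ov X^{+\W}$.

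The main technical construction is the quasimorphism in item~\eqref{squid_material:quasimorphisms}. If $Z_{v_i}$ is finite we set $\phi_{v_i}\equiv 0$, so assume $Z_{v_i}$ is infinite; then $\C\ell_{v_i}$ is a quasiline on which $Z_{v_i}$ acts geometrically (Axiom~\ref{short_axiom:cobounded}). I would pass to a finite-index normal subgroup $E_{v_i}\trianglelefteq\Stab{G}{v_i}$ that preserves an orientation of this quasiline and in which $Z_{v_i}\cap E_{v_i}$ is central. The signed asymptotic translation length along $\C\ell_{v_i}$ then defines a homogeneous quasimorphism $\phi_{v_i}\colon E_{v_i}\to\R$. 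Unboundedness on $Z_{v_i}\cap E_{v_i}$ follows from geometricity of its action, and triviality on each $Z_w\cap E_{v_i}$ with $w\in\link_{\ov X}(v_i)$ follows because $Z_w$ has uniformly bounded orbits on $\C\ell_{v_i}$, again by Axiom~\ref{short_axiom:cobounded}.

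Finally, for item~\eqref{squid_material:gates}, the gate $\gate_{v_i}\colon G\to 2^{E_{v_i}}$ is modelled on the HHS gate map onto the product region $P_{v_i}$, suitably corrected to land in the finite-index subgroup $E_{v_i}$. Coarse Lipschitzness comes from general HHS theory, and $\gate_{v_i}(P_w)\subseteq N_B(P_w)$ for $w\in\link_{\ov X}(v_i)$ is standard given item~\eqref{squid_material:edge_stab}. I expect the main obstacle to be the ``far'' case $\dist_{\ov X}(v_i,u)\ge 2$, where one must confine the gate of $P_u$ into a neighbourhood of a \emph{single} cyclic direction $gZ_{hv'}$, with $hv'\in V$ a specified $\Stab{G}{v_i}$-orbit representative of a neighbour of $v_i$. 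The natural strategy is to identify this neighbour via the strong bounded geodesic image Lemmas~\ref{lem:strong_bgi_for_short} and~\ref{lem:strong_bgi_for_short_edge_type}: these force any $\ov X^{+\W}$-geodesic from $u$ to $v_i$ to enter $\operatorname{Star}_{\ov X}(v_i)$ through one specific vertex $w$, whose relative position controls both the $\C\ell_{v_i}$- and $\C\U_{v_i}$-projections of $\gate_{v_i}(P_u)$. Combined with Remark~\ref{rem:unbounded_dom_short_hhg} listing the unbounded coordinate spaces, the HHS realisation/distance formula then places the gate in the required neighbourhood of $gZ_{hv'}$.
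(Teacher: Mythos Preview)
This statement is not proved in the present paper: it is quoted verbatim from the companion paper \cite{short_HHG:I} (as Proposition~4.1 there), and the current paper supplies no argument beyond that citation. So there is no proof here to compare your proposal against.

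That said, your sketch is broadly the right shape and aligns with how the companion paper is used elsewhere in this text. A couple of points where you are glossing over genuine work: for item~\eqref{squid_material:edge_stab}, the claim that $\text{P}\Stab{G}{e}$ acts \emph{properly} on $\C\ell_v\times\C\ell_w$ is the nontrivial direction (cocompactness of $\langle Z_v,Z_w\rangle$ is easy from Axiom~\ref{short_axiom:cobounded}, but why is the kernel of the pointwise-edge-stabiliser action finite?); this typically requires going back to the HHS distance formula or the product-region description rather than just Axiom~\ref{short_axiom:cobounded}. For item~\eqref{squid_material:quasimorphisms}, your translation-length construction is natural and would work, though note that the paper later (Lemma~\ref{lem:squid_for_quot_quasimorph} and Remark~\ref{rem:weird_flex}) treats the quasimorphisms as flexible data that can be replaced, invoking \cite[Lemma~4.4]{ELTAG_HHS}; this suggests the companion paper may build $\phi_{v_i}$ via quasimorphisms on the hyperbolic base $H_{v_i}$ rather than directly on the quasiline, but either route is valid. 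Your handling of item~\eqref{squid_material:gates} is the vaguest part and is where most of the actual labour lies in \cite{short_HHG:I}.
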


In particular, we get:
\begin{cor}[Edge groups]
    Let $(G, \ov X, \W)$ be a short HHG. Whenever $\{v,w\}$ is an edge of $\ov X$, the \emph{edge group} $\Stab{G}{v}\cap\Stab{G}{w}$ contains $\langle Z_v, Z_w\rangle$ as a subgroup of finite index.
\end{cor}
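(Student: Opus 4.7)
The proof is essentially a one-liner once one has Proposition~\ref{prop:short_is_squid}. The plan is to invoke that Proposition to extract squid materials for $(G,\ov X)$ whose support graph is $\ov X$ itself and whose $\Z$-central extensions are exactly those provided by Axiom~\ref{short_axiom:extension}. Then condition~\eqref{squid_material:edge_stab} of Definition~\ref{defn:squid_material} directly asserts, for each edge $\{v,w\}$ of $\ov X$, that $\Stab{G}{v}\cap \Stab{G}{w}$ contains $\langle Z_v, Z_w\rangle$ with finite index, which is precisely the statement to be proved.

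Thus there is no real obstacle: the substantive work is hidden in the proof of Proposition~\ref{prop:short_is_squid}, which upgrades a short HHG structure to a full set of squid materials. The only thing one should check when writing the argument out is that the cyclic directions $Z_v$ appearing in the short HHG structure of $(G,\ov X)$ coincide with (or at least are commensurable to, in a way compatible with taking finite-index subgroups) the cyclic directions used in the squid materials produced by Proposition~\ref{prop:short_is_squid}; this is ensured by the Proposition's statement that the extensions in the squid materials are the ones from Axiom~\ref{short_axiom:extension}. Once this identification is fixed, the corollary follows immediately.
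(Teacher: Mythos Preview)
Your proposal is correct and is exactly the paper's approach: the Corollary is stated immediately after Proposition~\ref{prop:short_is_squid} with the phrase ``In particular we get,'' and it follows at once from condition~\eqref{squid_material:edge_stab} of the squid materials furnished by that Proposition, with the extensions identified as those of Axiom~\ref{short_axiom:extension}.
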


\subsection{Subgroups generated by large cyclic directions}\label{sec:large_dt}

\begin{notation}[Kernel of the projection]\label{notation:kernel}
Let $(G, \ov{X},\W)$ be a colourable short HHG. Let $\B=\{s_1,\ldots,s_r\}\subset\ov{X}^{(0)}$ be a (possibly non-maximal) collection of vertices belonging to pairwise different $G$-orbits. For every $i=1,\ldots,r$ choose a non-zero natural number $M_i\in \mathbb{N}-\{0\}$, and for every $v\in G\{s_i\}$ let $\Gamma_v=M_i Z_{v}$.

From now on, we will focus on all normal subgroups of the form
$$\N\coloneq\langle\langle M_1Z_{s_1},\ldots, M_rZ_{s_r}\rangle\rangle=\langle \Gamma_v\rangle_{v\in G\B}.$$
$\B$ will be called the \emph{base} of $\N$.
\end{notation}

\begin{defn}[Deep enough quotient]\label{defn:deep_enough}
Let $\N=\langle\langle M_1Z_{s_1},\ldots, M_rZ_{s_r}\rangle\rangle$ be as in Notation~\ref{notation:kernel}. We will say that a property of $\N$, or of $G/\N$, holds \emph{if $\N$ is deep enough} if there exists $D\in \mathbb{N}-\{0\}$ such that the property holds whenever every $M_i$ is a multiple of $D$. 
\end{defn}

\begin{rem}\label{rem:initial_D}
    Notice that, if $\N$ is deep enough, one can always assume that:
    \begin{itemize}
        \item Every finite $\Gamma_v$ is trivial, or in other words $\B$ does not contain vertices with finite cyclic direction;
        \item Whenever $v,w$ are $\ov{X}$-adjacent, $Z_v$ commutes with $\Gamma_w$ (this is because the centraliser of $Z_v$ in $\Stab{G}{v}$ has index at most two);
        \item $\N$ lies in the normal subgroup $G_0$ of $G$ which acts trivially on the set of colours of $\ov X$.
    \end{itemize} 
\end{rem}

\section{Rotating families and projection graphs, revisited}\label{sec:rotating}
In this Section, we prove that the machinery of \emph{composite projection systems}, devised in \cite{dahmani:rotating} and then developed further in \cite{dfdt}, can be adapted to study quotients of colourable short HHG by powers of cyclic directions. This allows us to describe the quotient graphs $\ov X/\N$ and $\ov X^{+\W}/\N$, and the vertex stabilisers for the induced $G/\N$-action on $\ov X/\N$. The reader who is only interested in short HHG is advised to skip to the consequences which are gathered in Subsection~\ref{subsec:rotating_for_short}. 

\subsection{Strong composite projection graphs}\label{subsec:SCPG}
We first recall some definitions from \cite{dahmani:rotating}.

\begin{defn}[{\cite[Definition 1.2]{dahmani:rotating}}]\label{def;CPS}
Let $\mathbb{Y}_*$ be the disjoint union of finitely many countable sets $\mathbb{Y}_1, \dots, \mathbb{Y}_m$, called \emph{colours}, and for every $y\in \mathbb{Y}_*$ let $i(y)$ be the index such that $y\in \mathbb{Y}_{i(y)}$. A \emph{composite projection system} on $\mathbb{Y}_*$ consists of
\begin{itemize} 
\item a constant $\theta\ge 0$;
    \item a family of subsets  $\Act(y)\subset   \mathbb{Y}_*$ for $ y\in\mathbb{Y}_*$  (the \emph{active set} for $y$) such that $\mathbb{Y}_{i(y)} \subset\Act(y)$, and such that  $x\in \Act(y)$ if and only if $y\in \Act(x)$  (\emph{symmetry in action}), 
    \item and a family of functions $\dist^\pi_y : ( \Act(y)- \{y\} )^2 \to \R_+$, satisfying: 
    \begin{itemize} 
        \item \textbf{Symmetry}: $\dist^\pi_y(x,z)=\dist^\pi_y(z,x)$ for $x,z\in\Act(y)-\{y\}$;
        \item \textbf{Triangle inequality}: $\dist^\pi_y(w,x)\leq \dist^\pi_y(w,z)+\dist^\pi_y(z,x)$ for all $w,x,z\in\Act(y)-\{y\}$;
        \item \textbf{Behrstock inequality:} $\min\{\dist^\pi_y(x,z),\dist^\pi_z(x,y)\}\leq\theta$ whenever both quantities are defined; 
        \item \textbf{Properness:} $|\{y \in \mathbb{Y}_i, \dist^\pi_y(x,z)\geq \theta \}|<\infty$ for all $x,z\in \mathbb{Y}_i$;
			\item  \textbf{Separation:} $\dist^\pi_y(z,z)<\theta$ for $z\in\Act(y)-\{y\}$;
			\item  \textbf{Closeness in inaction:} if   $x\notin \Act(z)$ then, for
			all $y \in \Act(x) \cap \Act(z)$, we have $ \dist_y^\pi(x, z)\leq \theta$; 
			\item \textbf{Finite filling:}  for all $\mathcal{Z}\subset \mathbb{Y}_*$, there
			is a finite collection $x_j\in\mathcal{Z}$ such that $\cup_j
			\Act(x_j)$ covers  $\cup_{ x\in \mathcal{Z}} \Act(x)$.
		\end{itemize}
	\end{itemize}
\end{defn}

We will also require the following “uniform” version of the properness axiom:
\begin{defn}\label{defn:uniform_properness_CPS}
    A composite projection system on $\mathbb{Y}_*$ is \emph{uniformly proper} if $|\{y \in \mathbb{Y}_*, \dist^\pi_y(x,z)\geq \theta \}|<\infty$ for all $x,z\in \mathbb{Y}_*$.
\end{defn}

\begin{defn}\label{defn:group_action_on_CPS}
    A group $G$ acts on a composite projection system $\mathbb{Y}_*$ by isomorphisms if:
    \begin{itemize}
        \item $G$ acts on $\mathbb{Y}_*$, and the action permutes the colours;
        \item for all $g\in G$ and $y\in\mathbb{Y}_*$, $\Act(gy)=g\Act(y)$;
        \item for all $g\in G$ and $x,y,z\in \mathbb{Y}_*$ such that $\dist^\pi_y(x,z)$ is defined, $\dist^\pi_{gy}(gx,gz)=\dist^\pi_y(x,z)$.
    \end{itemize}  
\end{defn}

\begin{rem}\label{rem:d^angle} We shall follow the constructions and arguments from \cite{dfdt} as closely as possible, in order to recover their results without altering most of the proofs. To this extent, arguing as in the paragraph below \cite[Definition 1.1]{dfdt}, we first replace each distance $\dist^\pi_y$ by a modified function $\dist_y^\angle\colon (\Act(y) -\{y\})^2 \to \mathbb{R}_+$, constructed as in \cite[Theorem 3.3]{BBF}. The new functions $\dist_y^\angle$ differ from $\dist^\pi$ by a uniformly bounded amount depending only on $\theta$, and they satisfy almost all properties from Definitions~\ref{def;CPS} and~\ref{defn:uniform_properness_CPS} with $\theta$ replaced by $\Theta=4\theta+1$; the only difference is that the triangle inequality is ``coarsified'', meaning that there exists a constant $\kappa\ge 0$ such that, for all $w,x,z\in\Act(y)-\{y\}$, $\dist^\angle_y(w,x)\le \dist^\angle_y(w,z)+\dist^\angle_y(z,x)+\kappa$. Furthermore, if $G$ acts on $(\mathbb{Y}_*,\dist^\pi_*)$ by isomorphisms, then the new functions are $G$-equivariant, meaning that for all $g\in G$ and $x,y,z\in \mathbb{Y}_*$ such that $\dist^\angle_y(x,z)$ is defined, $\dist^\angle_{gy}(gx,gz)=\dist^\angle_y(x,z)$.
    \end{rem}

\begin{defn}(Composite rotating family)\label{def;CRF}
	Consider a composite projection system $(\mathbb{Y}_*, \theta,\dist^\pi_*)$, endowed with an action of a group $G$ by isomorphisms. Let $\Theta=4\theta+1$, and let $\dist^\angle_*$ be the modified functions from Remark~\ref{rem:d^angle}. 
    
    A \emph{composite rotating family} on  $(\mathbb{Y}_*,G)$, with \emph{rotation control} $\Theta_{rot}>0$, is a family of subgroups $\left\{\Gamma_v\right\}_{v\in \mathbb{Y}_*}$ such that the following holds: 
	\begin{itemize}
		\item For all $x\in \mathbb{Y}_*$, $\Gamma_x <  \Stab{G}{x}$ is an infinite group;
		\item whenever $y=x$ or $y\not\in\Act(x)$, the subgroup $\Gamma_x$ fixes $y$ and $\dist^\angle_y$;
		\item for all $g\in G$, and all $x\in \mathbb{Y}_*$, we have $\Gamma_{gx}= g\Gamma_x g^{-1}$;
		\item if $x\notin \Act(z)$ then $\Gamma_x$ and $\Gamma_z$ commute;
        \item $\Gamma_x$ acts with \textbf{proper isotropy}, meaning that for all $R>0$ and $y \in \Act(x)$, the set  $\{\gamma \in \Gamma_x, \dist_x^\angle (y, \gamma y) <R\}$ is finite;
		\item for all $i\leq m$ and for all $x,y, z \in \mathbb{Y}_i$, if $\dist^\angle_y(x,z)\leq \Theta$, then $$\dist^\angle_y(x,gz) \geq \Theta_{rot}$$ for all
		$g\in \Gamma_y-\{1\}$.
	\end{itemize}
    Set $\N$ as the normal subgroup of $G$ generated by $\bigcup_{v\in \mathbb{Y}_*}\Gamma_v$.
\end{defn}

\begin{defn}[SCPG]\label{dfn:scpg} Let $G$ be a finitely generated group. A \emph{strong} $G$-composite projection graph is the data of:
\begin{itemize}
    \item A hyperbolic graph $\calS$ on which $G$ acts by simplicial automorphisms, with finitely many orbits of vertices;
    \item A structure of a uniformly proper composite projection system on $\calS^{(0)}$, with constant $\theta$, on which the induced action of $G$ is by isomorphisms;
    \item A $G$-invariant subset $\mathbb{Y}_*\subset \calS^{(0)}$, which inherits the structure of a composite projection system;
    \item A composite rotating family $\{\Gamma_v\}_{v\in\mathbb{Y}_*}$ with rotation control $\Theta_{rot}$.
    \item \emph{Strong Bounded Geodesic Image}: a constant $C\in\mathbb{R}_+$ so that the following holds. For each $x, y, s \in \mathcal S$ so that $\dist^\angle_s(x, y)$ is defined and larger than $C$, every geodesic $[x, y]\subset \calS$ contains a vertex $w$ such that $\dist_{\calS}(w,s)\le C$, and either $w=s$ or $w\not\in \Act(s)$. In particular, $w$ is fixed by $\Gamma_s$. 
\end{itemize}
We shall denote a SCPG by the tuple $(\mathcal S, G, \mathbb{Y}_*, \{\Gamma_v\})$.
\end{defn}

\begin{rem}[Comparison with \cite{dfdt}]
The above is a slight variant of the main Definition in \cite[Section 2]{dfdt}. There are three notable differences:
\begin{itemize}
    \item In \cite{dfdt} the authors require that, for every $s\in \calS^{(0)}$, $\Gamma_s$ fixes $\link_{\mathcal S}(s)$, which is in general not true for short HHG (see Remark~\ref{rem:why_not_dfdt}). 
    \item The version of the bounded geodesic image appearing in \cite{dfdt} asks that, whenever $x, y$ project far to $s$, on any $\calS$-geodesic $[x, y]$ there exists a vertex $w$ which is $\calS$-adjacent to $s$. However, what is actually needed to run the arguments from \cite{dfdt} is the existence of some $w\in [x,y]$ which is fixed by $\Gamma_s$ and lies within uniform $\calS$-distance from $s$. Both these requirements are part of our strong bounded geodesic image assumption.
    \item Though the definition in \cite{dfdt} involves a sub-projection complex $\mathbb{Y}^\tau_*$ on which the composite rotating family is defined, all proofs there implicitly assume that $\mathbb{Y}^\tau_*$ coincides with the whole projection complex. Nonetheless, this does not invalidate the consequences the authors get in \cite[Section 5]{dfdt} about mapping class groups of surfaces, since in that setting the composite rotating family is defined on the whole projection complex.
    \\
    In our setting, instead, it is relevant that the composite rotating family $\mathbb{Y}_*$ is \emph{not} defined on the whole $\calS^{(0)}$, because we want to be able to quotient by any collection of cyclic directions. We establish a way to pass from the whole projection complex $\calS^{(0)}$ to the sub-complex $\mathbb{Y}_*$ in the Transfer-like Lemma~\ref{lem:transfer_like} below, whose main ingredients are uniform properness (Definition~\ref{defn:uniform_properness_CPS}) and the fact that $G$ acts cofinitely on $\calS^{(0)}$.
\end{itemize}
\end{rem}

\begin{assumption}
    For the rest of the Subsection, let $\calS$ be a hyperbolic graph, let $(\calS^{(0)},\theta,\dist^\pi_*)$ be a uniformly proper composite projection system, and let $(\mathcal S, G, \mathbb{Y}_*, \{\Gamma_v\})$ be a SCPG with rotation control $\Theta_{rot}$. Let $\N$ be the subgroup of $G$ generated by $\{\Gamma_v\}_{v\in\mathbb{Y}_*}$.
\end{assumption}

\begin{lemma}[Transfer-like Lemma]\label{lem:transfer_like}
There exists a constant $B\ge 0$, not depending on $\Theta_{rot}$, with the following property. For every $x\in \calS^{(0)}$ and every $i=1,\ldots,m$ there exists $t_i(x)\in\mathbb{Y}_i$ such that, for every $y\in \Act(x)\cap \Act(t_i(x))$, we have that $\dist^\angle_y(x, t_i(x))\le B$. 
\end{lemma}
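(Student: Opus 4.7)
The plan is to combine cofiniteness of the $G$-action on $\mathcal{S}^{(0)}$ with uniform properness of the composite projection system, reducing the problem to finitely many explicit cases. First I would fix a finite set $x_1,\ldots,x_k\in\mathcal{S}^{(0)}$ of representatives of the $G$-orbits of vertices (which exists by the SCPG definition), and for each $j\in\{1,\ldots,k\}$ and each colour $i\in\{1,\ldots,m\}$ with $\mathbb{Y}_i\ne\emptyset$ I would pick some $t_{j,i}\in\mathbb{Y}_i$ arbitrarily.

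Next, by uniform properness (Definition~\ref{defn:uniform_properness_CPS}) there is a constant $T$ such that, for each fixed pair $(j,i)$, the set
\[F_{j,i}=\bigl\{y\in\mathcal{S}^{(0)}:\dist^\pi_y(x_j,t_{j,i})\ge T\bigr\}\]
is finite, and for every $y\in F_{j,i}$ the quantity $\dist^\pi_y(x_j,t_{j,i})$ is some specific finite real. Taking the maximum over the finitely many pairs $(j,i)$ and, for each pair, over the finitely many $y\in F_{j,i}$, I obtain a uniform constant $B_0$ such that $\dist^\pi_y(x_j,t_{j,i})\le\max(T,B_0)$ for every $y\in\Act(x_j)\cap\Act(t_{j,i})$. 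Passing from $\dist^\pi$ to $\dist^\angle$ costs only a uniform additive constant (Remark~\ref{rem:d^angle}), yielding a single bound $B$ on $\dist^\angle_y(x_j,t_{j,i})$.

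Finally, I would extend this bound to arbitrary $x\in\mathcal{S}^{(0)}$ via equivariance. Writing $x=g\cdot x_j$ and letting $i'$ be the colour such that $g\cdot\mathbb{Y}_{i'}=\mathbb{Y}_i$ (recall that $G$ permutes the partition of $\mathbb{Y}_*$ into colours), I would set $t_i(x)=g\cdot t_{j,i'}\in\mathbb{Y}_i$. Then $G$-equivariance of the composite projection system gives $\dist^\angle_y(x,t_i(x))=\dist^\angle_{g^{-1}y}(x_j,t_{j,i'})\le B$ for every $y\in\Act(x)\cap\Act(t_i(x))$, as required. I do not anticipate a substantive obstacle: the only subtle point is recognising that uniform properness alone bounds the \emph{number} of vertices where projections are large, but combined with cofiniteness of the $G$-action it actually lets one cap the finitely many exceptional projection distances by a single uniform constant.
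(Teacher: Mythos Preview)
Your proposal is correct and follows essentially the same approach as the paper: pick orbit representatives, choose an arbitrary target in each colour for each representative, use uniform properness to bound the finitely many resulting projection distances, and then propagate to arbitrary $x$ via $G$-equivariance and the induced permutation of colours. Your argument is in fact slightly more explicit than the paper's in spelling out why uniform properness (which a priori only bounds the \emph{number} of large projections) yields a uniform bound on the supremum.
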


\begin{proof}
Let $x_1,\ldots, x_l\in \mathbb{Y}_*$ be a finite collection of representatives of the $G$-orbits. For each of these points $x_j$ and for each colour $i=1,\ldots, m$ choose a point $t_i(x_j)\in\mathbb{Y}_i$. By uniform properness (Definition~\ref{defn:uniform_properness_CPS}, together with Remark~\ref{rem:d^angle}), there exists a constant $B$ such that
$$\max_{i,j}\sup_{y\in\Act(x_j)\cap \Act(t_i(x_j))} \dist^\angle_y(x_j, t_i(x_j))\le B$$
Now, for every $x\in \mathbb{Y}_*$ there exists $j(x)\le l$ and an element $g\in G$ such that $x=g(x_{j(x)})$. Moreover, since $G$ acts on the set of colours, for every $i=1,\ldots, m$ there exists a unique $i'\in\{1,\ldots, m\}$ such that $gt_{i'}(x_{j(x)})\in \mathbb{Y}_{i}$, and we can set $t_i(x)=gt_{i'}(x_{j(x)})$. 
Notice that, since $G$ preserves projection distances, 
$$\sup_{y\in\Act(x)\cap \Act(t_i(x))} \dist^\angle_y(x, t_i(x))=\sup_{y\in\Act(x_j)\cap \Act(t_{i'}(x_j))} \dist^\angle_y(x_j, t_{i'}(x_j))\le B,$$
and this concludes the proof.
\end{proof}

\begin{cor}[{cf. \cite[Corollary 1.8]{dfdt}}]\label{cor:spostamento}
    The following holds if the rotation control $\Theta_{rot}$ is large enough. Let $v\in\calS^{(0)}$ and $w\in \mathbb{Y}_*$. If $v$ is $w$-active, then $\dist^\angle_w(v, \gamma_w v)>\theta$ for every $\gamma_w\in\Gamma_w-\{0\}$. In particular, $\gamma_w v\neq v$.
\end{cor}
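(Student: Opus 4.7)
The plan is to reduce the statement to the defining property of a composite rotating family by using the Transfer-like Lemma to approximate the arbitrary point $v\in\calS^{(0)}$ by a point in the same color class as $w$. Throughout, I would work with the modified projections $\dist^\angle$ from Remark~\ref{rem:d^angle}, absorbing the uniform discrepancy with $\dist^\pi$ into the final constants.

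First, let $i$ be the color of $w$, so that $w\in\mathbb{Y}_i$. Apply Lemma~\ref{lem:transfer_like} to $v$ with this color, producing a point $t\coloneq t_i(v)\in\mathbb{Y}_i$ such that $\dist^\angle_y(v,t)\le B$ whenever $y\in\Act(v)\cap\Act(t)$. Since $w\in\mathbb{Y}_i$ and $\mathbb{Y}_i\subseteq \Act(w)$ by Definition~\ref{def;CPS}, symmetry in action gives $w\in\Act(t)$; by hypothesis $w\in\Act(v)$ as well. Hence $\dist^\angle_w(v,t)\le B$.

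Next, apply the composite rotating family axiom with $x=z=t$ and $y=w$ (both in $\mathbb{Y}_i$): by separation, $\dist^\angle_w(t,t)<\theta\le\Theta$, so
\[
\dist^\angle_w(t,\gamma_w t)\ge \Theta_{rot}\quad\text{for every }\gamma_w\in\Gamma_w-\{1\}.
\]
Since $\gamma_w\in G$ acts by isomorphisms and fixes $w$, we have $\dist^\angle_w(\gamma_w v,\gamma_w t)=\dist^\angle_w(v,t)\le B$, where all these projections are defined because $\Act(\gamma_w x)=\gamma_w\Act(x)$ and $\gamma_w w=w$. Using the coarse triangle inequality from Remark~\ref{rem:d^angle} thrice,
\[
\Theta_{rot}\le \dist^\angle_w(t,\gamma_w t)\le \dist^\angle_w(v,\gamma_w v)+2B+2\kappa,
\]
so $\dist^\angle_w(v,\gamma_w v)\ge \Theta_{rot}-2B-2\kappa$. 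Translating back to $\dist_w=\dist^\pi_w$ costs an additional uniformly bounded error. Choosing $\Theta_{rot}$ sufficiently large in terms of $\theta$, $B$, $\kappa$, and this discrepancy yields $\dist_w(v,\gamma_w v)>\theta$, which in particular implies $\gamma_w v\ne v$ by separation.

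The only delicate point, which should not be a serious obstacle, is keeping track of which projections are defined: everything follows from symmetry in action together with the inclusion $\mathbb{Y}_i\subseteq\Act(w)$ for $w\in\mathbb{Y}_i$, plus the equivariance $\Act(\gamma_w x)=\gamma_w\Act(x)$. All constants involved depend only on the SCPG, so increasing $\Theta_{rot}$ (equivalently, deepening the kernel $\N$ as in Remark~\ref{rem:Short_is_SCPG}) uniformly establishes the conclusion.
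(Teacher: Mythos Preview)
Your proof is correct and follows essentially the same approach as the paper: use the Transfer-like Lemma~\ref{lem:transfer_like} to replace $v$ by a point $t$ in the same colour as $w$, invoke the rotating family axiom to get $\dist^\angle_w(t,\gamma_w t)\ge\Theta_{rot}$, and then use the coarse triangle inequality (together with equivariance) to transfer the bound back to $v$. If anything, you are more careful than the paper in verifying that all the relevant projections are defined and in distinguishing $\dist^\angle$ from $\dist^\pi$.
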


\begin{proof}
    Let $t=t_{i(w)}(v)$. Since now $t$ and $w$ lie in the same colour, we have that $\dist^\angle_w(t, \gamma_w t)>\Theta_{rot}$. Then the coarse triangular inequality yields that $\dist^\angle_w(v, \gamma_w v)>\Theta_{rot}-2B-2\kappa$, and we can choose the rotation control so that the latter quantity is greater than $\theta$.
\end{proof}

The following Lemma combines some properties from \cite[Section 3]{dfdt}, which only use that $ \{\Gamma_v\}_{v\in\mathbb{Y}_*}$ is a composite rotating family on a composite projection complex $\mathbb{Y}_*$.
\begin{lemma}\label{lem:reduction_for_original_rotating_fam}
    There exist a constant $\aleph\in\mathbb{R}$ depending only on $\mathbb{Y}_*$, a well-order of $\N$ which we call \emph{complexity}, and an indexing $i\colon \N\to\{1,\ldots, m\}$ such that the following holds. For every $\gamma\in \N-\{1\}$ and every $t\in \mathbb{Y}_{i(\gamma)}$, there exist $s \in \mathbb{Y}_*$ and $\gamma_s\in\Gamma_s$ such that:
    \begin{itemize}
        \item $\gamma_s\gamma$ has strictly lower complexity than $\gamma$;
        \item $\dist^\angle_s(t, \gamma t)\ge \Theta_{rot}/2-\aleph$.
    \end{itemize}
\end{lemma}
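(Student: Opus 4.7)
The strategy is to mimic the arguments from \cite[Section 3]{dfdt}, using only the composite rotating family axioms, which is what our SCPG setting provides. First I would invoke Theorem~\ref{lem:structure_of_kernel} (after enlarging the rotation control as needed) to write $\N$ as a free product $\bigast_{v\in\mathcal J}\Gamma_v$ for some $\mathcal J\subseteq\mathbb{Y}_*$. Every non-trivial $\gamma\in\N$ then has a unique normal form $\gamma=g_1\cdots g_n$ with $g_j\in\Gamma_{v_j}-\{1\}$, $v_j\in\mathcal J$, and $v_{j+1}\neq v_j$. I would take the \emph{complexity} of $\gamma$ to be the syllable length $n$ (with $1$ given complexity $0$); this is a well-ordering of $\N$. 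I would define $i(\gamma)\coloneq i(v_1)$, the colour of the leading base. Given $t\in\mathbb{Y}_{i(\gamma)}$, the natural choice is $s\coloneq v_1$ and $\gamma_s\coloneq g_1^{-1}\in\Gamma_s$, so that $\gamma_s\gamma=g_2\cdots g_n$ has complexity $n-1$, handling the first required property for free.

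For the distance estimate, write $\gamma'\coloneq g_2\cdots g_n$ and $t'\coloneq\gamma' t$, so that $\gamma t=g_1 t'$. By Remark~\ref{rem:initial_D} we may assume $\N$ acts trivially on the set of colours of $\calS$, so $t'$ still lies in $\mathbb{Y}_{i(v_1)}$. The heart of the proof is the claim
\[
\dist^\angle_{v_1}(t,t')\le\aleph_0
\]
for a constant $\aleph_0$ depending only on the SCPG data. Granted this, the rotation axiom of Definition~\ref{def;CRF}, applied with $y=v_1$ and $x=z=t$ and $g=g_1\in\Gamma_{v_1}-\{1\}$ (all in colour $i(v_1)$), forces $\dist^\angle_{v_1}(t,g_1 t)\ge\Theta_{rot}$ once the separation constant $\theta$ is exceeded; replacing $z=t$ by $z=t'$ and absorbing the error $\aleph_0$ via the coarse triangle inequality yields
\[
\dist^\angle_{v_1}(t,\gamma t)\;=\;\dist^\angle_{v_1}(t,g_1 t')\;\ge\;\Theta_{rot}-\aleph_0-\kappa\;\ge\;\Theta_{rot}/2-\aleph
\]
for a sufficiently large $\aleph$ (absorbing $\aleph_0$, $\kappa$, and $\theta$), provided $\Theta_{rot}$ is chosen large enough.

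The main obstacle is proving the bound $\dist^\angle_{v_1}(t,t')\le\aleph_0$. This is trivial for those syllables $g_j$ whose base $v_j$ is not $v_1$-active, since the rotation axiom forces $\Gamma_{v_j}$ to fix $\dist^\angle_{v_1}$. The delicate case is $v_j\in\Act(v_1)$: here I would proceed by induction on the syllable length, using the Behrstock inequality to show that each such $v_j$ with large projection to $v_1$ routes projections through the single "forward" base $v_2$ appearing on a geodesic from $v_1$ in the projection complex, and the strong bounded geodesic image assumption to prevent projections from jumping back near $v_1$ after one has moved away from it. The accumulated errors are then controlled by a geometric-series-type estimate in the monotone invariant $\dist^\angle$, giving a uniform bound $\aleph_0$ independent of $n$. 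This bookkeeping is the reason for the $\tfrac{1}{2}$ in $\Theta_{rot}/2$: part of the rotation budget must be allocated to absorbing the inductive slack before the rotation axiom can be applied cleanly.
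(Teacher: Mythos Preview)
Your opening move—invoking Theorem~\ref{lem:structure_of_kernel} to decompose $\N$ as a free product—is not available here. That theorem carries the explicit hypothesis $\Act(z)=\mathbb{Y}_*$ for all $z\in\mathbb{Y}_*$, which the present lemma does not assume and which fails in the main application (Remark~\ref{rem:Short_is_SCPG}): in a short HHG, $\ov X$-adjacent vertices lie outside each other's active sets, and the corresponding $\Gamma_v,\Gamma_w$ commute. So $\N$ is typically not a free product of the $\Gamma_v$, and there is no syllable normal form to order by.

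Even granting a free-product decomposition, your fixed choice $s=v_1$ and the key claim $\dist^\angle_{v_1}(t,t')\le\aleph_0$ are false for arbitrary $t\in\mathbb{Y}_{i(v_1)}$. Take $n=2$ and pick $t$ with $\dist^\angle_{v_1}(t,v_2)$ large. Behrstock gives $\dist^\angle_{v_2}(t,v_1)\le\theta$, so the rotation control forces $\dist^\angle_{v_2}(g_2 t,v_1)$ large; Behrstock again yields $\dist^\angle_{v_1}(g_2 t,v_2)\le\theta$, and the coarse triangle inequality then makes $\dist^\angle_{v_1}(t,g_2 t)$ large, not uniformly bounded. The lemma allows $s$ to depend on $t$, and it must. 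The paper does not attempt a direct argument here: it simply cites the machinery of \cite[Section~3]{dfdt}, where the complexity is a \emph{lexicographic} pair $(\alpha(\gamma),n(\gamma))$ with $\alpha$ an ordinal coming from a transfinite ``windmill'' construction that processes one colour at a time, and $i(\gamma)$ records the colour currently being treated. That bookkeeping is exactly what produces a shortening pair adapted to both $\gamma$ and $t$.
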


\begin{proof}
    Set $\aleph=\Theta_0/2 + \kappa$, where $\kappa$ is the constant from Remark~\ref{rem:d^angle} and $\Theta_0$ is defined in \cite[Remark 1.2 and Standing assumption 1.4]{dfdt} as
    $\Theta_0=8\theta+2+3\kappa.$
    We stress that both $\theta$ and $\kappa$ only depend on the composite projection system on $\mathbb{Y}_*$ and on the modified distance function from Remark~\ref{rem:d^angle}.
    The well-order is the lexicographic order $(\alpha(\gamma), n(\gamma))$, where the ordinal $\alpha$ and the integer $n$ are defined in \cite[Theorem 3.1 and Definitions 3.2 and 3.3]{dfdt}. The index $i(\gamma)\coloneq  i(\alpha(\gamma))$ is defined in \cite[Theorem 3.1]{dfdt}. The statement about $\gamma$ and $t$ follows by combining the second and the fourth bullet of \cite[Theorem 3.5]{dfdt}.
\end{proof}

Now set $\Theta_{short} =\Theta_{rot}/2-\aleph-2B-2\kappa$, where $B$ is the constant from the Transfer-like Lemma~\ref{lem:transfer_like}.

\begin{lemma}[Shortening pair]\label{lem:short_less_complex}
For all $\gamma\in \N-\{1\}$, and all $x\in \calS^{(0)}$, there exist a \emph{shortening pair} $(s,\gamma_s)$ (here $s\in\mathbb{Y}_*$ and $\gamma_s\in\Gamma_s$)  so that  $\gamma_s\gamma$ has strictly lower complexity than $\gamma$, and either
\begin{enumerate}
	 \item at least one of $x$ and $\gamma x$ is $s$-inactive, or
	 \item $\dist^\angle_s(x,\gamma x)\ge \Theta_{short}$.	
\end{enumerate}
\end{lemma}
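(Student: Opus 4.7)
The plan is to reduce the statement to the already-available Lemma~\ref{lem:reduction_for_original_rotating_fam}, which produces a shortening pair whose separation conclusion is phrased in terms of an auxiliary point $t\in\mathbb{Y}_{i(\gamma)}$ rather than the given point $x\in\calS^{(0)}$. The bridge between $t$ and $x$ is provided by the Transfer-like Lemma~\ref{lem:transfer_like}, which essentially says any point of $\calS^{(0)}$ sits at bounded projection distance from some point of any chosen colour. The shortening pair produced by Lemma~\ref{lem:reduction_for_original_rotating_fam} for the pair $(\gamma,t)$ will then be the one we want for $(\gamma,x)$, provided the separation estimate survives passing from $t$ to $x$; this will follow by a double application of the coarse triangle inequality of Remark~\ref{rem:d^angle}.

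In more detail, I would proceed as follows. Let $i=i(\gamma)$ be the index associated to $\gamma$ in Lemma~\ref{lem:reduction_for_original_rotating_fam}. First, apply the Transfer-like Lemma to obtain $t=t_i(x)\in\mathbb{Y}_i$ such that
$$\dist^\angle_y(x,t)\le B \quad\text{for every } y\in\Act(x)\cap\Act(t).$$
Next, apply Lemma~\ref{lem:reduction_for_original_rotating_fam} to $\gamma$ and this $t$ to extract the pair $(s,\gamma_s)\in\mathbb{Y}_*\times\Gamma_s$ such that $\gamma_s\gamma$ has strictly lower complexity than $\gamma$ and
$$\dist^\angle_s(t,\gamma t)\ge \Theta_{rot}/2-\aleph.$$
I claim that $(s,\gamma_s)$ is the desired shortening pair for $(\gamma,x)$. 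If either $x$ or $\gamma x$ is $s$-inactive, we are immediately in case (1). Otherwise, both $x$ and $\gamma x$ lie in $\Act(s)$, while $t$ and $\gamma t$ lie in $\Act(s)$ because the projection $\dist^\angle_s(t,\gamma t)$ is defined. Two applications of the coarse triangle inequality then yield
$$\dist^\angle_s(t,\gamma t)\le \dist^\angle_s(t,x)+\dist^\angle_s(x,\gamma x)+\dist^\angle_s(\gamma x,\gamma t)+2\kappa.$$
The Transfer-like estimate gives $\dist^\angle_s(t,x)\le B$ directly. For the symmetric term $\dist^\angle_s(\gamma x,\gamma t)$ I would invoke $G$-equivariance of the projections to rewrite it as $\dist^\angle_{\gamma^{-1}s}(x,t)$ and observe that $\gamma^{-1}s\in\Act(x)\cap\Act(t)$ (since $s\in\Act(\gamma x)\cap\Act(\gamma t)$), so the Transfer-like estimate again gives a bound of $B$. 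Plugging everything in,
$$\dist^\angle_s(x,\gamma x)\ge \Theta_{rot}/2-\aleph-2B-2\kappa=\Theta_{short},$$
placing us in case (2).

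The mildly subtle point I expect is ensuring the Transfer-like Lemma really applies to the pair $(\gamma x,\gamma t)$, since the Lemma chooses one $t_i(\cdot)$ per point of $\calS^{(0)}$ without an a priori equivariance statement. The clean way to handle this is simply to show by $G$-equivariance of distances and active sets that the element $\gamma t$ satisfies the transfer bound against $\gamma x$ for all active $y$, as done above; implicit here is that $\gamma t$ is still in $\mathbb{Y}_i$, which is where the standing assumption from Remark~\ref{rem:initial_D} that $\N$ acts trivially on the set of colours becomes essential. Other than bookkeeping of this kind, the argument is a clean combination of the three inputs already assembled in this section.
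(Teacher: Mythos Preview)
Your proof is correct and follows essentially the same approach as the paper's: apply the Transfer-like Lemma to pick $t=t_{i(\gamma)}(x)$, invoke Lemma~\ref{lem:reduction_for_original_rotating_fam} for the pair $(\gamma,t)$, and then pass back to $x$ via the coarse triangle inequality and $G$-equivariance. The one unnecessary detour is your final remark about $\gamma t\in\mathbb{Y}_i$ and the colour-preservation assumption from Remark~\ref{rem:initial_D}: your equivariance argument (rewriting $\dist^\angle_s(\gamma x,\gamma t)$ as $\dist^\angle_{\gamma^{-1}s}(x,t)$ with $\gamma^{-1}s\in\Act(x)\cap\Act(t)$) already yields the bound $B$ directly from the Transfer-like Lemma applied to $x$, with no need for $\gamma t$ to lie in any particular colour.
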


\begin{proof}
Let $t=t_{i(\gamma)}(x)$, defined as in the Transfer-like Lemma~\ref{lem:transfer_like}. By Lemma~\ref{lem:reduction_for_original_rotating_fam}, there exist $s \in \mathbb{Y}_*$ and $\gamma_s\in\Gamma_s$ such that $\gamma_s\gamma$ has strictly lower complexity, and $\dist^\angle_s(t, \gamma t)\ge \Theta_{rot}/2-\aleph$. Now, if one between $x$ and $\gamma x$ is $s$-inactive we are done. Otherwise, the (coarse) triangle inequality yields
$$\dist^\angle_s(x,\gamma x)\ge \dist^\angle_s(t,\gamma (t))-2 B-2\kappa \ge \Theta_{short},$$
where we used that 
$$\dist^\angle_s(x,t)\le \sup_{v\in\Act(x)\cap \Act(t)}\dist^\angle_v(x,t)\le B,$$
and therefore also 
$$\dist^\angle_s(\gamma x,\gamma t)\le \sup_{v\in\Act(\gamma x)\cap \Act(\gamma t}\dist^\angle_v(\gamma x,\gamma t)=\sup_{v'\in\Act(x)\cap \Act(t)}\dist^\angle_{v'}(x,t)\le B.$$
\end{proof}

\subsubsection{Lifting and projecting}
From now on, let $\calS/\N$ be the graph whose vertices and edges are $\N$-orbits of vertices and edges in $\calS$, and let $q\colon \calS\to \calS/\N$ be the quotient projection. Notice that, a priori, $\calS/N$ might not be a simplicial graph.

The next lemmas are the analogues of the results in \cite[Section 4]{dfdt}, whose proofs can be run verbatim in our setting. Indeed, they all rely only on \cite[Corollary 3.6]{dfdt} (which is our Lemma~\ref{lem:short_less_complex}), and the fact that, whenever $x,y\in\calS^{(0)}$ have sufficiently large projection on some $s\in \calS^{(0)}$, then every geodesic $[x,y]$ contains a point $w$ which is fixed by $\Gamma_s$ (which for us is a consequence of the strong bounded geodesic image assumption). 

\begin{lemma}\label{lem:simplicial_quotient_graph}
 If the rotation control $\Theta_{rot}$ is sufficiently large, then $\calS/\N$ is a simplicial graph.
\end{lemma}

\begin{proof}
    \textbf{No double edges.} Let $e=\{v,w\}$ be any edge in $\calS$, and suppose there exist $n,n'\in \N$ such that $e'=\{nv,n'w\}$ is also an edge. Up to replacing $e'$ by $(n')^{-1}e'$, assume that $n'=1$. We now prove that $e$ and $e'$ must lie in the same $\N$-orbit, and therefore $\calS/\N$ has no double edges.
    
    We argue by induction on the complexity of $n$. If $nv=v$, then the two edges coincide, and we are done. Otherwise, by Lemma~\ref{lem:short_less_complex} there exists a shortening pair $(s,\gamma_s)$. If $v$ is $s$-inactive, then we can apply $\gamma_s$ to both edges, and we get a new concatenation of translates of $e$ and $e'$ with endpoints $v$ and $\gamma_s n v$. Then we can conclude by induction, as $\gamma_s n$ has strictly lower complexity than $n$. Similarly, if $nv$ is $s$-inactive, we simply replace $n$ by $\gamma_s n$, and again we get a concatenation of two edges with endpoints $v$ and $\gamma_s n v$. If $w$ is $s$-inactive, we can apply $\gamma_s$ to $e'$; this yields a new concatenation of translates of $e$ and $e'$ since $\gamma_s$ fixes $w$, and again we conclude by induction. 

If none of the above happens, we have that $\dist^\angle_s(v, nv)\ge \Theta_{short}$, and by the coarse triangle inequality one of $\dist^\angle_s(v, w)$ and $\dist^\angle_s(w, nv)$ is at least $\Theta_{short}/2-\kappa$. Notice that we can always choose $\Theta_{rot}$ large enough that the latter quantity is greater than the constant $C$ from the strong BGI assumption. But this is a contradiction, because then one of $v,w,nv$ would have to be fixed by $\gamma_s$.

\textbf{No loops.} Towards a contradiction, suppose that there exists an edge $e$ of $\calS$ with endpoints $v,nv$ for some $n\in \N$, and choose $n$ of minimal complexity among all such edges. Since $n\neq 1$, there exists a shortening pair $(s,\gamma_s)$, as in Lemma~\ref{lem:short_less_complex}. If $v$ is $s$-inactive, then we can replace $e$ by $\gamma_s e=\{v,\gamma_s nv\}$, thus violating minimality of $n$ since $\gamma_s n$ has strictly lower complexity. Similarly, if $nv$ is $s$-inactive, then $e=\{v,\gamma_snv\}$, again contradicting minimality of $n$. Otherwise $\dist^\angle_s(v, nv)\ge \Theta_{short}$, which we can assume to be bigger than the constant $C$ from the strong BGI assumption. This is a contradiction, since then one f $v$ and $nv$ would have to be $s$-inactive.
\end{proof}

For the next lemma, by a \emph{combinatorial path} in a simplicial graph we mean a concatenation of edges.

\begin{lemma}\label{lem:geodesic_lifts} For each combinatorial path $\ov\gamma$ in $\calS/\N$ starting at $\ov x$, and any point $x$ in the preimage of $\ov x$ (henceforth: a lift of $\ov x$), there exists a combinatorial path in $\calS$ so that $q\circ\gamma=\ov \gamma$, which we call a lift of $\ov\gamma$. Moreover, if $\ov\gamma$ is a geodesic, then so is $\gamma$.
\end{lemma}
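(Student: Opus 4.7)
The plan is to build the lift inductively, edge by edge, and then use that the projection is 1-Lipschitz on paths to deduce the geodesic statement by a pull-back argument.

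For existence of the lift, I set $x_0 = x$ and suppose by induction that I have already constructed vertices $x_0, x_1, \ldots, x_i \in \calS$ with $q(x_j) = \ov x_j$ for $j \le i$ and $\{x_{j-1}, x_j\}$ an edge of $\calS$ for $j \le i$. Since $\{\ov x_i, \ov x_{i+1}\}$ is an edge of $\calS/\N$, by the definition of the quotient graph it is the $\N$-orbit of some edge $\{y, z\}$ of $\calS$, where (after possibly swapping endpoints) $q(y) = \ov x_i$ and $q(z) = \ov x_{i+1}$. Now $y$ and $x_i$ lie in the same $\N$-orbit, so there exists $n \in \N$ with $x_i = n\cdot y$; setting $x_{i+1} := n\cdot z$, I get a vertex adjacent to $x_i$ with $q(x_{i+1}) = \ov x_{i+1}$, completing the induction. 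The resulting $\gamma = (x_0, \ldots, x_n)$ is the required lift.

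For the second claim, I will use that $q$ sends every edge of $\calS$ either to an edge of $\calS/\N$ or (if its endpoints lie in the same $\N$-orbit) to a loop. Consequently, any combinatorial path of length $k$ in $\calS$ projects to a combinatorial walk of length $k$ in $\calS/\N$ between the images of its endpoints. In particular, if $\gamma = (x_0, \ldots, x_n)$ is the lift of a geodesic $\ov \gamma$ of length $n$, then $\dist_{\calS/\N}(\ov x_0, \ov x_n) = n$. Suppose for contradiction that $\gamma$ is not a geodesic in $\calS$; then there is a combinatorial path $\gamma'$ from $x_0$ to $x_n$ in $\calS$ of length $k < n$. Projecting, $q\circ \gamma'$ is a walk of length $k$ from $\ov x_0$ to $\ov x_n$ in $\calS/\N$, giving $\dist_{\calS/\N}(\ov x_0, \ov x_n) \le k < n$, a contradiction.

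There is essentially no obstacle here: the argument only uses the combinatorial definition of the quotient graph and does not require any of the metric or hyperbolic structure developed in the paper. The only mild subtlety is that $q\circ \gamma'$ may traverse loops (when consecutive vertices of $\gamma'$ happen to lie in the same $\N$-orbit), but this is harmless since such a loop still contributes one edge to the walk length, so the length inequality used in the contradiction is unaffected.
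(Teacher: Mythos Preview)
Your proof is correct and follows exactly the approach the paper sketches: lift edge by edge using that $\N$ acts simplicially on $\calS$, and deduce the geodesic statement from the fact that $q$ is $1$-Lipschitz. The paper compresses this into a single sentence, while you have spelled out the details.
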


\begin{proof}
    This follows from the fact that $\N$ acts simplicially on $\calS$, and that $q$ is $1$-Lipschitz.
\end{proof}

\begin{defn}\label{def:kgon_generic}
    A $k$-gon in a simplicial graph $\Lambda$ is a subgraph of the form
    $$Q=\bigcup_{i=0}^{k-1}[x_i, x_{i+1}],$$
    where $x_0,\ldots,x_{k}\in\Lambda^{(0)}$, $x_{k}=x_0$ and each $[x_i, x_{i+1}]$ is a geodesic path.
\end{defn}

\begin{lemma}[{cf. \cite[Proposition 4.3]{dfdt}}]\label{lem:rotating_lift_mgons}
    For every $k\in\mathbb{N}$ there exists a constant $\Theta_{rot}(k)$ such that the following holds if the rotation control $\Theta_{rot}$ is larger than $\Theta_{rot}(k)$. For every $k$-gon $\ov{Q}\subset \calS/\N$ there exists a $k$-gon $Q\subset \calS$ such that $q(Q)=\ov Q$, which we call a \emph{lift} of $\ov Q$.
\end{lemma}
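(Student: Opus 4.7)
The plan is to induct on the complexity (in the well-ordering from Lemma~\ref{lem:reduction_for_original_rotating_fam}) of a ``defect'' element of $\N$ measuring the failure of a naive lift to close up. Write $\ov Q = \bigcup_{i=0}^{k-1}\ov\sigma_i$ with $\ov\sigma_i$ a geodesic from $\ov x_i$ to $\ov x_{i+1}$ and $\ov x_k = \ov x_0$. I would begin by fixing a lift $x_0 \in \calS$ of $\ov x_0$ and iteratively applying Lemma~\ref{lem:geodesic_lifts} to lift each $\ov\sigma_i$ starting at the endpoint of the preceding lift. This produces a closed combinatorial path $x_0, x_1, \ldots, x_k$ in $\calS$ whose $k$ sides are geodesics projecting to the $\ov\sigma_i$, and with $x_k = \gamma x_0$ for some $\gamma \in \N$: if $\gamma = 1$ the path is already a $k$-gon lifting $\ov Q$ and we are done.

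If $\gamma \neq 1$, I would invoke Lemma~\ref{lem:short_less_complex} with the vertex $x_0$ to obtain a shortening pair $(s, \gamma_s) \in \mathbb{Y}_* \times \Gamma_s$ such that $\gamma_s\gamma$ has strictly lower complexity than $\gamma$, and either (i) one of $x_0, x_k$ is $s$-inactive, hence fixed by $\Gamma_s$, or (ii) $\dist^\angle_s(x_0, x_k) \geq \Theta_{short}$. In case (ii), the coarse triangle inequality of Remark~\ref{rem:d^angle} applied along the $k$ sides of the polygon produces some index $i$ with
\[
\dist^\angle_s(x_i, x_{i+1}) \geq \frac{\Theta_{short} - (k-1)\kappa}{k};
\]
by choosing $\Theta_{rot}(k)$ large enough that the right-hand side exceeds the strong BGI constant $C$, the strong bounded geodesic image axiom supplies a vertex $w$ on $[x_i, x_{i+1}]$ which either equals $s$ or is $s$-inactive, and hence is fixed by $\Gamma_s$. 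In case (i) a pivot fixed by $\Gamma_s$ is already at hand, namely $x_0$ itself (the subcase in which only $x_k$ is inactive is symmetric, since Lemma~\ref{lem:short_less_complex} may equally well be applied at $x_k$).

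With the pivot $w$ in hand, I would define a modified polygon by the rule $y_j \coloneq x_j$ for $j \leq i$ and $y_j \coloneq \gamma_s x_j$ for $j \geq i+1$, replacing the $i$-th side by the concatenation $[x_i, w] \cup \gamma_s[w, x_{i+1}] = [x_i, w] \cup [w, \gamma_s x_{i+1}]$. The crucial point is that this concatenation is itself a single geodesic: since $\gamma_s$ is an isometry fixing $w$, its combinatorial length equals $\dist(x_i, w) + \dist(w, x_{i+1}) = \dist(x_i, x_{i+1}) = |\ov\sigma_i|$, and this coincides with $\dist(x_i, \gamma_s x_{i+1})$ because the $1$-Lipschitz projection $q$ sends both endpoints to $\ov x_i, \ov x_{i+1}$ and the path itself to (a copy of) $\ov\sigma_i$. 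Hence the modified path $Q_1$ is a genuine $k$-gon with $q(Q_1) = \ov Q$ and defect the strictly lower-complexity element $\gamma_s\gamma$. Iterating terminates after finitely many steps by well-orderedness of complexity, producing a polygon with trivial defect -- the desired lift.

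The hard part is choosing $\Theta_{rot}(k)$ uniformly large enough for all iterations: the factor $1/k$ in the far-case inequality forces the threshold to depend explicitly on $k$, and one must verify that the constants produced by Lemma~\ref{lem:short_less_complex} stay compatible with the chosen $\Theta_{rot}(k)$-dependent bound throughout the induction. A secondary subtlety is that each modification must preserve the number of sides -- this reduces to the length identity showing that the broken concatenation at $w$ is a single geodesic, which follows from the $\Gamma_s$-invariance of $w$ combined with the $1$-Lipschitz property of $q$.
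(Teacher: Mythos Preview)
Your approach is essentially the paper's: lift side-by-side, induct on the complexity of the defect $\gamma$, and use the shortening pair together with strong BGI to find a $\Gamma_s$-fixed pivot at which to rotate part of the chain. The geodesic-reassembly argument you give for the modified side is correct and in fact more explicit than the paper's.

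There is one genuine gap in your case (ii). The coarse triangle inequality $\dist^\angle_s(x_0,x_k)\le \sum_i \dist^\angle_s(x_i,x_{i+1}) + (k-1)\kappa$ is only available when every intermediate vertex $x_j$ lies in $\Act(s)-\{s\}$; if some $x_j$ is $s$-inactive the terms are simply undefined. The paper handles this by observing that such an $x_j$ is itself fixed by $\Gamma_s$ and may be used directly as the pivot, bypassing BGI entirely. You should insert this sub-case before invoking the triangle inequality.

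A second, smaller point: your ``symmetric'' dismissal of the sub-case where only $x_k=\gamma x_0$ is $s$-inactive does not quite work as stated---re-applying Lemma~\ref{lem:short_less_complex} at $x_k$ produces a different shortening pair, not the one you have. The clean fix (which the paper uses) is to apply $\gamma_s^{-1}$ to the entire chain, obtaining a new chain with defect conjugate to $\gamma_s\gamma$; this still has lower complexity.
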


\begin{proof}
Let $\ov Q=\bigcup_{i=0}^{k-1}[\ov x_i, \ov x_{i+1}]$. 
Lift all segments of $\ov Q$ to a (possibly non-closed) chain $\bigcup_{i=0}^{k-1}[x_i, x_{i+1}]$. Let $\gamma\in \N$ be such that $\gamma x_0=x_k$. We proceed by induction on the complexity of $\gamma$.
If $\gamma=1$ then $x_0=x_k$ and the chain is already a $k$-gon. Otherwise, by Lemma~\ref{lem:short_less_complex} there exists a shortening pair $(s,\gamma_s)$. If $x_0$ is $s$-inactive, then we can apply $\gamma_s$ to the whole chain, and we get a new chain with endpoints $x_0'=\gamma_s x_0=x_0$ and $x_k'=\gamma_s\gamma x_0$. Then we can conclude by induction, as $\gamma_s\gamma$ has strictly lower complexity than $\gamma$. Similarly, if $\gamma x_0$ is $s$-inactive, we can apply $\gamma_s^{-1}$ to the whole chain, and we get a new chain with endpoints $x_0'=\gamma_s^{-1}x_0$ and $x_k'=\gamma_s\gamma x_0=\gamma x_0$. Again, the endpoints of the chain are such that $x_k'=\gamma_s\gamma x_0'$, and we conclude by induction.

Otherwise, we have that $\dist^\angle_s(x_0, \gamma x_0)\ge \Theta_{short}$. If at least one $x_i$ is $s$-inactive, we can apply $\gamma_s$ to the chain “after” $x_i$ (meaning, to all geodesics $[x_j,x_{j+1}]$ where $i\le j$), and again conclude by induction. Otherwise, by the coarse triangle inequality we get that there exists some $i\in\{0,\ldots, k-1\}$ such that $\dist^\angle_s(x_i, \gamma x_{i+1})\ge \Theta_{short}/k-k\kappa$. Notice that we can always choose $\Theta_{rot}$ large enough that the latter quantity is greater than the constant $C$ from the bounded geodesic image assumption. Therefore, there exists $w\in [x_i,x_{i+1}]$ such that either $w=s$ or $w$ is $s$-inactive. In both cases $\gamma_s$ fixes $w$, and therefore one can change the lift of the chain “after” $w$ (meaning, one can apply $\gamma_s$ to $[w,x_{i+1}]$ and to all geodesics $[x_j,x_{j+1}]$ where $i+1\le j$). Again, the conclusion then follows by induction.
\end{proof}

With the same techniques, one can prove the following two results:

\begin{lemma}[{cf. \cite[Proposition 4.3]{dfdt}, “moreover” part}]\label{lem:quadranglelift_if_thetarot/10_ge} Let $\ov Q$ be a geodesic quadrangle in $\calS/\N$. If the geodesics $[\ov v_1, \ov w_1]$, $[\ov v_2, \ov w_2]$ of Q have lifts $[v_i, w_i]$ so that
$\dist^\angle_s(v_i, w_i) \le\Theta_{rot}/10$ whenever the quantity is defined, then there exists a lift $Q$ of $\ov Q$ such that the lift $[v'_i, w'_i]$ of
$[\ov v_i, \ov w_i]$ contained in $Q$ is an $\N$-translate of $[v_i, w_i]$.
\end{lemma}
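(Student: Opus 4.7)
The plan is to refine the induction from the proof of Lemma~\ref{lem:rotating_lift_mgons} (for $k=4$), keeping the two specified lifts $[v_1,w_1]$ and $[v_2,w_2]$ under control throughout. Assume without loss of generality that the sides of $\ov Q$ in cyclic order are $[\ov v_1,\ov w_1],\ [\ov w_1,\ov v_2],\ [\ov v_2,\ov w_2],\ [\ov w_2,\ov v_1]$ (the case of adjacent specified sides is analogous). Starting at $v_1$, I lift the sides of $\ov Q$ one at a time: the lift of $[\ov v_1,\ov w_1]$ starting at $v_1$ is $[v_1,w_1]$; then lift $[\ov w_1,\ov v_2]$ from $w_1$ to some $p$; since $p$ and $v_2$ project to the same vertex of $\calS/\N$, choose $\lambda\in\N$ with $\lambda v_2=p$ and take $\lambda[v_2,w_2]$ as the lift of $[\ov v_2,\ov w_2]$; finally, lift $[\ov w_2,\ov v_1]$ from $\lambda w_2$ to some $q=\mu v_1$ with $\mu\in\N$. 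The chain closes up into the desired quadrangle exactly when $\mu=1$, and I induct on the complexity of $\mu$.

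When the shortening pair $(s,\mu_s)$ from Lemma~\ref{lem:short_less_complex} is such that $v_1$, an interior chain vertex, or $\mu v_1$ is $s$-inactive, I apply $\mu_s^{\pm 1}$ to the sub-chain lying on one side of the inactive vertex. This operation $\N$-translates complete segments of the chain, so each of the two specified sides remains an $\N$-translate of the original, and induction applies.

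The only delicate case is when no vertex of the chain is $s$-inactive. Then $\dist^\angle_s(v_1,\mu v_1)\ge\Theta_{short}$, and the coarse triangle inequality (Remark~\ref{rem:d^angle}) applied to the four segments forces an index $i$ with
\[
\dist^\angle_s(x_i,x_{i+1})\ \ge\ \frac{\Theta_{short}-3\kappa}{4}\ =\ \frac{\Theta_{rot}}{8}-\frac{\aleph+2B+5\kappa}{4}.
\]
For $\Theta_{rot}$ large enough, this quantity exceeds both the strong BGI constant $C$ and the hypothesis bound $\Theta_{rot}/10$. Strong BGI then produces a break point $w\in[x_i,x_{i+1}]$ which is either $s$ itself or $s$-inactive, hence fixed by $\mu_s$. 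Moreover, $[x_i,x_{i+1}]$ cannot be an $\N$-translate $\eta[v_j,w_j]$ of a specified lift: otherwise, by $G$-equivariance of projections,
\[
\dist^\angle_s(x_i,x_{i+1})=\dist^\angle_{\eta^{-1}s}(v_j,w_j)\le \Theta_{rot}/10,
\]
contradicting the estimate above. Hence the break occurs on one of the two non-specified segments, and applying $\mu_s$ to the sub-chain after $w$ only $\N$-translates whichever specified segment lies beyond $w$, while leaving the other intact; induction then closes the argument.

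The main obstacle is precisely the above numerical comparison: one must calibrate $\Theta_{rot}$ so that the shortening-forced lower bound strictly exceeds the hypothesis $\Theta_{rot}/10$, and simultaneously exceeds the strong BGI constant. Apart from this quantitative tuning, the proof is a book-keeping refinement of Lemma~\ref{lem:rotating_lift_mgons}, with the new ingredient being the use of the projection hypothesis to veto break points lying on the two lifts we wish to preserve.
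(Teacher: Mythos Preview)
The proposal is correct and follows precisely the approach the paper indicates (``with the same techniques'' as Lemma~\ref{lem:rotating_lift_mgons}). Your key observation---that the projection hypothesis $\dist_s(v_i,w_i)\le\Theta_{rot}/10$, combined with $\N$-equivariance, forces any break point produced by strong BGI to land on a non-specified side---is exactly the additional ingredient required, and the numerical comparison $\Theta_{rot}/8 - O(1) > \Theta_{rot}/10$ for large $\Theta_{rot}$ is what makes it work.
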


\begin{lemma}[{cf. \cite[Lemma 4.4]{dfdt}}]\label{lem:isom_proj}
    Suppose that $x,y\in\calS^{(0)}$ have the property that $\dist^\angle_s(x,y)\le \Theta_{rot}/10$ whenever the quantity is defined. Then $q|_{[x,y]}$ is isometric, for any geodesic $[x, y]\subset\calS$.
\end{lemma}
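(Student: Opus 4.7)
The plan is to argue by contradiction, leveraging the quadrangle lifting ``moreover'' lemma stated just above. Suppose $q|_{[x,y]}$ fails to be isometric; since $q$ is $1$-Lipschitz this amounts to $\ell:=\dist_{\calS/\N}(q(x),q(y))<\dist_\calS(x,y)=:k$. Choose vertices $u,v\in[x,y]^{(0)}$ with $\dist_\calS(u,v)$ minimal subject to $q|_{[u,v]}$ not being isometric, and let $w$ be a midpoint of $[u,v]$. By minimality of $[u,v]$, the restrictions $q|_{[u,w]}$ and $q|_{[w,v]}$ are isometric, so $q([u,w])$ and $q([w,v])$ are genuine geodesic segments in $\calS/\N$. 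Pick a geodesic in $\calS/\N$ from $q(u)$ to $q(v)$ of length $\le\ell<\dist_\calS(u,v)$ and split it at a midpoint $\ov m$: together with the two segments above, this forms a geodesic quadrangle $\ov Q$ in $\calS/\N$.

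The goal is then to produce a lift $Q$ of $\ov Q$ in $\calS$ in which the two sides labelled $q([u,w])$ and $q([w,v])$ are $\N$-translates of the actual segments $[u,w]$ and $[w,v]$ by a \emph{common} element $\eta\in\N$. Granted such a $Q$, the two remaining sides form a path from $\eta v$ to $\eta u$ of total length equal to that of the geodesic chosen in $\calS/\N$, hence strictly less than $\dist_\calS(u,v)$; combined with $\N$-invariance of the metric, this contradicts $\dist_\calS(\eta u,\eta v)=\dist_\calS(u,v)$. The existence of such a lift follows from the moreover clause of the quadrangle lifting lemma preceding the present statement, applied with the natural lifts $[u,w]$ and $[w,v]$; the fact that the two resulting $\N$-translates can be taken to coincide, rather than differ by some non-trivial $\gamma\in\Stab_{\N}(w)$, requires a small additional argument using Corollary~\ref{cor:spostamento}, which forces non-trivial elements of $\Gamma_w$ to move $v$ by a large amount in the $w$-projection and thus contradicts the bounded-projection hypothesis below.

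The main technical obstacle is verifying the hypothesis of the moreover clause, namely that $\dist_s(u,w)$ and $\dist_s(w,v)$ are bounded by $\Theta_{rot}/10$ for every $s\in\calS^{(0)}$ where these are defined. This is where the standing assumption $\dist_s(x,y)\le\Theta_{rot}/10$ enters: since $u,w,v$ lie on the geodesic $[x,y]\subset\calS$, their projections to any fixed vertex $s$ are coarsely contained in the projection of the pair $(x,y)$ --- a standard consequence of the coarse triangle inequality from Remark~\ref{rem:d^angle} combined with the Behrstock inequality. The bounded additive loss in passing from $(x,y)$ to sub-pairs can be absorbed by taking $\Theta_{rot}$ slightly larger from the outset, as we are free to do throughout the section.
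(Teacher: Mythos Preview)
Your approach has two genuine gaps.

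\textbf{Gap 1 (the decisive one).} The final paragraph asserts that for $u,w,v$ on the geodesic $[x,y]$ one has $\dist_s(u,w),\dist_s(w,v)\le\Theta_{rot}/10$ up to an absorbable error, justified by ``the coarse triangle inequality combined with the Behrstock inequality''. Neither ingredient gives this. The triangle inequality would require bounds on $\dist_s(u,x)$ and $\dist_s(w,y)$, which you do not have; Behrstock compares projections at \emph{different} vertices $s,s'$, not points along a geodesic at a fixed $s$. What you are really invoking is a ``projections vary monotonically along geodesics'' statement, which is a theorem in specific settings (Masur--Minsky for curve graphs) but is \emph{not} among the SCPG axioms. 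Concretely, if $x$ (or $y$) is $s$-inactive then the hypothesis on $\dist_s(x,y)$ is vacuous for that $s$, while nothing prevents intermediate points $u,w$ on $[x,y]$ from being $s$-active with large $\dist_s(u,w)$. So the hypothesis of the ``moreover'' clause is not verified.

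\textbf{Gap 2.} Your patching argument for $\eta_1=\eta_2$ appeals to Corollary~\ref{cor:spostamento}, which concerns elements of $\Gamma_w$. But the obstruction lives in $\Stab_\N(w)$, which by Lemma~\ref{lem:stabiliser_in_scpg} is generated by all $\Gamma_z$ with $z\notin\Act(w)\setminus\{w\}$; such $\Gamma_z$ act \emph{trivially} on $\dist_w^\angle$ by the rotation axiom, so a nontrivial $\gamma\in\Stab_\N(w)$ need not move $v$ in the $w$-projection at all. (Also, $w$ may not lie in $\mathbb{Y}_*$, in which case $\Gamma_w$ is not even defined.)

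The paper's route, ``with the same techniques'' as Lemma~\ref{lem:rotating_lift_mgons}, avoids both issues: one shows directly that $\dist_\calS(x,ny)\ge\dist_\calS(x,y)$ for every $n\in\N$ by induction on the complexity of $n$ via a shortening pair $(s,\gamma_s)$ for $(n,y)$. The hypothesis is used only through the single-vertex triangle inequality $\dist_s(x,ny)\ge\dist_s(y,ny)-\dist_s(x,y)-\kappa$, which, combined with $\dist_s(y,ny)\ge\Theta_{short}$, makes Strong BGI produce a $\Gamma_s$-fixed point on $[x,ny]$; one then rotates the tail of $[x,ny]$ by $\gamma_s$ without touching $x$ or $y$. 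No control on projections of intermediate points is needed.
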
 
We can finally prove the following:

\begin{thm}[{cf. \cite[Theorem 2.1]{dfdt}}]\label{thm:rotating}
    Let $(\mathcal S, G, \mathbb{Y}_*,  \{\Gamma_v\})$ be a SCPG, and let $\N$ be the subgroup of $G$ generated by $\bigcup_{v\in \mathbb{Y}_*}\Gamma_v$. If the rotation control $\Theta_{rot}$ is sufficiently large, then:
    \begin{enumerate}
        \item\label{item_hyp} $\mathcal S/\N$ is $\delta$-hyperbolic, where $\delta$ is any hyperbolicity constant for $\mathcal S$;
        \item If the action of $G$ on $\mathcal S$ admits a loxodromic element (resp. loxodromic WPD), then so does the action of $G/\N$ on $\mathcal S/\N$. More precisely, for every loxodromic (resp. loxodromic WPD) element $g\in G$ there exists a constant $\Theta_{loxo}(g)$ such that, if $\Theta_{rot}>\Theta_{loxo}(g)$, then $\ov g\in G/\N$ is loxodromic (resp. loxodromic WPD).
        \item\label{item:nonelm} If the action of $G$ on $\mathcal S$ is non-elementary, then so is the action of $G/\N$ on $\mathcal S/\N$.
    \end{enumerate}
\end{thm}

\begin{proof}
    Item~\eqref{item_hyp} follows from the fact that geodesic triangles in $\calS/\N$ lift to geodesic triangles in $\calS$, and that the projection $\calS\to \calS/\N$ is $1$-Lipschitz. Next, fix a basepoint $x_0\in \calS$, and let $g\in G$ act loxodromically on $\calS$. 
    
    \begin{claim}\label{claim:loxo_stay_loxo}
    There exists $\Theta_{loxo}(g)>0$ such that $\dist^\angle_s(x_0,g^kx_0)\le \Theta_{loxo}(g)/10$ for every $k\in \mathbb Z$ and every $s$ for which the projection is defined. 
    \end{claim}
    If the Claim is true, then Lemma~\ref{lem:isom_proj} implies that the projection map is an isometry on $\{g^kx_0\}_{k\in \mathbb N}$ whenever $\Theta_{rot}>\Theta_{loxo}(g)$, and in particular $\ov g$ is loxodromic. If moreover $g$ is WPD, then $\ov g$ is WPD as well by \cite[Proposition 4.6]{dfdt}, whose proof runs verbatim once one replaces \cite[Proposition 4.3]{dfdt} by Lemma \ref{lem:quadranglelift_if_thetarot/10_ge}.

    \begin{claimproof}[Proof of Claim~\ref{claim:loxo_stay_loxo}]
    Let $\mu=\mu(g,\delta)$ be such that each geodesic $[x_0,g^kx_0]$ lies at Hausdorff distance at most $\mu$ from the quasigeodesic sequence $x_0 , gx_0,\ldots, g^kx_0$. Fix $s\in \calS^{(0)}$. By strong BGI, either $\dist^\angle_s(x_0, g^kx_0)\le C$, or there exists some vertex on $[x_0,g^kx_0]$ within distance $C$ from $s$. Hence there exist integers $p, q$ with $1\le p \le q \le k$ and $|p-q|$ bounded by some constant depending only on $g$, so that $s$ is not $C$-close to any vertex of $[x_0 , g^i x_0 ]$ or $[g^i x_0 , g^k x_0 ]$ unless $p \le i \le q$. Thus, by strong BGI and the coarse triangle inequality, we get $$\dist^\angle_s (x_0 , g^k x_0 ) \le \dist^\angle_s (g^px_0 , g^q x_0 ) + 2C+2\kappa=\dist^\angle_{g^{-p}s} (x_0 , g^{q-p} x_0 ) + 2C+2\kappa.$$ 
    Now notice that the quantity $\sup_{s\in \mathbb Y_*}\dist^\angle_{g^{-p}s} (x_0 , g^{q-p} x_0 )=\sup_{s\in \mathbb Y_*}\dist^\angle_{s} (x_0 , g^{q-p} x_0 )$ is bounded above by uniform properness. Moreover, the difference $q-p$ can assume finitely many values, so we see that $\dist^\angle_s (x_0 , g^k x_0 )$ is uniformly bounded.
    \end{claimproof}

    We finally turn to Item \eqref{item:nonelm}. Let $g,h$ be independent loxodromic elements, and suppose that $\Theta_{rot}\ge \max\{\Theta_{loxo}(g),\Theta_{loxo}(h)\}$, so that both $\ov g$ and $\ov h$ are loxodromic in the quotient. 

     \begin{claim}\label{claim:ind_stay_ind}
    There exists $\Theta_{ind}(g,h)>0$ such that $\dist^\angle_s(g^kx_0,h^jx_0)\le \Theta_{ind}(g,h)/10$ for every $k,l\in \mathbb Z$ and every $s$ for which the projection is defined. 
    \end{claim}
    
    \begin{claimproof}
        Since $g,h$ are independent, there exists $\mu$ such that, for all integers $k,l$, any geodesic $[g^k x_0 , h^l x_0 ]$ lies at Hausdorff distance at most $\mu$ from the sequence $\{g^k x_0 , g^{k-1} x_0 , \ldots , x_0 \} \cup \{x_0 , h x_0 , \ldots , h^l x_0 \}$. We can now argue as in Claim~\ref{claim:loxo_stay_loxo}, replacing $\{x_0 , gx_0 , \ldots , g^k x_0 \}$ by $\{g^k x_0 , g^{k-1} x_0 , \ldots , x_0 \} \cup \{x_0 , h x_0 , \ldots , h^l x_0 \}$.
    \end{claimproof}

    By combining Claim~\ref{claim:ind_stay_ind} and Lemma~\ref{lem:isom_proj} we get that the projection map is an isometry on $\langle g\rangle x_0\cup \langle h\rangle x_0$, and therefore $\ov g$ and $\ov h$ are again independent.
\end{proof}

\subsubsection{Other consequences}
We gather here a collection of facts which are not used in the above proof, but are relevant for our arguments on short HHG. Firstly, the following theorem gives a presentation of $\N$ under the (fairly restrictive) assumption that any two vertices are active. This is stated in the language of the more general \cite[Theorem 2.2]{dahmani:rotating}, though it already follows from \cite[Theorem 5.3.(a)]{DGO}.

\begin{thm}\label{lem:structure_of_kernel}
    Let $\{\Gamma_v\}_{v\in \mathbb{Y}_*}$ be a composite rotating family on $(\mathbb{Y}_*,G)$. Suppose that $\Act(z)=\mathbb{Y}_*$ for every $z\in \mathbb{Y}_* $. If the rotation control $\Theta_{rot}$ is sufficiently large, there exists a (possibly infinite) subset $\mathcal J\subset \mathbb{Y}_*$ such that
    $$\N \cong \bigast_{v\in \mathcal J}\Gamma_v.$$
\end{thm}

Next, we describe the elements of the kernel $\N$ fixing a given vertex;
\begin{lemma}[{cf. \cite[Proposition 4.8]{dfdt}}] \label{lem:stabiliser_in_scpg}
    If the rotation control $\Theta_{rot}$ is sufficiently large, then for any vertex $v\in \mathcal S$ we have
     $$\Stab{G}{v}\cap \N=\langle\Gamma_w\cap\Stab{G}{v}\rangle_{w\in \mathbb{Y}_*}= \langle\Gamma_w\rangle_{w\in \mathbb{Y}_*-(\Act(v)-v)}.$$
\end{lemma}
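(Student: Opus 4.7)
The plan is to prove the two equalities separately, relying on the shortening-pair machinery of Lemma~\ref{lem:short_less_complex} together with Corollary~\ref{cor:spostamento} to convert the dynamical information into algebraic statements about $\Stab{G}{v}\cap\N$.

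For the second equality, I would examine $\Gamma_w\cap\Stab{G}{v}$ one generator at a time. If $w\in\mathbb{Y}_*-(\Act(v)-\{v\})$, meaning either $w=v$ or $w\notin\Act(v)$ (equivalently, by symmetry in action, $v\notin\Act(w)$), the composite rotating family axioms ensure that the whole of $\Gamma_w$ fixes $v$, so $\Gamma_w\cap\Stab{G}{v}=\Gamma_w$. Conversely, if $w\in\Act(v)-\{v\}$, then $v$ is $w$-active and distinct from $w$, and Corollary~\ref{cor:spostamento} (which holds whenever $\Theta_{rot}$ is large enough) forces $\gamma_w v\neq v$ for every non-trivial $\gamma_w\in\Gamma_w$, so $\Gamma_w\cap\Stab{G}{v}=\{1\}$. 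The two groups in the second equality are therefore generated by exactly the same non-trivial subgroups.

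For the first equality, the inclusion $\supseteq$ is immediate, and the remaining inclusion is the core of the argument: every $\gamma\in\Stab{G}{v}\cap\N$ lies in $\langle\Gamma_w\cap\Stab{G}{v}\rangle_{w\in\mathbb{Y}_*}$. I would prove this by transfinite induction on the complexity ordering from Lemma~\ref{lem:reduction_for_original_rotating_fam}, with the case $\gamma=1$ being trivial. For the inductive step, apply Lemma~\ref{lem:short_less_complex} with $x=v$ to obtain a shortening pair $(s,\gamma_s)$ with $\gamma_s\gamma$ of strictly lower complexity. Since $\gamma v=v$, the second alternative of Lemma~\ref{lem:short_less_complex} would give $\dist^\angle_s(v,v)\ge\Theta_{short}$, whereas the separation axiom of Definition~\ref{def;CPS}, combined with Remark~\ref{rem:d^angle}, bounds $\dist^\angle_s(v,v)$ by a constant depending only on the composite projection system; enlarging $\Theta_{rot}$ if needed rules this alternative out. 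Hence the first alternative must hold, i.e.\ $v$ is $s$-inactive, so $\Gamma_s$ fixes $v$ and $\gamma_s\in\Gamma_s\cap\Stab{G}{v}$. Since $\gamma_s\gamma\in\Stab{G}{v}\cap\N$ has strictly lower complexity, the induction hypothesis gives $\gamma_s\gamma\in\langle\Gamma_w\cap\Stab{G}{v}\rangle$, whence $\gamma=\gamma_s^{-1}(\gamma_s\gamma)$ also belongs to this subgroup.

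The only delicate point is calibrating $\Theta_{rot}$ large enough that Corollary~\ref{cor:spostamento}, Lemma~\ref{lem:short_less_complex}, and the above rejection of the second alternative all apply simultaneously with uniform constants; since none of these thresholds depend on $\gamma$ or $v$, the joint enlargement is harmless, and the well-foundedness of the complexity ordering established in Lemma~\ref{lem:reduction_for_original_rotating_fam} ensures that the transfinite induction terminates.
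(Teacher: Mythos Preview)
Your proposal is correct and follows exactly the approach indicated in the paper: the second equality is handled via Corollary~\ref{cor:spostamento} (together with the rotation axiom), and the non-trivial inclusion in the first equality is proved by transfinite induction on complexity using the shortening pairs of Lemma~\ref{lem:short_less_complex}, which are precisely the two ingredients the paper cites from \cite{dfdt}. One cosmetic point: when the shortening pair produces $s=v$ (possible only if $v\in\mathbb{Y}_*$), alternative~(1) as literally stated does not apply, but the desired conclusion $\gamma_s\in\Gamma_s\cap\Stab{G}{v}$ still holds trivially since $\Gamma_v\le\Stab{G}{v}$; you may want to note this case explicitly.
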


\begin{proof}
    One can follow the proof of \cite[Proposition 4.8]{dfdt}, which only uses the existence of shortening pairs (which is Lemma~\ref{lem:short_less_complex}) and \cite[Corollary 1.8]{dfdt} (which is our Corollary~\ref{cor:spostamento}).
\end{proof}

We conclude with two lemmas allowing us to inject certain finite configurations in the quotient. First, a consequence of Lemma~\ref{lem:isom_proj}:
\begin{cor}\label{lem:finite_vertices_inject}
    For any two distinct vertices $v,w\in\calS^{(0)}$ there exists a constant $\Theta_{rot}(v,w)$ such that, if the rotation control $\Theta_{rot}$ is greater than $\Theta_{rot}(v,w)$, then $\ov v\neq \ov w$.  As a consequence, for every finite set $\mathcal{W}\subseteq \calS^{(0)}$, there exists a constant $\Theta_{rot}(\mathcal W)$ such that, if the rotation control $\Theta_{rot}$ is greater than $\Theta_{rot}(\mathcal W)$, then $\mathcal{W}$ injects inside $\calS^{(0)}/\N$.
\end{cor}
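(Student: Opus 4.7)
The plan is to reduce the first assertion to Lemma~\ref{lem:isom_proj}, using uniform properness of the composite projection system on $\calS^{(0)}$ to control all projection distances between $v$ and $w$ simultaneously.

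First I would observe that, by uniform properness of the composite projection system on $\calS^{(0)}$ (Definition~\ref{defn:uniform_properness_CPS}), there is a uniform constant $T$ such that the set
$$F(v,w) = \{y \in \calS^{(0)} : v, w \in \Act(y), \ \dist^\angle_y(v,w) \geq T\}$$
is finite. Since every individual projection distance is a finite real number and the distances outside $F(v,w)$ are bounded above by $T$, I would set
$$D(v,w) := \sup\{\dist^\angle_y(v,w) : y \in \calS^{(0)}, v, w \in \Act(y)\} < \infty,$$
the sup being a max over the finite set $F(v,w)$ (or $\leq T$ otherwise).

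Next I would set $\Theta_{rot}(v,w) := 10\, D(v,w) + 1$. Whenever the rotation control $\Theta_{rot}$ exceeds $\Theta_{rot}(v,w)$, the pair $(v,w)$ satisfies the hypothesis $\dist^\angle_s(v,w) \leq \Theta_{rot}/10$ of Lemma~\ref{lem:isom_proj} wherever it is defined. Therefore $q$ restricted to any geodesic $[v,w] \subset \calS$ is an isometric embedding. Since $v \neq w$ gives $\dist_{\calS}(v,w) \geq 1$, we conclude that $\ov v \neq \ov w$ in $\calS/\N$.

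Finally, for a finite set $\mathcal W \subseteq \calS^{(0)}$, I would simply take
$$\Theta_{rot}(\mathcal W) := \max_{\substack{v,w \in \mathcal W \\ v \neq w}} \Theta_{rot}(v,w),$$
which is a finite maximum over finitely many pairs, and apply the first part to every pair. There is no serious obstacle here: the content is entirely in Lemma~\ref{lem:isom_proj} together with uniform properness, and the argument is a clean reduction.
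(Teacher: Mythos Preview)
Your proof is correct and follows essentially the same approach as the paper, which simply declares the corollary a consequence of Lemma~\ref{lem:isom_proj}. You have supplied the natural details: uniform properness bounds all projection distances $\dist^\angle_y(v,w)$ by a finite constant $D(v,w)$, and choosing $\Theta_{rot}$ large relative to $D(v,w)$ puts you in the hypothesis of Lemma~\ref{lem:isom_proj}.
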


In the same spirit, we show that we can inject finite subsets of $G$ in the quotient group:

\begin{lemma}\label{lem:separating_rotating_fam}
    For every $g\in G-\{1\}$ there exists a constant $\Theta_{rot}(g)$ such that, if the rotation control $\Theta_{rot}$ is greater than $\Theta_{rot}(g)$, then $g\not\in\N$. 

    As a consequence, for every finite subset $F\subset G$ there exists a constant $\Theta_{rot}(F)$ such that, if $\Theta_{rot}\ge\Theta_{rot}(F)$, then $F$ injects in $G/\N$.
\end{lemma}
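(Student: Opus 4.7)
The plan is to reduce the first assertion to Corollary~\ref{lem:finite_vertices_inject}, which already guarantees that finite vertex sets inject into $\calS^{(0)}/\N$ once $\Theta_{rot}$ is large. Given $g\in G-\{1\}$, the first step is to produce a vertex $v\in\calS^{(0)}$ with $gv\neq v$. In the generic situation where $g$ acts non-trivially on $\calS^{(0)}$ such a $v$ is immediate; the more delicate case is when $g$ lies in the pointwise stabiliser $K=\bigcap_{v\in\calS^{(0)}}\Stab{G}{v}$. In this latter case, I would apply Lemma~\ref{lem:stabiliser_in_scpg}: assuming $g\in\N$ forces $g$ to be expressible only using generators $\Gamma_w$ with $w$ either equal to a chosen reference vertex or inactive against it. Iterating this constraint over sufficiently many vertices, together with uniform properness of the projection system (Definition~\ref{defn:uniform_properness_CPS}), should exhaust all possibilities and contradict $g\neq 1$.

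Once a vertex $v$ with $gv\neq v$ is chosen, the conclusion follows formally. Apply Corollary~\ref{lem:finite_vertices_inject} to the two-element set $\{v, gv\}$ to obtain a threshold
\[
\Theta_{rot}(g) \;:=\; \Theta_{rot}(\{v, gv\})
\]
such that $\ov{v}\neq\ov{gv}$ in $\calS^{(0)}/\N$ whenever $\Theta_{rot}\ge\Theta_{rot}(g)$. If instead $g$ were to lie in $\N$, then $gv$ and $v$ would belong to the same $\N$-orbit, giving $\ov{gv}=\ov{v}$ and contradicting Corollary~\ref{lem:finite_vertices_inject}. Hence $g\notin\N$, as required.

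For the ``as a consequence'' statement, observe that a finite subset $F\subset G$ injects into $G/\N$ if and only if $g_1g_2^{-1}\notin\N$ for every pair of distinct $g_1,g_2\in F$, i.e.\ for every element of the finite set $FF^{-1}-\{1\}$. Setting
\[
\Theta_{rot}(F) \;:=\; \max\bigl\{\Theta_{rot}(h)\,:\,h\in FF^{-1}-\{1\}\bigr\}
\]
and applying the first part to each such $h$ yields the conclusion. The principal obstacle is the kernel case in the opening step: one must verify that the $G$-action on $\calS^{(0)}$, together with the rotating family structure, is rich enough to detect every non-identity element; once this is in hand, the rest of the argument is a clean reduction to Corollary~\ref{lem:finite_vertices_inject}.
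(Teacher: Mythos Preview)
Your reduction to Corollary~\ref{lem:finite_vertices_inject} is clean and correct whenever $g$ moves some vertex of $\calS$, and the ``as a consequence'' part is fine. The genuine gap is exactly where you flag it: the case $g\in K=\bigcap_v\Stab{G}{v}$. Your sketch via Lemma~\ref{lem:stabiliser_in_scpg} does not close it. That lemma tells you that, for each $v$, the element $g$ lies in $\langle\Gamma_w:w=v\text{ or }w\notin\Act(v)\rangle$, but intersecting these conditions over many $v$ does not obviously force $g=1$; nothing in the composite projection system axioms prevents a nontrivial element of $G$ from acting trivially on $\calS^{(0)}$ while still lying in $\N$ for some fixed rotation control, and ``uniform properness'' alone does not give you a handle on such elements since there are no vertices being displaced.

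The paper avoids this case distinction entirely by appealing directly to Lemma~\ref{lem:reduction_for_original_rotating_fam} rather than to Corollary~\ref{lem:finite_vertices_inject}. Fix one $y_i\in\mathbb{Y}_i$ per colour, and use uniform properness to bound $\sup_s\dist_s^\angle(y_i,gy_i)$ by some $M$ depending only on $g$ (when $gy_i=y_i$ this bound is just the separation constant $\theta$). If $g\in\N-\{1\}$, Lemma~\ref{lem:reduction_for_original_rotating_fam} produces $s$ with $\dist_s^\angle(y_{i(g)},gy_{i(g)})\ge\Theta_{rot}/2-\aleph$, which exceeds $M$ once $\Theta_{rot}$ is large. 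The point is that this lower bound is forced on \emph{every} element of $\N-\{1\}$, regardless of whether it moves vertices of $\calS$, so the kernel case never arises.
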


\begin{proof}
    Take an element $y_i\in\mathbb{Y}_i$ for every colour $i=1,\ldots,m$. By uniform properness, there exists a constant $M$ such that
$$\max_{i=1,\ldots,m}\sup_{s\in\Act(y_i)\cap\Act(g y_i)} \dist^\angle_s(y_i,g y_i)\le M.$$
Choose $\Theta_{rot}(g)$ such that $\Theta_{rot}(g)/2-\aleph> M$, where $\aleph$ is the constant from Lemma~\ref{lem:reduction_for_original_rotating_fam} which only depends on $\mathbb{Y}_*$. Now let $\N$ have rotation control greater than $\Theta_{rot}(g)$. If by contradiction $g\in \N$, then we can define its index $i(g)\in\{1,\ldots,m\}$. By Lemma~\ref{lem:reduction_for_original_rotating_fam} we can then find $s\in \mathbb{Y}_*$ such that
$$M<\Theta_{rot}/2-\aleph\le \dist^\angle_s(y_{i(g)}, g y_{i(g)})\le M,$$
and this yields a contradiction.
\end{proof}

\subsection{From a short HHG to a SCPG}\label{rem:Short_is_SCPG}
In this Subsection we show that short HHG fit the framework of strong composite projection graphs. We break the argument into a series of Propositions.

Let $(G, \ov{X},\W)$ be a colourable short HHG, with HHS constant $E$. For every $v\in \ov{X}^{(0)}$ set $\Act(v)=\ov{X}^{(0)}-\link_{\ov{X}}(v)$, which contains all vertices with the same colour as $v$ since no two $\ov X$-adjacent vertices have the same colour. Furthermore, set $\dist^\pi_v(x,y)=\diam_{\ell_v}(\rho^x_v\cup\rho^y_v)$ whenever the quantity is defined. 

\begin{prop}
    There exists $\theta\ge 0$ such that $(\ov{X}^{(0)}, \theta, \dist^\pi_*)$ is a uniformly proper composite projection system.
\end{prop}

\begin{proof}
     The proof is similar to that of \cite[Lemma 8.21]{BHMS}. Symmetry and the triangle inequality are clear, since $\dist^\pi_v$ is define as a diameter of a union. The Behrstock inequality is implied by the existence of a HHG structure, as it follows by combining the partial realisation axiom \cite[Definition 1.1.(8)]{HHS_II} with the first consistency inequality \cite[Definition 1.1.(4)]{HHS_II}. Uniform properness follows from the Distance Formula \cite[Theorem~4.5]{HHS_II}, again together with \cite[Definition 1.1.(8)]{HHS_II}. Separation corresponds to the fact that, for every two non-adjacent $v,w\in \ov X^{(0)}$, $\diam\rho^w_v\le E$. 

    Symmetry in action is also clear, since $x\in\Act(y)$ if and only if $x\not \in \link_{\ov X}(y)$. Moving to closeness in inaction, if $x,y,z\in \ov X^{(0)}$ are such that $x$ and $y$ are $\ov X$-adjacent while $z$ is disjoint from both, then $\ell_x\orth\ell_y$ and both are transverse to $\ell_z$. Then $\dist^\pi_z(x,y)$ is uniformly bounded by \cite[Lemma 1.5]{DHS}.

    We finally prove finite filling. Let $\mathcal Z\subseteq \ov X^{(0)}$. By definition, $y\in \bigcup_{x\in \mathcal Z}\Act(x)$ if and only if $y\not \in \bigcap_{x\in \mathcal Z}\link_{\ov X}(x)$, so it is enough to show that there does not exists an infinite sequence $\{x_i\}_{i\in \mathbb{N}}$ in $ \ov X^{(0)}$ such that $$\bigcap_{i\le k}\link_{\ov X}(x_i)\subsetneq \bigcap_{i\le k=1}\link_{\ov X}(x_i)$$ for all $k$. This is because, if $x_1$ and $x_2$ are $\ov X$-adjacent, then $\link_{\ov X}(x_1)\cap \link_{\ov X}(x_2)=\emptyset$, since $\ov X$ is triangle-free; if instead $x_1$ and $x_2$ are not $\ov X$-adjacent, then $\link_{\ov X}(x_1)\cap \link_{\ov X}(x_2)$ consists of at most one vertex, since $\ov X$ is square-free. 
\end{proof}

Now, $G$ acts by isomorphisms on this composite projection system, because it acts by isomorphisms on $\ov X$, the action permutes the colours, and projections in the HHG structure are $G$-equivariant. 

As in Notation~\ref{notation:kernel}, let $\B=\{s_1,\ldots,s_r\}$ be a base, such that every $Z_{v_i}$ is infinite. Choose non-trivial integers $M_1,\ldots,M_r$, and set $\Gamma_{gv_i}=g(M_iZ_{v_i})g^{-1}$ for all $g\in G$ and $i=1,\ldots, r$. Set $\mathbb{Y}_*=G\B$, with the induced set of colours. 

\begin{prop}
    For every $\Theta_{rot}>0$ there exists $M\in \mathbb{N}-\{0\}$ such that, if every $M_i$ is a non-trivial multiple of $M$, then $\{\Gamma_v\}_{v\in \mathbb Y_*}$ is a composite rotating family on $(\mathbb Y_*, G)$ with rotation control $\Theta_{rot}$.
\end{prop}
With the notation from Definition~\ref{defn:deep_enough}, the above Proposition states that every rotation control can be achieved if $\N$ is deep enough.

\begin{proof}
    By construction, every $\Gamma_v$ is infinite, and it acts trivially on the star of $v$ in $\ov X$ by Axiom~\ref{short_axiom:extension}. The collection $\{\Gamma_v\}$ is equivariant by construction, and if $M$ is even then $\Gamma_x$ commutes with $\Gamma_y$ whenever $x\in \link_{\ov X}(y)$. 
    Proper isotropy, as well as the last axiom, both follow from the fact that $Z_v$ acts metrically properly and coboundedly on the quasiline $\C\ell_v$.
\end{proof}

Finally, let $\calS=\ov X^{+\W}$ be the augmented support graph, which is hyperbolic as it is $G$-equivariantly quasi-isometric to the main coordinate space $\C S$ (see Remark~\ref{rem:coord_spaces_wrt_lines}). Notice that, by Lemma~\ref{lem:strong_bgi_for_short}, every $\ov X^{+\W}$-geodesic with large projection to some $\ell_v$ contains a vertex in the $\ov X$-star of $v$; hence the strong Bounded Geodesic Image property from Definition \ref{dfn:scpg} holds in this setting. Summing up, we proved the following:

\begin{prop} For every $\Theta_{rot}>0$ there exists $M\in \mathbb{N}$ such that, if every $M_i$ is a non-trivial multiple of $M$, the collection $(\ov X^{+\W}, G, G\B, \{\Gamma_v\}_{v\in G\B})$ is a strong composite projection graph with rotation control $\Theta_{rot}$.
\end{prop}

\begin{rem}\label{rem:why_not_dfdt}
Notice that we could not have used the original definition from \cite{dfdt}. Indeed, in our setting the hyperbolic graph is $\ov X^{+\W}$ (not $\ov X$); furthermore, $\link_{\ov X^{+\W}}(s)$ contains $\link_{\ov X}(s)$ but they might not coincide, and this means that there might be vertices of $\link_{\ov X^{+\W}}(s)$ which are \emph{not} fixed by $\Gamma_s$. \end{rem}

\begin{rem}[SCPG for augmented links]\label{rem:scpg_local_version}
For every $v\in\ov{X}^{(0)}$ set $\mathbb{Y}^v_*=G\B \cap \link_{\ov X}(v)$. Arguing as above, one can check that the collection $$\left(\link_{\ov X}(v)^{+\W}, \Stab{G}{v}, \mathbb{Y}^v_*,  \{\Gamma_w\}_{w\in \mathbb{Y}^v_*}\right),$$ defines a strong composite projection graph, whose rotation control can be made arbitrarily large (for all vertices at once) by choosing $\N$ is deep enough. We stress that, since any two vertices $w,w'\in \mathbb{Y}^v_*$ are always disjoint in $\link_{\ov X}(v)$, one has that $\Act(w)=\mathbb{Y}^v_*$ for every $w\in \mathbb{Y}^v_*$. This is relevant as then, by Lemma~\ref{lem:structure_of_kernel}, the subgroup $\langle \Gamma_w\rangle _{w\in \mathbb{Y}^v_*}$, which we shall later denote by $\N_v$, is a free product of (some of) the $\Gamma_w$.
\end{rem}

\subsection{Summary of the consequences for short HHG}\label{subsec:rotating_for_short}
We gather here all implications of the above discussion to our setting. Recall that we have a colourable short HHG $(G, \ov X, \W)$, with HHS constant $E$, and we are considering the quotient by a subgroup $\N$, as in Notation~\ref{notation:kernel}.

\subsubsection{The quotient support graph}\label{subsec:support_for_quotient} 
Let $(\Lambda, N)$ be either $(\ov X^{+\W},\N)$ or $(\link_{\ov X}(v)^{+\W}, \N_v)$ for some $v\in\ov{X}^{(0)}$. Define $\Lambda/N$ as the graph whose vertices and edges are $N$-orbits of vertices and edges of $\Lambda$, and let $q\colon \Lambda\to \Lambda/N$ be the quotient projection. We shall denote the $N$-orbit of a vertex $v\in \Lambda$ by $[v]$. 
\begin{cor}[of Lemma~\ref{lem:simplicial_quotient_graph}]\label{cor:simplicial_x^w/n}
    $\Lambda/N$ is simplicial.
\end{cor}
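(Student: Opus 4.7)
The plan is to reduce both cases to a direct application of Corollary~\ref{cor:simplicial_quotient_graph}. In Remarks~\ref{rem:Short_is_SCPG} and~\ref{rem:scpg_local_version} we have already exhibited SCPG structures on $(\ov X^{+\W}, G, G\B, \{\Gamma_v\})$ and on $(\link_{\ov X}(v)^{+\W}, \Stab{G}{v}, \mathbb{Y}^v_*, \{\Gamma_w\})$, provided $\N$ is deep enough. As noted in those remarks, the rotation control $\Theta_{rot}$ can be made arbitrarily large by requiring the multiples $M_i$ from Notation~\ref{notation:kernel} to be divisible by a sufficiently large $D$, which is exactly the content of Definition~\ref{defn:deep_enough}.

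Once this is in place, the statement is essentially a transcription: in the first case $\Lambda = \ov X^{+\W}$ carries a $G$-SCPG whose associated normal subgroup is $\N$, so $\Lambda/N = \ov X^{+\W}/\N$ is simplicial by Corollary~\ref{cor:simplicial_quotient_graph}; in the second case $\Lambda = \link_{\ov X}(v)^{+\W}$ carries a $\Stab{G}{v}$-SCPG whose associated normal subgroup is $\N_v = \langle \Gamma_w \rangle_{w \in \mathbb{Y}^v_*}$, and again Corollary~\ref{cor:simplicial_quotient_graph} gives simpliciality. There is no real obstacle: the only point worth flagging is that Corollary~\ref{cor:simplicial_quotient_graph} was stated for the SCPG $(\calS, G, \mathbb{Y}_*, \{\Gamma_v\})$ under the general assumption and hence applies verbatim to the local version with $\Stab{G}{v}$ replacing $G$, since the proof only uses the abstract SCPG data (lifting monogons and bigons via Lemma~\ref{lem:rotating_lift_mgons}) and not any specific feature of the ambient group action.
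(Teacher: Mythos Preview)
Your proposal is correct and follows exactly the approach the paper intends: the corollary is placed in the ``TL;DR'' subsection precisely because it is a direct application of Corollary~\ref{cor:simplicial_quotient_graph} to the two SCPG structures built in Remarks~\ref{rem:Short_is_SCPG} and~\ref{rem:scpg_local_version}, with ``deep enough'' playing the role of ``$\Theta_{rot}$ sufficiently large''. The paper does not spell out a proof for this reason, and your write-up simply makes explicit what the label ``of Corollary~\ref{cor:simplicial_quotient_graph}'' already signals.
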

Moreover, a plethora of subgraphs of $\Lambda/N$ lift isometrically to $\Lambda$. For example: 
\begin{cor}[of Lemma~\ref{lem:geodesic_lifts}]\label{lem:lift_geodesics_for_short}
For each combinatorial path $\ov\gamma$ in $\Lambda/N$ starting at $[v]$, and any point $v$ in the preimage of $[v]$ (henceforth: a lift of $[v]$), there exists a combinatorial path in $\Lambda$ so that $q\circ\gamma=\ov \gamma$, which we call a lift of $\ov\gamma$. Moreover, if $\ov\gamma$ is a geodesic, then so is $\gamma$.
\end{cor}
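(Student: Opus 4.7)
The plan is to reduce the statement to Lemma~\ref{lem:geodesic_lifts} by checking that both instances of $(\Lambda, N)$ fit into the SCPG framework established in the section. Indeed, in the global case, Remark~\ref{rem:Short_is_SCPG} exhibits $(\ov X^{+\W}, G, \mathbb{Y}_*, \{\Gamma_v\})$ as an SCPG whenever $\N$ is deep enough, and by construction the subgroup generated by the rotating family is precisely $\N = \langle \Gamma_v\rangle_{v\in G\B}$. In the local case, Remark~\ref{rem:scpg_local_version} provides the analogous SCPG $(\link_{\ov X}(v)^{+\W}, \Stab{G}{v}, \mathbb{Y}_*^v, \{\Gamma_w\}_{w\in\mathbb{Y}_*^v})$, whose associated generated subgroup is $\N_v$. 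Both SCPGs are defined provided the rotation control is large enough, which is the standing hypothesis of the TL;DR subsection.

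Once these identifications are made, Lemma~\ref{lem:geodesic_lifts} applies directly: given $\ov\gamma$ in $\Lambda/N$ starting at $[v]$ and a lift $v$, one inductively lifts each successive edge of $\ov\gamma$ using that $N$ acts simplicially on $\Lambda$ and that $q$ is $1$-Lipschitz. For the "moreover" part, the lift $\gamma$ has the same combinatorial length as $\ov\gamma$, since $q\circ\gamma=\ov\gamma$ maps edges to edges in bijection along $\gamma$. If $\gamma$ were not a geodesic of $\Lambda$, a strictly shorter path $\gamma'$ with the same endpoints as $\gamma$ would project to a path in $\Lambda/N$ with the same endpoints as $\ov\gamma$ and strictly shorter length, contradicting the assumption that $\ov\gamma$ is a geodesic.

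There is no substantive obstacle, as the corollary is essentially a two-line translation of Lemma~\ref{lem:geodesic_lifts} to the specific notation used for short HHGs. The only small care required is to check that the lift produced by the Lemma lands in $\Lambda$ rather than in some intermediate quotient, which is automatic since the projection $q$ in the SCPG coincides with the quotient map $\Lambda\to\Lambda/N$ by construction.
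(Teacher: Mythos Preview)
Your proposal is correct and follows the paper's approach exactly: the corollary is stated simply as ``of Lemma~\ref{lem:geodesic_lifts}'' with no separate proof, and the TL;DR subsection's opening line already explains that everything there is obtained by specialising the SCPG results via Remarks~\ref{rem:Short_is_SCPG} and~\ref{rem:scpg_local_version}, which is precisely what you do. Your added justification for the ``moreover'' clause just unpacks the one-line proof of Lemma~\ref{lem:geodesic_lifts} itself.
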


Furthermore, we can also lift \emph{geodesic $k$-gons}:
\begin{defn}\label{defn:kgon}
    A $k$-gon in a metric graph $\Lambda$ is a closed combinatorial path $Q$ made of $k$ geodesic segments. In other words,
    $$Q=\bigcup_{i=0}^{k-1}[x_i, x_{i+1}],$$
    where $x_0,\ldots,x_{k}\in\Lambda^{(0)}$, $x_{k}=x_0$ and each $[x_i, x_{i+1}]$ is a geodesic path.
\end{defn}

\begin{cor}[of Lemma~\ref{lem:rotating_lift_mgons}]\label{lem:lifting}
For every $k\in\mathbb{N}$ the following holds if $N$ is deep enough. For every $k$-gon $\ov{Q}$ inside $\Lambda/N$ there exists a $k$-gon $Q$ inside $\Lambda$ such that $q(Q)=\ov Q$. We say that $Q$ is a \emph{lift} of $\ov{Q}$.
\end{cor}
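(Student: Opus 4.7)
The plan is simply to instantiate Lemma~\ref{lem:rotating_lift_mgons} in the two cases at hand, after identifying the relevant SCPG and matching up the ``deep enough'' hypothesis with the ``rotation control sufficiently large'' hypothesis.

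First, I would recall that the two pairs $(\Lambda,N)$ under consideration have already been endowed with an SCPG structure earlier in the Section. Namely, when $(\Lambda,N)=(\ov X^{+\W},\N)$, Remark~\ref{rem:Short_is_SCPG} constructs the SCPG $(\ov X^{+\W},G,\mathbb{Y}_*,\{\Gamma_v\})$ with $\mathbb{Y}_*=G\B$; when $(\Lambda,N)=(\link_{\ov X}(v)^{+\W},\N_v)$, Remark~\ref{rem:scpg_local_version} provides the local SCPG $(\link_{\ov X}(v)^{+\W},\Stab{G}{v},\mathbb{Y}^v_*,\{\Gamma_w\}_{w\in\mathbb{Y}^v_*})$. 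In either case, the group $N$ is exactly the subgroup generated by the relevant composite rotating family $\{\Gamma_w\}$.

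Next, I would translate the hypothesis ``$N$ is deep enough'' into the hypothesis ``$\Theta_{rot}\ge \Theta_{rot}(k)$'' required by Lemma~\ref{lem:rotating_lift_mgons}. This is exactly the content of the last sentences of Remarks~\ref{rem:Short_is_SCPG} and~\ref{rem:scpg_local_version}: by choosing the multiples $M_i$ defining $\N$ to be sufficiently large multiples of some fixed $D_k\in\mathbb{N}-\{0\}$ (depending on $k$), the rotation control of the composite rotating family can be made to exceed the bound $\Theta_{rot}(k)$ supplied by Lemma~\ref{lem:rotating_lift_mgons}. One should choose $D_k$ to also dominate all the initial bounds from Remark~\ref{rem:initial_D}, so that every $\Gamma_v$ is infinite and commutes with $\Gamma_w$ for $w$ adjacent to $v$; this ensures the composite rotating family hypotheses hold.

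With these identifications in place, Lemma~\ref{lem:rotating_lift_mgons} applies directly and produces the desired lift $Q$ of $\ov Q$. There is no substantive obstacle here: the only thing to check is that the quotient graph appearing in Lemma~\ref{lem:rotating_lift_mgons}, namely $\calS/\N$ with $\calS=\Lambda$, is exactly $\Lambda/N$ as defined at the start of Subsection~\ref{subsec:support_for_quotient}, which is immediate from the definitions.
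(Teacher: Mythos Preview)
Your proposal is correct and matches the paper's approach: the corollary is stated without proof in the paper, being an immediate application of Lemma~\ref{lem:rotating_lift_mgons} to the SCPG structures from Remarks~\ref{rem:Short_is_SCPG} and~\ref{rem:scpg_local_version}, with the ``deep enough'' condition translating directly into the required lower bound on the rotation control. Your careful unpacking of this identification is exactly what the paper leaves implicit.
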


Now we focus our attention on $\Lambda=\ov X^{+\W}$. Recall that $\ov X$ is a (non-full) $G$-invariant subgraph of $\ov X^{+\W}$, as pointed out in Remark~\ref{rem:coord_spaces_wrt_lines}, so the quotient projection restricts to a map $\ov X\to \ov X/\N$, which we shall still call $q$. As $\ov X/\N$ is then a $G/\N$-invariant subgraph of $\ov X^{+\W}/\N$, it is itself simplicial; furthermore, Corollary~\ref{lem:lift_geodesics_for_short} implies that, for every $k\in\mathbb{N}$, we can find $\N$ deep enough that every closed combinatorial path $\ov\gamma\subset \ov X/\N$ of length at most $k$ has a lift \emph{inside $\ov X$}. Then we summarise the properties of $\ov X/\N$ below:
\begin{lemma}\label{lem:ovX/N_is_good}
    If $\N$ is deep enough, then:
    \begin{itemize}
        \item $\ov X/\N$ is a triangle- and square-free simplicial graph, and none of its connected components is a point.
        \item For every $\N$-orbit $[v]$ of a vertex $v\in\ov X^{(0)}$, $\link_{\ov X/\N}([v])=q(\link_{\ov X}(v)).$
    \end{itemize} 
\end{lemma}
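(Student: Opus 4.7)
The plan is to deduce all three assertions from the lifting Corollary~\ref{lem:lifting} together with the fact, recorded in Axiom~\ref{short_axiom:graph}, that $\ov X$ is an $\N$-invariant subgraph of $\ov X^{+\W}$. Simpliciality of $\ov X/\N$ follows at once: it sits inside $\ov X^{+\W}/\N$, which is simplicial by Corollary~\ref{cor:simplicial_x^w/n}; this is actually already noted in the discussion immediately preceding the statement.

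For triangle- and square-freeness I would argue by contradiction. Suppose $\ov X/\N$ contains a $k$-cycle $\ov C$ for some $k\in\{3,4\}$. Each edge of $\ov C$ has length one and so is a geodesic, hence $\ov C$ is a $k$-gon in $\ov X^{+\W}/\N$ in the sense of Definition~\ref{defn:kgon}. Choosing $\N$ deep enough as a function of $k$, Corollary~\ref{lem:lifting} produces a $k$-gon $C\subset \ov X^{+\W}$ whose sides lift those of $\ov C$ and therefore have length one (since the lift of a geodesic has the same length). The $k$ vertices of $C$ are pairwise distinct because they project to $k$ distinct vertices of $\ov C$. The crucial observation is that each edge $e$ of $C$ maps to an edge of $\ov X/\N$, so $e$ lies in the $\N$-orbit of some edge of $\ov X$; as $\N$ preserves the subgraph $\ov X$, the edge $e$ itself belongs to $\ov X$. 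Thus $C$ is a genuine $k$-cycle in $\ov X$, contradicting Axiom~\ref{short_axiom:graph}.

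For the no-point-component clause, I would note that for $\N$ deep enough Corollary~\ref{cor:simplicial_x^w/n} forbids loops in $\ov X/\N$, so every edge $\{v,w\}$ of $\ov X$ descends to a genuine edge $\{[v],[w]\}$ of $\ov X/\N$ with $[v]\neq[w]$. Since every vertex of $\ov X$ has an $\ov X$-neighbour, every vertex of $\ov X/\N$ has a neighbour as well. For the link identity, the inclusion $q(\link_{\ov X}(v))\subseteq\link_{\ov X/\N}([v])$ is immediate because $q$ is a graph morphism. Conversely, if $[w]\in\link_{\ov X/\N}([v])$, the definition of the quotient graph provides an edge $\{v',w'\}$ of $\ov X$ with $v'=\gamma v$ for some $\gamma\in\N$ and $w'\in[w]$; applying $\gamma^{-1}$ (which preserves $\ov X$) yields an edge $\{v,\gamma^{-1}w'\}$ of $\ov X$, so $\gamma^{-1}w'\in\link_{\ov X}(v)$ and $q(\gamma^{-1}w')=[w]$.

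The main delicate point is confirming that the lifted $k$-gon $C$ actually lies in the subgraph $\ov X$ and not merely in $\ov X^{+\W}$; this reduces cleanly to the $\N$-invariance of $\ov X$, so no new geometric input beyond what has already been developed is required.
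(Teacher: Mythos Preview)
Your proof is correct and follows essentially the same approach as the paper: lift short cycles from $\ov X/\N$ via Corollary~\ref{lem:lifting} and use $\N$-invariance of $\ov X$ to conclude the lift lands in $\ov X$, contradicting Axiom~\ref{short_axiom:graph}. The paper records the key observation (that closed paths in $\ov X/\N$ lift inside $\ov X$) in the paragraph immediately preceding the lemma, while you spell it out within the proof; your justification that each lifted edge lies in $\ov X$ (via $\N$-orbits of edges) is exactly the mechanism behind the paper's sentence. For the link identity you give a direct quotient-graph argument rather than citing the edge-lifting from Corollary~\ref{lem:lifting}, but the two are equivalent and equally short.
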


\begin{proof}
    We already noticed that $\ov X/\N$ is simplicial. Furthermore, any non-degenerate triangle or square in $\ov X/\N$ would lift to $\ov X$ if $\N$ is deep enough (notice that the lift would again be non-degenerate, as its edges would have different quotient projections), and every vertex of $\ov X/\N$ belongs to at least one edge, because this is true in $\ov X$.
    Moving to the second bullet, Corollary~\ref{lem:lift_geodesics_for_short} implies that every edge $\{[v], [w]\}$ lifts to an edge $\{v,w\}$. 
\end{proof}

\subsubsection{Large rotations stabilising a vertex}
Given $v\in\ov{X}^{(0)}$, we have the following description of the elements of $\N$ which fix $v$:

\begin{cor}[of Lemma~\ref{lem:stabiliser_in_scpg}]\label{cor:N_cap_Stabv}
     If $\N$ is deep enough, then for every $v\in \ov{X}^{(0)}$ we have that
     $\N\cap\Stab{G}{v}=\langle \Gamma_w\rangle_{w\in\operatorname{Star}_{\ov X}(v)\cap G\B}.$
 \end{cor}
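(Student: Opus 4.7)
The plan is to translate the statement into the SCPG framework and directly invoke Lemma~\ref{lem:stabiliser_in_scpg}. Recall from Remark~\ref{rem:Short_is_SCPG} that, provided $\N$ is deep enough, the short HHG $(G,\ov X)$ together with the base $\B$ and multiplicities $M_1,\ldots,M_r$ gives rise to a SCPG with underlying hyperbolic graph $\ov X^{+\W}$ (so $\calS^{(0)} = \ov X^{(0)}$), projection subset $\mathbb{Y}_* = G\B$, rotating family $\{\Gamma_v\}_{v\in G\B}$, and active sets $\Act(v) = \ov X^{(0)} - \link_{\ov X}(v)$ for every $v\in\ov X^{(0)}$. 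Since $\ov X$ has no self-loops, $v\in\Act(v)$.

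The first step is to enlarge the deepness threshold $D$ from Definition~\ref{defn:deep_enough} so that the rotation control of the resulting SCPG exceeds the constant required by Lemma~\ref{lem:stabiliser_in_scpg}. This is harmless, as both the construction of the SCPG from Remark~\ref{rem:Short_is_SCPG} and the application of Lemma~\ref{lem:stabiliser_in_scpg} only impose lower bounds on the multiplicities $M_i$, so we may take the maximum of the two thresholds.

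Fixing any $v\in \ov X^{(0)}$, Lemma~\ref{lem:stabiliser_in_scpg} then yields
$$\Stab{G}{v}\cap \N \;=\; \langle \Gamma_w\rangle_{w\in \mathbb{Y}_* - (\Act(v)-v)}.$$
It remains to identify the indexing set with $\operatorname{Star}_{\ov X}(v) \cap G\B$. This is a purely set-theoretic computation: unwinding the definition of $\Act(v)$ gives
$$\Act(v) - v \;=\; \ov X^{(0)} - \link_{\ov X}(v) - \{v\} \;=\; \ov X^{(0)} - \operatorname{Star}_{\ov X}(v)^{(0)},$$
so intersecting with $\mathbb{Y}_*=G\B$ produces
$$\mathbb{Y}_* - (\Act(v)-v) \;=\; G\B \cap \operatorname{Star}_{\ov X}(v)^{(0)},$$
which is exactly the indexing set in the statement.

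There is essentially no obstacle here; this Corollary is really a direct re-reading of Lemma~\ref{lem:stabiliser_in_scpg} once one spells out the shape of the active sets of the SCPG built in Remark~\ref{rem:Short_is_SCPG}. The only mildly delicate point is coordinating the notions of ``deep enough'' coming from Remark~\ref{rem:Short_is_SCPG} (needed to construct the SCPG and its composite rotating family) and from Lemma~\ref{lem:stabiliser_in_scpg} (needed to make the rotation control large), but both are controlled by choosing sufficiently large common multiples of the $M_i$.
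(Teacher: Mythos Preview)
Your proof is correct and is exactly the argument the paper intends: the corollary is stated without proof as a direct specialisation of Lemma~\ref{lem:stabiliser_in_scpg} to the SCPG built in Remark~\ref{rem:Short_is_SCPG}, and you have carried out precisely that specialisation, including the routine identification of $\mathbb{Y}_* - (\Act(v)-v)$ with $\operatorname{Star}_{\ov X}(v)\cap G\B$.
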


As $\Gamma_v$ acts trivially on $\link_{\ov X}(v)$, we get that
$$\link_{\ov X}(v)/\langle \Gamma_w\rangle_{w\in\operatorname{Star}_{\ov X}(v)\cap G\B}=\link_{\ov X}(v)/\N_v,$$
where $$\N_v\coloneq \langle \Gamma_w\rangle_{w\in\link_{\ov X}(v)\cap G\B}.$$
Moreover, no two elements of $\link_{\ov X}(v)$ are adjacent, as $\ov X$ is triangle-free, so we get:
 \begin{cor}[of Lemma~\ref{lem:structure_of_kernel}]\label{cor:Nv_free_prod}
     If $\N$ is deep enough, then for any $v\in \ov{X}^{(0)}$ there exists $\mathcal J\subseteq\link_{\ov X}(v)^{(0)}$ such that $\N_v$ has a free presentation
     $$\N_v=\bigast_{w\in \mathcal J} \Gamma_w.$$
 \end{cor}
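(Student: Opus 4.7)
The plan is to invoke Lemma~\ref{lem:structure_of_kernel} (the free product structure for rotating families) applied to the local SCPG constructed in Remark~\ref{rem:scpg_local_version}, namely
\[
\left(\link_{\ov X}(v)^{+\W}, \Stab{G}{v}, \mathbb{Y}^v_*, \{\Gamma_w\}_{w\in \mathbb{Y}^v_*}\right),
\]
with $\mathbb{Y}^v_* = G\B \cap \link_{\ov X}(v)$. By construction, the subgroup of $\Stab{G}{v}$ generated by $\bigcup_{w\in \mathbb{Y}^v_*} \Gamma_w$ is precisely $\N_v$, so Lemma~\ref{lem:structure_of_kernel} will output the desired free product decomposition, provided that its hypotheses are met.

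The only non-automatic hypothesis is that $\Act(z) = \mathbb{Y}^v_*$ for every $z \in \mathbb{Y}^v_*$. To verify this, recall from Remark~\ref{rem:Short_is_SCPG} that the active set of a vertex $w$ in the composite projection system arising from a colourable short HHG is the complement of its link in the ambient support graph; applied to the local SCPG on $\link_{\ov X}(v)^{+\W}$, this gives $\Act(w) = \link_{\ov X}(v)^{(0)} - \link_{\link_{\ov X}(v)}(w)$. Now the crucial observation: since $\ov X$ is triangle-free (Axiom~\ref{short_axiom:graph}), no two distinct vertices $w, w'\in \link_{\ov X}(v)$ can be $\ov X$-adjacent, as otherwise $\{v, w, w'\}$ would span a triangle in $\ov X$. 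Therefore $\link_{\link_{\ov X}(v)}(w) = \emptyset$ for every $w \in \link_{\ov X}(v)^{(0)}$, and so $\Act(w) = \link_{\ov X}(v)^{(0)}$, whose intersection with $\mathbb{Y}^v_*$ is all of $\mathbb{Y}^v_*$, as required.

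With this hypothesis confirmed, choose $\N$ deep enough that the rotation control $\Theta_{rot}$ exceeds the threshold required by Lemma~\ref{lem:structure_of_kernel} (this is possible by Remark~\ref{rem:scpg_local_version}, since large $M_i$ yield arbitrarily large rotation control). The Lemma then produces a subset $\mathcal J \subseteq \mathbb{Y}^v_* \subseteq \link_{\ov X}(v)^{(0)}$ such that
\[
\N_v \cong \bigast_{w\in \mathcal J} \Gamma_w,
\]
which is exactly the claimed presentation. The only real content of the proof is the triangle-freeness observation — everything else is a direct citation — so there is no serious obstacle.
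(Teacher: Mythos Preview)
Your proposal is correct and follows exactly the paper's own reasoning: the paper sets up the local SCPG in Remark~\ref{rem:scpg_local_version}, already notes there that triangle-freeness of $\ov X$ forces $\Act(w)=\mathbb{Y}^v_*$ for every $w\in\mathbb{Y}^v_*$, and then states the Corollary as an immediate consequence of Lemma~\ref{lem:structure_of_kernel}. Your write-up spells out the same observation in slightly more detail, but the argument is identical.
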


\subsubsection{Uniform hyperbolicity of the quotients}
By choosing $\N$ to be deep enough, we can ensure that the quotient of the main coordinate space of $(X,\W)$ remains hyperbolic, \emph{with the same hyperbolicity constant} (which we bound by the HHS constant $E$):
\begin{cor}[of Theorem~\ref{thm:rotating}, global version]\label{cor:hyp_quotients_from_rotating} If $\N$ is deep enough, then $\ov X^{+\W}/\N$ is $E$-hyperbolic. 
Furthermore,  $G/\N$ acts acylindrically on $\ov X^{+\W}/\N$, and if $G$ acts non-elementarily on $\ov X^{+\W}$ then so does $G/\N$ on $\ov X^{+\W}/\N$.
\end{cor}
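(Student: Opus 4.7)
The plan is to deduce this as an almost immediate consequence of Theorem~\ref{thm:rotating} applied to the SCPG $(\ov X^{+\W}, G, \mathbb{Y}_*, \{\Gamma_v\})$ constructed in Remark~\ref{rem:Short_is_SCPG}. All the hypotheses of that Theorem are checked in the Remark, so the application is legitimate once $\N$ is chosen deep enough so that the rotation control $\Theta_{rot}$ exceeds every threshold produced earlier in Section~\ref{sec:rotating}.

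For hyperbolicity, Theorem~\ref{thm:rotating}(1) yields that $\ov X^{+\W}/\N$ inherits any hyperbolicity constant of $\ov X^{+\W}$. By Remark~\ref{rem:coord_spaces_wrt_lines}, the graph $\ov X^{+\W}$ is $G$-equivariantly quasi-isometric to the main coordinate space $\C S$, which is $E$-hyperbolic by the HHS axioms. Enlarging $E$ once and for all to absorb the additive/multiplicative error of this quasi-isometry, we can arrange that $\ov X^{+\W}$ is itself $E$-hyperbolic, and this bound then passes to the quotient. The statement about non-elementarity is Theorem~\ref{thm:rotating}(3) applied verbatim.

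The genuinely new content is acylindricity, which Theorem~\ref{thm:rotating} does not output directly (it only preserves loxodromic WPDs). The plan is to bootstrap from the fact that $G$ acts acylindrically on $\C S$, hence on the $G$-equivariantly quasi-isometric $\ov X^{+\W}$, since this is a general property of HHGs acting on their top-level coordinate space. Given $\varepsilon>0$ and $[x],[y]\in\ov X^{+\W}/\N$ at large enough distance, I would consider the set of $[g]\in G/\N$ moving both by at most $\varepsilon$. For each such $[g]$, lift the geodesic quadrilateral with vertices $[x],[y],[g][y],[g][x]$ to an honest geodesic quadrilateral in $\ov X^{+\W}$ via Corollary~\ref{lem:lifting} with $k=4$. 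The lifted quadrilateral produces a representative $\tilde g\in G$ of $[g]$ which displaces the chosen lifts $\tilde x,\tilde y$ by at most $\varepsilon$ in $\ov X^{+\W}$; acylindricity of $G\curvearrowright\ov X^{+\W}$ then bounds the number of possible $\tilde g$, uniformly in the lifts.

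The main obstacle is the last bookkeeping step: different lifts $\tilde g,\tilde g'\in G$ can have the same image $[g]\in G/\N$, and in principle many of them could all displace $\tilde x,\tilde y$ by at most $\varepsilon$, inflating the count. The remedy is Corollary~\ref{cor:spostamento}: any nontrivial $n\in\N$ moves vertices active for its rotation supports by more than the projection constant $\theta$, so by the distance formula (or directly by the strong bounded geodesic image property used throughout Section~\ref{sec:rotating}), one can rule out two distinct $\N$-translates of the same $\tilde g$ both realising the $\varepsilon$-displacement, provided $\Theta_{rot}$ was taken large relative to $\varepsilon$ and the acylindricity constants of $G\curvearrowright\ov X^{+\W}$. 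Since every such threshold is compatible with the ``deep enough'' regime, the final tally produces the acylindricity constants for $G/\N\curvearrowright\ov X^{+\W}/\N$.
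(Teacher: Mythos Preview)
Your treatment of hyperbolicity and non-elementarity is correct and matches the paper: both are read off directly from Theorem~\ref{thm:rotating} applied to the SCPG of Remark~\ref{rem:Short_is_SCPG}. The paper does not give a separate proof of the Corollary; acylindricity is meant to come from the same source, namely the proof of \cite[Theorem 2.1]{dfdt} that Theorem~\ref{thm:rotating} adapts verbatim, even though the paper's restatement omits it from the bulleted list.

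Your direct argument for acylindricity, however, has a gap that appears earlier than the ``main obstacle'' you flag. When you lift the quadrilateral with vertices $[x],[y],[g][y],[g][x]$ via Corollary~\ref{lem:lifting}, you obtain vertices $a,b,c,d$ with $d(a,d)\le\varepsilon$ and $d(b,c)\le\varepsilon$; but $d$ is merely \emph{some} element of $\N g a$ and $c$ is \emph{some} element of $\N g b$, so you get lifts $\tilde g_1,\tilde g_2$ of $[g]$ with $\tilde g_1$ displacing $a$ and $\tilde g_2$ displacing $b$ by at most $\varepsilon$---there is no reason these coincide. So the sentence ``the lifted quadrilateral produces a representative $\tilde g$ which displaces the chosen lifts $\tilde x,\tilde y$ by at most $\varepsilon$'' is not yet justified. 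Moreover, the obstacle you do identify is a non-issue: if several lifts $\tilde g$ share an image $[g]$, that only makes the bound on $|\{[g]\}|$ looser, not invalid, since distinct $[g]$'s always lift to distinct coset representatives.

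The actual difficulty is arranging a \emph{single} lift $\tilde g$ that simultaneously controls both basepoints, and this is exactly what the ``moreover'' clause of the quadrilateral-lifting lemma (the unnamed lemma between Corollary~\ref{cor:simplicial_quotient_graph} and Lemma~\ref{lem:isom_proj}, corresponding to \cite[Proposition 4.3]{dfdt}) is designed to do. One applies it to the two short sides, whose lifts have length $\le\varepsilon$ and hence uniformly small projections, forcing the lifted short sides to be $\N$-translates of prescribed segments; a short diagram chase then produces the common $\tilde g$. Your invocation of Corollary~\ref{cor:spostamento} does not accomplish this, as that corollary concerns a single rotation $\gamma_s\in\Gamma_s$ rather than a general element of $\N$, and in any case addresses the wrong step.
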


By Remark~\ref{rem:scpg_local_version}, we can also establish a “local version” of the above result:
\begin{cor}[of Theorem~\ref{thm:rotating}, local version]\label{cor:local_hyp_quotients_from_rotating} If $\N$ is deep enough, then for every $v\in \ov{X}^{(0)}$ we have that $\link_{\ov X}(v)^{+\W}/\N_v$ is $E$-hyperbolic.
\end{cor}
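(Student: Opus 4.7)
The plan is to run the same machinery used for Corollary~\ref{cor:hyp_quotients_from_rotating}, but applied pointwise to the local SCPG structure on each link, and then to use cocompactness of the $G$-action on $\ov X$ to get a uniform depth requirement. Specifically, Remark~\ref{rem:scpg_local_version} provides, for each $v \in \ov X^{(0)}$, the SCPG $(\link_{\ov X}(v)^{+\W}, \Stab{G}{v}, \mathbb{Y}_*^v, \{\Gamma_w\}_{w \in \mathbb{Y}_*^v})$, and by construction the normal subgroup of $\Stab{G}{v}$ generated by $\{\Gamma_w\}_{w \in \mathbb{Y}_*^v}$ is exactly $\N_v$. So the first step is to apply Theorem~\ref{thm:rotating}(1) to this SCPG, which yields that $\link_{\ov X}(v)^{+\W}/\N_v$ is $\delta_v$-hyperbolic as soon as the rotation control for the local family $\{\Gamma_w\}_{w \in \mathbb{Y}_*^v}$ is large enough, where $\delta_v$ is any hyperbolicity constant for $\link_{\ov X}(v)^{+\W}$.

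Next, I would observe that the hyperbolicity constant $\delta_v$ can be taken to be $E$ independently of $v$. Indeed, $\link_{\ov X}(v)^{+\W}$ is $(E,E)$-quasi-isometric to the coordinate space $\C\U_v$ by Remark~\ref{rem:coord_spaces_wrt_lines}, and $\C\U_v$ is $E$-hyperbolic by the HHS axioms. Therefore the uniform bound $\delta_v \le E$ depends only on the short HHG structure $(X,\W)$, not on $v$.

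The remaining step is to turn the $v$-dependent "deep enough" requirement into a single one. By Axiom~\ref{short_axiom:graph}, there are only finitely many $G$-orbits of vertices in $\ov X^{(0)}$. For each orbit representative $v_1,\dots,v_k$, there is a threshold $D_i$ such that the local rotation control exceeds $\Theta_{\mathrm{rot}}$ (the value required by Theorem~\ref{thm:rotating}) whenever $\N$ is deep enough past $D_i$; taking $D$ to be the least common multiple (or maximum) of these thresholds together with the global $D$ used in Corollary~\ref{cor:hyp_quotients_from_rotating} produces a single depth above which the local conclusion holds at each orbit representative. Since $G$ acts on the whole short HHG structure by isomorphisms and maps $\link_{\ov X}(v)^{+\W}$ to $\link_{\ov X}(gv)^{+\W}$ equivariantly with respect to $\N_v \mapsto \N_{gv}=g\N_v g^{-1}$, hyperbolicity at $v_i$ transfers to every vertex in the orbit $Gv_i$, and we conclude.

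The only delicate point will be justifying that the local rotation control grows to infinity as $\N$ becomes deeper, uniformly over the finitely many orbit representatives: the rotation control for $\{\Gamma_w\}_{w \in \mathbb{Y}_*^v}$ is the restriction of the global rotation control to the subfamily indexed by $\link_{\ov X}(v)$, so deepening the multiples $M_1,\dots, M_r$ increases both the global and the local controls simultaneously. Once this is in hand, the rest of the argument is a straightforward assembly of Theorem~\ref{thm:rotating} with the uniform hyperbolicity constant $E$.
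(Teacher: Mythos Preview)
Your proposal is correct and follows exactly the approach the paper intends: apply Theorem~\ref{thm:rotating}(1) to the local SCPG from Remark~\ref{rem:scpg_local_version}, then use cocompactness of the $G$-action on $\ov X$ together with $G$-equivariance to reduce to finitely many orbit representatives and obtain a single depth threshold. One small imprecision: the passage from $E$-hyperbolicity of $\C\U_v$ to $E$-hyperbolicity of $\link_{\ov X}(v)^{+\W}$ is not literally via quasi-isometry (which would alter the constant), but rather because the latter is a Lipschitz retract of the former and $E$ is by convention a uniform HHS constant large enough to absorb such changes; this does not affect the argument.
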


\subsubsection{Preserving finite data in the quotient}
\begin{cor}[of Lemmas~\ref{lem:finite_vertices_inject} and~\ref{lem:separating_rotating_fam}]\label{cor:separating_filling}
    Let $\{w_1,\ldots,w_k\}\subset \ov X^{(0)}$ be a finite collection of vertices, and let $F\subset G-\{1\}$ be a finite subset.  If $\N$ is deep enough then
    \begin{itemize}
        \item $\{w_1,\ldots,w_k\}$ injects in the quotient $\ov X^{(0)}/\N$;
        \item $F\cap \N=\emptyset$, so that $F$ injects in the quotient $G/\N$.
    \end{itemize}
\end{cor}

Combining the Corollary with the description of $\N_v$, we get the following characterisation of which cyclic directions survive in the quotient:

\begin{lemma}\label{lem:NV_cap_Z_v} The following holds whenever $\N$ is deep enough. For any $v\in \ov{X}^{(0)}-G\B$, we have that $\N\cap Z_v=\{0\}$.
\end{lemma}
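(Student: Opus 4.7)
The idea is to apply the rotating-family machinery from Section~\ref{sec:rotating} to the \emph{local} SCPG of Remark~\ref{rem:scpg_local_version}, exploiting the fact that $Z_v$ acts trivially on $\link_{\ov X}(v)^{+\W}$. Since $v\notin G\B$ one has $\operatorname{Star}_{\ov X}(v)\cap G\B=\link_{\ov X}(v)\cap G\B$, so Corollary~\ref{cor:N_cap_Stabv} gives $\N\cap \Stab{G}{v}=\N_v$. Because $Z_v\le \Stab{G}{v}$, the claim reduces to
$$Z_v\cap \N_v=\{1\}.$$

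Consider now the local SCPG $\bigl(\link_{\ov X}(v)^{+\W},\,\Stab{G}{v},\,\mathbb{Y}^v_*,\,\{\Gamma_w\}_{w\in \mathbb{Y}^v_*}\bigr)$, for which $\N_v$ is precisely the subgroup generated by the rotation subgroups. By Axiom~\ref{short_axiom:extension} and Remark~\ref{rem:coord_spaces_wrt_lines}, $Z_v$ fixes every vertex of $\link_{\ov X}(v)$; since the $\W$-augmentation adds no new vertices, and since $\Stab{G}{v}$ acts on the local SCPG by isomorphisms that are equivariant with respect to projection distances, $Z_v$ acts trivially on the entire local SCPG, so every $g\in Z_v$ satisfies $gy=y$ for all $y\in\mathbb{Y}^v_*$ and preserves every $\dist^\angle_s$ pointwise.

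Apply Lemma~\ref{lem:separating_rotating_fam} to this local SCPG. Inspecting the proof, the only constant depending on $g$ is $M=\max_i\sup_{s\in\Act(y_i)\cap\Act(gy_i)}\dist_s(y_i,gy_i)$ coming from uniform properness; when $g$ fixes vertices, this collapses to $\sup_s \dist_s(y_i,y_i)$, which by the separation axiom (and Remark~\ref{rem:d^angle}) is bounded by a constant that depends only on the local SCPG, \emph{not} on $g$. Consequently the threshold $\Theta_{rot}(g)$ required by Lemma~\ref{lem:separating_rotating_fam} is uniform over all $g\in Z_v$, yielding $Z_v\cap \N_v=\{1\}$ once $\N_v$ is deep enough. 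Finally, taking the maximum of these thresholds over $G$-orbit representatives of vertices of $\ov X$ (a finite set) gives a global ``deep enough'' condition on $\N$ that works uniformly over all $v\in \ov X^{(0)}-G\B$. The main subtlety is precisely this uniformity: for an arbitrary $g\in G$ Lemma~\ref{lem:separating_rotating_fam} only provides a $g$-dependent threshold, and the infinite family $Z_v-\{1\}$ could in principle force the ``deep enough'' constant to blow up — but the triviality of the $Z_v$-action on the local SCPG rescues uniformity.
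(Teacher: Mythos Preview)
Your argument is correct and takes a genuinely different route from the paper's. Both proofs begin with the reduction $\N\cap Z_v\le \N\cap\Stab{G}{v}=\N_v$ via Corollary~\ref{cor:N_cap_Stabv}, but then diverge. The paper splits into cases: for finite $Z_v$ it invokes Corollary~\ref{lem:separating_filling} on the \emph{global} SCPG (a finite set of elements to exclude); for infinite $Z_v$ it argues algebraically that $Z_v$ commutes with every $\Gamma_w$ (Remark~\ref{rem:initial_D}), so $Z_v\cap\N_v$ lies in the centre of the free product $\N_v$ (Corollary~\ref{cor:Nv_free_prod}), forcing $\N_v=\Gamma_w$ for a single $w$, and then observes $Z_v\cap Z_w=\{0\}$ by comparing the actions on $\C\ell_v$. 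Your approach instead exploits the dynamical fact that $Z_v$ fixes the local SCPG pointwise, which collapses the $g$-dependent constant in the proof of Lemma~\ref{lem:separating_rotating_fam} to the separation bound and yields a uniform threshold over all of $Z_v-\{1\}$. This is cleaner, handles finite and infinite $Z_v$ uniformly, and avoids both the free-product structure and the endgame comparison of actions on $\C\ell_v$. The paper's route has the advantage of reusing Corollary~\ref{cor:Nv_free_prod} as a black box rather than inspecting a proof, and it makes the algebraic reason for triviality more explicit. One cosmetic point: ``once $\N_v$ is deep enough'' should be phrased as ``once $\N$ is deep enough'', since the rotation control for the local SCPG is governed by the same multiples $M_i$.
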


\begin{proof} 
First, let $F=Z_{x_1}\cup\ldots \cup Z_{x_l},$ where $x_1,\ldots,x_l$ are representatives of the $G$-orbits of vertices with finite cyclic directions. By Corollary~\ref{cor:separating_filling}, we can choose $\N$ deep enough so that it does not contain any conjugates of elements of $F$, so that $\N$ intersects trivially every finite cyclic direction.

Thus, it remains to show that $\N\cap Z_v=\{0\}$ whenever $v\not\in G\B$ and $Z_v$ is infinite. Notice first that $\N\cap Z_v\le \N\cap\Stab{G}{v}=\N_v$, as $v$ does not belong to $G\B$. Since we assumed $\N$ to be deep enough to satisfy Remark~\ref{rem:initial_D}, we have that $Z_v$ commutes with $\Gamma_w$ for every $w\in\link_{\ov X}(v)$, so $\N\cap Z_v$ must lie in the centre of $\N_v$. Then Corollary~\ref{cor:Nv_free_prod} tells us that $\N_v$ is a free product of (some) $\Gamma_w$s, and in particular it has non-trivial centre if and only if $\N_v=\Gamma_w$ for some $w\in\link_{\ov X}(v)$. In this case, it suffices to notice that $Z_v\cap\Gamma_w\le Z_v\cap Z_w$ must be trivial, since $Z_v$ acts geometrically on the quasiline $\C\ell_v$ while $Z_w$ acts on it with uniformly bounded orbits.
\end{proof}

\subsubsection{The quotient extensions}\label{subsec:quotient_ext}
Recall that, for every $v\in \ov{X}^{(0)}$, we defined $[v]\in \ov{X}^{(0)}/\N$ as its $\N$-orbit. Let $Z_{[v]}=Z_v/(Z_v\cap \N)$ and $H_{[v]}\coloneq H_v/\p_v(\N_v)$.
\begin{lemma}\label{lem:squid_for_quot_ext.1}
For every $v\in \ov{X}^{(0)}$ there is a commutative diagram of group extensions, where the vertical arrows are the restrictions of the quotient projection $G\to G/\N$:
$$\begin{tikzcd}
    0\ar{r}&Z_v\ar{r}\ar[d]&\Stab{G}{v}\ar{r}{\p_v}\ar{d}&H_v\ar{r}\ar{d}&1\\
    0\ar{r}&Z_{[v]}\ar{r}&\Stab{G}{[v]}\ar{r}{\p_{[v]}}&H_{[v]}\ar{r}&0.
\end{tikzcd}$$ 
Consequently, $Z_{[v]}$ is a cyclic, normal subgroup of $\Stab{G}{[v]}$ acting trivially on $\link_{\ov X/\N}([v])$, and the collection of quotient extensions is equivariant with respect to the $G/\N$-action by conjugation.
\end{lemma}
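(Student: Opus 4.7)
The plan is to build the lower row of the diagram by showing that the projection $G\to G/\N$ restricts to a surjection $\Stab{G}{v}\to \Stab{G/\N}{[v]}$ with kernel $K_v:=\Stab{G}{v}\cap\N$, and then factor $\p_v$ through this kernel. First I would identify $\Stab{G/\N}{[v]}$: if $g\N$ fixes $[v]$, then $gv\in \N v$, so $g\in \N\cdot\Stab{G}{v}$; combined with normality of $\N$, this gives the standard isomorphism $\Stab{G/\N}{[v]}\cong \Stab{G}{v}/K_v$. By Corollary~\ref{cor:N_cap_Stabv}, $K_v=\langle \Gamma_w\mid w\in\operatorname{Star}_{\ov X}(v)\cap G\B\rangle$, which equals $\N_v$ if $v\notin G\B$ and equals $\Gamma_v\cdot \N_v$ if $v\in G\B$ (using that $\Gamma_v\le Z_v$ is central in $\Stab{G}{v}$ and that $\Gamma_v$ commutes with $\N_v$, as ensured by Remark~\ref{rem:initial_D}).

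Since $\p_v(\Gamma_v)\subseteq \p_v(Z_v)=\{0\}$, in both cases $\p_v(K_v)=\p_v(\N_v)$, so $\p_v$ descends to a well-defined, surjective homomorphism $\p_{[v]}:\Stab{G}{v}/K_v\to H_v/\p_v(\N_v)$ making the right square commute. The key computation is the kernel of $\p_{[v]}$: an element $gK_v$ is in the kernel iff $\p_v(g)\in\p_v(\N_v)$, iff $g\in Z_v\cdot \N_v$, so the kernel equals $(Z_v\cdot \N_v)/K_v\cong Z_v/\bigl(Z_v\cap K_v\bigr)$ by the second isomorphism theorem. I then claim $Z_v\cap K_v=Z_v\cap \N$: the inclusion $\subseteq$ is immediate from $K_v\subseteq\N$, and conversely, any $z\in Z_v\cap\N\subseteq \Stab{G}{v}\cap\N=K_v$. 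Thus the kernel is $Z_v/(Z_v\cap\N)=Z_{[v]}$, producing the required short exact sequence. Commutativity of the left square is built in, since the vertical arrows are the canonical quotient maps.

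For the ``consequently'' part, $Z_{[v]}$ is cyclic as a quotient of the cyclic group $Z_v$, and normal in $\Stab{G/\N}{[v]}$ as the image of a normal subgroup. To see that $Z_{[v]}$ acts trivially on $\link_{\ov X/\N}([v])$, I use the second bullet of Lemma~\ref{lem:ovX/N_is_good}, which identifies this link with $q(\link_{\ov X}(v))$: lifting any neighbour $[w]$ to some $w\in\link_{\ov X}(v)$ and using that $Z_v$ fixes $w$ (Axiom~\ref{short_axiom:extension}) shows $Z_{[v]}$ fixes $[w]$. Finally, equivariance of the family $\{Z_{[v]}\}_{[v]}$ under $G/\N$-conjugation is inherited from the corresponding $G$-equivariance: for $g\in G$, we have $gZ_vg^{-1}=Z_{gv}$, which upon passing to the quotient gives $(g\N)Z_{[v]}(g\N)^{-1}=Z_{[gv]}=Z_{g\N\cdot [v]}$.

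The only mildly delicate point is the kernel computation when $v\notin G\B$: there one needs to know that $Z_v\cap \N_v$ is trivial so that the quotient $Z_v/(Z_v\cap K_v)$ actually equals $Z_v=Z_v/(Z_v\cap\N)$. This is precisely the content of Lemma~\ref{lem:NV_cap_Z_v}, which holds provided $\N$ is deep enough; the rest of the argument is formal diagram chasing.
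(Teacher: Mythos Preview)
Your proof is correct and follows essentially the same approach as the paper's: identify $\Stab{G/\N}{[v]}$ with $\Stab{G}{v}/(\N\cap\Stab{G}{v})$, observe that $\p_v(\N\cap\Stab{G}{v})=\p_v(\N_v)$, and check that the induced map has kernel $Z_{[v]}$. Your write-up is in fact more detailed than the paper's.

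One small remark: your final paragraph is unnecessary. You already showed, in complete generality, that $Z_v\cap K_v = Z_v\cap\N$ via the two inclusions $K_v\subseteq\N$ and $Z_v\cap\N\subseteq\Stab{G}{v}\cap\N=K_v$; this immediately identifies the kernel with $Z_{[v]}=Z_v/(Z_v\cap\N)$ in both cases $v\in G\B$ and $v\notin G\B$, without any appeal to Lemma~\ref{lem:NV_cap_Z_v}. That lemma is only needed later, for the ``furthermore'' clause of Theorem~\ref{thm:quotient_short_HHG} asserting that $Z_{[v]}\cong Z_v$ when $v\notin G\B$, which is not part of the present statement.
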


\begin{proof} It is easy to see that $\Stab{G}{[v]}$ is the quotient projection of $\Stab{G}{v}$. Furthermore, define a map $\p_{[v]}\colon\Stab{G}{[v]}\to H_{[v]}$ by sending the coset $g(\N\cap \Stab{G}{v})$ to the coset $\p_v(g)\p_v(\N_v)$, for every $g\in \Stab{G}{v}$. This map is well-defined, as 
$$\p_v(\N\cap \Stab{G}{v})=\p_v\left(\langle \Gamma_w\rangle_{w\in\{\operatorname{Star}_{\ov X}(v)\}\cap G\B}\right)=\p_v(\N_v),$$
and it is a group homomorphism with kernel $Z_{[v]}$. Then one can easily see that the above diagram commutes, using that both $\N\cap\Stab{G}{v}$ and $Z_v$ are normal in $\Stab{G}{v}$. Furthermore, by Lemma~\ref{lem:ovX/N_is_good}, $\link_{\ov X/\N}([v])=q(\link_{\ov X}(v))$, so $Z_{[v]}$ acts trivially on $\link_{\ov X/\N}([v])$). The remaining properties of the quotient extension follow from the corresponding features of the top row of the diagram.
\end{proof}

Next, we show that the quotient $H_{[v]}$ is hyperbolic:

\begin{lemma}\label{lem:Hv/Nv_rel_hyp}
Let $v\in \ov{X}^{(0)}$, and fix a collection $[W]$ of $\Stab{G}{[v]}$-orbit representatives of vertices in $\link_{\ov X/\N}([v])$ with infinite cyclic direction.

If $\N$ is deep enough, then:
\begin{enumerate}
    \item $H_{[v]}$ is hyperbolic;
    \item The collection $\{\p_{[v]}(Z_{[w]})\}_{[w]\in [W]}$ is \emph{independent} in $H_{[v]}$, that is, for every distinct $[w],[w']\in [W]$ and every $\ov h\in H_{[v]}$, $\p_{[v]}(Z_{[w]})^{\ov h} \cap \p_{[v]}(Z_{[w']})=\{1\}$;
    \item $\p_{[v]}(Z_{[w']})$ is quasiconvex in $H_{[v]}$ for every ${[w']\in \link_{\ov X/\N}([v])}$.
\end{enumerate} 
\end{lemma}

\begin{proof}
    Recall from Lemma~\ref{lem:Hv_hyp_1} that $H_v$ is hyperbolic relative to the collection $\{K_{H_v}(\p_v(Z_w))\}_{w\in W}$, for any collection $W$ of $\Stab{G}{v}$-orbit representatives of vertices in $\link_{\ov X}(v)$ with infinite cyclic direction. We can choose $W$ such that its quotient projection contains $[W]$. Now, since $\p_v(\N_v)=\langle \p_v(\Gamma_w)\rangle_{w\in\link_{\ov X}(v)\cap G\B}$, by the relative Dehn Filling Theorem \cite[Theorem 1.1]{Osin_Dehn_Fill} there exists a finite set $F_v\subset H_v-\{1\}$ such that, if $\p_v(\Gamma_w)\cap F_v=\emptyset$ for every $w\in W$, then $H_{[v]}$ is hyperbolic relative to 
    $$\{Q_{[w]}\coloneq K_{H_v}(\p_v(Z_w))/\p_v(Z_w\cap \N))\}_{w\in W}.$$
    By choosing $\N$ deep enough, we can ensure that $\p_v(\Gamma_w)$ avoids $F_v$ for every $w\in W$. Indeed, $\p_v(\Gamma_w)=M_i\p_v(Z_w)$, where $M_i$ is one of the integers from Notation~\ref{notation:kernel}. Since both $F_v$ and $W$ are finite, we can find $M\in \mathbb{N}-\{0\}$ such that $M\p_v(Z_w)\cap F_V=\emptyset$ for every $w\in W$, and then choose each $M_i$ to be a multiple of $M$. In turn, since there are finitely many $G$-orbits of vertices in $\ov X$, if $\N$ is deep enough then $H_{[v]}$ admits the above relative hyperbolic structure for every $v\in \ov X^{(0)}$.
    
    Now, since peripherals are quotients of virtually cyclic subgroups, $H_{[v]}$ is hyperbolic. Moreover, for every $w\in W$ such that $[w]\in [W]$, we have that $\p_{[v]}(Z_{[w]})\le Q_{[w]}$, so Item (2) in the statement follows from the fact that conjugates of different peripherals have finite intersection in a relatively hyperbolic group. Finally, every infinite $\p_{[v]}(Z_{[w']})$ is conjugate to $\p_{[v]}(Z_{[w]})$ for some $[w]\in [W]$, and the latter is a finite-index subgroup of $Q_{[w]}$. Item (3) then comes from the fact that conjugates of peripherals in a relatively hyperbolic group are quasiconvex.
\end{proof}

\subsubsection{Absence of hidden symmetries}
For later use, we also point out that cyclic directions in the kernel have no \emph{hidden symmetries}, in the following sense.

\begin{defn}[No hidden symmetries]\label{defn:hiddensymm_general}
    Let $0\to \Z\to G\xrightarrow[]{\pi} H\to 1$ be a group extension, with $H$ hyperbolic, and let $C\le G$ be a cyclic subgroup. We say that $C$ has \emph{no hidden symmetries} if $\pi(C)$ is infinite and $C$ contains a finite-index subgroup which is normal in $\pi^{-1}(K_H(\pi(C)))$. We say that an element $g\in G$ has no hidden symmetries if $\langle g\rangle$ has no hidden symmetries.
\end{defn}

\begin{lemma}\label{lem:nohidden_for_quot}
	The following holds if $\N$ is deep enough. For any two adjacent vertices $v,w\in \ov X^{(0)}-G\B$ with infinite cyclic directions, $Z_{[w]}$ has no hidden symmetries in $\Stab{G/\N}{[v]}$, in the sense of Definition~\ref{defn:hiddensymm_general}.
\end{lemma}

\begin{proof}
Let $D_0$ be such that $\N=\langle \langle M_xZ_x\rangle\rangle_{x\in \B}$ satisfies all properties from Subsection~\ref{subsec:SCPG} whenever each $M_x$ is a non-trivial multiple of $D_0$. Furthermore, by Corollary~\ref{cor:Nv_free_prod} applied to the composite rotating family on the whole $\ov X^{(0)}$, there exists $D_1$ such that, for every $v\in \ov X^{(0)}$, $R_v\coloneq\langle M_yZ_y\rangle_{y\in \link_{\ov X}(v)}$ is a (possibly infinitely generated) free group whenever each $M_y$ is a non-trivial multiple of $D_1$ (notice that $R_v$ does not necessarily coincide with $\N_v$, as $\B$ might not contain a representative for every $G$-orbit). Then let $\N=\langle \langle M_xZ_x\rangle\rangle_{x\in \B}$, where each $M_x$ is a multiple of $D_0D_1$. 

Now let $v,w\in \ov X^{(0)}$ be adjacent vertices, and let $\ov g\in \p_{[v]}^{-1}\left(K_{H_{[v]}}(\p_{[v]}(Z_{[w]}))\right)$. We want to show that $\ov g$ normalises a finite-index subgroup of $Z_{[w]}$. Since $\ov g$ projects to $K_{H_{[v]}}(\p_{[v]}(Z_{[w]}))$ in $H_{[v]}$, it must normalise a finite-index subgroup of $\p_{[v]}(Z_{[w]}$, so there exists $m\in \mathbb N-\{0\}$, $\varepsilon\in\{\pm 1\}$, and $k\in \mathbb Z$ such that
$$z_{\ov g[w]}^{mD_1}=\ov gz_{[w]}^{mD_1}\ov g^{-1}=z_{[w]}^{\varepsilon mD_1}z_{[v]}^k.$$
If $k=0$ we are done, so suppose otherwise. Let $g\in \Stab{G}{v}$ project to $\ov g$. By the above equality, there exists $n\in \N$ such that
$$z_{gw}^{mD_1}=z_{w}^{\varepsilon mD_1}z_{v}^k n.$$
In particular $n\in \N\cap \Stab{G}{v}=\N_v$. We can rewrite the above equality as
$$z_v^k=z_{w}^{-\varepsilon 2mD_1}z_{gw}^{2mD_1}n^{-1}.$$
By our choice of how deep $\N$ is, the right hand side belongs to $R_v$, which is a free group. Now, $z_v^2$ commutes with $z_x$ for every $x\in \link_{\ov X}(v)$, so $\langle z_v^{2k}\rangle$, which is an infinite cyclic group because $k\neq 0$, must belong to the centre of $R_v$. The latter is trivial unless $R_v\cong \Z$, which happens exactly when $\link_{\ov X}(v)$ consists of a single vertex. In the latter case we would have that $w=gw$, and therefore
$$z_{[w]}^{mD_1}=z_{\ov g[w]}^{mD_1}=\ov gz_{[w]}^{mD_1}\ov g^{-1},$$
proving that $\ov g$ normalises $\langle z_{[w]}^{mD_1}\rangle$.
\end{proof}

\section{A short structure for the quotient}\label{sec:quotient_is_short}
The main aim of this paper is to prove the following theorem, which roughly states that the class of short HHG is stable under taking quotients by large enough cyclic directions. Recall Notation \ref{notation:kernel}, where we defined normal subgroups of the form $\N=\langle M_iZ_{s_i}\rangle$, that is, generated by cyclic subgroups of the cyclic directions of a short HHG.

\begin{thm}\label{thm:quotient_short_HHG}
    Let $(G, \ov X, \W)$ be a short HHG, as in Definition~\ref{defn:short_HHG}, and let $\N$ be the normal subgroup as in Notation~\ref{notation:kernel}. If $\N$ is deep enough, then $(G/\N, \ov X/\N, \W/\N)$  is a short HHG, where the cyclic direction associated to each $[v]\in(\ov X/\N)^{(0)}$ is (a finite-index subgroup of) $Z_{[v]}\coloneq Z_v/(Z_v\cap \N)$. Furthermore, if $v\not\in G\B$ then $Z_{[v]}\cong Z_v$.
\end{thm}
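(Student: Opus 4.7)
The plan is to apply Theorem~\ref{thm:squidification} by verifying the six squid-material axioms of Definition~\ref{defn:squid_material} for $G/\N$ with support graph $\ov X/\N$, taking the extensions, weak relative hyperbolicity, quasimorphisms and gates all by descending from the corresponding data of the short HHG $(G,\ov X)$ provided by Proposition~\ref{prop:short_is_squid}. The final statement that $Z_{[v]}\cong Z_v$ for $v\notin G\B$ is precisely Lemma~\ref{lem:NV_cap_Z_v}, so the cyclic direction at $[v]$ is indeed (a finite-index subgroup of) $Z_{[v]}$ as claimed.

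Axioms~\eqref{squid_material:graph} and~\eqref{squid_material:extensions} are immediate from Lemma~\ref{lem:ovX/N_is_good} and from Lemmas~\ref{lem:squid_for_quot_ext.1}--\ref{lem:Hv/Nv_rel_hyp}, the latter providing hyperbolicity of the quotient base $H_v/\p_v(\N_v)$. Axiom~\eqref{squid_material:edge_stab} is inherited from $G$ up to finite index by quotienting the edge stabilisers, with quasiconvexity of $\p_{[v]}(Z_{[w]})$ again from Lemma~\ref{lem:Hv/Nv_rel_hyp}. Axiom~\eqref{squid_material:big_papa} follows from Corollary~\ref{cor:hyp_quotients_from_rotating}: the coned-off graph of $G/\N$ over vertex stabilisers is a $G/\N$-equivariant quotient of the coned-off graph of $G$ (which is quasi-isometric to $\ov X^{+\W}$), and remains hyperbolic under the rotating-family assumption for $\N$ deep enough.

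For the quasimorphisms of axiom~\eqref{squid_material:quasimorphisms}, let $E_{[v_i]}$ be the image of $E_{v_i}$ in $\Stab{G/\N}{[v_i]}$ and define $\phi_{[v_i]}$ by descending $\phi_{v_i}$. This is well-defined: by Corollary~\ref{cor:Nv_free_prod}, $\N_{v_i}$ is a free product of the $\Gamma_w\subseteq Z_w$ for $w\in\link_{\ov X}(v_i)\cap G\B$, and $\phi_{v_i}$ vanishes on each $Z_w\cap E_{v_i}$ by hypothesis; homogeneity then forces it to vanish on all of $\N_{v_i}\cap E_{v_i}$. Unboundedness on $Z_{[v_i]}\cap E_{[v_i]}$ survives when $v_i\notin G\B$ by Lemma~\ref{lem:NV_cap_Z_v}, while for $v_i\in G\B$ the cyclic direction becomes finite and we may set $\phi_{[v_i]}\equiv 0$.

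The main obstacle is axiom~\eqref{squid_material:gates}: constructing coarsely Lipschitz retractions $\gate_{[v_i]}\colon G/\N\to 2^{E_{[v_i]}}$ satisfying the neighbourhood conditions on product regions, in particular landing close to a coset of the form $gZ_{hv'}$ whenever $\dist_{\ov X/\N}([v_i],[u])\ge 2$. The difficulty is that the original $\gate_{v_i}$ is not $\N$-equivariant and therefore does not descend. The strategy is to lift the relevant configuration back to $G$ using the path-lifting Corollary~\ref{lem:lift_geodesics_for_short} and the $k$-gon lifting Corollary~\ref{lem:lifting}, apply $\gate_{v_i}$ there, and push the output back to $G/\N$. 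Well-definedness up to bounded error is the subtle step: one needs Lemma~\ref{lem:isom_proj} to ensure that paths with small projections map isometrically, Corollary~\ref{cor:N_cap_Stabv} to control which rotations can modify the lift while fixing the basepoint, and the strong BGI of Lemma~\ref{lem:strong_bgi_for_short} to guarantee that the cyclic-direction containment is preserved after projecting. With all six squid material axioms verified, Theorem~\ref{thm:squidification} produces the desired short HHG structure on $(G/\N, \ov X/\N)$.
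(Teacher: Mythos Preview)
Your approach --- verifying squid materials for $G/\N$ directly and invoking Theorem~\ref{thm:squidification} --- is a legitimate alternative to the paper's route, which instead builds the combinatorial HHS $(\h X,\h\W)$ and checks Definition~\ref{defn:combinatorial_HHS} axiom by axiom. However, your treatment of axiom~\eqref{squid_material:quasimorphisms} contains a genuine error.

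You claim that since the homogeneous quasimorphism $\phi_{v_i}$ vanishes on each $Z_w\cap E_{v_i}$, ``homogeneity then forces it to vanish on all of $\N_{v_i}\cap E_{v_i}$''. This is false: a homogeneous quasimorphism that vanishes on each free factor of a free product need not vanish, or even be bounded, on the free product itself. (Brooks-type quasimorphisms on free groups give explicit counterexamples: they vanish on the standard generators but are unbounded.) Consequently there is no reason for $\phi_{v_i}$ to descend to $E_{[v_i]}$, and the squid material you need simply does not exist in general for the \emph{original} structure on $G$.

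This is precisely the issue the paper confronts in Subsection~\ref{subsec:preparation}. The fix is not to descend the old quasimorphism but to manufacture a new one: one first passes to a preliminary deep-enough quotient $G/\N_0$, uses relative hyperbolicity of $H_v/\p_v(\N_{0,v})$ (Lemma~\ref{lem:Hv/Nv_rel_hyp}) to build a quasimorphism $\psi_{[v]_0}$ there (Lemma~\ref{lem:squid_for_quot_quasimorph}), and then \emph{pulls it back} to $E_v$. The resulting $\psi_v$ vanishes on all of $\N_0\cap E_v$ by construction, not by any property of quasimorphisms. One then rebuilds the short HHG structure on $G$ using $\psi_v$ in place of $\phi_v$, and only \emph{after} this replacement does the quotient argument go through (see Remark~\ref{rem:weird_flex}). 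Your proposal skips this preparation step entirely, and without it the argument breaks down.
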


\begin{proof}[Outline of the proof]
    In Subsection~\ref{subsec:preparation} we modify the short HHG structure for $G$, in such a way that the kernel $\N$ acts with uniformly bounded orbits on the quasilines.
    
    We then set $\h X= X/\N$, which we prove to be the blowup of $\ov X/\N$ in Lemma~\ref{lem:blowup_for_quotient}, and we consider the $\h X$-graph $\h \W$, obtained from the (possibly non simplicial) graph $\W/\N$ by removing loop edges and double edges. Then we prove that $(\h X,\h \W)$ is a combinatorial HHS. Here is where we verify each axiom from Definition~\ref{defn:combinatorial_HHS}:
    \begin{itemize}
        \item Axioms~\eqref{item:chhs_flag} and~\eqref{item:chhs_join} both follow from the fact that $\h X$ is a blowup. 
        \item Axiom~\eqref{item:chhs_delta} is split between Subsections~\ref{subsubsec:hyp_link_quot} and~\ref{subsubsec:qi_emb_quot}.
        \item Axiom~\eqref{item:C_0=C} is Lemma~\ref{lem:edges_in_link_quot}.
    \end{itemize}
    In Subsection~\ref{rem:g_action_quot} we notice that the $G$-action on $X$ induces a $G/\N$-action on $\h X$, with finitely many orbits of links, and a geometric action on $\h W$. Therefore $(\h X, \h \W)$ is a combinatorial HHG structure for $G/\N$. Furthermore, in Lemma~\ref{lem:short_hhg_structure_for_quot} we check that it is actually a short HHG structure, with the required cyclic directions. The “furthermore” part of the statement is simply Lemma~\ref{lem:NV_cap_Z_v}.
\end{proof}

\subsection{Preparing the structure above}\label{subsec:preparation}
We first need to tweak the short HHG structure for $G$, to make it as “compatible” with $\N$ as possible. This will later allow us to define a combinatorial HHG structure for $G/\N$ by taking the quotient by $\N$ of the refined structure.
\par\medskip

First, fix a short HHG structure $(G,\ov X, \W_0)$, coming from the action on the combinatorial HHS $(X_0, \W_0)$. One can use the $G$-action on $\ov X^{+\W_0}$ to build a strong composite projection graph (SCPG), and let $\N_0$ be deep enough to satisfy all properties from Subsection~\ref{subsec:rotating_for_short} \emph{with respect to this SCPG}. Later we shall choose a deeper subgroup $\N\le \N_0$, so to avoid confusion we shall denote by $[v]_0\in (\ov X/\N_0)^{(0)}$ the $\N_0$-orbit of a vertex $v\in \ov X^{(0)}$. 

Now let $v\in \ov X^{(0)}-G\B$ with infinite cyclic direction, so that $Z_{[v]_0}\cong Z_v\cong \Z$ by Lemma~\ref{lem:NV_cap_Z_v}. By Lemma~\ref{lem:Hv/Nv_rel_hyp}, the quotient $H_{[v]_0}=\Stab{G/\N_0}{[v]_0}/Z_{[v]_0}$ is hyperbolic. Furthermore, if we fix a collection $[W]_0$ of $\Stab{G/\N_0}{[v]_0}$-orbits of vertices in $\link_{\ov X/\N_0}([v]_0)$ with infinite cyclic directions, then the subgroups $\{Z_{[w]_0}\}_{[w]_0\in [W]_0}$ have no hidden symmetries by Lemma~\ref{lem:nohidden_for_quot}, and they project to an independent collection in $H_{[w]_0}$ by Lemma~\ref{lem:Hv/Nv_rel_hyp}. Hence, if we denote the centraliser of $Z_{[v]_0}$ in $\Stab{G/\N_0}{[v]_0}$ by $E_{[v]_0}$, Corollary A.4 from the Appendix of \cite{Short_HHG:I}  produces a homogeneous quasimorphism $\psi_{[v]_0}\colon E_{[v]_0}\to \R$ which is unbounded on $Z_{[v]_0}$ and is trivial on $Z_{[u]_0}$ for every $u\in \link_{\ov X}(v)$. 

Next, applying Proposition~\ref{prop:short_is_squid} to $(G,\ov X, \W_0)$ produces blowup materials, and in particular, for every $v\in \ov X^{(0)}$, a subgroup $E_v\le \Stab{G}{v}$ centralising $Z_v$. Whenever $v\in \ov X^{(0)}-G\B$ has infinite cyclic direction, we can precompose the quasimorphism $\psi_{[v]_0}$ constructed above with the quotient projection $E_{v}\to E_{[v]_0}$. This gives a new homogeneous quasimorphism $\psi_{v}\colon E_{v}\to \mathbb{R}$, which is unbounded on $Z_{v}\cap E_{v}$ and trivial on $Z_w\cap E_{v}$ whenever $w\in\link_{\ov X}(v)$. Most importantly, by construction $\psi_{v}$ also vanishes on the intersection $\N_0\cap E_{v}$. If we replace the quasimorphism $\phi_v$ associated to $v$ by $\psi_v$, we get new blowup materials for $G$, and Theorem~\ref{thm:squidification} then yields \emph{another} short HHG structure $(X,\W)$ on $G$, with the same support graph $\ov X$. Using the $G$-action on $\ov X^{+\W}$, one can build \emph{another} strong composite projection graph, and let $\N\le \N_0$ be deep enough to satisfy all properties of Section~\ref{sec:large_dt} \emph{with respect to the new SCPG}.

\begin{rem}[Motivation of the above construction]\label{rem:weird_flex}
    Earlier we mentioned that we wanted the short HHG structure to be “compatible” with the quotient. The improvement when passing from $(X_0, \W_0)$ to $(X, \W)$ is that, for every $v\in \ov X^{(0)}$ with infinite cyclic direction, $\N\cap E_v$ now acts with uniformly bounded orbits on the quasiline $L_v$ associated to $v$, as its construction involved collapsing the “coarse level sets” of the quasimorphism $\psi_v$ (see \cite[Definition 3.12]{Short_HHG:I} for further details). This will be one of the key ingredients of the proof, in particular when we verify that augmented links in the quotient are hyperbolic (see Lemma~\ref{lem:l_[w]} below).
\end{rem}

\subsection{The candidate combinatorial structure}\label{subsec:X_W_quot}
From now on, let $\N$ be deep enough to satisfy all properties of Section~\ref{sec:large_dt}. We now move to the description of the combinatorial HHG structure for $G/\N$. We first recall from \cite{Short_HHG:I} that the underlying graph $X$ of the short HHG structure for $G$, granted by Theorem~\ref{thm:squidification}, is a blowup of $\ov X$, where, given any $v_i\in V$ and any $g\in G$, the vertex $v=gv_i$ is blown up to the cone over $(L_v)^{(0)}=g\Stab{G}{v_i}$. 

Now let $X/\N$ be the graph whose vertices and edges are $\N$-orbits of vertices and edges in $X$. Notice that $\ov X/\N$ is a full, simplicial subgraph of $X/\N$.

\begin{lemma}\label{lem:blowup_for_quotient}
   The graph $X/\N$ is isomorphic to the blowup $\h X$ of $\ov X/\N$ with respect to the family $\{L_{[v]}\coloneq L_v/(\N\cap\Stab{G}{v})\}_{[v]\in\ov X/\N^{(0)}}$. In particular, $\h X$ is simplicial.
\end{lemma}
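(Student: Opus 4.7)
The plan is to exhibit an explicit isomorphism $\bar\Psi \colon X/\N \to \h X$ by defining a natural $\N$-invariant map $\Psi \colon X \to \h X$ and verifying that it descends to a graph isomorphism; simpliciality of $\h X$ will then be an essentially automatic by-product.

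Setup and definition of $\Psi$. The vertex set of $X$ splits as a disjoint union of \emph{tips} $\ov X^{(0)}$ and \emph{leaves} $\bigsqcup_{v \in \ov X^{(0)}} (L_v)^{(0)}$, and this partition is preserved by the $G$-action because $\ov X$ is $G$-invariant (Axiom~\ref{short_axiom:graph}) and by equivariance of the squid construction one has $n \cdot L_v = L_{nv}$ for every $n \in G$. Define $\Psi$ on tips by $v \mapsto [v] \in (\ov X/\N)^{(0)}$, and on leaves by sending $x \in (L_v)^{(0)}$ to its class in $L_{[v]}^{(0)} := (L_v)^{(0)}/(\N \cap \Stab{G}{v})$; the set $L_{[v]}$ is well-defined up to canonical isomorphism, because for $n \in \N$ conjugation by $n$ produces an identification $(L_v)^{(0)}/(\N \cap \Stab{G}{v}) \to (L_{nv})^{(0)}/(\N \cap \Stab{G}{nv})$. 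A direct check shows $\Psi$ is $\N$-invariant, since for $n \in \N$ and $x \in L_v$, $\Psi(nx)$ and $\Psi(x)$ represent the same class in $L_{[v]}^{(0)}$ under this identification.

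Bijection on vertices and edges. Since $\bar \Psi$ preserves the tip/leaf decomposition, injectivity and surjectivity can be verified separately on each. On tips, $\bar \Psi$ is the natural map $\ov X^{(0)}/\N \to (\ov X/\N)^{(0)}$, which is a bijection. On leaves, two leaves $x \in L_v$ and $y \in L_w$ satisfy $\bar\Psi(x) = \bar\Psi(y)$ iff $[v]=[w]$ in $\ov X/\N$ and, after moving $y$ into $L_v$ by some $n \in \N$ with $nv = w^{-1}$... more cleanly: the $\N$-orbit of a leaf $x \in L_v$ is determined by $[v]$ together with the $(\N \cap \Stab{G}{v})$-orbit of $x$, which is exactly what a vertex of $L_{[v]}$ records. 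For edges, the three types in $X$ are tip-tip (edges of $\ov X$), squid (between a tip $v$ and a leaf of $L_v$), and join (between $\Squid(v)$ and $\Squid(w)$ for $\ov X$-adjacent $v,w$), and these are preserved by $G$. Their $\N$-orbits biject with the corresponding edge types of $\h X$: for tip-tip edges this is the definition of the quotient graph $\ov X/\N$; for squid and join edges the same orbit-stabiliser argument as for leaves applies, using also that if $\{v,w\}$ is an edge of $\ov X$ then $\{[v],[w]\}$ is a genuine edge of $\ov X/\N$ since no loops occur by Lemma~\ref{lem:ovX/N_is_good}.

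Simpliciality of $\h X$. Once $\bar\Psi$ is an isomorphism, it suffices to check $\h X$ has neither loops nor multi-edges. The base $\ov X/\N$ is simplicial by Lemma~\ref{lem:ovX/N_is_good}, so it has no loops; squid edges connect a tip to a leaf (hence distinct vertices), and join edges connect leaves over two distinct $\ov X/\N$-adjacent tips. No multi-edges arise because between any pair of vertices in $\h X$ the blowup construction prescribes at most one edge. I expect the most delicate point to be verifying the well-definedness and $\N$-invariance on leaves, which requires being careful with the canonical identifications between $L_{[nv]}$ and $L_{[v]}$; everything else is bookkeeping that follows formally from the blowup description of $X$ combined with simpliciality of $\ov X/\N$.
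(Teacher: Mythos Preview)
Your proposal is correct and follows essentially the same idea as the paper's proof: the $\N$-action permutes squids compatibly with the retraction $p\colon X\to\ov X$, so the quotient is again a blowup. The paper compresses this into a single sentence (that $np\in L_{nv}$ for $n\in\N$), whereas you spell out the induced bijection on vertices and edges explicitly; both arguments rest on the same orbit--stabiliser computation identifying the $\N$-orbit of a leaf $x\in L_v$ with its $(\N\cap\Stab{G}{v})$-orbit. One presentational remark: the fragment ``by some $n\in\N$ with $nv=w^{-1}$\dots more cleanly:'' is garbled (vertices are not group elements, and this reads as an abandoned half-sentence), so you should delete it and keep only the clean formulation that follows.
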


\begin{proof} It suffices to notice that, for every $v\in \ov X^{(0)}$, $p\in (L_v)^{(0)}$, and $n\in \N$, $np$ is only adjacent to $nv$. In other words, each $\N$-translate of $p$ belongs to a single edge, and all these edges are in the same $\N$-orbit. Then by definition $X/\N$ is the desired blowup.
\end{proof}

Let $\h p\colon \h X\to \ov X/\N$ be the retraction mapping every cone to its apex. Given a simplex $\h \Delta$ of $\h X$, we shall call $\h p(\h \Delta)$ its support. We also denote a maximal simplex of $\h X$ by $\Delta([x],[y])$, where $[x]\in (L_{[v]})$, $[y]\in (L_{[w]})$, and $[v],[w]$ are $\ov X/\N$-adjacent. 

Given a simplex $\Delta$ of $X$, we will denote its projection to $\h X$ as $\h \Delta$, and we will say that $\Delta$ is a \emph{lift} of $\h \Delta$. In particular, a lift of a maximal simplex $\Delta([x],[y])$ is of the form $\Delta(x,y)$, where $x\in [x]$ and $y\in [y]$. Conversely:

\begin{lemma}\label{lem:maximal_to_maximal}
    Let $\Delta(x,y)$ be a maximal simplex of $X$. If $\N$ is deep enough, then $\h \Delta=\Delta([x],[y])$ is a maximal simplex of $\h X$.
\end{lemma}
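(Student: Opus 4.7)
The plan is to combine the description of $\h X$ as the blowup of $\ov X/\N$ (Lemma~\ref{lem:blowup_for_quotient}) with the good properties of $\ov X/\N$ granted by Lemma~\ref{lem:ovX/N_is_good}, namely simpliciality and triangle-freeness, both of which hold provided $\N$ is deep enough.

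First, I would unpack the hypothesis using Lemma~\ref{cor:bounded_links} applied to the blowup $X$ of the triangle- and square-free graph $\ov X$: since $\Delta(x,y)$ is maximal in $X$, its vertex set has the form $\{v,x,w,y\}$, where $v,w\in\ov X^{(0)}$ are $\ov X$-adjacent, $x\in L_v$, and $y\in L_w$. Hence the quotient $\h\Delta$ has vertex set contained in $\{[v],[x],[w],[y]\}$, with $[v],[w]\in(\ov X/\N)^{(0)}$, $[x]\in L_{[v]}$, and $[y]\in L_{[w]}$.

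Next, I would verify that these four vertices are genuinely distinct and span a simplex in $\h X$ once $\N$ is deep enough. Distinctness of $[v]$ and $[w]$ follows from simpliciality of $\ov X/\N$: otherwise the $\ov X$-edge between $v$ and $w$ would descend to a loop, contradicting Lemma~\ref{lem:ovX/N_is_good}. Given $[v]\neq [w]$, the squid vertices $[x]\in L_{[v]}$ and $[y]\in L_{[w]}$ automatically lie in disjoint squids, and hence are distinct from each other and from $[v],[w]$. Pairwise adjacency in $\h X$ is then immediate from Definition~\ref{defn:blowup}: since $[v]$ and $[w]$ are adjacent in $\ov X/\N$, the squids $\Squid([v])$ and $\Squid([w])$ span a join in $\h X$.

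Finally, I would establish maximality by ruling out any extending vertex $[u]$. If $[u]\in(\ov X/\N)^{(0)}$, then $[u]$ must be $\ov X/\N$-adjacent to both $[v]$ and $[w]$, producing a triangle in $\ov X/\N$, which is forbidden by Lemma~\ref{lem:ovX/N_is_good}. If $[u]\in L_{[z]}$ for some $[z]\notin\{[v],[w]\}$, then adjacency of $[u]$ to $[v]$ and $[w]$ forces $\Squid([z])$ to span joins with both $\Squid([v])$ and $\Squid([w])$, so $[z]$ is $\ov X/\N$-adjacent to both, recreating a forbidden triangle. If instead $[u]\in L_{[v]}$ with $[u]\neq [x]$, the edge $\{[u],[x]\}$ would have to live inside $\Squid([v])=[v]\star L_{[v]}^{(0)}$, whose $L$-part carries no internal edges, and $[u],[x]$ sit in no other common squid; the case $[u]\in L_{[w]}$ is symmetric. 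I do not anticipate a substantial obstacle: the argument is purely combinatorial, and the only requirement on $\N$ is that it be deep enough for Lemma~\ref{lem:ovX/N_is_good} to apply.
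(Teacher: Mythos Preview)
Your proposal is correct and follows essentially the same approach as the paper: reduce to showing the four vertices $[v],[x],[w],[y]$ are distinct in the blowup $\h X$, then invoke the blowup structure and triangle-freeness of $\ov X/\N$ for maximality. The only minor difference is that the paper establishes $[v]\neq[w]$ via the colouring (adjacent vertices have different colours and $\N$ preserves colours, by Remark~\ref{rem:initial_D}), whereas you use simpliciality of $\ov X/\N$; both are valid and both require $\N$ deep enough, and the paper leaves the maximality step implicit via Lemma~\ref{cor:bounded_links} rather than arguing it case-by-case as you do.
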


\begin{proof}
    Let $v=p(x)$ and $w=p(y)$. As $\ov X$ is a $G$-invariant subgraph of $X$, $v$ and $x$ cannot be in the same $\N$-orbit, and similarly for all other pairs of vertices of $\Delta$ where one is in $\ov X$ and the other is not. Moreover, if $x$ and $y$ are in the same $\N$-orbit, then so are $v$ and $w$. Thus, it suffices to exclude that $v$ and $w$ are in the same $\N$-orbit, which is true as they must have different colours, and we assumed $\N$ to be deep enough to preserve each colour.
\end{proof}

As every simplex of $X$ can be completed to a maximal simplex, we get that:

\begin{cor}\label{cor:simplices_inject}
    Every simplex $\Delta$ of $X$ injects inside $\h X$. In particular, if $a,b\in X^{(0)}$ are $X$-adjacent vertices, then their projections $[a], [b]$ are distinct.
\end{cor}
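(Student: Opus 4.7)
The plan is to bootstrap directly from the maximal case handled in Lemma~\ref{lem:maximal_to_maximal}. First, I would observe that every simplex $\Delta$ of $X$ is contained in some maximal simplex. This is a straightforward consequence of the case analysis in Lemma~\ref{cor:bounded_links}: in the edge-type and triangle-type cases one only needs to adjoin vertices of the form $(w)$ or $(w,y)$ for a suitable neighbour $w$ of the existing support, which exists since no component of $\ov X$ is a single point (Axiom~\ref{short_axiom:graph}) and every $L_v$ is non-empty by construction of the squid (as $(L_v)^{(0)}$ corresponds to the stabiliser coset $P_v$ from Notation~\ref{notation:pre_def_squid}).

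Next, write the containing maximal simplex as $\Delta' = \Delta(x,y)$ with $x \in L_v$, $y \in L_w$, and $v,w$ adjacent in $\ov X$. By Lemma~\ref{lem:maximal_to_maximal}, provided $\N$ is deep enough, the projection $\h{\Delta'} = \Delta([x],[y])$ is a maximal simplex of $\h X$. By the blowup description of $\h X$ from Lemma~\ref{lem:blowup_for_quotient}, a maximal simplex has exactly four pairwise distinct vertices $[v], [x], [w], [y]$. Since the vertex set of $\Delta$ is a subset of $\{v,x,w,y\}$, the quotient projection $X \to \h X$ is injective on $\Delta^{(0)}$, which is exactly the claim.

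The ``in particular'' part is immediate: an $X$-edge $\{a,b\}$ is a $1$-simplex, so applying the above to $\Delta = \{a,b\}$ yields $[a]\neq [b]$.

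I do not anticipate any serious obstacle; the corollary is essentially bookkeeping on top of Lemma~\ref{lem:maximal_to_maximal}. The one point worth double-checking is the extension step: that Lemma~\ref{cor:bounded_links} together with Axiom~\ref{short_axiom:graph} really does guarantee that every non-maximal simplex extends to a maximal one. This is immediate from the explicit classification, so no new argument is required.
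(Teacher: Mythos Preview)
Your proposal is correct and follows exactly the paper's approach: the paper's proof is the single line ``As every simplex of $X$ can be completed to a maximal simplex, we get that:'', and you have simply filled in the details of that extension step and the bookkeeping that follows from Lemma~\ref{lem:maximal_to_maximal}.
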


\begin{defn}[$\h\W$-edges]\label{defn:hW}
    Let $\h W$ be the $\h X$-graph where two maximal simplices $\Delta([x],[y])$, $\Delta([x'],[y'])$ are adjacent if and only if they admit lifts $\Delta(x,y)$, $\Delta(x',y')$ which are adjacent in $\W$. 
\end{defn}

In other words, $\h \W$ is obtained from $\W/\N$ after collapsing double edges and loops, in order to get a simplicial graph.

\subsection{Checking the combinatorial HHG axioms}
We now check that the pair $(\h X, \h\W)$ is a combinatorial HHG structure for $G$. The recurring theme will be that one is often allowed to lift combinatorial configurations from $\h X^{+\h \W}$ to $X^{+\W}$. This way, all properties of $(\h X, \h \W)$ can be deduced from the corresponding statements about $(X,\W)$, which we already know to be a combinatorial HHG structure for $G$.
\par\medskip

For convenience, we recall the notion of a shortening pair, which will be a key ingredient for our lifting procedures:
\begin{cor}[of Lemma~\ref{lem:short_less_complex}]\label{cor:shortpair_for_short} The following holds if $\N$ is deep enough. 
Let $(\Lambda, N)$ be either $(\ov X^{+\W},\N)$ or $(\link_{\ov X}(v)^{+\W}, \N_v)$ for some $v\in\ov{X}^{(0)}$. There exists a well-order on $N$, called \emph{complexity}, such that the minimum element is the identity $1$. Moreover, for all $\gamma\in N-\{1\}$ and all $x\in \Lambda^{(0)}$, there exist a \emph{shortening pair} $(s,\gamma_s)$ (here $s\in G\B\cap \Lambda^{(0)}$ and $\gamma_s\in\Gamma_s$) so that $\gamma_s\gamma$ has strictly lower complexity than $\gamma$, and either
\begin{enumerate}
	 \item one between $x$ and $\gamma x$ is fixed by $\Gamma_s$, or
	 \item $\dist_s(x,\gamma x)\ge 100E$.	
\end{enumerate}
\end{cor}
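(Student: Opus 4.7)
The plan is to apply Lemma~\ref{lem:short_less_complex} to each of the two SCPGs constructed in Remarks~\ref{rem:Short_is_SCPG} and~\ref{rem:scpg_local_version}: the global one $(\ov X^{+\W}, G, G\B, \{\Gamma_v\})$ and, for each $v\in\ov X^{(0)}$, the local one $(\link_{\ov X}(v)^{+\W}, \Stab{G}{v}, G\B\cap\link_{\ov X}(v), \{\Gamma_w\})$. In each case the composite rotating family is defined on $G\B\cap\Lambda^{(0)}$, so the vertex $s$ produced by Lemma~\ref{lem:short_less_complex} automatically lies in $G\B\cap\Lambda^{(0)}$. The good ordering on $N$ is the lexicographic complexity $(\alpha(\gamma),n(\gamma))$ from Lemma~\ref{lem:reduction_for_original_rotating_fam}, whose minimum element is $1$ by the construction in \cite[Section 3]{dfdt}.

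Two translations are then needed. First, I would convert the conclusion ``$x$ (or $\gamma x$) is $s$-inactive'' into ``$x$ (resp.\ $\gamma x$) is fixed by $\Gamma_s$'': by Definition~\ref{def;CRF}, $\Gamma_s$ acts by rotations around $s$, meaning it pointwise fixes $\{s\}\cup(\Lambda^{(0)}-\Act(s))$; so $s$-inactivity implies $\Gamma_s$-fixedness (and the case $x=s$ or $\gamma x=s$ is also covered). Second, I would translate the bound $\dist^\angle_s(x,\gamma x)\ge\Theta_{short}$ into a lower bound on $\dist_s(x,\gamma x)=\dist_{\C\ell_s}(\rho^x_s,\rho^{\gamma x}_s)$, using Remark~\ref{rem:d^angle}, which tells us $\dist^\angle_s$ and $\dist^\pi_s=\dist_s$ differ by a uniformly bounded additive constant $D$ depending only on the SCPG.

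Putting these together, Lemma~\ref{lem:short_less_complex} produces a shortening pair $(s,\gamma_s)$ with $\gamma_s\gamma$ of strictly lower complexity than $\gamma$ and such that either one of $x,\gamma x$ is fixed by $\Gamma_s$, or $\dist_s(x,\gamma x)\ge\Theta_{short}-D$. Since $\Theta_{short}=\Theta_{rot}/2-\aleph-2B-2\kappa$ is an affine function of the rotation control $\Theta_{rot}$, which by Remarks~\ref{rem:Short_is_SCPG} and~\ref{rem:scpg_local_version} may be made arbitrarily large by taking $\N$ deep enough, we can arrange $\Theta_{short}-D\ge 100E$ in both the global and local settings simultaneously (there are only finitely many $G$-orbits of vertices $v\in\ov X^{(0)}$, so the finitely many constants $D$, $\aleph$, $B$, $\kappa$ arising from the local SCPGs are uniformly bounded).

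The proof is essentially a dictionary between the SCPG vocabulary of Section~\ref{sec:rotating} and the short HHG vocabulary needed for Section~\ref{sec:quotient_is_short}, so I do not expect serious obstacles; the only mildly delicate point is the uniformity of the constants across the local SCPGs, which follows from the cofiniteness of the $G$-action on $\ov X$ and from the fact that the metrics $\dist^\angle$ and $\dist^\pi$ on each $\C\ell_v$ come from the fixed short HHG structure $(X,\W)$.
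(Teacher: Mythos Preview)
Your proposal is correct and matches the paper's intent: the corollary is stated without proof precisely because it is a direct translation of Lemma~\ref{lem:short_less_complex} into the short-HHG setting via Remarks~\ref{rem:Short_is_SCPG} and~\ref{rem:scpg_local_version}. Your dictionary---$s$-inactivity (or $x=s$) $\Rightarrow$ $\Gamma_s$-fixedness, $\dist^\angle_s$ versus $\dist_s$ via Remark~\ref{rem:d^angle}, and $\Theta_{short}\to\infty$ as $\N$ deepens---is exactly what is needed, and your remark on uniformity over the finitely many $G$-orbits of vertices is the right justification for handling all local SCPGs at once.
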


\begin{rem}[Dependence on $\N$]\label{rem:dependence on N}
    From now on, we shall say that a quantity is \emph{depth-resistant} if it does not depend on the choice of powers $\{M_1,\ldots, M_k\}$ used to define $\N$, as in Notation~\ref{notation:kernel}, but only on the fact that each $M_i$ is a multiple of a large enough integer (that is, the quantity is the same for every deep enough $\N$). If we could prove that all constants in the proofs below were depth-resistant, then the combinatorial structure $(\h X, \h\W)$ for $G/\N$ would be \emph{uniformly} hierarchically hyperbolic, i.e. the HHS constant and the uniqueness function would be depth-resistant. This is not the case, but the only exception is that, whenever $w\in G\B$, the diameter of $L_{[w]}$ depends on the index of $\Gamma_w$ inside $Z_w$ (see Lemma~\ref{lem:l_[w]}).
\end{rem}

\subsubsection{Finite chains, and intersection of links}
As $\h X$ is a blowup of a triangle- and square-free graph and none of its connected components is a single point, one can argue exactly as in \cite[Subsection 3.3.2]{Short_HHG:I} to get the first and third requirements of Definition~\ref{defn:combinatorial_HHS}, with depth-resistant constants:

\begin{cor}[Verification of Definition~\ref{defn:combinatorial_HHS}.\eqref{item:chhs_flag}]\label{cor:finite_complexity_HHS_blow_up} Any chain $\link(\Delta_1)\subsetneq\dots\subsetneq\link(\Delta_i)$, where each $\Delta_j$ is a simplex of $X$, has length at most $25$.
\end{cor}

\begin{cor}[Verification of Definition~\ref{defn:combinatorial_HHS}.\eqref{item:chhs_join}]\label{lem:simplicial_wedge_property}
Let $\Sigma,\Delta$ be non-maximal simplices of $X$, and suppose that there exists a non-maximal simplex $\Gamma$ such that $[\Gamma]\nest[\Sigma]$, $[\Gamma]\nest[\Delta]$ and $\diam(\C (\Gamma))\ge 3$. Then there exists a non-maximal simplex $\Pi$ which extends $\Sigma$ such that $[\Pi]\nest[\Delta]$ and all $\Gamma$ as above satisfy $[\Gamma]\nest[\Pi]$.
\end{cor}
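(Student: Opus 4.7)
The plan is to mimic the proof of the analogous statement for $(G,\ov X)$ from \cite[Subsection 3.3.2]{short_HHG:I}. By Lemma~\ref{lem:blowup_for_quotient}, $\h X$ is the blowup of $\ov X/\N$, and by Lemma~\ref{lem:ovX/N_is_good} the graph $\ov X/\N$ is triangle- and square-free with no single-point component. Hence Lemma~\ref{cor:bounded_links} applies verbatim to $\h X$, yielding a short explicit list of the possible shapes of non-maximal simplices and of their links.

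First I would narrow down the possible $\sim$-classes of $\Gamma$. The assumption $\diam(\C(\Gamma)) \ge 3$ rules out the cases in which $\link_{\h X}(\Gamma)$ is a single vertex or a non-trivial join, since by Lemma~\ref{cor:bounded_links}\eqref{cor:bounded_links_bounded} these have diameter at most $2$. Via the quotient analogue of Remark~\ref{rem:unbounded_dom_short_hhg}, this leaves only $[\Gamma] \in \{[\emptyset]\} \cup \{\ell_{[v]}\} \cup \{\U_{[v]}\}$. If $[\Gamma]=[\emptyset]$, then $\link(\Gamma)=\h X$ forces $[\Sigma]=[\Delta]=[\emptyset]$, and we can take $\Pi=\Sigma$ trivially.

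The main work is then a finite case analysis on the type of $[\Gamma]$. If $[\Gamma]=\ell_{[w]}$, then $\link(\Gamma)=(L_{[w]})^{(0)}$, and $[\Gamma]\nest[\Sigma]$ forces $\Sigma$ to be either edge-type supported on a vertex $\ov X/\N$-adjacent to $[w]$, or triangle-type with $[w]$ already in its support; the same holds for $\Delta$. In each subcase I would produce $\Pi$ explicitly: for instance, if $\Sigma=\{([v],[x])\}$ is edge-type with $[v]\in\link_{\ov X/\N}([w])$ and $\Delta$ is triangle-type supported on $[v']$ and $[w]$, then $\Pi=\Sigma\star\{([w])\}$ has link $(L_{[w]})^{(0)}=\link(\Delta)$ and extends $\Sigma$. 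The dual case $[\Gamma]=\U_{[v]}$ forces both $\Sigma,\Delta$ to be edge- or triangle-type simplices with $[v]$ in their support, and $\Pi$ is constructed analogously by enlarging $\Sigma$ at $[v]$.

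The main obstacle in each subcase is checking that the candidate $\Pi$ is actually a simplex of $\h X$, i.e.\ that the relevant vertices span an edge. Via the blowup structure this reduces to checking that the supports of the involved simplices already lie on a common edge of $\ov X/\N$, which is exactly what the hypotheses $[\Gamma]\nest[\Sigma]$ and $[\Gamma]\nest[\Delta]$ encode. Since the whole argument is combinatorial and only uses the blowup structure together with the absence of small cycles in $\ov X/\N$, the bound on the complexity of $\Pi$ (and hence all constants produced) is depth-resistant.
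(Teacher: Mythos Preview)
Your approach matches the paper's exactly: the paper gives no argument beyond observing that $\h X$ is a blowup of a triangle- and square-free graph with no single-point components (Lemmas~\ref{lem:blowup_for_quotient} and~\ref{lem:ovX/N_is_good}) and then citing \cite[Subsection~3.3.2]{short_HHG:I}, which is precisely what you do. One minor point: your case analysis for $\Sigma$ when $[\Gamma]=\ell_{[w]}$ is incomplete, since simplices such as $\Sigma=\{([v])\}$ or $\Sigma=\{([v]),([w])\}$ (case~\eqref{cor:bounded_links_bounded} of Lemma~\ref{cor:bounded_links}) can also satisfy $(L_{[w]})^{(0)}\subseteq\link(\Sigma)$; but since you are explicitly deferring to the cited reference for the full enumeration, this does not affect the overall correctness.
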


\subsubsection{Fullness of links}
\begin{lemma}[Verification of Definition~\ref{defn:combinatorial_HHS}.\eqref{item:C_0=C}]\label{lem:edges_in_link_quot}
Let $\h \Delta$ be a non-maximal simplex of $\h X$. Suppose that $[a],[b]\in\link(\h \Delta)$ are distinct, non-adjacent vertices which are contained in $\h \W$--adjacent maximal simplices $\h\Sigma_a,\h\Sigma_b$. Then there exist $\h \W$--adjacent maximal simplices $\h\Pi_a,\h\Pi_b$ of $\h X$ such that $\h \Delta\star [a]\subseteq\h\Pi_a$ and $\h \Delta\star [b]\subseteq\h\Pi_b$.
\end{lemma}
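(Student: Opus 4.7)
The strategy is a \emph{lift--apply--project} scheme: lift the entire configuration from $\h X$ to $X$, invoke Axiom (\ref{item:C_0=C}) of Definition~\ref{defn:combinatorial_HHS} for the combinatorial HHS $(X,\W)$ (which we already have by assumption), and then push the conclusion back down to $\h X$.

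First, by Definition~\ref{defn:hW}, the $\h\W$-adjacency of $\h\Sigma_a$ and $\h\Sigma_b$ provides $\W$-adjacent lifts $\Sigma_a,\Sigma_b\subset X$, and these single out lifts $a\in\Sigma_a$ of $[a]$ and $b\in\Sigma_b$ of $[b]$. Since $[a]\neq[b]$ are non-adjacent in $\h X$, Corollary~\ref{cor:simplices_inject} (via its contrapositive) gives that $a,b$ are distinct and not $X$-adjacent.

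The key technical step is to exhibit a lift $\Delta$ of $\h\Delta$ in $X$ such that $a,b\in\link_X(\Delta)$. Here we argue by a case analysis according to the type of $\h\Delta$, using Lemma~\ref{cor:bounded_links} applied to $\h X$, which by Lemmas~\ref{lem:ovX/N_is_good} and~\ref{lem:blowup_for_quotient} is the blowup of the triangle- and square-free graph $\ov X/\N$. The empty and single-vertex cases are essentially immediate; in the edge- and triangle-type cases, the explicit description of $\link(\h\Delta)$ (squids over neighbours of the supports of $\h\Delta$), combined with $\N$-equivariance of the projection $X\to\h X$, produces such a $\Delta$. Whenever the initial lifts $a,b$ extracted from $\Sigma_a,\Sigma_b$ are not simultaneously compatible with any single lift of $\h\Delta$, we replace the pair $(\Sigma_a,\Sigma_b)$ with a suitable $\N$-translate (preserving $\W$-adjacency) so that the new $a,b$ lie in a common squid configuration over lifts that share a common neighbour in $\ov X$, using that $\link_{\ov X/\N}([v])=q(\link_{\ov X}(v))$.

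Once $\Delta$ is in place, Axiom (\ref{item:C_0=C}) for $(X,\W)$ applies: $a,b$ are distinct non-adjacent vertices of $\link_X(\Delta)$ contained in the $\W$-adjacent maximal simplices $\Sigma_a,\Sigma_b$, hence there exist $\W$-adjacent maximal simplices $\Pi_a=\Delta\star\Sigma'_a$ and $\Pi_b=\Delta\star\Sigma'_b$ of $X$ with $a\in\Pi_a$ and $b\in\Pi_b$. Projecting back to $\h X$, Lemma~\ref{lem:maximal_to_maximal} (assuming $\N$ is deep enough) ensures that $\h\Pi_a=\h\Delta\star\h{\Sigma'_a}$ and $\h\Pi_b=\h\Delta\star\h{\Sigma'_b}$ are maximal simplices of $\h X$; by construction they contain $\h\Delta\star[a]$ and $\h\Delta\star[b]$ respectively, and they are $\h\W$-adjacent by Definition~\ref{defn:hW}, since $\Pi_a,\Pi_b$ are $\W$-adjacent lifts.

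The main obstacle is the compatibility of lifts in the middle paragraph: for generic $\W$-adjacent $\Sigma_a,\Sigma_b$, the induced $a,b$ need not lie in the $X$-link of any single lift of $\h\Delta$. Overcoming this requires a careful use of the blowup structure of $\h X$, together with the freedom to $\N$-translate the lifted configuration, both to adjust which lifts of the vertices of $\h\Delta$ we pick and which lifts of $[a],[b]$ we see inside the $\W$-adjacent pair.
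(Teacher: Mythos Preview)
Your overall lift--apply--project strategy is exactly the paper's, and the easy case where $\h p([a])=\h p([b])$ goes through as you suggest. The genuine gap is in the second case, where $\h p([a])=[w]$ and $\h p([b])=[w']$ are distinct and $\h\Delta$ is supported on some $[v]$ adjacent to both. Here you assert that one can ``replace the pair $(\Sigma_a,\Sigma_b)$ with a suitable $\N$-translate'' to force the lifts $a,b$ to be $X$-adjacent to a common lift of $[v]$, but translating the whole pair by an element of $\N$ does nothing: the relative configuration is unchanged, so if no lift of $[v]$ was simultaneously adjacent to $a$ and $b$ before, none is after. What actually has to be resolved is a discrepancy element $n\in\N$ with $nw\in\link_{\ov X}(v)$ but $w\notin\link_{\ov X}(v)$, and you need a mechanism for reducing $n$ to the identity while preserving $\W$-adjacency.

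The paper handles this by induction on the \emph{complexity} of $n$ via the shortening-pair machinery (Corollary~\ref{cor:shortpair_for_short}): given $n\neq 1$, one finds $s\in G\B$ and $\gamma_s\in\Gamma_s$ with $\gamma_s n$ of strictly lower complexity, and then applies $\gamma_s$ to \emph{part} of the lifted configuration---which part depends on which vertex $\gamma_s$ fixes. When neither $w$ nor $nw$ is fixed by $\gamma_s$, one has $\dist_s(w,nw)\ge 100E$, and the strong bounded geodesic image Lemma~\ref{lem:strong_bgi_for_short} forces some intermediate vertex of the configuration (either $v$ or $w'$) to lie in $\operatorname{Star}_{\ov X}(s)$, hence to be fixed by $\gamma_s$; this is what allows the partial translation without breaking the diagram. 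Your proposal mentions neither the complexity induction nor the strong BGI input, and without them there is no argument for why compatible lifts exist.
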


The following proof is prototypical of how to use Corollary~\ref{cor:shortpair_for_short}, together with the strong bounded geodesic image Lemma~\ref{lem:strong_bgi_for_short}, in order to lift combinatorial configurations from $\h X^{+\h \W}$ to $X^{+\W}$.

\begin{proof}
Suppose first that $\h p([a])=\h p([b])=[v]$. Then $[a],[b]\in (L_{[v]})^{(0)}$, as they are not $\h X$-adjacent. Let $a,b\in X$ be $\W$-adjacent lifts of $[a]$ and $[b]$; moreover, let $v=p(a)$ and $v'=p(b)$, which are $\W$-adjacent as well and in the same $\N$-orbit. By Corollary~\ref{cor:simplicial_x^w/n}, $\ov X^{+\W}/\N$ is simplicial, so $v$ must be equal to $v'$ or $\ov X^{+\W}/\N$ would have an edge with the same endpoints. This means that $a$ and $b$ belong to the same $(L_{v})^{(0)}$. Let $\Delta$ be a lift of $\h\Delta$ inside $\link_X(a)=\link_X(b)$. Then, as $(X,\W)$ is a combinatorial HHS, there exist $\W$-adjacent maximal simplices $\Pi_a, \Pi_b$ such that $\Delta\star a\subseteq\Pi_a$ and $\Delta\star b\subseteq\Pi_b$. Thus, the required simplices $\h\Pi_a$ and $\h\Pi_b$ are the quotient projections of $\Pi_a$ and $\Pi_b$.
\par\medskip
Thus suppose that $\h p([a])=[w]$ and $\h p([b])=[w']$ are different. In particular $[w]$ and $[w']$ are not $\ov{X}/\N$-adjacent, or $[a]$ and $[b]$ would be joined by an edge of $\h X$. This forces the support of $\h\Delta$ to be a single vertex $[v]$, which is $\ov X/\N$-adjacent to both $[w]$ and $[w']$. Let $[y]=\Sigma_a\cap (L_{[w]})^{(0)}$, so that $[a]$ is either $[y]$ or $[w]$, and $[y']=\Sigma_b\cap (L_{[w']})^{(0)}$. Now take lifts $y$ of $[y]$ and $y'$ of $[y']$ which are $\W$-adjacent. Let $w=p(y)$, $w'=p(y')$, and let $v\in \link_{\ov X}(w')$ be a lift of $[v]$. There exists $n\in \N$ such that $nw\in\link_{\ov X}(v)$. Hence the situation in $X^{+\W}$ is as in Figure~\eqref{fig:lift_fullness} below.

\begin{figure}[htp]
    \centering
    $$\begin{tikzcd}
        nw\ar[r,no head]& v\ar[r,no head]\ar[rd,no head]& w'\ar[r,no head, dashed]\ar[d, no head]\ar[rd,no head, dashed]& w\ar[d, no head]  \ar[lll, bend right, "n"]  \\
        & & y'\ar[r,no head, dashed]\ar[ru,no head, dashed]& y 
    \end{tikzcd}$$
    \caption{The full lines represent $X$-edges, while the dashed lines represent $\W$-edges.}
    \label{fig:lift_fullness}
\end{figure}

Our goal is to show that there exist $\W$-adjacent lifts of $[y]$ and $[y']$ which are also $X$-adjacent to some lift of $[v]$. Then we will lift $\h \Delta$ to some $\Delta$ supported on $v$, and we will conclude as above that $[y]$ and $[y']$ belong to $\h W$-adjacent maximal simplices containing $\h \Delta$.

By Corollary~\ref{cor:shortpair_for_short}, $\N$ is equipped with a well-order, called complexity, whose minimum element is the identity, so we proceed by induction on the complexity of $n$. If $n=1$ then $w=nw$, and both $y$ and $y'$ are already $X$-adjacent to $v$. Otherwise, let $(s, \gamma_s)$ be a shortening pair, as in Corollary~\ref{cor:shortpair_for_short}. Using $\gamma_s$, we want to replace some lifts, without breaking the configuration from Figure~\eqref{fig:lift_fullness}, in such a way that the two new lifts of $[w]$ are $w$ and $\gamma_s n w$. Then we shall conclude by induction, as $\gamma_s n$ has strictly lower complexity than $n$ by the defining properties of a shortening pair.

There are some cases to consider, depending on how $\gamma_s$ acts on our configuration.
\begin{itemize}
    \item If $\gamma_s$ fixes $w$, then we apply $\gamma_s$ to all lifts. As $\gamma_s$ acts by isometries on $X^{+\W}$, the configuration from Figure~\eqref{fig:lift_fullness} is preserved; moreover, every lift is mapped to a lift of the same point, as $\gamma_s\in \N$. Now $\gamma_s nw$ differs from $\gamma_s w=w$ by $\gamma_s n$.
    \item If $\gamma_s$ fixes $nw$ then $nw=\gamma_s n w$ already differs from $w$ by $\gamma_s n$. Then we leave the configuration untouched.
    \item Otherwise, Corollary~\ref{cor:shortpair_for_short}  implies that $\dist_s(w, nw)\ge 100E$. If neither $v$ nor $w'$ belonged to $\operatorname{Star}_{\ov X}(s)$, then $\dist_s(w, w')$, $\dist_s(w', v)$, and $\dist_s(v, nw)$ would all be well-defined, and by triangle inequality at least one of them would be greater than $33E$. Without loss of generality, say $\dist_s(w, w')\ge 33E$. However, this would contradict the strong bounded geodesic image Lemma~\ref{lem:strong_bgi_for_short}, because the edge $\{w, w'\}$ of $\ov X^{+\W}$ would be a geodesic disjoint from $\operatorname{Star}_{\ov X}(s)$. 
    \\
    Thus, suppose first that $\gamma_s$ fixes $v$. If we apply $\gamma_s$ to $nw$ then $v=\gamma_s v$ is again $\ov X$-adjacent to $\gamma_s n w$, so we can replace the lifts without breaking the configuration.
    \\
    If instead $\gamma_s$ fixes $w'$, we apply $\gamma_s$ to both $v$ and $nw$. This way $\gamma_s v$ is still $X$-adjacent to $w'$, and therefore to $y'$. 
\end{itemize}
The proof of Lemma~\ref{lem:edges_in_link_quot} is now complete.
\end{proof}

\subsubsection{Hyperbolicity of augmented links}\label{subsubsec:hyp_link_quot}
Let $\h \Delta$ be a simplex of $\h X$. We want to show that $\link(\h\Delta)^{+\W}$ is uniformly hyperbolic, in order to verify the first half of Definition~\ref{defn:combinatorial_HHS}.\eqref{item:chhs_delta}. As $\h X$ is a blowup graph, we only need to focus on the cases when $\link(\h \Delta)$ is unbounded, listed in Lemma~\ref{cor:bounded_links}.

Firstly, $\link(\emptyset)^{+\h\W}$ retracts onto $(\ov X/\N)^{+\h\W}$, which coincides with $\ov X^{+\W}/\N$ as, by construction, two vertices $[v],[w]\in (\ov X/\N)^{(0)}$ are $\h \W$-adjacent if and only if they have $\W$-adjacent lifts $v,w\in \ov X^{(0)}$. Then Corollary~\ref{cor:hyp_quotients_from_rotating} tells us that $\ov X^{+\W}/\N$ is $E$-hyperbolic. Thus we get:
\begin{lemma} $\link(\emptyset)^{+\h\W}$ is hyperbolic, and the hyperbolicity constant is depth-resistant.
\end{lemma}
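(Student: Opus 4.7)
The plan is to transfer hyperbolicity from $\ov X^{+\W}/\N$, which is $E$-hyperbolic by Corollary~\ref{cor:hyp_quotients_from_rotating} with $E$ depth-resistant, to the augmented link $\link_{\h X}(\emptyset)^{+\h\W} = \h X^{+\h\W}$ (using Lemma~\ref{cor:bounded_links}(\ref{cor:bounded_links_emptyset})). The identification is mediated by the induced subgraph of $\h X^{+\h\W}$ on the apex vertices, namely $(\ov X/\N)^{+\h\W}$.

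First I would verify that the apex retraction $\h p \colon \h X \to \ov X/\N$ from Lemma~\ref{lem:blowup_for_quotient} extends to a $1$-Lipschitz retraction $\h p^{+} \colon \h X^{+\h\W} \to (\ov X/\N)^{+\h\W}$. For $\h X$-edges this is immediate from the blowup description (each squid $\Squid([v])$ is a cone on $[v]$, so any edge projects to an edge or a vertex). For the augmented edges arising from $\h\W$-adjacent maximal simplices $\Delta([x],[y])$ and $\Delta([x'],[y'])$, the retraction sends them to edges between the supports $\{\h p([x]),\h p([y])\}$ and $\{\h p([x']),\h p([y'])\}$, which exist in $(\ov X/\N)^{+\h\W}$ by the very definition of $\h\W$-augmentation. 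Since each squid has diameter at most $2$ in $\h X$, the apex vertices are $1$-dense in $\h X^{+\h\W}$, so this retraction is a quasi-isometry with depth-resistant constants, and hyperbolicity of the target implies hyperbolicity of the source.

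Next I would identify $(\ov X/\N)^{+\h\W}$ with $\ov X^{+\W}/\N$ as graphs. Both share the vertex set $(\ov X/\N)^{(0)}$, so only the edge sets need to be matched. An edge in $(\ov X/\N)^{+\h\W}$ between apex vertices $[v],[w]$ is either a $\ov X/\N$-edge, which by Lemma~\ref{lem:ovX/N_is_good} lifts to an $\ov X$-edge, or an augmented edge, which by Definition~\ref{defn:hW} corresponds precisely to lifts belonging to $\W$-adjacent maximal simplices of $X$. Both conditions match the definition of an edge in $\ov X^{+\W}/\N$. Applying Corollary~\ref{cor:hyp_quotients_from_rotating} then yields the desired hyperbolicity of $\h X^{+\h\W}$.

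The main obstacle, such as it is, is purely definitional: verifying that $\h\W$-augmentation at the level of $\ov X/\N$ matches the $\N$-quotient of $\W$-augmentation of $\ov X$. Since $\h\W$ was introduced in Definition~\ref{defn:hW} precisely so that this identification works, and Lemma~\ref{lem:ovX/N_is_good} guarantees that edges of $\ov X/\N$ lift to edges of $\ov X$, the bookkeeping reduces to reading off the definitions. No further argument is required, and all constants introduced in the retraction (bounded squid diameter, $1$-Lipschitz retraction) are depth-resistant, so the resulting hyperbolicity constant depends only on $E$.
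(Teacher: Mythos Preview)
Your proof is correct and follows the same approach as the paper: retract $\h X^{+\h\W}$ onto the augmented support graph $(\ov X/\N)^{+\h\W}$, identify the latter with $\ov X^{+\W}/\N$, and invoke Corollary~\ref{cor:hyp_quotients_from_rotating}. You supply more detail on why the retraction is Lipschitz and why the identification of edge sets holds, but the strategy is identical.
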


We can argue similarly if $\h \Delta=\{([v],[x])\}$ is of edge-type. Indeed, $\link(\h\Delta)^{+\h\W}$ retracts onto $(\link_{\ov X/\N}([v]))^{+\h\W}=(\link_{\ov X}(v)^{+\W})/\N_v$, and the latter is $E$-hyperbolic by Corollary~\ref{cor:local_hyp_quotients_from_rotating}. Hence:

\begin{lemma}[Edge-type] If $\h \Delta$ is of edge-type then $\link(\h\Delta)^{+\h\W}$ is hyperbolic, and the hyperbolicity constant is depth-resistant.
\end{lemma}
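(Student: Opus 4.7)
The plan is to mirror the argument just given for the $\h\Delta=\emptyset$ case. Write $\h\Delta=\{([v],[x])\}$. From the blowup description of $\h X$ (Lemma~\ref{lem:blowup_for_quotient}) together with the edge-type case of Lemma~\ref{cor:bounded_links}, we have $\link_{\h X}(\h\Delta)=\h p^{-1}(\link_{\ov X/\N}([v]))$. The map $\h p$ is a $1$-Lipschitz retraction onto the support $\link_{\ov X/\N}([v])$, and its fibres $\{[w]\}\cup L_{[w]}^{(0)}$ have diameter at most $2$ in $\h X$ uniformly in $[w]$ (each squid vertex is $\h X$-adjacent to its apex). Since $\h\W$-adjacency between maximal simplices is recorded at the level of their supports, this retraction extends to a quasi-isometry $\link(\h\Delta)^{+\h\W}\to\link_{\ov X/\N}([v])^{+\h\W}$ with depth-resistant constants.

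It therefore suffices to show that $\link_{\ov X/\N}([v])^{+\h\W}$ is $E$-hyperbolic. The natural candidate is that this graph coincides with $\link_{\ov X}(v)^{+\W}/\N_v$, which is $E$-hyperbolic by Corollary~\ref{cor:local_hyp_quotients_from_rotating}. The vertex sets agree by the second bullet of Lemma~\ref{lem:ovX/N_is_good}, and one inclusion of edges is immediate from Definition~\ref{defn:hW}: a $\W$-adjacency between $w,w'\in\link_{\ov X}(v)$ descends to an $\h\W$-adjacency between $[w]$ and $[w']$ in the quotient.

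The reverse inclusion is the only real content: given an $\h\W$-edge between $[w]$ and $[w']$ in $\link_{\ov X/\N}([v])$, we need $\W$-adjacent lifts sitting in $\link_{\ov X}(v)$ for a \emph{common} lift of $[v]$. Unwinding Definition~\ref{defn:hW} produces lifts $w,w'$ of $[w],[w']$ that are $\W$-adjacent in $X$, each lying in the $\ov X$-link of some lift $v_w,v_{w'}$ of $[v]$, with $v_w=\gamma v_{w'}$ for some $\gamma\in\N$. I would induct on the complexity of $\gamma$ provided by Corollary~\ref{cor:shortpair_for_short}: given a shortening pair $(s,\gamma_s)$, either $\gamma_s$ fixes one of $v_w,v_{w'}$ and we apply it to the opposite end of the configuration, or $\dist_s(v_w,v_{w'})\ge 100E$, in which case the $\W$-edge between $w$ and $w'$ forces, via the strong BGI of Lemma~\ref{lem:strong_bgi_for_short}, some vertex of the configuration to be fixed by $\Gamma_s$ and again allows a compatible substitution of lifts. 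In either case one reduces to a configuration with $\gamma$ replaced by $\gamma_s\gamma$, of strictly smaller complexity, so induction concludes.

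Once the identification $\link_{\ov X/\N}([v])^{+\h\W}=\link_{\ov X}(v)^{+\W}/\N_v$ is established, Corollary~\ref{cor:local_hyp_quotients_from_rotating} yields $E$-hyperbolicity; since $E$ is the HHS constant of $(X,\W)$ and is independent of the multiples $M_i$, the hyperbolicity constant is depth-resistant. The main obstacle is the inductive lifting of a single $\h\W$-edge at the level of common lifts of $[v]$, but this is entirely analogous to the argument already carried out in Lemma~\ref{lem:edges_in_link_quot}, so the heavy combinatorial lifting has essentially been done; the adaptation here only requires tracking that the shortening pair rotations preserve the link of a fixed (but possibly changing) lift of $[v]$.
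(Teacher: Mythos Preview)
Your proposal is correct and follows the same route as the paper: retract $\link(\h\Delta)^{+\h\W}$ onto $(\link_{\ov X/\N}([v]))^{+\h\W}$, identify the latter with $(\link_{\ov X}(v)^{+\W})/\N_v$, and invoke Corollary~\ref{cor:local_hyp_quotients_from_rotating}. The paper compresses all of this into a single line.

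Where you add value is in unpacking the identification of edge sets. The paper simply writes ``$=$'' and moves on; you correctly observe that the reverse inclusion---producing $\W$-adjacent lifts of $[w],[w']$ that lie in the link of a \emph{common} lift of $[v]$---is not automatic. Your proposed induction on the complexity of $\gamma$ with $v_w=\gamma v_{w'}$, using a shortening pair and strong BGI to find a $\Gamma_s$-fixed vertex on the length-$3$ chain $v_w\,{-}\,w\,{-}\,w'\,{-}\,v_{w'}$, is exactly the argument carried out in the second case of Lemma~\ref{lem:edges_in_link_quot} (with the roles of the ``outer'' and ``inner'' vertices swapped). So the heavy lifting has indeed already been done there, and your reduction to it is accurate.
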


We are now left with the triangle-type case, which we split into Lemmas~\ref{lem:l_[w]} and~\ref{lem:hyp_triangle_quotient}. For every $v\in\ov X^{(0)}$ set $L_{[v]}\coloneq L_v/(\N\cap\Stab{G}{v})$. The definition does not depend on the choice of $v\in [v]$, as $nL_v=L_{nv}$ for every $n\in \N$.
\begin{lemma}\label{lem:l_[w]}
    The following holds if $\N$ is deep enough. 
    \begin{itemize}
        \item If $Z_{[v]}$ is finite then $L_{[v]}$ is uniformly bounded, and the bound depends on $\N$.
        \item If instead $Z_{[v]}$ is infinite, then the quotient map $L_v\to L_{[v]}$ is a $\Stab{G}{v}$-equivariant quasi-isometry, whose constants are depth-resistant. As a consequence, $L_{[v]}$ is a quasiline on which $Z_{[v]}$ acts geometrically, while $Z_{[w]}$ acts with uniformly bounded orbits whenever $[w]\in\link_{\ov X/\N}([v])$.
    \end{itemize} 
\end{lemma}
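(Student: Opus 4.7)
The plan is to split the proof on whether $Z_{[v]}$ is finite or infinite. By Lemma~\ref{lem:NV_cap_Z_v} combined with Remark~\ref{rem:initial_D}, $Z_{[v]}$ is infinite if and only if $v\notin G\B$ and $Z_v$ is infinite; equivalently, $Z_{[v]}$ is finite precisely when either $Z_v$ is finite or $v\in G\B$ (in the latter case $\Gamma_v=M_iZ_v$ already has index $M_i$ in $Z_v$).

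If $Z_{[v]}$ is finite, I would argue as follows. When $Z_v$ itself is finite, $L_v$ is already uniformly bounded by Axiom~\ref{short_axiom:cobounded}, and so is any of its quotients. Otherwise $v\in Gs_i\subseteq G\B$ and $\Gamma_v\subseteq\N\cap\Stab{G}{v}$ has index $M_i$ in $Z_v$; since $Z_v$ acts cocompactly on $L_v$, the further quotient $L_{[v]}=L_v/(\N\cap\Stab{G}{v})$ has diameter of order $M_i$ times the $Z_v$-cocompactness constant, consistent with the statement that the bound may depend on $\N$.

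If $Z_{[v]}$ is infinite, then $\N\cap\Stab{G}{v}=\N_v$ by Corollary~\ref{cor:N_cap_Stabv} (as $v\notin G\B$), and I reduce the lemma to the single claim that $\N_v$ acts on $L_v$ with uniformly depth-resistant bounded orbits. Granting this, the quotient projection $L_v\to L_{[v]}=L_v/\N_v$ is a $\Stab{G}{v}$-equivariant quasi-isometry with depth-resistant constants, by the standard fact that a quotient by an isometric bounded-orbit action is a quasi-isometry. All stated properties of $L_{[v]}$ then transport from the corresponding properties of $L_v$: namely $L_{[v]}$ is a quasiline, $Z_{[v]}\cong Z_v$ acts geometrically by Axiom~\ref{short_axiom:cobounded}, and $Z_{[w]}$ has uniformly bounded orbits for every $[w]\in\link_{\ov X/\N}([v])$ because $Z_w$ does on $L_v$.

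To prove the bounded-orbit claim, I exploit Remark~\ref{rem:weird_flex}: the preparation of Subsection~\ref{subsec:preparation} guarantees that $\N\cap E_v$ acts with depth-resistant uniformly bounded orbits on $L_v$, so it is enough to arrange $\N_v\subseteq E_v$. Since $\ov X$ has finitely many $G$-orbits of edges and $E_v$ has finite index in $\Stab{G}{v}$, the indices $[Z_w:Z_w\cap E_v]$ are bounded as $\{v,w\}$ ranges over orbit representatives; requiring each $M_i$ to be divisible by their common multiple forces $\Gamma_w=M_iZ_w\subseteq E_v$ whenever $w\in Gs_i\cap\link_{\ov X}(v)$, and hence $\N_v=\langle\Gamma_w\rangle\subseteq E_v$. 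The main obstacle is precisely this depth-resistance: the bounded-orbit property of $\N_v$ on $L_v$ cannot be recovered just from the fact that each generator $\Gamma_w$ has bounded orbits (this would only yield an estimate accumulating linearly in the word length of $n\in\N_v$), and it is exactly the preparation of $\psi_v$ via Lemma~\ref{lem:squid_for_quot_quasimorph}—forcing $\psi_v$ to factor through a quotient that kills $\N_0\supseteq\N$—that removes this accumulation.
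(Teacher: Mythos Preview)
Your proof is correct and follows essentially the same approach as the paper's: both split according to whether $Z_{[v]}$ is finite or infinite, and in the infinite case both arrange that $\N\cap\Stab{G}{v}\subseteq E_v$ (using finite index of $E_v$) so that Remark~\ref{rem:weird_flex} applies to give depth-resistant bounded orbits on $L_v$, whence the quotient map is a quasi-isometry. Your framing via Lemma~\ref{lem:NV_cap_Z_v} and Corollary~\ref{cor:N_cap_Stabv} to identify $\N\cap\Stab{G}{v}=\N_v$ when $v\notin G\B$ is a clean way to set up the case division, and your final paragraph explaining \emph{why} the preparation of Subsection~\ref{subsec:preparation} is indispensable (rather than naively bounding generator by generator) is a helpful gloss that the paper leaves implicit.
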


\begin{proof}
    If $Z_v$ is finite then $L_v$ was already uniformly bounded. Thus assume that $Z_v$ is infinite, so that $L_v$ is a quasiline on which $Z_v$ acts geometrically. If $Z_{v}\cap \N\neq\{0\}$, then $L_{[v]}$ is bounded, and the bound is uniform as there are finitely many $G$-orbits of vertices in $\ov X$. Thus suppose instead that $Z_{v}\cap \N=\{0\}$. Recall that, in Subsection~\ref{subsec:preparation}, we constructed a quasimorphism $\psi_v\colon E_v\to \mathbb{R}$ which is trivial on $\N\cap E_v$. As each $E_v$ has finite index in $\Stab{G}{v}$, we can assume that $\N$ is deep enough that every $\Gamma_w$ is contained in $E_v$ whenever $w\in\operatorname{Star}_{\ov X}(v)\cap G\B$, so that $\N\cap \Stab{G}{v}=\N\cap E_v$. Now, by Remark~\ref{rem:weird_flex}, every subgroup of $E_v$ on which $\psi_v$ vanishes (such as $\N\cap \Stab{G}{v}$) acts with uniformly bounded orbits on $L_v$. Thus, the quotient map $L_v\to L_{[v]}$ is a $\Stab{G}{v}$-equivariant quasi-isometry, whose constants are depth-resistant.
\end{proof}

\begin{lemma}[Triangle-type]\label{lem:hyp_triangle_quotient} The following holds if $\N$ is deep enough. Let $\h \Delta=\{([v],[x]), ([w])\}$ be of triangle-type. Then there is a $\Stab{G}{[w]}$-equivariant quasi-isometry $\link(\h\Delta)^{+\h\W}\to L_{[w]},$ whose constants are depth-resistant.
\end{lemma}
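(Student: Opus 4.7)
By construction of the blowup $\h X$, the graphs $\link(\h\Delta)^{+\h\W}$ and $L_{[w]}$ share the vertex set $(L_{[w]})^{(0)}$. Take $\Phi$ to be the identity on vertices, which is manifestly $\Stab{G}{[w]}$-equivariant. If $[w]\in[\B]$ then $L_{[w]}$ is bounded by Lemma~\ref{lem:l_[w]} and both graphs have bounded diameter, so we may assume $[w]\notin[\B]$; by Lemmas~\ref{lem:l_[w]} and~\ref{lem:NV_cap_Z_v}, the quotient $L_w\to L_{[w]}$ is then a depth-resistant $\Stab{G}{w}$-equivariant QI onto a quasiline.

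The easier direction, that $\Phi$ is coarsely Lipschitz as a map $L_{[w]}\to\link(\h\Delta)^{+\h\W}$, follows by combining Axiom~\ref{short_axiom:cobounded} (depth-resistant QI $L_w\to\C\ell_w=\link_X(\Delta)^{+\W}$), Definition~\ref{defn:hW} (descent of $\W$-edges to $\h\W$-edges or loops), and Lemma~\ref{lem:l_[w]}.

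The reverse direction is the main obstacle: every $\h\W$-adjacent pair $[x_0],[x_1]\in(L_{[w]})^{(0)}$ must be at depth-resistantly bounded $L_{[w]}$-distance. The plan is to establish the following claim: every $\h\W$-edge admits a lift to a $\W$-adjacent pair lying in a \emph{single} squid $L_{w_0}$. Granted this, the two lifts are at distance at most $1$ in $\C\ell_{w_0}$, hence at depth-resistantly bounded $L_{w_0}$-distance by Axiom~\ref{short_axiom:cobounded}, and the conclusion follows via Lemma~\ref{lem:l_[w]}. To prove the claim, we lift $[x_0],[x_1]$ to $\W$-adjacent $y_0\in L_{w_0}, y_1\in L_{w_1}$ with $[w_i]=[w]$ and $w_1=nw_0$ for some $n\in\N$, choosing the lift to minimize the complexity of $n$ in the good ordering of Corollary~\ref{cor:shortpair_for_short}. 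If $n\neq 1$, applying Corollary~\ref{cor:shortpair_for_short} to $(n,w_0)$ yields a shortening pair $(s,\gamma_s)$. Case (2) of that Corollary is ruled out: it would require $\dist_s(w_0,w_1)\ge 100E$ (hence $w_0,w_1\notin\operatorname{Star}_{\ov X}(s)$), but then the strong BGI Lemma~\ref{lem:strong_bgi_for_short} would force the length-one geodesic $\{w_0,w_1\}\subset\ov X^{+\W}$ to pass through $\operatorname{Star}_{\ov X}(s)$. Hence $\Gamma_s$ fixes $w_i$ for some $i$, and since $w_i\notin G\B$ we obtain $s\in\link_{\ov X}(w_i)$. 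Applying $\gamma_s$ (or $\gamma_s^{-1}$) to the entire lift, as in the proof of Lemma~\ref{lem:edges_in_link_quot}, then produces a new valid lift of the same $\h\W$-edge in which $n$ has strictly lower complexity, contradicting minimality.

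The hardest part to get right is the symmetry between the two sub-cases of (1), where $\Gamma_s$ fixes $w_0$ versus $w_1$: the complexity bookkeeping must ensure strict reduction in both cases, possibly by invoking the shortening-pair Corollary on $(n^{-1},w_1)$ in the second sub-case. The compatibility refinement of Subsection~\ref{subsec:preparation}, recalled in Remark~\ref{rem:weird_flex}, is what guarantees that all constants throughout are depth-resistant.
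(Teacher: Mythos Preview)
Your argument is correct, but it takes a more laborious route than the paper's. The paper observes that there is already a $\Stab{G}{w}$-equivariant $(K,K)$-quasi-isometry $L_w\to\link(\Delta)^{+\W}$ (for any lift $\Delta$ of $\h\Delta$), so it suffices to show that the two target spaces are quotients of these by the \emph{same} group $\N\cap\Stab{G}{w}$: namely that $\link(\h\Delta)^{+\h\W}=(\link(\Delta)^{+\W})/(\N\cap\Stab{G}{w})$. The only nontrivial direction is that a $\h\W$-edge between $[y],[y']\in(L_{[w]})^{(0)}$ lifts to a $\W$-edge inside a single squid $L_w$. The paper dispatches this in one line: choose $\W$-adjacent lifts $y\in L_w$ and $y'\in L_{nw}$ with $n\in\N$; then $w$ and $nw$ are themselves $\W$-adjacent, so if $w\neq nw$ there would be a loop edge at $[w]$ in the simplicial graph $\ov X^{+\W}/\N$ (Corollary~\ref{cor:simplicial_x^w/n}), contradiction. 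Your shortening-pair induction is effectively re-proving this special case of simplicity from scratch.

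Two further remarks. First, your worry about the ``symmetry'' between the sub-cases of (1) is unfounded: exactly as in the proof of Lemma~\ref{lem:edges_in_link_quot}, if $\gamma_s$ fixes $w_1=nw_0$ then $w_1=\gamma_s n\,w_0$ already holds, so one simply redefines $n':=\gamma_s n$ without moving any lifts; there is no need to invoke the shortening pair for $(n^{-1},w_1)$. Second, your treatment of the case $[w]\in[\B]$ (``both graphs have bounded diameter'') does not yield depth-resistant quasi-isometry constants, since the diameter bound on $L_{[w]}$ depends on $\N$. The paper's argument avoids this case split altogether, because the descended $(K,K)$-quasi-isometry works uniformly whether or not $L_{[w]}$ is bounded.
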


\begin{proof} Let  $\Delta=\{(v,x), (w)\}$ be any lift of $\h \Delta$. By \cite[Lemma 3.34]{Short_HHG:I}, for every $w\in \ov X^{(0)}$ there is a $\Stab{G}{w}$-equivariant $(K,K)$-quasi-isometry $L_w\to \link(\Delta)^{+\W}$, for some uniform constant $K\ge 0$. Moreover,  $L_{[w]}=L_w/(\N\cap\Stab{G}{w})$, so it suffices to show that $\link(\h\Delta)^{+\h\W}=(\link(\Delta)^{+\W})/(\N\cap\Stab{G}{w})$, as this shall imply the existence of a $\Stab{G}{[w]}$-equivariant $(K,K)$-quasi-isometry $\link(\h\Delta)^{+\h\W}\to L_{[w]}$. It is clear that the quotient projection of $\link(\Delta)$ is contained in $\link(\h\Delta)$, and that if $y, y'\in \link(\Delta)$ are $\W$-adjacent then their projections $[y], [y']$ are $\h W$-adjacent by construction. Conversely, let $[y],[y']\in \link(\h \Delta)$ be $\h\W$-adjacent. Lift $[y]$ to $y\in \link(\Delta)$, and lift $[y']$ to some $y'$ which is $\W$-adjacent to $y$. Let $n\in \N$ be such that $y'\in nw$. Then we must have that $nw=w$, or $w$ and $nw$ would be $\W$-adjacent and there would be an edge in the simplicial graph $\ov X/\N$ connecting $[w]$ to itself. Thus $y'\in\link(\Delta)$ as well, and we are done. 
\end{proof}

\subsubsection{Quasi-isometric embeddings}\label{subsubsec:qi_emb_quot}
We move on to show that the augmented link of a simplex $\h\Delta$ of $\h X$ is quasi-isometrically embedded in $Y_{\h\Delta}$, thus proving the second part of Axiom~\eqref{item:chhs_delta}. Again, we look at all possible shapes of $\link(\h\Delta)$, according to Lemma~\ref{cor:bounded_links}. If $\link(\h\Delta)$ has diameter at most $2$, or if $\h\Delta=\emptyset$, then clearly $\C(\h\Delta)$ is quasi-isometrically embedded in $Y_{\h\Delta}$. Then we only need to deal with the following cases:
\begin{itemize}
    \item $\h\Delta=\{([v], [x])\}$ of edge-type, where $[v]$ has valence greater than one in $\ov X/\N$;
    \item $\h\Delta=\{([v], [x]), ([w])\}$ of triangle-type.
\end{itemize}

\begin{lemma}[Edge-type] The following holds if $\N$ is deep enough. Let $\h\Delta=\{([v], [x])\}$ be a simplex of edge-type, where $[v]$ has valence greater than one in $\ov X/\N$. Then there exists a coarsely Lipschitz retraction from $ Y_{\h \Delta}=\h p^{-1}(\ov X/\N-\{[v]\})^{+\h\W}$ to $\link(\h\Delta)^{+\h\W}$, whose constants are depth-resistant.
\end{lemma}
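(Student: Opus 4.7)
The plan is to descend the corresponding retraction from the original short HHG structure $(X,\W)$. Since $(X,\W)$ is itself a combinatorial HHS, we have the analogous retraction $r_\Delta\colon Y_\Delta\to\link(\Delta)^{+\W}$ for any lift $\Delta$ of $\h\Delta$, and any lift $u$ of a vertex $[u]\in Y_{\h\Delta}^{(0)}$ satisfies $p(u)\notin\N v$ (since $\h p([u])\neq[v]$), hence $u\in Y_\Delta$. I would then define $\h r\colon Y_{\h\Delta}^{(0)}\to\link(\h\Delta)^{+\h\W}$ by $\h r([u])=q(r_\Delta(u))$, where $q$ is the quotient projection $X^{+\W}\to\h X^{+\h\W}$.

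The first task is coarse well-definedness of $\h r$. By $G$-equivariance of $r_\Delta$, changing the lift $\Delta$ reduces to changing the lift $u$, so it suffices to show that $q(r_\Delta(u))\approx q(r_\Delta(nu))$ for every $u\in Y_\Delta$ and every $n\in\N$. I would argue by induction on the complexity of $n$ from Corollary~\ref{cor:shortpair_for_short}. For $n\neq 1$, extract a shortening pair $(s,\gamma_s)$ for $(u,nu)$ and split on whether $s\in\link_{\ov X}(v)$. If so, then by Axiom~\ref{short_axiom:extension} the cyclic direction $Z_s$ fixes $v$, so $\gamma_s\in\N\cap\Stab{G}{v}$; moreover $\gamma_s\in\N_v$, so it acts on $\link(\Delta)^{+\W}$ via an element that is trivialised in $\link(\h\Delta)^{+\h\W}$, and the induction closes using that $\gamma_sn$ has strictly lower complexity than $n$. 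If instead $s\notin\link_{\ov X}(v)$, then $\gamma_s$ need not fix $v$; but the shortening pair yields $\dist_s(u,nu)\geq 100E$, and the strong BGI Lemma~\ref{lem:strong_bgi_for_short} then forces every $\ov X^{+\W}$-geodesic from $u$ to $nu$ through $\operatorname{Star}_{\ov X}(s)$. Since $s$ is at $\ov X$-distance at least $2$ from $v$, the explicit description of $r_\Delta$ via the gate $\gate_v$ from Definition~\ref{defn:squid_material}.\eqref{squid_material:gates} shows that $r_\Delta(u)$ and $r_\Delta(\gamma_s u)$ agree up to bounded error, closing the induction.

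Once $\h r$ is coarsely well-defined on vertices, extending it to a coarsely Lipschitz map on $Y_{\h\Delta}$ reduces to the coarse Lipschitz property of $r_\Delta$: edges of $\h X^{+\h\W}$ lift (possibly after adjusting one endpoint by an $\N$-translate) to paths of bounded length in $X^{+\W}$ via Corollary~\ref{lem:lift_geodesics_for_short}, and applying $r_\Delta$ gives points of uniformly bounded distance in $\link(\Delta)^{+\W}$. The retraction property is immediate: any $[u]\in\link(\h\Delta)^{+\h\W}$ admits a lift $u\in\link(\Delta)^{+\W}$, on which $r_\Delta$ acts as the identity, so $\h r([u])=[u]$. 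All constants inherit from those of the fixed structure $(X,\W)$, and are therefore depth-resistant.

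The main obstacle is the subcase $s\notin\link_{\ov X}(v)$. Here $\gamma_s$ does not stabilise $v$, and could a priori shift the gate $\gate_v(P_u)$ to a different vertex of $\link_{\ov X}(v)$. The resolution is that the strong BGI Lemma~\ref{lem:strong_bgi_for_short} forces any geodesic $[u,\gamma_s u]$ to lie near $s$, which is far from $v$, so the portion of the path controlling the gate onto $\link_{\ov X}(v)$ is essentially unchanged. This is analogous in spirit to the lifting argument in Lemma~\ref{lem:edges_in_link_quot} but involves tracking a coarse quantity (the retraction) rather than a discrete combinatorial configuration, and is the technical heart of the proof.
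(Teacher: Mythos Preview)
Your approach---descend the upstairs retraction $r_\Delta$ by setting $\h r([u])=q(r_\Delta(u))$---is genuinely different from the paper's, and it runs into a real gap in the well-definedness step. The induction on the complexity of $n\in\N$ accumulates error. In your Case $s\in\operatorname{Star}_{\ov X}(v)$ the reduction is exact (exact equivariance of closest-point projection under $\gamma_s\in\Stab{G}{[\Delta]}$ gives $q(r_\Delta(\gamma_s n u))=q(r_\Delta(nu))$), and likewise when $\gamma_s$ fixes $nu$. But in the remaining case $\dist_{\ov X}(s,v)\ge 2$ with $\dist_s(u,nu)\ge 100E$, you only claim that $r_\Delta(nu)$ and $r_\Delta(\gamma_s nu)$ ``agree up to bounded error''. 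Even granting that (and your justification via $\gate_v$ is vague---$r_\Delta$ is a closest-point projection in $Y_\Delta$, not literally the gate of Definition~\ref{defn:squid_material}), each inductive step now adds a fixed additive error. The complexity ordering is a well-order, so every descent terminates, but the \emph{length} of the descent from a given $n$ is not uniformly bounded; hence the total error in $\dist\bigl(q(r_\Delta(u)),q(r_\Delta(nu))\bigr)$ is not controlled, and $\h r$ is not shown to be coarsely well-defined. (The same issue arises in the subcase where $\gamma_s$ fixes $u$ rather than $nu$.)

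The paper sidesteps this entirely by defining the retraction \emph{intrinsically in the quotient}: for $[u]\in(\ov X/\N-\{[v]\})^{(0)}$, take a geodesic in $(\ov X/\N)^{+\h\W}$ from $[u]$ to its closest point in $\link_{\ov X/\N}([v])$, and set $\varrho([u])$ to be the endpoint. Well-definedness and the Lipschitz property are then verified \emph{simultaneously} by taking $[u],[u']$ at distance $\le 1$, forming a single geodesic pentagon in the quotient (with vertices $[u],[u'],\varrho([u]),\varrho([u']),[v]$), and lifting it once via Corollary~\ref{lem:lifting}. Upstairs, the strong BGI Lemma~\ref{lem:strong_bgi_for_short_edge_type} bounds $\dist_{\link_{\ov X}(v)^{+\W}}(w,w')\le 6E$ directly, with no induction and no accumulation. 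The key conceptual point is that lifting a \emph{bounded-complexity figure} (a $5$-gon) is a one-shot application of the rotating-family machinery, whereas your approach requires controlling the retraction across an \emph{arbitrary} $\N$-orbit.
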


\begin{proof}
    The retraction $\h p$ maps $Y_{\h \Delta}$ onto $(\ov X/\N-\{[v]\})^{+\h\W}$ and $\link(\h\Delta)^{+\h\W}$ onto $\link_{\ov X/\N}([v])^{+\h\W}$, so it is enough to build a retraction $$\rho\colon(\ov X/\N-\{[v]\})^{+\h\W}\to \link_{\ov X/\N}([v])^{+\h\W}. $$
    For every $[u]\in(\ov X/\N-\{[v]\})^{(0)}$, pick any geodesic $\ov \gamma$ in $(\ov X/\N)^{+\h W}$ from $[u]$ to a closest point inside $\link_{\ov X/\N}([v])$, and let $\varrho([u])$ be the endpoint of this geodesic. Notice that $[v]$ does not belong to $\ov \gamma$. Indeed, suppose that this is not the case, and let $[t]$ be the vertex of $\ov \gamma$ which comes right before $[v]$. If $[t]$ and $[v]$ are $(\ov X/\N)$-adjacent, then we would contradict the fact that $\gamma$ connects $[u]$ to a closest point in $\link_{\ov X/\N}([v])$. If instead $[t]$ and $[v]$ are $\h\W$-adjacent, then $[t]$ is also $\h\W$-adjacent to some $[w]\in\link_{\ov X/\N}([v])$, and we could find a path from $[u]$ to $[w]$ which is shorter than $\ov\gamma$, again finding a contradiction.
\par\medskip

    Now we want to show that $\varrho$ is both coarsely well-defined and coarsely Lipschitz, for some depth-resistant constants. Let $[u],[u']\in(\ov X/\N-\{[v]\})^{(0)}$ be such that $\dist_{(\ov X/\N)^{+\h\W}}([u],[u'])\le 1$, and let $\ov\gamma $ (resp. $\ov \gamma'$) be a geodesic from $[u]$ (resp. $[u']$) to $\link_{\ov X/\N}([v])$. The configuration in Figure~\eqref{fig:qi_emb_edge} is therefore a geodesic pentagon inside $(\ov X/\N)^{+\h\W}$, which by Corollary~\ref{lem:lifting} we can lift to $\ov X^{+\W}$ if $\N$ is deep enough. 

    \begin{figure}[htp]
        \centering
        $$\begin{tikzcd}
            {[u]}\ar[dd, dash dot, no head] \ar[rrr, dashed, no head, "\ov\gamma"] &&& \varrho([u])\ar[dr, no head]&\\
            &&&& {[v]}\\
             {[u']}\ar[rrr, dashed, no head, "\ov\gamma'"] &&&\varrho([u'])\ar[ur, no head]&
        \end{tikzcd}$$
        \caption{The pentagon inside $(\ov X/\N)^{+\h\W}$, where the full lines represent $\ov X/\N$-edges, the dashed lines are geodesics of $(\ov X/\N)^{+\h\W}$, and the dash-and-dot line means that $\dist_{(\ov X/\N)^{+\h\W}}([u],[u'])\le 1$. The configuration lifts to a pentagon inside $\ov X^{+\W}$, with vertices $v, w, u, u', w'$.}
        \label{fig:qi_emb_edge}
    \end{figure}

    Let $\gamma$ (resp. $\gamma'$) be the lift of $\ov \gamma$ (resp. $\ov\gamma'$), with endpoints $u$ and $w\in\varrho([u])$ (resp. $u'$ and $w'\in\varrho([u'])$. Notice that neither $\gamma$ nor $\gamma'$ contain any lift of $[v]$, as pointed out above. Let $v$ be the lift of $[v]$ which is adjacent to both $w$ and $w'$.
    \par\medskip

    Now we claim that $\dist_{\link_{\ov X}(v)^{+\W}}(w, w')\le 6E$, which then implies that $\varrho([u])$ and $\varrho([u'])$ are $6E$-close in $\link_{\ov X/\N}([v])^{+\h\W}$. Indeed, if $\dist_{\link_{\ov X}(v)^{+\W}}(w, w')> 6E$, then by triangle inequality one between $\dist_{\link_{\ov X}(v)^{+\W}}(w, u)$, $\dist_{\link_{\ov X}(v)^{+\W}}(u, u')$, and $\dist_{\link_{\ov X}(v)^{+\W}}(u', w')$ is at least $2E$. But this contradicts the strong bounded geodesic image, Lemma~\ref{lem:strong_bgi_for_short_edge_type}, as neither $\gamma$ nor $\gamma'$ can pass through $v$.
\end{proof}

The proof in the triangle-type case is similar, but to build the retraction we need something more sophisticated than a geodesic, which we call an \emph{approach path}. Our approach paths are not the same as their homonyms from \cite[Definition 8.36]{BHMS}, although they play a similar role.
\begin{lemma}[Triangle-type]\label{lem:qi_emb_con_approach}
The following holds if $\N$ is deep enough. Let $\h\Delta=\{([v], [x]), ([w])\}$ be a simplex of triangle-type. There exists a coarsely Lipschitz retraction 
$$\varrho_{[w]}\colon Y_{\h \Delta}\to \link(\h\Delta)^{+\h\W},$$
whose constants are depth-resistant.
\end{lemma}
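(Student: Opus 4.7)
\textbf{The plan} is to mimic the edge-type strategy, replacing geodesics landing in $\link_{\ov X/\N}([v])$ with \emph{approach paths} ending at a vertex of $(L_{[w]})^{(0)}$. Concretely, since $\h p$ retracts $Y_{\h\Delta}$ onto a superset of $(\ov X/\N-\{[w]\})^{+\h\W}$, I would first compose with $\h p$; then, for each vertex $[u]$ of the latter, pick a geodesic $\ov\gamma$ in $(\ov X/\N)^{+\h\W}$ from $[u]$ to a vertex $[y]\in (L_{[w]})^{(0)}$ of minimal length. By Lemma~\ref{lem:ovX/N_is_good}, the penultimate vertex of $\ov\gamma$ spans a maximal simplex together with $[w]$ and $[y]$, so such an approach path indeed exists, and I would set $\varrho_{[w]}([u])=[y]$.

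\textbf{For coarse well-definedness and coarse Lipschitzness}, given $[u],[u']$ at $(\ov X/\N)^{+\h\W}$-distance at most $1$ with approach paths $\ov\gamma,\ov\gamma'$ terminating at $[y],[y']\in (L_{[w]})^{(0)}$, I would consider the geodesic pentagon on vertices $[u],[y],[w],[y'],[u']$. For $\N$ deep enough, Corollary~\ref{lem:lifting} lifts this pentagon to a pentagon in $\ov X^{+\W}$ on vertices $u,y,w,y',u'$, with $y,y'$ both $\ov X$-adjacent to $w$. Next, I would argue, using the minimality of $\ov\gamma,\ov\gamma'$ together with simpliciality of $\ov X/\N$ (Corollary~\ref{cor:simplicial_x^w/n}), that the lifts $\gamma,\gamma'$ of the approach paths avoid $\operatorname{Star}_{\ov X}(w)$: any lift of $[w]$ on $\gamma$ would descend to an earlier occurrence of $[w]$ on $\ov\gamma$, contradicting minimality. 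The strong bounded geodesic image Lemma~\ref{lem:strong_bgi_for_short} then forces the projections onto $\C\ell_w$ of $u,y,u',y'$ to be pairwise uniformly close.

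\textbf{The hard step} is transferring this bound from $\C\ell_w$ to $L_{[w]}$. If $Z_{[w]}$ is finite, then $L_{[w]}$ is bounded by Lemma~\ref{lem:l_[w]} and there is nothing to prove. Otherwise, I would combine \cite[Lemma 3.34]{short_HHG:I} with Lemma~\ref{lem:l_[w]} to get a $\Stab{G}{[w]}$-equivariant quasi-isometry $\C\ell_w\to L_w\to L_{[w]}$ with depth-resistant constants, yielding a uniform bound on $\dist_{L_{[w]}}([y],[y'])$. A strictly analogous argument on a triangle instead of a pentagon establishes coarse well-definedness of $\varrho_{[w]}$. \textbf{The main subtlety} I anticipate is ensuring that the lifts $\gamma,\gamma'$ really avoid $\operatorname{Star}_{\ov X}(w)$ and that, after any necessary replacements, the two lifts of $[w]$ attached to $y$ and to $y'$ actually coincide; as in Lemma~\ref{lem:edges_in_link_quot}, this will require the rotating-family bookkeeping of shortening pairs from Corollary~\ref{cor:shortpair_for_short}, which lets us modify lifts case by case without breaking the pentagon configuration.
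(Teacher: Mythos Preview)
There is a genuine gap in your proposal, and it is exactly the reason the paper's proof is substantially more elaborate than the edge-type case.

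First, two type errors that signal where things go wrong. You propose a geodesic $\ov\gamma$ in $(\ov X/\N)^{+\h\W}$ ending at $[y]\in (L_{[w]})^{(0)}$, but $[y]$ is not a vertex of that graph; and you invoke Corollary~\ref{lem:lifting} to lift a pentagon with corners $[y],[y']\in L_{[w]}$, but that corollary only lifts $k$-gons in $\ov X^{+\W}/\N$ (or in $\link_{\ov X}(v)^{+\W}/\N_v$), not in the blowup. These could perhaps be patched by working instead with a geodesic $\ov\gamma\subset\ov X^{+\W}/\N$ from $[u]$ to $[w]$ and picking $[y]$ from the penultimate vertex of $\ov\gamma$ --- which is indeed what the paper does.

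The real obstruction is the next step. Your minimality argument only shows that lifts of $\ov\gamma$ avoid lifts of $[w]$; it does \emph{not} show they avoid $\operatorname{Star}_{\ov X}(w)$, which is what the strong BGI Lemma~\ref{lem:strong_bgi_for_short} requires. When the penultimate vertex $[r]$ of $\ov\gamma$ lies in $\link_{\ov X/\N}([w])$ (so the last edge of $\ov\gamma$ is an $\ov X/\N$-edge), the lift necessarily passes through $\link_{\ov X}(w)\subset\operatorname{Star}_{\ov X}(w)$, and your BGI argument collapses. Moreover, in this case $[r]$ is $\h X$-adjacent to \emph{every} point of $(L_{[w]})^{(0)}$, so it does not single out any target $[y]$.

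This ``type $\ov X$'' case is precisely why the paper introduces two species of approach paths. When $[r]\in\link_{\ov X/\N}([w])$, one must detour through a geodesic $\ov\eta_2$ inside $\link_{\ov X/\N}([r])^{+\h\W}$ to reach a vertex $[b]$ that is $\h\W$-adjacent (not $\ov X/\N$-adjacent) to $[w]$, and only then pick $[y]$. The red path $\ov\eta_2$ lies in $\link_{\ov X}(r)$, which is disjoint from $\link_{\ov X}(w)$ since $\ov X$ is triangle-free, so the lifted configuration genuinely avoids $\operatorname{Star}_{\ov X}(w)$. Lifting this two-piece approach path (Claim~\ref{claim:approach_ov_X_lifts}) and then gluing the two lifts of $[w]$ together (the paper's Step~1) both require the shortening-pair induction, applied along the red and blue paths using strong BGI in \emph{both} $\ov X^{+\W}$ and $\link_{\ov X}(r)^{+\W}$. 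Your single-geodesic plan does not account for any of this.
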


\begin{proof}
First, notice that $Y_{\h \Delta}=\left( (L_{[w]})^{(0)}\cup \h\p^{-1}(\ov X/\N-\operatorname{Star}_{\ov X/\N}([w])\right)^{+\h\W}.$ Now, for every $[y]\in (L_{[w]})^{(0)}$ set $\varrho_{[w]}([y])=[y]$. For every $[u]\in (\ov X/\N-\operatorname{Star}_{\ov X/\N}([w]))^{(0)}$ we define the value of the retraction on $\Squid([u])$ as follows. Pick a geodesic $\ov\gamma$ in $\ov X^{+\W}/\N$ from $[u]$ to $[w]$. Let $[r]\in\ov\gamma$ be the last point before $[w]$.
\begin{enumerate}
    \item Suppose first that $[r]\not\in \link_{\ov X/\N}([w])$, i.e. the last edge of $\ov\gamma$ is a $\h\W$-edge. Then choose any $[y]\in (L_{[w]})^{(0)}$ which is $\h\W$-adjacent to $[r]$, and set $\varrho_{[w]}(\Squid([u]))=[y]$. Moreover, let $\ov\lambda$ be the subpath of $\ov\gamma$ between $[u]$ and $[r]$. What we get is the configuration in Figure~\eqref{fig:approach_W}, which we call an \emph{approach path of type $\W$}.
    \item Suppose instead that $[r]\in \link_{\ov X/\N}([w])$, and let $[t]$ be the last point of $\ov\gamma$ before $[r]$. There exists $[a]\in \link_{\ov X/\N}([r])$ which is within distance $1$ from $[t]$ inside $\ov X^{+\W}/\N$ ($[a]$ might be $[t]$ itself, if the edge of $\ov \gamma$ between $[t]$ and $[r]$ comes from $\ov X/\N$). Notice that $[a]\neq [w]$, because otherwise $[t]$ would be $\ov X^{+\W}/\N$-adjacent to $[w]$ and this would contradict the fact that $\ov\gamma$ is a geodesic. Now pick a geodesic from $[a]$ to $[w]$ inside $\link_{\ov X/\N}([r])^{+\W}$, and let $[b]$ be the second-to-last point of such geodesic. Then $[b]$ is $\h\W$-adjacent to $[w]$, and therefore also to some $[y]\in (L_{[w]})^{(0)}$, so we set $\varrho_{[w]}(\Squid([u]))=[y]$. For further reference, let $\ov\eta_1$ be the subpath of $\ov\gamma$ from $[u]$ to $[t]$, and let $\ov\eta_2$ be the subgeodesic from $[a]$ to $[b]$ inside $\link_{\ov X/\N}([r])^{+\W}$. We get the configuration in Figure~\eqref{fig:approach_ov_X}, which we call an \emph{approach path of type $\ov X$}.
\end{enumerate}

\begin{figure}[htp]
    \centering
    $$\begin{tikzcd}
        {[w]} \ar[r, dashed, no head] \ar[d, no head]& {[r]} \ar[rrr, dash dot, no head, "\ov\lambda"]&&&{[u]}\\
        {[y]=\varrho_{[w]}([u])}\ar[ru, dashed, no head] & &&&
    \end{tikzcd}$$
    \caption{An approach path of type $\W$. Here the full arc is an edge of $\h X$, the dashed lines are $\h\W$-edges, and the dash-and-dot line is a geodesic inside $\ov X^{+\W}/\N$, which does not intersect $\operatorname{Star}_{\ov X/\N}([w])$.}
    \label{fig:approach_W}
\end{figure}

\begin{figure}[htp]
    \centering
    $$\begin{tikzcd}
        {[y]=\varrho_{[w]}([u])}\ar[rd, dashed, no head]
        &&{[r]}\ar[dll, no head] \ar[ll, no head] \ar[dl, no head]  \ar[dr, no head] \ar[drr, no head, dash dot]\\
        {[w]} \ar[r, dashed, no head] \ar[u, no head]& {[b]} \ar[rr, dash dot, red, no head, "\ov\eta_2"]&&{[a]}\ar[r, no head, dash dot]& {[t]}\ar[rr, dash dot, no head, "\ov\eta_1"] && {[u]}
    \end{tikzcd}$$
    \caption{An approach path of type $\ov X$. Here the full arcs represent edges of $\h X$; the dashed lines are $\h\W$-edges; the black dash-and-dot lines are geodesics inside $\ov X^{+\W}/\N$; and the red dash-and-dot line is a geodesic inside $\link_{\ov X/\N}([r])^{+\W}$. It will be relevant that $\ov \eta_1$ does not intersect $\operatorname{Star}_{\ov X/\N}([w])$.}
    \label{fig:approach_ov_X}
\end{figure}

First, we prove that both types of approach paths lift to $X^{+\W}$, meaning that all vertices and geodesics involved in the definition admit lifts which are arranged in the same configuration.
\begin{claim}\label{claim:approach_W_lifts}
    An approach path of type $\W$ lifts to $X^{+\W}$.
\end{claim}

\begin{claimproof}[Proof of Claim~\ref{claim:approach_W_lifts}]
    Lift $\ov \gamma$ to a geodesic $\gamma\in \ov X^{+\W}$, with endpoints $u\in [u]$ and $r\in [r]$. Then there exists $y\in [y]$ which is $\W$-adjacent to $r$, by how $\h\W$-edges are defined, and set $w=p(y)$. Then the configuration is as in Figure~\eqref{fig:approach_W} (notice that $w$ and $r$ are not $\ov X$-adjacent, or their projections $[w]$ and $[r]$ would be adjacent as well).
\end{claimproof}

\begin{claim}\label{claim:approach_ov_X_lifts}
    An approach path of type $\ov X$ lifts to $X^{+\W}$. 
\end{claim}

\begin{claimproof}[Proof of Claim~\ref{claim:approach_ov_X_lifts}]
    Let $\eta_1\subset \ov X^{+\W}$ be a lift of $\ov \eta_1$, with endpoints $u\in [u]$ and $t\in [t]$. By Corollary~\ref{lem:lifting}, we can lift the triangle with vertices $[r], [a], [t]$ to a triangle in $\ov X^{+\W}$ with vertices $r\in [r], a\in [a], t'\in [t]$, and up to the action of $\N$ we can assume that $t'=t$. Now, by Lemma~\ref{lem:lift_geodesics_for_short} we can lift $\ov\eta_2$ to a geodesic inside $\link_{\ov X}(r)$, with endpoints $a\in [a]$ and $b\in [b]$. Let $y\in [y]$ be $\W$-adjacent to $b$, let $w=p(y)$, and let $r'=nr\in \link_{\ov X}(w)$ be a lift of $[r]$, for some $n\in \N$. The whole configuration is as in Figure~\eqref{fig:lift_approach_ov_X}. 
    \begin{figure}[htp]
    \centering
    $$\begin{tikzcd}
        r'=nr\ar[r, no head] \ar[dr, no head] &{y}\ar[rd, dashed, no head]
        &&{r}\ar[lll, bend right, "n"] \ar[dl, no head]  \ar[dr, no head] \ar[drr, no head, dash dot]\\
        &{w} \ar[r, dashed, no head] \ar[u, no head]& {b} \ar[rr, dash dot, red, no head, "\eta_2"]&&{a}\ar[r, no head, dash dot]& {t}\ar[rr, dash dot, no head, "\eta_1"] && {u}
    \end{tikzcd}$$
    \caption{The (unclosed) lift of an approach path of type $\ov X$.}
    \label{fig:lift_approach_ov_X}
\end{figure}

    If $n=1$, then $r=r'$, and the configuration we get is a lift of the subgraph in Figure~\eqref{fig:approach_ov_X}. Otherwise, we proceed by induction on the complexity of $n$. Let $(s, \gamma_s)$ be a shortening pair, as in Corollary~\ref{cor:shortpair_for_short}. We want to replace some lifts from Figure~\eqref{fig:approach_ov_X}, without breaking the configuration, in such a way that the new lifts of $[r]$ will differ by $\gamma_s n$, which has strictly less complexity than $n$. 
    \begin{itemize}
        \item If $\gamma_s$ fixes $r$ then we apply $\gamma_s$ to all lifts from Figure~\eqref{fig:lift_approach_ov_X}.
        \item If $\gamma_s$ fixes $r'$ then $r'=\gamma_s nr$ already differs from $r$ by $\gamma_sn$, so we do nothing.
        \item Otherwise, $\dist_s(r, r')\ge 100E$. Arguing as in Lemma~\ref{lem:edges_in_link_quot}, the strong bounded geodesic image~\ref{lem:strong_bgi_for_short} tells us that $\gamma_s$ must fix either $b$ or $w$. If $\gamma_s w=w$, then we just replace $r'$ by $\gamma_s r'$, which is still $X$-adjacent to $w$ and therefore to $y$. If instead $\gamma_s b=b$, then we apply $\gamma_s$ to $r'$, $ w$, and $y$. Notice that $\gamma_s y$ is still $\W$-adjacent to $b=\gamma_s b$. In both cases, after the replacement $r$ and $\gamma_sr'$ differ by $\gamma_s n$.
    \end{itemize}
    Proceeding by induction, we can find lifts such that $r=r'$, as required.
\end{claimproof}

Finally, we shall prove with a single argument that the map $\varrho$ is both coarsely well-defined and coarsely Lipschitz with depth-resistant constants. Let $[u], [u']\in (\ov X/\N-\operatorname{Star}_{\ov X/\N}([w]))^{(0)}$ be such that $\dist_{(\ov X/\N)^{+\h\W}}([u], [u'])\le 1$, and consider two approach paths, one from $[u]$ to $[w]$ and one from $[u']$ to $[w]$. Let $u\in [u]$ and $u'\in [u']$ be such that $\dist_{\ov X^{+\W}}(u, u')\le 1$. Now lift both approach paths, starting from $u$ and $u'$, respectively, to get the configuration from Figure~\eqref{fig:2_lifted_approach_paths}, where both $w$ and $w'=nw$ belong to $[w]$ and $n\in \N$. To illustrate the process, we assume that the path starting at $u$ is of type $\ov X$, while the path starting at $u'$ is of type $\W$ (the two other cases are dealt with analogously). 

\begin{figure}[htp]
    \centering
    $$\begin{tikzcd}
        {y}\ar[rd, dashed, no head]
        &&{r}\ar[dll, no head] \ar[ll, no head] \ar[dl, no head]  \ar[dr, no head] \ar[drr, no head, dash dot]\\
        {w} \ar[r, dashed, no head] \ar[u, no head]\ar[d, bend right=90, "n"] & {b} \ar[rr, dash dot, red, no head, "\eta_2"]&&{a}\ar[r, no head, dash dot]& {t}\ar[rr, dash dot, no head, "\eta_1", blue] && {u}\ar[d, dash dot, no head, blue]\\
        {w'=n w} \ar[r, dashed, no head] \ar[d, no head]& {r'} \ar[rrrrr, dash dot, no head, "\lambda", blue]&&&&& u'\\
        {y'}\ar[ru, dashed, no head]
    \end{tikzcd}$$
    \caption{Two lifts of approach paths starting at $\ov X^{+\W}$-adjacent vertices. The blue path is a concatenation of three geodesics of $\ov X^{+\W}$.}
    \label{fig:2_lifted_approach_paths}
\end{figure}

We split the argument into two steps.
\par\medskip
\textbf{Step 1: Gluing $w$ to $w'$.}
We first prove that we can change the lifts, without breaking the configuration from Figure~\eqref{fig:2_lifted_approach_paths}, until $w=w'$. This will again be a combination of Corollary~\ref{cor:shortpair_for_short}  and the strong bounded geodesic image Lemma~\ref{lem:strong_bgi_for_short}.
\par\medskip
We proceed by induction on the complexity of $n$. If $n=1$ then we have nothing to prove; otherwise, let $(s, \gamma_s)$ be a shortening pair, as in Corollary~\ref{cor:shortpair_for_short}, so that $\gamma_s n$ has strictly less complexity than $n$. We want to replace the lifts in such a way that the two new lifts of $[w]$ differ by $\gamma_s n$, in order to conclude by induction. 

If $\gamma_s$ fixes $w$, we apply $\gamma_s$ to the whole diagram. If $\gamma_s$ fixes $w'$ then we do nothing, as $w'=\gamma_s w'$ already differs from $w$ by $\gamma_s n$. 

Otherwise, we have that $\dist_s(w, w')\ge 100E$. Now look at the blue path from Figure~\eqref{fig:2_lifted_approach_paths}, which is a concatenation of three geodesics of $\ov X^{+\W}$. If there is a point $z$ on the blue path such that $\dist_{\ov X}(z, s)\le 1$, then $\gamma_s z=z$ and we can apply $\gamma_s$ to $w'$, $y'$, and the subpath of the blue path between $z$ and $r'$. The new blue path is again a concatenation of three geodesics, since we did not change its projection to $\ov X/\N$ and therefore is still a lift of three geodesics. Then we can conclude by induction.
\par\medskip
We are left with the case when $\dist_s(w, w')\ge 100E$, but no point on the blue path belongs to $\operatorname{Star}_{\ov X}(s)$. In particular, by the strong bounded geodesic image Lemma~\ref{lem:strong_bgi_for_short}, the distances $\dist_s(w',r'),\dist_s(r',u'),\dist_s(u',u),\dist_s(u,t)$ are all well-defined and bounded above by $2E$. Moreover, the triangle inequality yields
$$\dist_s(w, t)\ge \dist_s(w, w')-\dist_s(w',r')-\dist_s(r',u')-\dist_s(u',u)-\dist_s(u,t) \ge 92E.$$

If $r\not\in \operatorname{Star}_{\ov X}(s)$, then by the triangle inequality, at least one of $\dist_s(w, r)$ and $\dist_s(r, t)$ would be at least $46E>2E$, again contradicting the strong bounded geodesic image Lemma~\ref{lem:strong_bgi_for_short}. Hence $\gamma_s$ must fix $r$. Furthermore, if no point on the red path $\eta_2$ belongs to $\operatorname{Star}_{\ov X}(s)$, then by triangle inequality one between $\dist_s(w, a)$, $\dist_s(a,b)$, and $\dist_s(b, t)$ would be greater than $30E>2E$. This would again contradict the strong bounded geodesic image Lemma~\ref{lem:strong_bgi_for_short}, either inside $\link_{\ov X}(r)^{+\W}$ (in the first two cases) or inside $\ov X^{+\W}$ (in the last case). Then let $k\in \eta_2$ be fixed by $\gamma_s$. If we apply $\gamma_s$ to everything beyond $r$ and $k$ (meaning, to $w'$, $y'$, the blue path, and the subpath of $\eta_2$ between $k$ and $b$), then we do not break the configuration, and we can conclude by induction.

\par\medskip
\textbf{Step 2: Bounding $\dist_{L_w}(y,y')$.} After the previous step, our configuration looks as in Figure~\eqref{fig:2_glued_approach_paths}:

\begin{figure}[htp]
    \centering
    $$\begin{tikzcd}
        {y}\ar[rd, dashed, no head]
        &&{r}\ar[dll, no head] \ar[ll, no head] \ar[dl, no head]  \ar[dr, no head] \ar[drr, no head, dash dot]\\
        {w}\ar[d, no head]\ar[dr, dashed, no head] \ar[r, dashed, no head] \ar[u, no head] & {b} \ar[rr, dash dot, red, no head, "\eta_2"]&&{a}\ar[r, no head, dash dot]& {t}\ar[rr, dash dot, no head, "\eta_1", blue] && {u}\ar[d, dash dot, no head, blue]\\
        {y'}  \ar[r, dashed, no head]& {r'} \ar[rrrrr, dash dot, no head, "\lambda", blue]&&&&& u'
    \end{tikzcd}$$
    \caption{Now the lifts of both approach paths terminate at $w$.}
    \label{fig:2_glued_approach_paths}
\end{figure}

Our final goal is to show that $\dist_{L_w}(y,y')$ is bounded in terms of $E$. This will then imply that $\dist_{L_{[w]}}(y,y')$ as well is bounded in terms of the depth-resistant constant $E$, concluding the proof.
\par\medskip

Firstly, we argue that $y'\subseteq \rho^{r'}_w$. Indeed, with our Notation~\ref{notation:rho}, 
$\rho^{r'}_w$ was defined as $\rho^{[\Delta']}_{[\Delta]}$, where $\Delta'=\{(s', x'), (r')\}$ is any simplex of triangle-type containing $r'$ but no point in $(L_{r'})^{(0)}$. Moreover, by Definition~\ref{defn:projections}, $\rho^{[\Delta']}_{[\Delta]}\coloneq p(\Sat(\Delta')\cap Y_\Delta)\supseteq p(r')$ is obtained by applying the coarse closest point projection $p\colon Y_{\Delta}\to \C(\Delta)$ to $r'$, which is $\W$-adjacent (that is, adjacent in $Y_{\Delta}$) to $y'$. Similarly, $y\subseteq \rho^{b}_w$, so it suffices to bound the distance $\dist_{w}(b,r')$. 

Now, notice that no point on the red path $\eta_2$ is $\ov X$-adjacent to $w$, as $\eta_2\subset\link_{\ov X}(r)$ and $\ov X$ is triangle-free. Moreover, recall that by construction no point on the blue path belongs to $\operatorname{Star}_{\ov X}(w)$. Thus both the red and the blue path have well-defined projections on $L_w$. Now, by the triangle inequality
$$\dist_{w}(b,r')\le \dist_{w}(b,a)+\dist_{w}(a,t)+\dist_{w}(t,u)+\dist_{w}(u, u')+\dist_{w}(u,r').$$
The first term is at most $2E$, by the strong bounded geodesic image Lemma~\ref{lem:strong_bgi_for_short}, applied inside $\link_{\ov X}(r)^{+\W}$. All other terms are at most $2E$ each, again by strong BGI applied inside $\ov X^{+\W}$. Thus $\dist_{w}(b,r')\le 10E$, and this concludes the proof of Lemma~\ref{lem:qi_emb_con_approach}.
\end{proof}

\subsubsection{$G/\N$-action}\label{rem:g_action_quot}
    The $G$-action on $X$ induces a $G/\N$ action on $\h X$, which has finitely many $G/\N$-orbits of links of simplices, and this action extends to $\h W$, as each edge of $\h W$ lifts to some edge of $\W$. 
    
    Moreover, if one fixes a generating set $S$ for $G$, there is a $G$-equivariant $(K,K)$-quasi-isometry $f\coloneq \Cay{G}{S}\to \W$, for some $K$ depending on $S$. By taking the quotient by $\N$, we get a $G/\N$-equivariant map $$\h f\coloneq \Cay{G/\N}{S\N}\to \h\W,$$
    which is again a $(K,K)$-quasi-isometry (notice that $K$ is depth-resistant). Then $(\h X, \h W)$ is a combinatorial HHG structure for $G/\N$, in view of the “moreover” part of Theorem~\ref{thm:hhs_links}.

\subsection{The quotient is short(er)}
To conclude the proof of Theorem \ref{thm:quotient_short_HHG}, we finally check that the combinatorial structure for $G/\N$, coming from the action on $(\h X,\h\W)$, is short:
\begin{lemma}\label{lem:short_hhg_structure_for_quot}
    $G/\N$ admits a short HHG structure $(G/\N,\ov X/\N)$, whose extensions are defined as in Lemma~\ref{lem:squid_for_quot_ext.1}.
\end{lemma}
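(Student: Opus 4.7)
The plan is to unpack Definition~\ref{defn:short_HHG} and verify its three axioms one by one, at which point essentially everything has already been established in the earlier subsections; so this lemma is mostly an assembly step. Combinatorial hierarchical hyperbolicity of $(\h X,\h\W)$ for $G/\N$ is already granted by the verification in Subsections~\ref{subsec:X_W_quot}--\ref{rem:g_action_quot}, so the remaining task is really to identify the short structure.

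First, Axiom~\eqref{short_axiom:graph} is read off from the preparatory lemmas. By Lemma~\ref{lem:blowup_for_quotient}, $\h X$ is the blowup of $\ov X/\N$ with squids $\{L_{[v]}\}$; by Lemma~\ref{lem:ovX/N_is_good}, $\ov X/\N$ is triangle- and square-free and has no singleton components (provided $\N$ is deep enough). The fact that $\ov X/\N$ is a $G/\N$-invariant subgraph of $\h X$ descends from $G$-invariance of $\ov X\subseteq X$ and $G$-equivariance of the quotient projection.

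Next, Axiom~\eqref{short_axiom:extension} is handed to us by Lemma~\ref{lem:squid_for_quot_ext.1}: for each $[v]$, the diagram there provides an extension of $\Stab_{G/\N}([v])$ with cyclic normal kernel $Z_{[v]}$ acting trivially on $\link_{\ov X/\N}([v])$, and the collection is $G/\N$-equivariant by construction. The hyperbolicity of the quotient base $H_v/\p_v(\N_v)$ is precisely Lemma~\ref{lem:Hv/Nv_rel_hyp}. So the only thing to do here is to point to these results, with a line noting that finite generation of $H_v/\p_v(\N_v)$ is inherited from that of $H_v$.

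Finally, Axiom~\eqref{short_axiom:cobounded} is where the work from Subsection~\ref{subsec:preparation} pays off. Using Notation~\ref{notation:lv_Uv}, I will identify the coordinate space $\C\ell_{[v]}$ with $L_{[v]}$ via the $\Stab_{G/\N}([v])$-equivariant quasi-isometry from Lemma~\ref{lem:hyp_triangle_quotient}. Then Lemma~\ref{lem:l_[w]} gives exactly what is required: when $Z_{[v]}$ is finite, $L_{[v]}$ is (uniformly) bounded; when $Z_{[v]}$ is infinite, $L_{[v]}$ is a quasiline on which $Z_{[v]}$ acts geometrically and each $Z_{[w]}$ with $[w]\in\link_{\ov X/\N}([v])$ acts with uniformly bounded orbits. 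The only subtle point worth flagging is that one has to make sure the action of $Z_{[w]}$ on $\C\ell_{[v]}$ actually corresponds to the action of $Z_{[w]}$ (a subgroup of $\Stab_{G/\N}([v])$ by the edge-group description) on $L_{[v]}$, but this follows from equivariance of the identification in Lemma~\ref{lem:hyp_triangle_quotient}. No step is a genuine obstacle here: the nontrivial content was done back in Subsection~\ref{subsec:preparation}, where quasimorphisms $\psi_v$ vanishing on $\N\cap E_v$ were arranged precisely so that the quotient of $L_v$ remained a quasiline and the coboundedness of the induced $Z_{[v]}$-action was preserved.
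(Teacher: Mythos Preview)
Your proposal is correct and follows essentially the same approach as the paper: you verify Axiom~\eqref{short_axiom:graph} via Lemmas~\ref{lem:blowup_for_quotient} and~\ref{lem:ovX/N_is_good}, Axiom~\eqref{short_axiom:extension} via Lemmas~\ref{lem:squid_for_quot_ext.1} and~\ref{lem:Hv/Nv_rel_hyp}, and Axiom~\eqref{short_axiom:cobounded} via Lemmas~\ref{lem:l_[w]} and~\ref{lem:hyp_triangle_quotient}, exactly as the paper does. Your additional remarks on equivariance and finite generation are harmless elaborations of the same assembly argument.
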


\begin{proof}
    Axiom~\eqref{short_axiom:graph} is clear, as $\h X$ is a blowup of $\ov X/\N$ by Lemma~\ref{lem:blowup_for_quotient}, and by Lemma~\ref{lem:ovX/N_is_good} the latter is triangle- and square-free with no connected components which are points. Moreover, Axiom~\eqref{short_axiom:extension} is a combination of Lemmas~\ref{lem:squid_for_quot_ext.1} and~\ref{lem:Hv/Nv_rel_hyp}. Regarding Axiom~\eqref{short_axiom:cobounded}, the properties of the action on $\C\ell_{[v]}\cong L_{[v]}$ were proved in Lemmas~\ref{lem:l_[w]} and~\ref{lem:hyp_triangle_quotient}.
\end{proof}

\subsection{Residual hyperbolicity}\label{subsec:reshyp}
\begin{defn}
    We say that $\N=\langle\langle \Gamma_1,\ldots,\Gamma_k\rangle\rangle$ is a \emph{full} kernel if, for every $v\in \ov X^{(0)}$, there exists $i$ such that $\Gamma_i$ is conjugated into $Z_v$. 
\end{defn}

\begin{cor}\label{cor:deep_full_is_hyp}
    If $\N$ is a full, deep-enough kernel, then $G/\N$ is hyperbolic. Furthermore, if $G$ is not virtually cyclic and the main coordinate space $\C S$ is unbounded, then $G/\N$ is also non-elementary hyperbolic.
\end{cor}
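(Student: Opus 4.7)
The plan is to apply Theorem~\ref{thm:quotient_short_HHG} to endow $G/\N$ with a short HHG structure and then use the classification of unbounded domains in Remark~\ref{rem:unbounded_dom_short_hhg} to show that fullness of $\N$ trivialises all $\ell$-type domains of the quotient, leaving no pair of unbounded $\bot$-related domains, so that the quotient HHS (and hence $G/\N$) is hyperbolic.

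First I would check that fullness of $\N$ forces every $Z_{[v]}$ to be finite: for each $v\in\ov X^{(0)}$ there exists $i$ with $\Gamma_i=M_iZ_{s_i}$ conjugate into $Z_v$, so $Z_v\cap\N$ has finite index in $Z_v$ and $Z_{[v]}=Z_v/(Z_v\cap\N)$ is finite. Theorem~\ref{thm:quotient_short_HHG} applied to a deep enough $\N$ then gives a short HHG structure on $(G/\N,\ov X/\N)$ with these finite cyclic directions, and Axiom~\eqref{short_axiom:cobounded}, equivalently Lemma~\ref{lem:l_[w]}, makes every $\C\ell_{[v]}$ uniformly bounded. By Remark~\ref{rem:unbounded_dom_short_hhg}, the unbounded domains of the quotient structure therefore lie in $\{S\}\cup\{\U_{[v]}\}$. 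Every $\bot$-pair listed in that Remark involves at least one $\ell$-type domain, and pairs $\U_{[v]},\U_{[w]}$ with $\dist_{\ov X/\N}([v],[w])\ge 2$ are transverse; the remaining case of adjacent $[v],[w]$ of valence at least two is not explicitly stated in the Remark but can be ruled out by a short direct computation using the triangle- and square-freeness of $\ov X/\N$ from Lemma~\ref{lem:ovX/N_is_good}, since a vertex orthogonal to both $\U_{[v]}$ and $\U_{[w]}$ would have to be adjacent to a second neighbour of $[v]$ besides $[w]$, creating a triangle or a square. Hence no pair of unbounded domains in the quotient HHS is $\bot$-related, so by the standard criterion that an HHS is hyperbolic precisely when every $\bot$-pair has at least one bounded coordinate, the quotient HHS is hyperbolic; since $G/\N$ acts geometrically on its underlying HHG space $\h\W$ we conclude that $G/\N$ is a hyperbolic group.

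For the furthermore clause, I would observe that the $G$-action on $\C S\simeq \ov X^{+\W}$ is cobounded and acylindrical (both from the HHG structure). Under the added hypotheses, the standard classification of acylindrical actions on hyperbolic spaces rules out the elliptic case (since $\C S$ is unbounded and the action is cobounded, so orbits are unbounded) and the lineal case (since $G$ is not virtually cyclic), so the action is non-elementary. Corollary~\ref{cor:hyp_quotients_from_rotating} then transfers non-elementarity to the $G/\N$-action on $\ov X^{+\W}/\N$, which yields a non-abelian free subgroup of $G/\N$ via ping-pong, and hence non-elementarity of $G/\N$ as a hyperbolic group.

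The main obstacle I anticipate is precisely the $\U_{[v]}\bot\U_{[w]}$ case for adjacent vertices of valence at least two, not explicitly handled by Remark~\ref{rem:unbounded_dom_short_hhg}: ruling this out is combinatorial and genuinely uses both the triangle-freeness and the square-freeness of $\ov X/\N$ supplied by Lemma~\ref{lem:ovX/N_is_good}. Everything else is either a direct consequence of Theorem~\ref{thm:quotient_short_HHG} or the standard HHS hyperbolicity criterion.
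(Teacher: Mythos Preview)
Your proposal is correct and follows essentially the same route as the paper: apply Theorem~\ref{thm:quotient_short_HHG}, observe that fullness kills all $\ell$-type domains, conclude via the ``no unbounded orthogonal pair'' criterion, and for the furthermore clause pass non-elementarity through Corollary~\ref{cor:hyp_quotients_from_rotating}. The paper simply cites Remark~\ref{rem:unbounded_dom_short_hhg} and \cite[Corollary~2.16]{quasiflats} without dwelling on the $\U_{[v]}\bot\U_{[w]}$ case, and for the furthermore part invokes \cite[Corollary~14.4]{HHS_I} rather than Osin's classification directly; these are the same arguments packaged differently.

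One wording issue: your sentence about ``a vertex orthogonal to both $\U_{[v]}$ and $\U_{[w]}$'' is garbled, since orthogonality is a relation between domains. What you actually need is that $\U_{[v]}\bot\U_{[w]}$ would force $\link_{\ov X/\N}([w])\subseteq\{[v]\}$: one computes $\link_{\h X}(\link_{\h X}(\Delta_{[v]}))=\Squid([v])$ using that any common neighbour of two distinct neighbours of $[v]$ would create a triangle or square, so the required inclusion $p^{-1}\link_{\ov X/\N}([w])\subseteq\Squid([v])$ contradicts $[w]$ having valence at least two. The combinatorics you invoke are right; just phrase the orthogonality condition at the level of link-containment rather than vertices.
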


\begin{proof}
    Since $\N$ is full, every cyclic direction $Z_{[v]}$ for the quotient is bounded, and therefore so is every $\C\ell_{[v]}$. Then Remark \ref{rem:unbounded_dom_short_hhg} shows that no two orthogonal domains in the structure have unbounded coordinate spaces, so $G/\N$ is hyperbolic by \cite[Corollary 2.16]{quasiflats}. 
    
    In the “furthermore” setting, \cite[Corollary 14.4]{HHS_I} implies that $G$ acts non-elementarily on $\C S$, and therefore on $\ov X^{+\W}$. Then Lemma~\ref{cor:hyp_quotients_from_rotating} tells us that $G/\N$ still acts non-elementarily on $\ov X^{+\W}/\N$, whenever $\N$ is deep enough.
\end{proof}

Recall that a group $G$ is \emph{fully residually $P$} for some property $P$ if, for every finite subset $F\subset G$, there exists a quotient $G\to \ov G$ where $F$ injects, and such that $\ov G$ enjoys $P$.

\begin{cor}\label{cor:fullres_hyp}
    A short HHG $G$ is fully residually hyperbolic. If moreover $G$ is not virtually cyclic and the main coordinate space $\C S$ is unbounded, then $G$ is fully residually non-elementary hyperbolic.
\end{cor}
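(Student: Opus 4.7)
The plan is to exhibit, for each finite subset $F \subset G$, a single full Dehn-filling quotient $G \to G/\N$ which is hyperbolic and in which $F$ injects. The idea is to simultaneously invoke Corollary~\ref{cor:deep_full_is_hyp} (full deep kernels produce hyperbolic quotients) and Corollary~\ref{lem:separating_filling} (deep kernels preserve any prescribed finite data), balancing both "deep enough" requirements against each other.

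More concretely, let $\B = \{s_1, \dots, s_r\}$ be a full set of $G$-orbit representatives of vertices of $\ov X$, and consider, for each $M \in \mathbb{N}-\{0\}$, the normal subgroup
\[
\N_M \coloneq \langle\langle M Z_{s_1}, \dots, M Z_{s_r}\rangle\rangle,
\]
which is a full kernel in the sense of Subsection~\ref{subsec:reshyp} with base $\B$. Let $F' \coloneq \{f_1 f_2^{-1} : f_1, f_2 \in F, \, f_1 \neq f_2\} \subset G - \{1\}$, which is a finite subset whose injection in $G/\N_M$ is equivalent to the injection of $F$. Apply Corollary~\ref{lem:separating_filling} to $F'$ to produce a depth $D_1$ such that $F' \cap \N_M = \emptyset$ whenever $M$ is a multiple of $D_1$. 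Apply Corollary~\ref{cor:deep_full_is_hyp} to produce a depth $D_2$ such that $G/\N_M$ is hyperbolic whenever $M$ is a multiple of $D_2$, and additionally non-elementary hyperbolic in the "furthermore" case (i.e.\ when $G$ is not virtually cyclic and $\C S$ is unbounded). Choosing $M$ to be a common multiple of $D_1$ and $D_2$ — together with whatever finite list of further depth constraints is needed for Theorem~\ref{thm:quotient_short_HHG} to apply — yields a quotient of the required form.

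The only step that requires any thought is ensuring that these finitely many depth conditions are simultaneously satisfiable, but this is immediate from Definition~\ref{defn:deep_enough}: each condition amounts to requiring $M$ to be a multiple of some positive integer, so one takes the least common multiple. There is no substantive geometric obstacle, since all the heavy lifting (existence of the short HHG quotient, hyperbolicity of the filled group, non-triviality of images) has already been packaged into the results of Section~\ref{sec:quotient_is_short} and Subsection~\ref{subsec:reshyp}. Hence $G$ is fully residually hyperbolic, and fully residually non-elementary hyperbolic under the extra hypotheses.
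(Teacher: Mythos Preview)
Your proposal is correct and follows essentially the same approach as the paper: fix a full base $\B$, then choose $\N$ deep enough to simultaneously satisfy Corollary~\ref{lem:separating_filling} (so the prescribed finite set survives) and Corollary~\ref{cor:deep_full_is_hyp} (so the quotient is hyperbolic, and non-elementary in the ``furthermore'' case). Your version is simply more explicit about forming $F'$ and taking a common multiple of the depth constants, whereas the paper compresses this into two sentences.
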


\begin{proof}
    Fix a finite set $F\subset G-\{1\}$, and let $\N$ be a full kernel. By Corollary~\ref{cor:separating_filling}, we can choose $\N$ to be deep enough that $F$ injects in $G/\N$. Furthermore, since $\N$ is full, by Corollary~\ref{cor:deep_full_is_hyp} we have that $G/\N$ is hyperbolic (and non-elementary in the “moreover” setting).
\end{proof}

\begin{cor}\label{cor:if_hyp_resfin}
    If all hyperbolic groups are residually finite then all short HHG are residually finite.
\end{cor}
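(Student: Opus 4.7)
The plan is to deduce this directly from Corollary~\ref{cor:fullres_hyp}, which already establishes that every short HHG is fully residually hyperbolic. So the work has essentially been done; the present corollary is just a two-step distillation of that fact together with the hypothesis.

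More precisely, let $G$ be a short HHG and let $g \in G - \{1\}$. Applying Corollary~\ref{cor:fullres_hyp} to the finite subset $F = \{g\}$, one obtains a quotient map $\pi \colon G \to \ov G$ where $\ov G$ is hyperbolic and $\pi(g) \neq 1$. By the standing assumption that all hyperbolic groups are residually finite, $\ov G$ admits a finite quotient $q \colon \ov G \to H$ with $q(\pi(g)) \neq 1$. The composition $q \circ \pi \colon G \to H$ is then a homomorphism to a finite group under which $g$ has non-trivial image. Since $g$ was arbitrary, $G$ is residually finite.

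There is no real obstacle here: Corollary~\ref{cor:fullres_hyp} provides the hyperbolic quotient separating any chosen non-trivial element, and residual finiteness of the hyperbolic quotient is exactly the hypothesis. The only thing one might slightly elaborate is that ``fully residually hyperbolic'' yields more than needed, since we only require separation of a single non-trivial element rather than a finite subset; but of course the single-element case is a special case of the finite-subset statement. This gives a complete proof in just a few lines.
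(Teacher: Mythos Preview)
Your proof is correct and is exactly the intended argument: the paper states this corollary without proof immediately after Corollary~\ref{cor:fullres_hyp}, as it follows from that result by precisely the composition argument you give.
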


\section{Hopf property from central quotients}\label{sec:hopf}
Recall that a group $G$ is \emph{Hopfian}, or has the \emph{Hopf property}, if every surjective homomorphism $\phi\colon G\to G$ is an isomorphism. In this Section we develop some tools to study self-epimorphisms of short HHG, with the aim of then proving the Hopf property for most large hyperbolic type Artin groups. We expect that one could treat other short HHG similarly, and indeed in Subsection~\ref{subsec:HNN} we prove the Hopf property of certain HNN extensions of free groups.

\subsection{A criterion}
Firstly, we state a simple criterion for a group to be Hopfian.

\begin{defn}
\label{defn:enough}
    We say that a group $G$ has \emph{enough Hopfian quotients} if for every surjective homomorphism $\phi: G\to G$ and non-trivial $g_0\in G$ there exists a quotient $H$ of $G$, say with quotient map $q$, and $n\geq 1$ such that:
\begin{itemize}
 \item $q(g_0)\neq 1$,
 \item $H$ is Hopfian,
 \item $\phi^n$ induces a homomorphism $\psi:H\to H$, which is necessarily surjective.
\end{itemize}
\end{defn}

\begin{rem}
    Note that the third bullet holds if and only if $\phi^n(\ker(q))\le\ker(q)$.
\end{rem}

\begin{lemma}
\label{lem:enough}
    If $G$ has enough Hopfian quotients then it is Hopfian.
\end{lemma}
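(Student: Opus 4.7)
The plan is to argue by contradiction, exploiting the fact that $\phi^n$ descends to $H$ and must kill $g_0$.

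Suppose $\phi\colon G\to G$ is a surjective homomorphism that is not injective, and pick $g_0\in\ker(\phi)-\{1\}$. Apply the hypothesis of enough Hopfian quotients to this pair $(\phi,g_0)$, obtaining a Hopfian quotient $q\colon G\to H$, an integer $n\geq 1$, and an induced homomorphism $\psi\colon H\to H$ with $\psi\circ q=q\circ \phi^n$. The first step is to observe that $\psi$ is surjective: indeed $\phi$ is surjective, hence so is $\phi^n$, and composing with the surjection $q$ shows that $q\circ \phi^n$ is surjective, which forces $\psi$ to be surjective as well.

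Since $H$ is Hopfian, the surjection $\psi$ is in fact an isomorphism, and in particular injective. The hard part — which is really no hard part at all here, once the criterion has been set up — is to produce the contradiction. On the one hand, $q(g_0)\neq 1$ by the first bullet of Definition~\ref{defn:enough}. On the other hand, $g_0\in\ker(\phi)$ implies $\phi^n(g_0)=1$, so
\[
\psi(q(g_0))=q(\phi^n(g_0))=q(1)=1,
\]
contradicting injectivity of $\psi$. Thus no such $g_0$ exists, $\phi$ is injective, and $G$ is Hopfian.

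The whole argument is essentially a diagram chase; the substance lies entirely in the design of Definition~\ref{defn:enough}, which is tailored precisely so that (i) passing to iterates $\phi^n$ is harmless because surjectivity is preserved under iteration, and (ii) the element $g_0$ detecting non-injectivity can be arranged to survive in a Hopfian quotient on which the map descends. No further structure on $G$ is needed.
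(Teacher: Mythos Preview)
Your proof is correct and essentially identical to the paper's: both use that $\psi$ is a self-surjection of a Hopfian group, hence injective, together with the relation $\psi\circ q=q\circ\phi^n$ and the fact that $q(g_0)\neq 1$. The only cosmetic difference is that you phrase it as a contradiction (picking $g_0\in\ker\phi$), while the paper argues directly that $\phi(g_0)\neq 1$ for every $g_0\neq 1$.
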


\begin{proof}
    Given a surjective homomorphism $\phi$ and $g_0\neq 1$ we have to argue that $\phi(g_0)$ is non-trivial. In the setting of Definition \ref{defn:enough}, this will follow if we show that $\phi^n(g_0)$ is non-trivial. But we have $\psi(q(g_0))\neq 1$, and therefore $\phi^n(g_0)\neq 1$, as required. 
\end{proof}

\begin{rem}\label{rem:almost_char_hopf_quotients}
   We stress that, despite the name "enough Hopfian quotients", the property from Definition~\ref{defn:enough} is stronger than being residually Hopfian, and indeed there are famous examples of residually finite, non-finitely-generated groups which do not satisfy the conclusion of Lemma~\ref{lem:enough}. The key point of Definition~\ref{defn:enough} is the third bullet, which requires the quotient $H$ to be ``almost characteristic'' in some sense.
\end{rem}

\subsection{Preliminary lemmas on central extensions}
\begin{lemma}
\label{lem:central_ext_peripheral}
    Let $G$ be a short HHG, and let $H\le G$ be a subgroup isomorphic to a $\mathbb Z$-central extension $1\to Z\to H\to K\to 1$.
    \begin{enumerate}
        \item If $K$ is infinite then $H$ is virtually contained in $\Stab{G}{v}$ for some $v\in \ov{X}^{(0)}$.
        \item If moreover $K$ is not virtually cyclic then $Z$ is virtually contained in $Z_v$.
    \end{enumerate}
\end{lemma}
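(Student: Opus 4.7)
The plan is to leverage the fact that a generator $z$ of $Z\cong\mathbb Z$ centralises all of $H$, together with the hierarchical structure of $G$ and the acylindricity of its action on $\C S\simeq\ov X^{+\W}$, a standard feature of HHGs.

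For Part~(1), note that $H\le C_G(z)$ and that $z$ has infinite order. By acylindricity, $z$ acts on $\C S$ either loxodromically or with bounded orbits. If $z$ is loxodromic, then by acylindricity $C_G(z)$, and hence $H$, is virtually cyclic; but then $K=H/Z$ would be finite, contradicting the assumption that $K$ is infinite. Thus $z$ has bounded $\C S$-projection. Since $z$ has infinite order, the HHS distance formula produces some non-$\nest$-maximal domain $U$ on which $z$ has unbounded projection, and Remark~\ref{rem:unbounded_dom_short_hhg} pins $[U]$ down to be either $\ell_v$ or $\U_v$ for some $v\in\ov X^{(0)}$. After replacing $z$ by a suitable power (equivalently, passing to a finite-index subgroup of $H$), one can arrange that $z$ stabilises $[U]$, and the blowup description of $X$ in Lemma~\ref{cor:bounded_links} identifies the stabiliser of $[U]$ as virtually contained in $\Stab{G}{v}$. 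Since $H$ centralises $z$, it permutes $\Bigdom{z}$; as this set has finitely many $\langle z\rangle$-orbits, a further finite-index subgroup of $H$ stabilises $[U]$, and Part~(1) follows.

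For Part~(2), assume further that $K$ is not virtually cyclic. By Part~(1), after replacing $H$ by a finite-index subgroup I may take $H\le\Stab{G}{v}$ for the vertex $v$ above. Consider $\p_v\colon\Stab{G}{v}\to H_v$: the image $\p_v(Z)$ is cyclic (being a quotient of $Z\cong\mathbb Z$) and lies in the centre of $\p_v(H)$. Suppose for contradiction that $\p_v(Z)$ is infinite; it then contains an element $\bar z$ of infinite order, which is loxodromic in the hyperbolic group $H_v$, and whose centraliser in $H_v$ is therefore virtually cyclic. Thus $\p_v(H)$, lying inside $C_{H_v}(\bar z)$, is virtually cyclic. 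Since $\p_v(H)\cong H/(H\cap Z_v)$ and $H\cap Z_v$ is cyclic, $H$ is an extension of a cyclic group by a virtually cyclic group and is therefore virtually abelian of rank at most two; quotienting by the infinite central $Z$ then makes $K$ virtually cyclic, a contradiction. Hence $\p_v(Z)$ is finite, so $Z\cap Z_v$ has finite index in $Z$, proving $Z$ is virtually contained in $Z_v$.

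The main obstacle will be the middle step of Part~(1): passing from the existence of one domain $U$ on which $z$ has unbounded projection to a finite-index subgroup of $H$ that stabilises that very $[U]$. This relies on the finiteness of $\Bigdom{z}/\langle z\rangle$ (a standard HHS fact) and on identifying the stabilisers of $[\ell_v]$ and $[\U_v]$ with virtual overgroups of $\Stab{G}{v}$, both of which follow from the blowup description of $X$ and Axiom~\ref{short_axiom:graph} but deserve an explicit verification.
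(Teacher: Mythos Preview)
Your argument is correct, and Part~(2) is essentially identical to the paper's proof. For Part~(1), however, you take a genuinely different route. The paper observes that \emph{every} element $h\in H$ has non-virtually-cyclic centraliser (since $C_G(h)\supseteq Z\cdot\langle k\rangle$ for any infinite-order $k\in K$), hence no element of $H$ can act loxodromically on $\C S$; it then invokes the Omnibus subgroup theorem \cite[Theorem~9.20]{DHS} to conclude directly that $H$ is virtually contained in some $\Stab{G}{v}$. Your approach instead singles out the generator $z$ of $Z$, locates a domain in $\Bigdom{z}$, and uses that $H$ centralises $z$ to pass to a finite-index subgroup fixing that domain.

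Both work. The paper's route is a one-liner once you accept the Omnibus theorem as a black box. Your route is more elementary: it only needs the finiteness of $\Bigdom{z}$ (a lighter fact than the full Omnibus statement) together with the identification $\Stab{G}{[\ell_v]}=\Stab{G}{[\U_v]}=\Stab{G}{v}$, which follows immediately from the $G$-equivariance of the assignment $v\mapsto\ell_v,\U_v$. Two small clean-ups: $\Bigdom{z}$ is actually finite outright (not just modulo $\langle z\rangle$), consisting of pairwise-orthogonal domains, which simplifies your middle step; and the stabiliser of $[\ell_v]$ or $[\U_v]$ equals $\Stab{G}{v}$ on the nose, not merely virtually, so you need not hedge there.
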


\begin{proof}
    We first notice that, if $K$ is infinite, then the centraliser of any element of $H$ is not virtually cyclic, and therefore, $H$ cannot contain any element acting loxodromically on the top-level coordinate space for $G$ (since this action is acylindrical, by \cite[Corollary 14.4]{HHS_I}). 

    Now, by the Omnibus subgroup theorem \cite[Theorem 9.20]{DHS}, together with \cite[Theorem 7.1]{DHS}, there exists $h\in H$ and a finite collection $\mathcal V$ of pairwise orthogonal, unbounded domains, such that $H$ permutes $\mathcal V$, and a power of $h$ acts loxodromically on $\C V$ for every $V\in \mathcal V$. By the above discussion, $\mathcal V$ does not coincide with the $\nest$-maximal element. Hence, by our description of unbounded domains in a short HHG (Remark~\ref{rem:unbounded_dom_short_hhg}), we see that an index-two subgroup of $H$ must fix a domain $V\in \mathcal V$ of the form $\ell_v$ or $\U_v$. In the first case $H$ virtually fixes $v$, so (1) follows. In the second scenario, since $\C \U_v$ is unbounded, $v$ is the only vertex $w\in \ov X^{(0)}$ such that $\ell_w\orth \U_v$, and again we get (1).
\par\medskip

    Towards proving (2), let $H_0$ be a finite-index subgroup of $H$ contained in $\Stab{G}{v}$. The group $H_1=H_0/(H_0\cap Z_v)$ embeds in a hyperbolic group, and either $Z$ is virtually contained in $Z_v$, or $H_1$ has infinite centre. The latter can only happen if $H_1$ is virtually cyclic, but then $K$ would also be virtually cyclic, which is not possible under our assumption. Therefore $Z$ is virtually contained in $Z_v$, as required.
\end{proof}

We will also need the following support lemma. Recall that a group extension $1\to Z\to H\to H/Z\to 1$ is \emph{virtually trivial} if there exists a finite-index subgroup $H'\le H$ and a group retraction $H'\to H'\cap Z$, which we call a \emph{virtual retraction} from $H$ to $Z$.

\begin{lemma}
 \label{lem:non_trivial_quot_ext}
 Let $1\to Z\to H\to H/Z\to 1$ be a non-virtually-trivial extension, and let $\phi:H\to K$ be a surjective homomorphism whose kernel intersects $Z$ trivially. Then the extension $1\to \phi(Z)\to K\to K/\phi(Z)\to 1$ is non-virtually-trivial.
\end{lemma}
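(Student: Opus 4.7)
The plan is to argue by contrapositive: assuming the quotient extension $1\to \phi(Z)\to K\to K/\phi(Z)\to 1$ is virtually trivial, we pull back a retraction along $\phi$ to exhibit virtual triviality of the original extension. The key tool is that $\ker(\phi)\cap Z=\{1\}$ makes $\phi|_Z\colon Z\to\phi(Z)$ an isomorphism, which we can invert to land inside $Z$.

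Concretely, suppose the quotient extension is virtually trivial, so there exists a finite-index subgroup $K'\le K$ together with a retraction $r\colon K'\to K'\cap\phi(Z)$. Set $H'=\phi^{-1}(K')$, which has finite index in $H$, and set $Z_0=H'\cap Z$. The first step is to verify that $\phi$ restricts to an isomorphism $\phi|_{Z_0}\colon Z_0\to K'\cap\phi(Z)$: injectivity is immediate from the hypothesis on $\ker\phi$, and surjectivity follows because any $k\in K'\cap\phi(Z)$ is of the form $\phi(z)$ for some $z\in Z$, and then $z\in\phi^{-1}(K')\cap Z=Z_0$. In particular $Z_0$ is a finite-index subgroup of $Z$ when $K'\cap\phi(Z)$ is finite-index in $\phi(Z)$, but even without this observation $Z_0=H'\cap Z$ is exactly what we need.

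The second step is to define the desired pulled-back retraction
\[
\tilde r\colon H'\longrightarrow Z_0,\qquad \tilde r(h)=(\phi|_{Z_0})^{-1}\bigl(r(\phi(h))\bigr).
\]
This makes sense because $\phi(H')=K'$ (as $\phi$ is surjective and $H'=\phi^{-1}(K')$), so $r\circ\phi|_{H'}$ takes values in $K'\cap\phi(Z)$, the domain of $(\phi|_{Z_0})^{-1}$. It is a group homomorphism as a composition of three homomorphisms. To see it is a retraction onto $Z_0=H'\cap Z$, pick $z\in Z_0$: then $\phi(z)\in K'\cap\phi(Z)$ is fixed by $r$, hence $\tilde r(z)=(\phi|_{Z_0})^{-1}(\phi(z))=z$. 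Thus the original extension $1\to Z\to H\to H/Z\to 1$ is virtually trivial, contradicting the hypothesis.

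There is no real obstacle here beyond bookkeeping: once one observes that $\phi|_Z$ is injective and hence invertible on its image, the retraction on the quotient side transports along $\phi$ without distortion, and everything checks out on the nose. The only subtlety worth being careful about is that we must use the \emph{same} finite-index subgroup $H'=\phi^{-1}(K')$ throughout, so that $H'\cap Z$ agrees with $\phi^{-1}(K'\cap\phi(Z))\cap Z$ and the retraction genuinely lands in $H'\cap Z$.
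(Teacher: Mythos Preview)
Your proof is correct and follows exactly the approach the paper sketches in one sentence: pull back the virtual retraction along $\phi$, using that $\phi|_Z$ is an isomorphism onto $\phi(Z)$. You have simply supplied the details the paper leaves implicit.
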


\begin{proof} Towards a contradiction, assume that the latter central extension is virtually trivial, so that there exist a finite-index subgroup $K'\le K$ and a group retraction $\rho\colon K'\to K'\cap \phi(Z)$. Let $\psi\colon \phi(Z)\to Z$ be an inverse of $\phi$. Then the composition $\psi\circ \rho\circ \phi$, defined on $\phi^{-1}(K')\le H$, is a virtual retraction from $H$ to $Z$, against the hypothesis that the first extension is non-virtually-trivial.
\end{proof}

\subsection{Certain relatively hyperbolic groups are Hopfian}
\begin{thm}
\label{thm:rel_hyp_hopf}
    Let $G$ be hyperbolic relative to $\Z$-central extensions of hyperbolic groups (including the case that $G$ itself is such an extension). Then $G$ is Hopfian.
\end{thm}

\begin{proof}
First, we note that if a group $H$ is hyperbolic relative to subgroups which are virtually a direct product of $\mathbb Z$ and a hyperbolic group (for short, virtual products), then it is Hopfian. Indeed, the peripheral subgroups are equationally Noetherian by \cite[Corollary 6.13]{ReinfeldWeidmann} for hyperbolic groups and \cite[Theorem E]{Val18} plus \cite[Theorem 1]{BMR97} for finite extensions of direct products of hyperbolic groups, so that $H$ is Hopfian by \cite[Corollary 3.14 and Theorem D]{Groves_Hull:eq_noetherian}.

We now proceed by induction on the number $k$ of peripheral subgroups which are not virtual products (for short, \emph{twisted}), the case $k=0$ being what we discussed above.

Suppose that the statement holds when there are at most $k$ twisted peripheral subgroups, and consider $G$ having $k+1$ twisted peripheral subgroups. Fix a self-epimorphism $\phi$ of $G$ and $g_0\neq 1$. We will use the criterion provided by Lemma \ref{lem:enough}, constructing a quotient $H$ which will be hyperbolic relative to $\Z$-central extensions of hyperbolic groups with $k$ twisted peripheral subgroups.
\par\medskip

We first claim that, up to passing to a power of $\phi$, there is some twisted peripheral $P$, with cyclic direction generated by $z_P$, and some positive integer $N_P$ such that $\phi(z_P^{N_P})$ is conjugated into $\langle z_P^{N_P}\rangle$. Indeed, let $P$ be any twisted peripheral. If $\phi^n(z_P)$ is a torsion element for some $n\in \mathbb{N}$ then we can take $N_P$ to be its order; thus we can assume that, for any twisted peripheral $P$ and any $n\in \mathbb{N}$, we have that $\phi^n(z_P)$ has infinite order. In this case, $\phi(P)$ is a $\mathbb Z$-central extension, and we claim that it cannot be an extension of a virtually cyclic group. Indeed, any such extension is virtually trivial, so this would contradict Lemma \ref{lem:non_trivial_quot_ext}. Since $G$ is a short HHG by \cite[Proposition 6.3]{Short_HHG:I}, we are now in a position to apply Lemma \ref{lem:central_ext_peripheral} to $\phi(P)$, and conclude that it is virtually contained in a conjugate $P'$ of some peripheral subgroup, which must be twisted itself (again as a consequence of Lemma \ref{lem:non_trivial_quot_ext}). Moreover, $P'\cap \phi(z_P) P' \phi(z_P)^{-1}$ contains the infinite subgroup $\langle\phi(z_P)\rangle \cap P'$, therefore $\phi(z_P)\in P'$ or we would contradict almost malnormality of peripheral subgroups. Notice also that the centraliser of $\phi(z_P)$ in $P'$, which contains $\phi(P)\cap P'$, cannot be virtually Abelian; hence $\phi(z_P)$ must be contained in the centre of $P'$. 

 Considering the directed graph with vertices the twisted peripherals and a directed edge from $P$ to $P'$ with $\phi(P)$ virtually contained in a conjugate of $P'$, we see that, up to passing to an iterated of $\phi$ (which is allowed by Lemma \ref{lem:enough}) there exists a twisted peripheral $P$ such that $\phi(P)$ is virtually contained in a conjugate of $P$. Moreover, $\phi(z_P)$ is virtually contained in the relevant conjugate of the centre of $P$.

 \par\medskip
 Therefore, by the relatively hyperbolic Dehn filling theorem, there exists a non-trivial multiple $N=N(g_0)$ of $N_P$ such that:
 \begin{itemize}
     \item the group $H=G/\langle\langle z_P^{N}\rangle\rangle$ is hyperbolic relative to
    \begin{enumerate}
        \item virtual products and $k$ twisted peripheral subgroups (coming from peripherals of $G$), and
        \item\label{item:hyp_peripheral} the hyperbolic group $P/\langle\langle z_P^{N}\rangle\rangle$.
    \end{enumerate}
    \item the image of $g_0$ in $H$ in non-trivial.
 \end{itemize}

Note that we can drop the subgroup from Item~\ref{item:hyp_peripheral} from the list of peripherals, so that $H$ is hyperbolic relative to virtual products and at most $k$ twisted peripheral subgroups, and it is therefore Hopfian by induction. Furthermore, the fact that $\phi(z_P^{N_P})$ is conjugate into $\langle z_P^{N_P}\rangle$ ensures that $\phi$ induces a homomorphism of $H$, so that we checked all conditions from Lemma \ref{lem:enough}, and the proof is complete.
\end{proof}

\subsection{The product region graph}\label{subsec:pr_graph}
\begin{defn}
    Let $G$ be a short HHG with support graph $\ov{X}$. Let $\ov{X}'$ be the full subgraph of $\ov{X}$ spanned by all vertices $v$ with $Z_v$ infinite. The \emph{product region graph} of $G$, denoted by $\mathcal{PR}(G)$, is the simplicial graph whose vertex set is $(\ov X')^{(0)}/G$, and where two vertices are adjacent if and only if they admit adjacent representatives in $\ov X'$.
\end{defn}

\begin{rem}
    The product region graph has the following interpretation. The vertices are conjugacy classes of vertex stabilisers (which are HHS product regions), and two vertices are adjacent if there exist conjugacy representatives that intersect along an edge group.
\end{rem}

\begin{defn}\label{defn:central_dir}
    A short HHG $(G, \ov X, \W)$ has \emph{central cyclic directions} if, for every $v\in\ov X^{(0)}$, either $Z_v$ is finite or it lies in the centre of $\Stab{G}{v}$ whenever $v\in\ov X^{(0)}$. In other words, whenever $Z_v$ is infinite, $\Stab{G}{v}$ is a $\Z$-central extension of a hyperbolic group.
\end{defn}

\begin{lemma}
\label{lem:rel_hyp_check}
    If a short HHG $(G, \ov X, \W)$ has discrete product region graph then it is hyperbolic relative to $\{\Stab{G}{v}\}_{v\in V}$, where $V$ is a collection of representatives for some $G$-orbits of vertices with infinite cyclic directions. In particular, if $G$ furthermore has central cyclic directions then it is Hopfian by Theorem~\ref{thm:rel_hyp_hopf}.
\end{lemma}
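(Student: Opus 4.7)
The plan is to apply a criterion for strong relative hyperbolicity, starting from the weak relative hyperbolicity already provided by the squid materials, and to then invoke Theorem~\ref{prop:rel_hyp_hopf} for the ``in particular'' statement. As a first step, item~\ref{squid_material:big_papa} of the squid materials (Definition~\ref{defn:squid_material}, available thanks to Proposition~\ref{prop:short_is_squid}) gives that $G$ is weakly hyperbolic relative to the full collection of vertex stabilisers $\{\Stab{G}{v_i}\}_{v_i \in V'}$, where $V'$ ranges over $G$-orbit representatives of all vertices of $\ov X$. Since by Axiom~\ref{short_axiom:extension} each $\Stab{G}{w}$ with $Z_w$ finite is a finite extension of the hyperbolic group $H_w$, these hyperbolic peripheral subgroups can be dropped without breaking weak relative hyperbolicity, leaving exactly $\{\Stab{G}{v}\}_{v \in V}$.

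Next I would unpack the discreteness of $\mathcal{PR}(G)$: it means that any two vertices $v, w$ with $Z_v, Z_w$ both infinite and lying in distinct $G$-orbits are never $\ov X$-adjacent. In the colourable setting (which is natural here and holds for all short HHGs appearing in the Artin-group applications of this paper), this rules out the only remaining case of same-orbit adjacency as well, so every $\ov X$-edge $\{v, w\}$ must have at least one endpoint with finite cyclic direction. Consequently, the edge stabiliser $\Stab{G}{v} \cap \Stab{G}{w}$, which virtually equals $\langle Z_v, Z_w \rangle$ by squid item~\ref{squid_material:edge_stab}, is virtually cyclic rather than virtually $\mathbb{Z}^2$.

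The heart of the proof is upgrading weak to strong relative hyperbolicity by verifying the Bounded Coset Penetration (BCP) property, equivalently, by verifying fineness of the coned-off Cayley graph over $\{\Stab{G}{v}\}_{v \in V}$. The three ingredients are: (a) the Strong BGI lemmas (Lemmas~\ref{lem:strong_bgi_for_short} and~\ref{lem:strong_bgi_for_short_edge_type}), which force geodesics in $\ov X^{+\W}$ with large projection onto some $\C \ell_w$ or $\link_{\ov X}(w)^{+\W}$ to pass through $\operatorname{Star}_{\ov X}(w)$; (b) the gate property from squid item~\ref{squid_material:gates}, which places $\gate_v(P_u)$ within a bounded neighbourhood of a conjugate of a single cyclic direction $Z_w$ (with $w \in \link_{\ov X}(v)$) whenever $\dist_{\ov X}(v,u) \geq 2$; and (c) the consequence of discreteness that such $Z_w$ is always finite, since $v \in V$ has no infinite-$Z$ neighbour. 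Together these show that the coarse intersection of any two distinct cosets of peripherals is uniformly bounded, yielding fineness and thus BCP. The main obstacle is carefully fitting these ingredients together: the discreteness hypothesis is essential, as without it two adjacent peripheral regions would admit a virtually $\mathbb{Z}^2$ coarse intersection, which would violate fineness. The ``in particular'' conclusion is then immediate from Theorem~\ref{prop:rel_hyp_hopf}, since under the central cyclic directions assumption each $\Stab{G}{v}$ is a $\mathbb{Z}$-central extension of the hyperbolic group $H_v$.
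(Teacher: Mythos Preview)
Your approach is genuinely different from the paper's, and the crucial step has a gap.

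The paper does not touch BCP or fineness at all. Instead it works entirely inside the HHG structure: it prunes the index set $\frakS$ down to a sub-poset $\frakS_{keep}$ containing $S$, the $\U_v$, the $\ell_v$ for $v$ with infinite $Z_v$, and auxiliary domains $I_v$ sitting above $\ell_v$ and $\U_v$. After checking that containers and large links survive the pruning, the paper observes that the only orthogonal pairs left are $\U_v\bot\ell_v$, both nested in $I_v$; this is exactly \emph{isolated orthogonality} in the sense of Russell, and \cite[Theorem~4.3]{russell} then outputs relative hyperbolicity with peripherals $\Stab{G}{v}$ for $v$ with infinite $Z_v$. So the paper's argument is a structural manipulation of the HHS data followed by a black-box citation, with no coned-off-graph geometry at all.

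Your route is more classical in spirit, but the step ``bounded coarse intersection of peripheral cosets $\Rightarrow$ fineness/BCP'' is not a valid implication from weak relative hyperbolicity alone, and you do not name a criterion that would bridge the gap. Weak relative hyperbolicity together with almost-malnormality of the peripherals does \emph{not} force strong relative hyperbolicity in general; one needs an additional ingredient such as strong quasiconvexity of peripheral cosets in $G$, or a direct verification of bounded coset penetration for relative geodesics. Your ingredients (a)--(c) do establish that distinct peripheral cosets have uniformly bounded coarse intersection (your use of the gate axiom and discreteness is correct here), but you have not shown, and it is not automatic, that this suffices. A secondary issue: dropping the hyperbolic $\Stab{G}{w}$ with $Z_w$ finite from the weakly relatively hyperbolic structure also requires justification; un-coning a family of cosets can destroy hyperbolicity of the coned-off graph unless the removed subgroups are suitably quasiconvex in the intermediate space. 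Finally, you inserted a colourability hypothesis that is not in the statement of the lemma.
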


\begin{proof}
    The product region graph of $(G, \ov  X)$ being discrete is equivalent to no two vertices $v$ of the support graph $\ov  X$ with infinite $Z_v$ being connected to each other. Let $(X,\W)$ be a combinatorial HHG structure for $G$, where $X$ is a blowup of $\ov X$. With the aim of using \cite[Corollary 5.2]{russell}, we now modify the HHS structure, by removing various bounded domains. Namely, we only keep the following:
    \begin{itemize}
        \item The maximal domain $S$;
        \item For every $v\in \ov X^{(0)}$ of valence greater than one, the domain $\U_v=\link(\{(v,x)\})$, where $x$ is any point in $(L_v)^{(0)}$;
        \item For every $v\in \ov X^{(0)}$ for which $L_v$ is infinite, the domain $\ell_v$;
        \item For every $v\in \ov X^{(0)}$ for which $L_v$ is infinite, the domain $I_v\coloneq[\Sigma]$ corresponding to the link of the simplex $\Sigma=\{(v)\}$.  
    \end{itemize}
    Let $\frakS_{keep}\subset \frakS$ be the $G$-invariant subset containing the above domains. Notice that $I_v$ contains both $\ell_v$ and $\U_v$, and has no orthogonal domain in $\frakS_{keep}$. 
    
    Now, in view of Remark~\ref{rem:unbounded_dom_short_hhg}, every $U\in \frakS-\frakS_{keep}$ has bounded coordinate space. Hence, by inspection of the definition of a HHS (see e.g. \cite[Definition 1.1]{HHS_II}), removing these domains can only affect the existence of containers and the validity of the large link axiom, so we must check that both still hold in $\frakS_{keep}$.
\par\medskip
    \textbf{Containers}: By inspection of Remark~\ref{rem:unbounded_dom_short_hhg}, combined with the fact that the product region graph is discrete, the only pairs of orthogonal domains in $\frakS_{keep}$ are of the form $\U_v$ and $\ell_v$, for $v\in \ov X^{(0)}$ of valence greater than one and with infinite cyclic direction. Thus, whenever $U$ and $V$ are both nested in some $T\in \frakS_{keep}$, the container for $U$ inside $T$ is $V$, and vice versa.
\par\medskip
    \textbf{Large links}: Let $U\in \frakS_{keep}$ which is not $\nest$-minimal, and let $z,z'\in G$. We want to prove that, if one sets $N=2E\dist_U(z,z')+2E$, there exist $\{T_1,\ldots, T_{\lceil N\rceil}\}\subseteq \frakS_{keep}$ properly nested in $U$ and such that, whenever $V\in \frakS_{keep}$ is properly nested in $U$ and $\dist_V(z,z')> E$, then $V\nest T_i$ for some $i$. 
    \par\medskip

    We first notice that, if $U$ contains only finitely many domains of $\frakS_{keep}$ with unbounded coordinate spaces, then the large link axiom holds trivially (possibly after enlarging the HHS constant $E$). In particular, this happens if $U=I_v$, as it only contains $\ell_v$ and possibly $\U_v$.
\par\medskip

    Moreover, suppose that $U$ only contained $\nest$-minimal domains, already in $\frakS$ (this is the case if $U=\U_v$). Then the large link axiom for $\frakS$ produces a collection $\{T_1,\ldots, T_{\lceil N\rceil}\}\subseteq \frakS$, and one can simply intersect such collection with $\frakS_{keep}$ to get the required property.
\par\medskip

    The only case which is not covered by the above is when $U=S$ is the maximal domain. Let $\mathcal{T}=\{T_1,\ldots, T_{\lceil N\rceil}\}\subseteq \frakS$ the collection granted by the large link axiom for $\frakS$. If $\mathcal{T}\subseteq \frakS_{keep}$ we have nothing to prove; otherwise let $T\in \mathcal{T}-\frakS_{keep}$. If no $V\in\frakS_{keep}$ is properly nested in $T$ we can simply remove $T$ from the collection; otherwise we need to replace $T$ with some finite collection inside $\frakS_{keep}$. Suppose that $T=[\Delta]$ for some simplex $\Delta\subseteq X$. There are several cases to consider, according to the shape of $\Delta$ for which $[\Delta]\not\in \frakS_{keep}$. In what follows, let $\{v,w\}$ be an edge of $\ov X$ containing $\ov \Delta$, and let $x\in (L_v)^{(0)}$ and $y\in (L_w)^{(0)}$.
    \begin{itemize}
        \item Suppose that $\Delta=\{(v)\}$, where $v$ has finite cyclic direction. If there exists $V\in\frakS_{keep}$ which is nested in $T$, then $V\nest \U_v$. If $\U_v\in \frakS_{keep}$ then we replace $T$ by $\U_v$; otherwise $\link_{\ov X}(v)=\{w\}$, so that $V$ can only be $\ell_w$, and we replace $T$ by $\ell_w$. 
        \item Suppose that $\Delta=\{(x)\}$. Again, if there exists $V\in\frakS_{keep}$ which is nested in $T$, then $V\nest \U_v$, and we can argue as above.
        \item Suppose that $\Delta=\{(v,x)\}$, so that $T=\U_v$. As $T\not\in \frakS_{keep}$, we must have that $\link_{\ov X}(v)=\{w\}$, so the only $V\in\frakS_{keep}$ which is nested in $T$ is $V=\ell_w$. Thus we replace $T$ by $\ell_w$.
        \item Suppose that $\Delta=\{(v,w)\}$. The only unbounded domains which are nested in $T$ can be $\ell_v$ and $\ell_w$, so we can replace $T$ by $\{\ell_v, \ell_w\}\cap \frakS_{keep}$.
        \item Suppose that $\Delta=\{(v,y)\}$. The only unbounded domain which is nested in $T$ can be $\ell_v$, so we can replace $T$ by $\ell_v$.
        \item Finally, if $\Delta=\{(x,y)\}$ then no $V\in \frakS$ is nested in $T$, so we can simply remove the latter.
    \end{itemize}
    This concludes the verification of the large link axiom.
\par\medskip 
It is now readily seen that the HHG structure $(G,\frakS_{keep})$ has isolated orthogonality in the sense of \cite{russell}, specifically isolated by the set of domains $\{I_v\}$ as above. Then \cite[Theorem 4.3]{russell} states that $G$, equipped with a fixed word metric, is hyperbolic relative to the product regions $\{P_{I_v}\}_{v\in \ov X^{(0)}}$. Furthermore, since $G$ permutes the product regions, \cite[Proposition 5.1]{drutu_relhyp} yields that $G$ is hyperbolic  relative to a collection $\mathcal{Q}$ of subgroups, where for every $Q\in \mathcal Q$ there exists a unique $P_{I_v}$ within finite Hausdorff distance. Notice that $P_{I_v}$ is exactly $P_{\ell_v}$, so it lies within finite Hausdorff distance from a coset of a vertex stabiliser; hence there are $x\in G$ and a vertex $w\in \ov X^{(0)}$ with infinite cyclic direction such that the Hausdorff distance between $Q$ and $x\Stab{G}{w}$ is finite. In turn, $\dist_{Haus}(x\Stab{G}{w},\Stab{G}{xw})\le |x|$, where $|x|$ is the norm of $x$ in the fixed word metric; thus let $w'=xw$, and let $R=\dist_{Haus}(\Stab{G}{w'},Q)$.

We now claim that $\Stab{G}{w'}=Q$, thus completing the proof. To see this, let $K=Q\cap \Stab{G}{w'}$, and notice that, by \cite[Lemma 4.5]{Hruska_Wise}, there exists a constant $R'>0$ such that
$$Q\subseteq Q\cap N_R(\Stab{G}{w'})\subseteq N_{R'}(Q\cap \Stab{G}{w'})=N_{R'}(K)$$
so $K$ has finite index in $Q$. The same argument shows that $K$ has finite index in $\Stab{G}{w'}$.

Now, given $g\in \Stab{G}{w'}$, notice that $K\cap gKg^{-1}$ has finite index in $K$, as it is the intersection of two finite-index subgroups of $\Stab{G}{w'}$. Hence $K\cap gKg^{-1}$ has finite index in $Q$ as well, since $K$ has finite index in $Q$. Thus $Q\cap gQg^{-1}$, which contains $K\cap gKg^{-1}$, has finite index in $Q$, and since $Q$ is infinite and almost malnormal we must have that $g\in Q$. As $g$ was any element of $\Stab{G}{w'}$, this proves that $\Stab{G}{w'}\le Q$. 

For the reverse inclusion let $g\in Q$. The same argument shows that $\Stab{G}{w'}\cap g\Stab{G}{w'}g^{-1}=\Stab{G}{w'}\cap \Stab{G}{gw'}$ has finite-index in $\Stab{G}{w'}$. In particular some non-trivial subgroup of $Z_{gw'}$ fixes $w'$, which means that $\dist_{\ov X}(w',gw')\le 1$. But since $G$ has discrete product region graph and $Z_{gw'}$ is infinite, $gw'$ must actually coincide with $w'$, i.e., $g\in \Stab{G}{w'}$, as required. 
\end{proof}

\begin{defn}\label{defn:clean_int}
    A short HHG $(G, \ov X, \W)$ has \emph{clean intersections} if, for every $\ov X$-adjacent vertices $v,w$, the edge group $\Stab{G}{v}\cap \Stab{G}{w}$ coincides with $\langle Z_v, Z_w\rangle$. 
\end{defn}

\begin{defn}\label{defn:stable_pr}
    We say that a short HHG has \emph{stable product regions} if the following strengthening of Lemma \ref{lem:central_ext_peripheral} holds for $G$. 
    Let $H\le G$ be a subgroup isomorphic to a $\mathbb Z$-central extension $1\to Z\to H\to K\to 1$, and suppose that $H$ is virtually contained in some $\Stab{G}{v}$.
    \begin{enumerate}
        \item \label{item_stable_1} If $K$ is infinite then $\Stab{G}{v}$ actually contains $H$. If, in addition, $K$ is not virtually cyclic then $Z$ is contained in $Z_v$.
        \item \label{item_stable_2} There exists $I$, depending on $G$ only, such that if $K$ is finite then $H$ has an index-$\leq I$ subgroup contained in $\Stab{G}{v}$.
    \end{enumerate}
\end{defn}

\newcommand{\good}{always restrained}
\begin{defn}\label{defn:good_pr}
    Let $(G, \ov X,\W)$ be a short HHG, let $P=\Stab{G}{v}$ for some $v\in \ov X^{(0)}$, and let $\phi\colon G\to G$ be a homomorphism. $P$ is \emph{\good{}} with respect to $\phi$ if, for every $k\in\mathbb{N}$, either:
    \begin{itemize}
        \item $\phi^k(P)$ is a $\Z$-central extension of a non-elementary hyperbolic group, or
        \item $\phi^k(P)$ is virtually $\Z^2$, and virtually contained in an edge group.
    \end{itemize}
\end{defn}

\begin{lemma}\label{lem:quotient_for_good_stuff}
    Let $(G, \ov X,\W)$ be a colourable short HHG with stable product regions, central cyclic directions, and clean intersections. Let $g_0\in G-\{1\}$, let $\phi\colon G\to G$ be a homomorphism, and let $P_i=\Stab{G}{v_i}$ be \good{} vertex stabilisers with respect to $\phi$, for $i=1,\ldots, r$. Then there exists a kernel $\N$, as in Notation~\ref{notation:kernel}, such that:
    \begin{itemize}
        \item $\phi^M(\N)\le \N$ for some $M\in \mathbb{N}_{>0}$;
        \item $Z_{v_i}\cap \N\neq \{1\}$ for every $i=1,\ldots, r$;
        \item $g_0\not\in \N$;
        \item $G/\N$ is a colourable short HHG.
    \end{itemize}
\end{lemma}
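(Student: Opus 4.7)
The proof constructs $\N$ by selecting a finite base $\B$ and powers $\{M_s\}_{s\in\B}$ so that $\phi^M(\N)\le\N$ holds for a suitable iterate $M$. For every $k\ge 0$ and every $i$, the \good{} property of $P_i$ (Definition~\ref{defn:good_pr}) combined with Lemma~\ref{lem:central_ext_peripheral} and the stable product regions hypothesis (Definition~\ref{defn:stable_pr}) yields either a vertex $w_{i,k}$ with $\phi^k(Z_{v_i})\le Z_{w_{i,k}}$, or an edge $\{u_{i,k},u'_{i,k}\}$ whose edge group, by clean intersections, is $\langle Z_{u_{i,k}},Z_{u'_{i,k}}\rangle$ and contains $\phi^k(Z_{v_i})$. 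Cocompactness of the $G$-action on $\ov X$ forces $\ov X^{(0)}/G$ to be finite, so the set $\mathcal O$ of $G$-orbits of vertices appearing as $i,k$ vary is finite and contains every $[v_i]$. By pigeonhole each trajectory $([w_{i,k}])_k$ is eventually periodic, so I choose $M$ both large enough that the trajectories have entered their periodic parts and a common multiple of the periods.

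Take $\B$ to be a set of $G$-orbit representatives of $\mathcal O$; in particular, every $[v_i]$ is represented in $\B$. For each $s\in\B$ I extract target data: non-zero integers $a_s,b_s$, an element $s'\in\B$, and a conjugator $h_s\in G$ such that $\phi^M(z_s)^{a_s}=h_sz_{s'}^{b_s}h_s^{-1}$, with an analogous edge-group relation when the edge-group case of the \good{} property applies. When $s$ represents some $[v_i]$ the data are read directly from the first paragraph; when $s$ represents a derived orbit $[w_{i,k}]$ not of the form $[v_j]$, the data come from applying $\phi^M$ to $\phi^k(z_{v_i})=z_{w_{i,k}}^{a_{i,k}}$ and comparing with $\phi^{k+M}(z_{v_i})=z_{w_{i,k+M}}^{a_{i,k+M}}$, which forces $\phi^M(z_s)^{a_{i,k}}$ into a $G$-conjugate of $z_{w_{i,k+M}}^{a_{i,k+M}}$. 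These relations translate to a finite system of divisibility conditions on the unknowns $\{M_s\}_{s\in\B}$ (roughly of the form $a_sM_{s'}\mid b_sM_s$); the system is consistent because, traversing a periodic cycle of the orbit dynamics, the composition of the conditions telescopes to a trivially true relation. Pick any solution.

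To finish, scale every $M_s$ by a common large factor $K$: this preserves the divisibility conditions and hence $\phi^M(\N)\le\N$, while by Corollary~\ref{lem:separating_filling} combined with Theorem~\ref{thm:quotient_short_HHG} one may fix $K$ large enough that $g_0\notin\N$ and that $G/\N$ is a colourable short HHG. The condition $Z_{v_i}\cap\N\neq\{1\}$ is automatic since $\B$ contains a representative of $[v_i]$, so $\N$ contains a $G$-conjugate of the power $z_{v_i}^{M_{[v_i]}}$. The main difficulty is the middle paragraph: vertices $s\in\B$ not in the orbit of any $v_i$ carry no intrinsic \good{}ness hypothesis, so direct control over $\phi^M(z_s)$ is unavailable; this is resolved by propagating the algebraic identities attached to the original $v_i$'s along their orbit trajectories. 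A secondary technical point is the cyclic consistency of the divisibility system, which is what lets the whole scheme close up once the trajectories enter their periodic regime.
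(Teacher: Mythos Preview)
Your approach attempts a direct global construction, whereas the paper proceeds by induction on $r$, adding one always restrained stabiliser at a time to an already-constructed kernel $\N'$. The paper's inductive step splits into three cases depending on the eventual behaviour of $\phi^k(P_r)$: either some power of $z_r$ eventually lands in $\N'$; or $\phi^k(P_r)$ is always a non-elementary $\mathbb Z$-central extension, in which case one uses stability to find a single cyclic direction $Z'$ with $\phi^n(Z')$ conjugate into $Z'$ and adds $\langle\langle z^t, tZ'\rangle\rangle$ with a single common exponent $t$; or $\phi^k(P_r)$ eventually lies in edge groups $E_k$, in which case one argues $\phi(E_k)\le E_{k+1}$, finds a periodic $E'=\langle Z_v,Z_w\rangle$ via clean intersections, and adds $\langle\langle z^t, tZ_v, tZ_w\rangle\rangle$, using that $\phi^n(tE')$ lies in a conjugate of $tE'$ as a whole.

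Your sketch has two real gaps. First, the edge-group case is essentially not handled: you write ``an analogous edge-group relation'' but never explain what this is. In that regime $\phi^M(z_s)$ need not lie in any single cyclic direction; it can be a genuine product $z_u^a z_{u'}^b$ inside an edge group, so your framework of divisibility constraints between individual $M_s$'s does not obviously close up. Crucially, to control where $\phi^M$ sends the \emph{edge} generators $z_u,z_{u'}$ you need the observation $\phi(E_k)\le E_{k+1}$ (which the paper deduces from stability), and you never invoke it. Second, even in the vertex case, for a derived $s=[w_{i,k}]$ you only control $\phi^M(z_s)^{a_{i,k}}$, not $\phi^M(z_s)$ itself; this forces the extra constraint $a_{i,k}\mid M_s$ and makes your ``telescoping'' consistency claim non-obvious: solvability of the cycle system requires $\prod a_{s_j}\mid\prod b_{s_j}$, which you have not established (it is provable, but only once you argue, as the paper does, that derived stabilisers $Q_k$ themselves have $\phi(Z_{Q_k})$ conjugate into $Z_{Q_{k+1}}$, giving $a_s=1$). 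Both gaps are repairable along the paper's lines, but as written the argument does not go through.
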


\begin{rem}\label{rem:hopfian_from_good_stuff}
    Notice that, in the above Lemma, the product region graph of $G/\N$ injects in the graph obtained from $\mathcal{PR}(G)$ after removing the open stars of the vertices corresponding to $v_1,\ldots, v_r$. 
    
    Now, suppose that $G$ has central cyclic directions. In view of the above discussion, if removing all \good{} stabilisers makes $\mathcal{PR}(G)$ discrete, then $G/\N$ is Hopfian by Lemma \ref{lem:rel_hyp_check}. In other words, the quotient $G/\N$ satisfies all requirements of our criterion, Lemma \ref{lem:enough}.
\end{rem}

\begin{proof}[Proof of Lemma \ref{lem:quotient_for_good_stuff}] We proceed by induction on $r$, the base case $r=0$ being trivial. For the inductive step, let $\N'$ be a kernel satisfying the statement, for the collection $\{P_1,\ldots,P_{r-1}\}$. Up to replacing $\phi$ by a power, we can assume that $\phi(\N')\le \N'$. Set $P=P_r$, and choose a generator $z$ of $Z_{r}$. As $P$ is \good{}, we are in one of the three situations below.

\par\medskip
\textbf{(1)} Suppose first that $\phi^k(z^n)\in \N'$ for some $k,n\in\mathbb{N}$ and $n\neq 0$. Then set $\N=\langle\langle \N', z^n\rangle \rangle$, which is preserved by $\phi^k$. Working in the short HHG $G/\N'$, we see that, up to replacing $n$ by a non-trivial multiple, we can assume that $g_0\not\in \N$, and that $G/\N$ is again a colourable short HHG.
\par\medskip
\textbf{(2)} Suppose now that, for every $k\in \mathbb{N}$, $\phi^k(P)$ is a $\Z$-central extension of a hyperbolic group. By stability of product regions, this means that $\phi^k(P)$ is conjugated into some vertex stabiliser $Q_k$; moreover $\phi(Q_k)$ is again a $\Z$-central extension of a non-elementary hyperbolic group, as it contains $\phi^{k+1}(P)$, and the stability assumption implies that the $\phi$-image of the centre of $Q_k$ is conjugated into the centre of $Q_{k+1}$. Now, there are finitely many cyclic directions up to conjugation, so we can find $n\in\mathbb{N}_{>0}$ and a cyclic direction $Z'$ such that $\phi^n(\langle z\rangle)\le Z'$, and $\phi^n(Z')$ is conjugated inside $Z'$. Notice that neither $\langle z\rangle$ nor $Z'$ intersect $\N'$ trivially, as this case was covered by point \textbf{(1)}. Then set $\N=\langle\langle \N', z^t,tZ'\rangle\rangle$, which is preserved by $\phi^n$ for any choice of $t\in\mathbb{N}_{>0}$. Again, one can choose $t$ in such a way that $g_0\not\in \N$, and that $G/\N$ is a colourable short HHG.
\par\medskip
\textbf{(3)} Finally, suppose that there exists $k_0\in\mathbb{N}_{>0}$ such that, for every $k\ge k_0$, $\phi^k(P)$ is virtually $\Z^2$ and is virtually contained in some edge group $E_k$. Without loss of generality, we can replace $\phi$ by $\phi^{k_0}$ and assume that $k_0=1$. By definition, each $E_k$ is the intersection of two vertex stabilisers, so the stability of product regions implies that $\phi^k(P)$ is actually a finite-index subgroup of $E_k$. 

Notice that central cyclic directions and clean intersection imply that each $E_k$ is isomorphic to $\Z^2$, so that $\phi(E_k)$ is also Abelian. Moreover, $\phi(E_k)$ is a finite-index overgroup of $\phi^{k+1}(P)$, which is virtually $\Z^2$ and is contained inside $ E_{k+1}$; hence, by stability of product regions applied to the central extension $\phi(E_k)$, we must have that $\phi(E_k)\le E_{k+1}$.

As in point \textbf{(2)}, the existence of finitely many edge groups allows one to find $n\in\mathbb{N}_{>0}$ and an edge group $E'=\Stab{G}{v}\cap\Stab{G}{w}$ such that both $\phi^n(P)$ and $\phi^n(E')$ are conjugated inside $E'$. As $G$ has clean intersections, $E'=\langle Z_v, Z_w\rangle$, so for every $t\in \mathbb{N}_{>0}$ the subgroup $tE'=\langle tZ_v, tZ_w\rangle$ is preserved by $\phi^n$ (up to conjugation). Then set $\N=\langle\langle \N, z^t, tE'\rangle\rangle$, which is preserved by $\phi^n$ (notice that $\phi^n(z^t)$ is conjugated inside $tE'$). If, say, $Z_v$ already intersected $\N'$, we choose $t$ in such a way that $tZ_v\le \N'$, and similarly for $Z_w$. Then again a suitable choice of $t$ grants the required properties of the quotient.
\end{proof}

\subsection{Hopf property for admissible HNN extensions}\label{subsec:HNN}
Before focusing on Artin groups, we provide an easy example of how one can establish the Hopf property for certain short HHG, which will serve as a blueprint for many arguments in the next Section. The additional hypotheses we will assume on the short HHG rule out the difficulties that appear for Artin groups. We start with a general Lemma.

\begin{lemma}
\label{lem:abel_hopf}
    Let $G$ be a finitely generated group, and let $\phi\colon G\to G$ be a surjective homomorphism. Then $\phi$ induces an automorphism of the abelianisation $G^{ab}$ of $G$.
\end{lemma}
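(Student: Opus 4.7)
The plan is to reduce the claim to the fact that any finitely generated abelian group is Hopfian. First I would observe that $\phi$ descends to a well-defined group homomorphism $\bar\phi\colon G^{ab}\to G^{ab}$: indeed, $\phi$ sends commutators to commutators, hence preserves the commutator subgroup $[G,G]$, so it induces a map on the quotient $G^{ab}=G/[G,G]$.

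Next I would check surjectivity of $\bar\phi$. This is immediate from surjectivity of $\phi$ together with surjectivity of the abelianization projection $\pi\colon G\to G^{ab}$: given $\bar g\in G^{ab}$ we lift it to $g\in G$, pick a preimage $h\in G$ with $\phi(h)=g$, and then $\bar\phi(\pi(h))=\pi(\phi(h))=\pi(g)=\bar g$.

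The main content is injectivity, i.e.\ showing a surjective endomorphism of $G^{ab}$ is automatically an isomorphism. Since $G$ is finitely generated, so is $G^{ab}$, so the structure theorem writes $G^{ab}\cong \Z^n\oplus T$ with $T$ a finite abelian group. Equivalently, $G^{ab}$ is a finitely generated module over the Noetherian ring $\Z$, and such modules are Hopfian: the ascending chain $\ker\bar\phi\subseteq \ker\bar\phi^2\subseteq\cdots$ stabilises, so if $\bar\phi^{k+1}$ and $\bar\phi^k$ have the same kernel, surjectivity of $\bar\phi^k$ combined with a quick diagram chase forces $\ker\bar\phi=0$. (Alternatively, the torsion-free rank of $G^{ab}$ is preserved and the torsion part is finite, so one argues separately on each factor.) The main obstacle is essentially nil; this is a textbook fact, and I mention the proof only because the statement is invoked without attribution in the paper.
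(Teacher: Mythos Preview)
Your proof is correct and follows essentially the same approach as the paper: both reduce to the fact that the finitely generated abelian group $G^{ab}$ is Hopfian. The only difference is cosmetic — you justify Hopficity via a Noetherian-module argument, while the paper invokes residual finiteness — and you spell out the (easy) steps of descending $\phi$ and checking surjectivity, which the paper leaves implicit.
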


\begin{proof} The commutator subgroup, which is the kernel of the abelianisation, is preserved by any endomorphism, so $\phi$ descends to a self-epimorphism of $G^{ab}$. Furthermore, the latter is a finitely generated Abelian group, so it is Hopfian (as it is residually finite, for instance). 
\end{proof}

\begin{prop}\label{prop:example_hopf}
    Let $(G, \ov X, \W)$ be a short HHG with central cyclic directions. Suppose that:
    \begin{itemize}
        \item All vertex stabilisers are conjugate;
        \item The image of a vertex stabiliser in $G^{ab}$ has torsion-free rank at least $3$;
        \item The image of a cyclic direction in $G^{ab}$ has infinite order.
    \end{itemize}
    Then $G$ is Hopfian.
\end{prop}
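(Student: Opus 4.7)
The plan is to invoke the enough-Hopfian-quotients criterion, Lemma~\ref{lem:enough}. Fix a surjective homomorphism $\phi\colon G\to G$ and $g_0\in G-\{1\}$; the goal is to construct a Hopfian quotient $G/\N$ in which $g_0$ survives and onto which a power of $\phi$ descends. Since $G^{ab}$ is finitely generated Abelian (hence Hopfian), Lemma~\ref{lem:abel_hopf} gives that the induced map $\phi^{ab}\colon G^{ab}\to G^{ab}$ is an automorphism. Consequently, the rank-$\geq 3$ and infinite-order hypotheses of the proposition transfer verbatim to $\phi^k(P)$ and $\phi^k(Z)$ for every $k$, where $P=\Stab{G}{v_0}$ is the (unique up to $G$-conjugacy) vertex stabiliser and $Z=Z_{v_0}$ is its cyclic direction.

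The heart of the argument is to show that $P$ is \emph{always restrained} with respect to $\phi$, in the sense of Definition~\ref{defn:good_pr}. Fix $k\in\mathbb{N}$. The element $\phi^k(Z)$ is infinite cyclic (its image in $G^{ab}$ has infinite order) and lies in $Z(\phi^k(P))$ since $Z\subseteq Z(P)$ by the central cyclic directions hypothesis. So $\phi^k(P)$ fits in a central extension
\begin{equation*}
1\to \phi^k(Z)\to \phi^k(P)\to Q\to 1,
\end{equation*}
where $Q$ has image of rank $\geq 2$ in $G^{ab}$ and is therefore not virtually cyclic. Lemma~\ref{lem:central_ext_peripheral} then produces $w\in\ov X^{(0)}$ so that some finite-index $P^*\leq\phi^k(P)$ lies in $\Stab{G}{w}$, and $\phi^k(Z)$ is virtually contained in $Z_w$. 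Quotienting by the (central) $Z_w$ exhibits $P^*/(P^*\cap Z_w)$ as a subgroup of the hyperbolic group $H_w$; since $\phi^k(Z)$ and $P^*\cap Z_w$ are commensurable infinite cyclic subgroups, $\phi^k(P)/\phi^k(Z)$ is a finite extension of a hyperbolic group, hence hyperbolic, and it is non-elementary because its abelianisation has rank $\geq 2$. Thus $\phi^k(P)$ is a $\mathbb{Z}$-central extension of a non-elementary hyperbolic group, establishing the first alternative of Definition~\ref{defn:good_pr}.

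With $P$ always restrained and the $G$-orbit of $P$ being the whole vertex set of $\mathcal{PR}(G)$, removing it leaves the empty graph, which is trivially discrete. Lemma~\ref{lem:quotient_for_good_stuff} applied with $r=1$ and input $P$ therefore produces a kernel $\N$ with $\phi^M(\N)\leq\N$ and $g_0\notin\N$ such that $G/\N$ is a short HHG. Central cyclic directions pass to quotients, so Remark~\ref{rem:hopfian_from_good_stuff} (together with the fact that the product region graph of $G/\N$ injects into $\mathcal{PR}(G)$ minus the star of the orbit of $P$, which is empty) implies via Lemma~\ref{lem:rel_hyp_check} and Theorem~\ref{prop:rel_hyp_hopf} that $G/\N$ is Hopfian. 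Feeding $G/\N$ into Lemma~\ref{lem:enough} then delivers the Hopf property for $G$.

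The main technical point to scrutinise before committing to this outline is whether $G$ meets the ancillary hypotheses of Lemma~\ref{lem:quotient_for_good_stuff}---namely, that $(G,\ov X)$ is colourable and has clean intersections and stable product regions. I expect these to hold (or to be arrangeable by passing to a finite-index subgroup, which is harmless for proving hopficity) in the homogeneous setting of a single conjugacy class of vertex stabilisers, but this verification is the genuine hurdle, and is exactly the sort of bookkeeping that the Artin-group arguments in Section~\ref{subsec:example_artin} will have to handle in a substantially more involved form.
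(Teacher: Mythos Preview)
Your outline routes the argument through Lemma~\ref{lem:quotient_for_good_stuff}, and you correctly flag that this requires colourability, stable product regions, and clean intersections. That flag is fatal: the last two are genuine extra structural hypotheses that are \emph{not} assumed in Proposition~\ref{prop:example_hopf} and do not follow from having a single conjugacy class of vertex stabilisers. (Your suggested fix of passing to a finite-index subgroup does not help either, since hopficity is not a commensurability invariant.) A smaller issue: your claim that $\phi^k(P)/\phi^k(Z)$ is hyperbolic because a finite-index subgroup embeds in $H_w$ is not justified---subgroups of hyperbolic groups need not be hyperbolic---so the ``always restrained'' verification is incomplete even on its own terms.

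The paper avoids Lemma~\ref{lem:quotient_for_good_stuff} altogether and gives a direct, more elementary argument. After observing (as you do) that $\phi(P)$ is a $\Z$-central extension with non-virtually-cyclic base, Lemma~\ref{lem:central_ext_peripheral} gives that $\langle\phi(z)\rangle$ is virtually contained in a cyclic direction, which by the single-conjugacy-class hypothesis is a conjugate of $\langle z\rangle$. Hence, after an inner automorphism, $\phi(z^n)=z^m$ for some $n>0$, $m\neq 0$. The key new step is a divisibility argument in the torsion-free quotient of $G^{ab}$: writing $\ov z=ke_1$ in a suitable basis, one reads off $n\mid m$. Consequently $\phi(\langle z^n\rangle)\le\langle z^n\rangle$, so $\phi$ descends to $G/\langle\langle z^{nK}\rangle\rangle$ for every $K$. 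Since there is only one conjugacy class of cyclic directions, this is a full kernel, and Corollary~\ref{cor:fullres_hyp} supplies a $K$ making the quotient hyperbolic (hence Hopfian) with $g_0$ surviving. Lemma~\ref{lem:enough} then finishes. This bypasses stable product regions and clean intersections entirely; the only residual ingredient is the Dehn-filling machinery behind Corollary~\ref{cor:fullres_hyp}.
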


\begin{proof}
     Let $\phi\colon G\to G$ be a surjective homomorphism, and let $g_0\in G-\{1\}$. In order to apply our criterion, Lemma~\ref{lem:enough}, we must produce a Hopfian quotient $H$ of $G$, such that the image of $g_0$ is non-trivial, and that some iterate of $\phi$ induces a self-epimorphism of $H$.
     
     Let $P=\Stab{G}{v}$ for some $v\in \ov X^{(0)}$, and let $\langle z\rangle$ be its cyclic direction. As $z$ has infinite order in $G^{ab}$, $\langle\phi(z)\rangle$ is infinite cyclic; furthermore, since the image of $P$ in $G^{ab}$ has torsion-free rank at least three, the same must be true for $\phi(P)$, which means that the latter must be a $\Z$-central extension of a non-elementary hyperbolic group. In turn, Lemma~\ref{lem:central_ext_peripheral} implies that $\langle\phi(z)\rangle$ is virtually contained in some cyclic direction, which is conjugate to $\langle z\rangle$ by assumption. Hence, up to post-composing $\phi$ by an inner automorphism, we can assume the existence of some $n\in \mathbb{N}_{>0}$ and $m\in \mathbb{Z}-\{0\}$ such that $\phi(z^n)=z^m$. 
     
     We now claim that $n$ divides $m$. Let $\ov\phi\colon G^{ab}\to G^{ab}$ be the induced map, and let $\ov z$ be the image of $z$ in the abelianisation. Since $\ov \phi$ maps the torsion subgroup to itself, and the latter does not contain $\ov z$, up to taking a further quotient we can assume that $G^{ab}\cong \Z^r$ for some $r\in\mathbb{N}_{>0}$. Choose a base $e_1,\ldots, e_r$ of $\Z^r$ such that $\ov z=ke_1$ for some $k\in\mathbb{N}_{>0}$. Then $mke_1=m\ov z=\ov\phi(n\ov z)\le nk \Z^r$, hence $n$ divides $m$.

    The above discussion implies that $\phi(z^n)\le \langle z^n\rangle$, so $\phi$ induces an automorphism of $H\coloneq G/\langle\langle z^{nK}\rangle\rangle$ for every $K\in\mathbb{N}$. Since there is a unique conjugacy class of cyclic directions, Corollary~\ref{cor:fullres_hyp} grants the existence of some $K$ such that $H$ is hyperbolic (hence Hopfian), and $g_0$ survives in $H$, as required. 
\end{proof}

Just to give a concrete example for the Proposition:

\begin{ex}\label{example:zxF}
    Let $G_0=\Z\times F_4$, where the centre is generated by $z\in \Z$ and the free factor has a basis $e_1,\ldots, e_4$. Let $H_1=\langle z, e_1\rangle$ and $H_2=\langle z, e_2\rangle$. Consider the HNN extension $G\colon G_0 *_\psi$, where $\psi\colon H_1\to H_2$ maps $z$ to $e_2$ and $e_1$ to $z$. The HNN extension is an admissible graph of groups, in the sense of \cite[Definition 3.1]{croke-Kleiner}, and therefore it admits a short HHG structure with support graph the Bass-Serre tree, as argued in \cite[Subsection 2.3.3]{Short_HHG:I}. In particular, all vertex stabilisers are conjugates of $G_0$. Furthermore, $G^{ab}$ is the free Abelian group generated by the images of $z, e_3, e_4,t$; in particular, the image of $G_0$ in the abelianisation is isomorphic to $\Z^3$, and the image of $z$ has infinite order. Hence $G$ satisfies the requirements of Proposition~\ref{prop:example_hopf}, and is therefore Hopfian.
\end{ex}

\section{Hopf property for Artin groups}\label{subsec:example_artin}

The goal of this section is to prove Theorem \ref{thm:XL_Hopfian} about the Hopf property for Artin groups. We will use our "Dehn filling" quotients to do so. We start with some basic definitions.

\begin{defn}
Let $\Gamma$ be a simplicial graph, with edge set $E$, and let $m\colon E\to \mathbb{N}_{\ge 2}$ be a labelling of the edges of $\Gamma$ with positive integers greater than or equal to $2$. Recall that the \emph{Artin group} $A_\Gamma$ is the group with the following presentation:
$$A_{\Gamma}=\langle \Gamma^{(0)}\,|\, \mathrm{prod}(a,b,m_{ab})=\mathrm{prod}(b,a,m_{ab}) \, \forall \{a,b\}\in E\rangle,$$
where $m_{ab}\coloneq m(\{a,b\})$ and $\mathrm{prod}(u,v,n)$ denotes the prefix  of length $n$ of the infinite alternating word $uvuvuv\dots$. 

An Artin group $A_\Gamma$ is of \emph{large type} if all edge labels are at least $3$, and it is of \emph{hyperbolic type} if, for every triangle with vertices $a,b,c$ inside $\Gamma$, the sum of the inverses of the edge labels is strictly less than one:
$$\frac{1}{m_{ab}}+\frac{1}{m_{bc}}+\frac{1}{m_{ac}}<1.$$
\end{defn}

\begin{defn}[Odd components]
Given a labelled graph $\Gamma$, we say that two vertices $a,b$ are in the same \emph{odd component} if there exists a combinatorial path between them, all whose edges have odd labels (i.e. if $a$ and $b$ are in the same connected component after we remove all even edges). The \emph{odd component graph}, denoted $\Gamma_{OC}$, is the simplicial graph whose vertices are odd components, and where two odd components $C,C'$ are adjacent if there exist vertices $a\in C$, $a'\in C'$ which are joined by an even edge.
\end{defn}

\begin{rem}
    A result of Paris \cite[Corollary 4.2]{paris_artin_groups} states that two standard generators are conjugate if and only if they lie in the same odd component, and this is why odd components will be relevant.
\end{rem}

\begin{rem}[Short HHG structure]\label{rem:example_Artingroup}
In \cite{ELTAG_HHS}, the authors produce a combinatorial HHG structure $(X,\W)$ for $A_\Gamma$ which, as noticed in \cite{Short_HHG:I}, is a short HHG structure. Here we point out some of its properties.
\begin{itemize}
    \item Fix a representative vertex for every odd component, and let $V$ be the union of such vertices. Let $\mathcal H$ be the collection of all cyclic subgroups generated by either a standard generator $s\in V$, or by the centre $z_{ab}$ of a standard dihedral subgroup $A_{ab}\coloneq\langle a, b\rangle$, for every two $\Gamma$-adjacent $a,b$. For every $H\in \mathcal H$, let $N(H)$ be its normaliser. Then $X$ is a blowup of the \emph{commutation graph} $Y$, whose vertices are the cosets of the $N(H)$, and two cosets $gN(H)$ and $hN(H')$ are adjacent if and only if $gHg^{-1}$ commutes with $hH'h^{-1}$. By \cite[Lemma 3.10]{ELTAG_HHS}, $Y$ is connected if the defining graph $\Gamma$ is. 
    \item By construction, $Y$ is bipartite, as two different conjugates of the standard generators never commute, nor do two different conjugates of centres of dihedral subgroups. This gives an $A_\Gamma$-invariant colouring of $Y$.
    \item In \cite[Lemmas 2.27 and 2.28]{ELTAG_HHS}, the authors describe $N(H)$ as follows: if $H=\langle a\rangle$ then $N(H)=C(a)$ is the centraliser of $a$, which is the direct product of $\langle a\rangle$ and a finitely generated free group; if $H=\langle z_{ab}\rangle$ then $N(H)=A_{ab}$ is the corresponding dihedral subgroup, which is a central extension with kernel $\langle z_{ab}\rangle$ and quotient a free product of two cyclic groups.
\end{itemize}
\end{rem}

For the next definition, recall that a \emph{leaf} of a simplicial graph is an edge with an endpoint of valence one, which we call the \emph{tip} of the leaf. A leaf in $\Gamma$ is said to be even or odd according to its edge label.

\begin{defn}[Hanging component]
    A \emph{hanging component} is an odd component $C$ which is a leaf of $\Gamma_{OC}$. A hanging component $C$ is \emph{broad} if $|C|>1$. A hanging component is a \emph{needle} if $C=\{v\}$ is a single vertex, and $v$ is a (necessarily even) leaf of $\Gamma$.
\end{defn}

We devote the rest of the Section to the proof of the following:
\begin{thm}\label{thm:XL_Hopfian}
    Let $A_\Gamma$ be an Artin group of large and hyperbolic type, such that every hanging component is either broad or a needle. 
    Then every surjective homomorphism $\phi\colon A_\Gamma\to A_\Gamma$ is an isomorphism.
\end{thm}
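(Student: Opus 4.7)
The plan is to apply the Hopfian criterion of Lemma~\ref{lem:enough}: given a self-epimorphism $\phi\colon A_\Gamma\to A_\Gamma$ and a non-trivial $g_0$, I will produce a Hopfian quotient of $A_\Gamma$ in which $g_0$ survives and some iterate of $\phi$ descends. By Remark~\ref{rem:example_Artingroup}, $A_\Gamma$ admits a colourable short HHG structure with support graph the commutation graph, whose vertex stabilisers are centralisers $C(a)\cong\langle a\rangle\times F_a$ for representatives of odd components and dihedral subgroups $A_{ab}$ for each $\Gamma$-edge, with cyclic directions $\langle a\rangle$ and $\langle z_{ab}\rangle$, respectively. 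From the explicit dihedral presentation and the splittings of $C(a)$ one verifies that this structure has central cyclic directions, stable product regions, and clean intersections. The goal is to apply Lemma~\ref{lem:quotient_for_good_stuff} to a collection of always-restrained vertex stabilisers chosen so that their removal discretises $\mathcal{PR}(A_\Gamma)$; then Remark~\ref{rem:hopfian_from_good_stuff} combined with Theorem~\ref{prop:rel_hyp_hopf} will furnish a Hopfian quotient satisfying every condition of Lemma~\ref{lem:enough}.

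The always-restrainedness verification proceeds by abelianisation. Since each relation $\mathrm{prod}(a,b,m)=\mathrm{prod}(b,a,m)$ becomes either trivial (when $m$ is even) or $a=b$ (when $m$ is odd) in $A_\Gamma^{ab}$, one has $A_\Gamma^{ab}\cong\Z^{|OC(\Gamma)|}$, with the projection sending each standard generator to the class of its odd component. By Lemma~\ref{lem:abel_hopf}, $\phi$ descends to an automorphism $\ov\phi$ of $\Z^{|OC(\Gamma)|}$; since $\ov\phi$ maps the basis of odd-component classes to another basis, the vectors $\ov{\phi^k(a)}$ for $a$'s in distinct odd components remain $\Z$-linearly independent for every $k\geq 0$. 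Two consequences follow:
\begin{itemize}
\item For any edge $\{a,b\}$ of $\Gamma$ whose endpoints lie in distinct odd components, $\phi^k(A_{ab})$ has rank $2$ in the abelianisation for every $k$, so it cannot collapse to a cyclic group. Stability of product regions (Definition~\ref{defn:stable_pr}) then forces $\phi^k(A_{ab})$ to be either a $\Z$-central extension of a non-elementary hyperbolic group or virtually $\Z^2$ inside an edge group, so $A_{ab}$ is always restrained.
\item For any standard generator $a$ whose centraliser factor $F_a$ has rank $\geq 2$---which holds whenever the odd component of $a$ is not a hanging component of size one---an analogous rank argument, using the hyperbolic-type assumption to rule out abelian degenerations of $\phi^k(C(a))$, shows $C(a)$ is always restrained.
\end{itemize}

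View $\mathcal{PR}(A_\Gamma)$ as the bipartite graph with a vertex per odd component of $\Gamma$ and a vertex per $\Gamma$-edge, with incidence whenever the edge has an endpoint in the odd component. After removing the open stars of all always-restrained stabilisers above, the only PR-vertices that could survive are those coming from hanging components and from $\Gamma$-edges incident to them. The broad/needle hypothesis handles these remaining cases: for a broad hanging component $C$, any $a\in C$ has at least one odd neighbour in $C$ plus the unique even incident edge, producing $F_a$ of rank $\geq 2$ and thus making $C$ always restrained; for a needle $\{v\}$, the unique even edge incident to $v$ is between distinct odd components and has therefore already been removed, leaving $\{v\}$ as an isolated vertex of the residual graph. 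The residual graph is thus discrete, so Lemma~\ref{lem:quotient_for_good_stuff} produces a kernel $\N$ with $\phi^M(\N)\le\N$, $g_0\notin\N$, and $A_\Gamma/\N$ a colourable short HHG; Remark~\ref{rem:hopfian_from_good_stuff} together with Theorem~\ref{prop:rel_hyp_hopf} then yields Hopficity of $A_\Gamma/\N$, and Lemma~\ref{lem:enough} concludes. The main obstacle is expected to be the precise combinatorial verification that the three types of restrained stabilisers above exhaust enough of $\mathcal{PR}(A_\Gamma)$: the configuration explicitly excluded by the hypothesis (single-vertex hanging components which are neither broad nor needles) is precisely what leaves a residual PR-vertex whose centraliser cannot be shown to be always-restrained by the abelianisation-plus-rank method.
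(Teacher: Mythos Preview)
Your overall strategy matches the paper's, but the always-restrained verification is much more delicate than you suggest, and in several regimes it simply fails, forcing the paper into case analyses you have skipped.

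First, the abelianisation-rank argument is weaker than you use it. Rank $2$ in $A_\Gamma^{ab}$ for $A_{ab}$ or $C(a)$ rules out $\phi^k$-images that are virtually cyclic, but it does \emph{not} rule out images that are virtually $\Z^2$ and \emph{not} contained in an edge group; that is exactly the failure mode of Definition~\ref{defn:good_pr}. The paper has to exclude this possibility by hand in each case: see Claim~\ref{claim:ponte_tibetano} inside Proposition~\ref{prop:3+_odd} for the cross-edge dihedrals, and the lengthy analyses in Propositions~\ref{prop:2_odd_needle} and~\ref{prop:2_odd_broad}. Also, you conflate the rank of the free factor $F_a$ with the rank of the image of $C(a)$ in the abelianisation; for $a$ in a broad hanging component the latter is only $2$ (Remark~\ref{rem:abel_artin}), so your ``$F_a$ has rank $\geq 2$'' observation does not give always-restrained.

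Second, and more seriously, the single-odd-component case destroys your scheme entirely: there $A_\Gamma^{ab}\cong\Z$, so \emph{no} vertex stabiliser has abelianisation rank $\geq 2$, and none can be shown always restrained by your method. The paper handles this case (Proposition~\ref{prop:odd}) by a direct and rather intricate construction of the Hopfian quotient, splitting into sub-cases according to whether $\phi^n(C(a))$ stays non-elementary, becomes virtually $\Z^2$ (in or out of an edge group), or becomes virtually $\Z$; the last two sub-cases bypass the always-restrained machinery altogether. The actual proof thus partitions according to the number of odd components (one, two, or at least three), with only the $\geq 3$ case resembling your outline, and even there requiring the extra contradiction argument of Claim~\ref{claim:ponte_tibetano}.
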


\begin{proof}[Outline of the proof]
    Firstly, it is enough to consider the case where $\Gamma$ is connected, since a free product of (finitely many, finitely generated) Hopfian groups is Hopfian by \cite[Theorem 1.1]{DeyNeumann_free_prod_Hopf}. Thus we are in the setting of Subsection~\ref{subsec:example_artin}, and $A_\Gamma$ is a short HHG. We can also assume that $|\Gamma|\ge 3$, as $\Z$ and dihedral Artin groups are known to be residually finite and therefore Hopfian. The proof is then split between Propositions~\ref{prop:odd} to~\ref{prop:2_odd_broad}, depending on the number of odd components of $\Gamma$.
\end{proof}

\begin{figure}[htp]
    \centering
    \includegraphics[width=\textwidth, alt={examples of hanging components}]{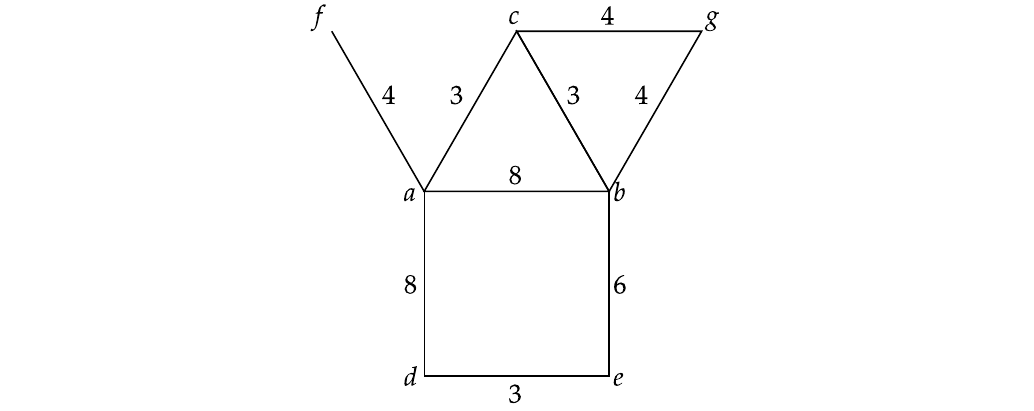}
    \caption{There are three hanging components in this graph: $\{d,e\}$ is broad, $\{f\}$ is a needle, and $\{g\}$ is what we forbid in Theorem \ref{thm:XL_Hopfian}.}
    \label{fig:needles_&_broad}
\end{figure}

\subsection{Pruning leaves}
Recall that, if $p\in\Gamma^{(0)}$ is the tip of an even leaf, then its centraliser is the $\Z^2$ subgroup generated by $p$ and the centre of the dihedral $A_{pq}$ corresponding to the leaf (see e.g. \cite[Corollary 34]{CMV:parabolics}). In particular $C(p)\le A_{pq}$, so the product region associated to $p$ is somewhat redundant. This is made clearer in the next Lemma.
\begin{lemma}
    Let $A_\Gamma$ be an Artin group of large hyperbolic type. Suppose that $\Gamma$ is connected and has at least three vertices. There exists a short HHG structure $(A_\Gamma, \ov{X})$, where $\ov X$ is the full subgraph of the commutation graph whose vertices are cosets of normalisers of
    \begin{itemize}
        \item centres of standard dihedral parabolics, or
        \item cyclic subgroups generated by standard generators \emph{which are not the tips of even leaves}.
    \end{itemize}
\end{lemma}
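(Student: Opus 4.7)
The plan is to construct squid materials for $A_\Gamma$ with support graph $\ov X$, then invoke Theorem~\ref{thm:squidification}. Most of the data can be inherited from the squid materials for the full commutation graph $Y$ (see Remark~\ref{rem:example_Artingroup}), which exist by applying Proposition~\ref{prop:short_is_squid} to the short HHG structure of \cite{ELTAG_HHS}.

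First I verify axiom~(1) of Definition~\ref{defn:squid_material}: $\ov X$ is a full $A_\Gamma$-invariant subgraph of $Y$, hence triangle- and square-free with cofinite action, and no connected component is a single point --- every retained dihedral vertex $gA_{ab}$ is adjacent in $\ov X$ to $gC(a)$ or $gC(b)$, since $\Gamma$ being connected with $|\Gamma|\geq 3$ prevents both $a$ and $b$ from being tips of even leaves, and every retained $gC(a)$ is adjacent to $gA_{ab}$ for any $\Gamma$-neighbour $b$ of $a$. Axioms~(2)--(3) follow immediately, as kept vertices preserve their stabilisers and extensions, and all edges of $\ov X$ are edges of $Y$. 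Weak relative hyperbolicity (axiom~(4)) is preserved when shrinking the peripheral family, because each removed $C(p)$ sits inside the adjacent retained $A_{pq}$, so the coned-off Cayley graph is unchanged. The finite-index subgroups $E_{v_i}$ and quasimorphisms $\phi_{v_i}$ required by axiom~(5) are inherited, with the vanishing condition immediate since $\link_{\ov X}(v_i) \subseteq \link_{Y}(v_i)$.

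The delicate point is the far-vertex clause of axiom~(6). Given $u\in \ov X$ with $\dist_{\ov X}(v_i,u)\geq 2$, distances only increase under restriction so $\dist_{Y}(v_i,u)\geq 2$ as well; the inherited bound provides $\gate_{v_i}(P_u)\subseteq N_B(gZ_{hv''})$ for some $hv''\in \link_{Y}(v_i)$. If $v''$ is retained we are done; otherwise $v''=C(p)$ for a tip $p$, forcing $v_i$ to be conjugate to the dihedral $A_{pq}$ and $Z_{hv''}$ to be a conjugate of $\langle p\rangle\subseteq A_{pq}$. The main obstacle is then to transport this bound to a retained cyclic direction. The key observation is that $C(p)=\langle p, z_{pq}\rangle$ is a $\Z^2$-subgroup of $A_{pq}$, so the coarse footprint of $\langle p\rangle$ inside $E_{v_i}$ is controlled, up to uniform error, by the retained direction $Z_{v_i}=\langle z_{pq}\rangle$ and by the projection to the adjacent retained vertex $gC(q)\in\link_{\ov X}(v_i)$ (which exists because $q$ is not a tip of an even leaf). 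Consequently $\gate_{v_i}(P_u)\subseteq N_{B'}(gZ_{hv'})$ for some retained $v'\in V$ and an enlarged constant $B'$; once this bookkeeping is complete, Theorem~\ref{thm:squidification} yields the desired short HHG structure $(A_\Gamma, \ov X)$, with the cyclic directions as claimed.
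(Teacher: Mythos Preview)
Your overall approach coincides with the paper's: restrict the squid materials from the full commutation graph $Y$ to $\ov X$ and invoke Theorem~\ref{thm:squidification}. The paper's proof is terser; it records that each removed vertex $gC(p)$ has valence exactly one in $Y$ (its only neighbour being $gA_{pq}$), and then declares all axioms ``easily seen'' except axiom~(4), which it checks exactly as you do using $C(p)\le A_{pq}$.

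There is, however, a genuine gap in your treatment of the far-vertex clause of axiom~(6). First a small point: the sentence ``distances only increase under restriction so $\dist_Y(v_i,u)\geq 2$'' has the inequality the wrong way round; the correct justification is that $\ov X$ is \emph{full} in $Y$, so two retained vertices are $Y$-adjacent iff they are $\ov X$-adjacent. More seriously, your ``key observation'' does not yield the stated conclusion. Knowing that $\gate_{v_i}(P_u)\subseteq N_B(g\langle p\rangle)$ does not imply $\gate_{v_i}(P_u)\subseteq N_{B'}(g'\langle q\rangle)$ for any $B'$: the coset $g\langle p\rangle$ is an unbounded subset of $A_{pq}$ whose image in the hyperbolic quotient $A_{pq}/\langle z_{pq}\rangle$ is an infinite cyclic subgroup distinct from the image of any $g'\langle q\rangle$ (for even $m_{pq}$ the abelianisation of $A_{pq}$ is $\Z^2$ and separates them), so no enlargement of the constant helps. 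The honest argument must instead show that the $Y$-gate \emph{never} selects a removed neighbour when $u$ is retained, and this is precisely where the valence-one property should enter: removed vertices are dead ends in $Y$, so no retained $u$ at distance $\geq 2$ lies ``behind'' them. The paper sweeps this under ``easily seen''; you are right that something deserves to be said, but what you wrote does not do the job.
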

\begin{proof}
Let $Y$ be the commutation graph, let $p$ be the tip of an even leaf $\{p,q\}$ of $\Gamma$. For every $g\in A_\Gamma$, the coset $g N(\langle p\rangle)$ is only adjacent to $gA_{pq}$ in $Y$, and is therefore a vertex of valence one of the commutation graph. Now let $\ov X$ be the full, $A_\Gamma$-invariant subgraph of $Y$ defined above, which is still triangle- and square-free, as so is $Y$, and none of its connected components is a point. 

Now, by Proposition~\ref{prop:short_is_squid}, $A_\Gamma$ admits blowup materials with support graph $Y$, which we can restrict to $\ov X$ by forgetting the data associated to the cosets $gN(\langle p\rangle)$. It is easily seen that the restriction gives blowup materials for $A_\Gamma$, as all the requirements of Definition~\ref{defn:squid_material} are already satisfied in the bigger graph $Y$. The only non-trivial observation is that point \eqref{squid_material:big_papa} still holds. Indeed, $A_\Gamma$ is weakly hyperbolic relative to the collection
$$\{A_{ab}\}_{\{a,b\}\in \Gamma^{(1)}}\cup\{N(c)\}_{c\in\Gamma^{(0)}}.$$
However $N(\langle p\rangle)$ is contained inside $A_{pq}$, so $A_\Gamma$ is also weakly hyperbolic relative to 
$$\{A_{ab}\}_{\{a,b\}\in \Gamma^{(1)}}\cup\{N(c)\}_{c\in\Gamma^{(0)}, |\link_\Gamma(c)|>1}.$$
Then Theorem~\ref{thm:squidification} yields the required short HHG structure $(A_\Gamma, \ov X)$.
\end{proof}

\subsection{Some properties to check}
We now argue that the short structure $(A_\Gamma, \ov{X})$ defined above fits the framework of Subsection~\ref{subsec:pr_graph}. For the rest of the Section, by \emph{vertex stabiliser} we will always mean the stabiliser of a vertex of $\ov X$, with respect to the action of $A_\Gamma$. Firstly, by inspection of vertex stabilisers, we see that cyclic directions are central, Definition~\ref{defn:central_dir}. Next, an easy observation, which we prove for completeness:

\begin{lemma}
    $(A_\Gamma, \ov{X})$ has clean intersections, in the sense of Definition~\ref{defn:clean_int}.
\end{lemma}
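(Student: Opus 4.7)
The plan is to leverage the bipartite structure of the support graph $\ov X$ (Remark~\ref{rem:example_Artingroup}), $A_\Gamma$-equivariance, and the classical direct product description $C(a) = \langle a\rangle \times F_a$ of centralisers of standard generators in large-type Artin groups, valid when $a$ is not a tip of an even leaf.

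\emph{Reduction.} The commutation graph $Y$ (and hence its subgraph $\ov X$) is bipartite, with parts given by cosets of centralisers $C(a)$ of standard generators and cosets of Dihedral parabolics $A_{bc}$. Every edge therefore connects vertices of opposite types, and using equivariance I reduce to an edge $\{v, w\}$ with $v = C(a)$ and $w = h A_{bc} h^{-1}$ for some $h \in A_\Gamma$. The edge condition says $[a, h z_{bc} h^{-1}] = 1$, so $\langle Z_v, Z_w\rangle = \langle a, h z_{bc} h^{-1}\rangle \cong \Z^2$ (independence follows from abelianising and using $m_{bc} \geq 3$). By the squid material axiom~\eqref{squid_material:edge_stab}, the edge group $L \coloneqq C(a) \cap h A_{bc} h^{-1}$ contains $\langle Z_v, Z_w\rangle$ with finite index, hence is itself free abelian of rank two, and the goal is to show this index equals $1$.

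\emph{Main computation.} Writing $h z_{bc} h^{-1} = a^m g'$ and an arbitrary $u \in L$ as $u = a^n g$ in the decomposition $C(a) = \langle a\rangle \times F_a$, centrality of $h z_{bc} h^{-1}$ in $h A_{bc} h^{-1}$ forces $[u, h z_{bc} h^{-1}] = 1$, which reduces to $[g, g'] = 1$ in the free group $F_a$. Commuting elements of a free group share a common cyclic root, so $g = r^j$ and $g' = r^k$ for a primitive $r \in F_a$ and integers $j, k$. An abelianisation argument rules out $g' = 1$ (else $h z_{bc} h^{-1} = a^m$, contradicting the fact that $z_{bc}^{\mathrm{ab}}$ has non-trivial $\{b,c\}$-support for $m_{bc} \geq 3$), so $k \neq 0$. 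Under the identification $\langle a\rangle \times \langle r\rangle \cong \Z \oplus \Z$, the subgroup $\langle Z_v, Z_w\rangle$ corresponds to the sublattice of pairs with $k$-divisible second coordinate, whence $u \in \langle Z_v, Z_w\rangle$ iff $k \mid j$.

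\emph{Divisibility and the main obstacle.} The most delicate step is establishing this divisibility. Any intermediate element $a^s r^d \in L$ with $d$ a proper divisor of $k$ would furnish a root of $h z_{bc} h^{-1}$ (up to an $\langle a\rangle$-factor) inside the Dihedral $h A_{bc} h^{-1}$. I would rule this out by analysing roots of the central element $z_{bc}$ in $A_{bc}$: when $m_{bc}$ is odd, $z_{bc}$ is the primitive Garside element and has no proper roots in $A_{bc}$; when $m_{bc}$ is even, its only proper root is the Garside element $\Delta$ with $\Delta^2 = z_{bc}$, but the edge condition $[a, h z_{bc} h^{-1}] = 1$ does not upgrade to $[a, h\Delta h^{-1}] = 1$, so $h\Delta h^{-1} \notin C(a)$ and hence not in $L$. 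Combining this with primitivity of standard generators (so that any power $a^t \in h A_{bc} h^{-1}$ forces $a \in h A_{bc} h^{-1}$, a case to be handled separately) yields $k \mid j$ and hence $L = \langle Z_v, Z_w\rangle$.
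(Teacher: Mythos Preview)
Your core idea matches the paper's: pass to $C(a)/\langle a\rangle \cong F_a$ and use that centralisers of non-trivial elements in a free group are cyclic. The paper, however, makes a sharper reduction than you do. Using the $A_\Gamma$-action, every edge of $\ov X$ can be brought to the \emph{standard} form $\{C(a),A_{ab}\}$ with $a$ one of the two generators of the dihedral, not merely $\{C(a),hA_{bc}h^{-1}\}$. (This follows from the structure of links in the commutation graph: the neighbours of the vertex $A_{bc}$ are exactly the $A_{bc}$-translates of $C(b)$ and $C(c)$.) With this reduction the edge group is simply $C_{A_{ab}}(a)$, and the divisibility obstacle you isolate dissolves: the projection $\bar z_{ab}$ is part of a free basis of $F_a$ (this is the explicit description of $C(a)$ in large-type Artin groups underlying Remark~\ref{rem:example_Artingroup}), hence primitive, so its centraliser in $F_a$ is exactly $\langle \bar z_{ab}\rangle$. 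This gives $(C(a)\cap A_{ab})/\langle a\rangle=\langle \bar z_{ab}\rangle$ and hence $C(a)\cap A_{ab}=\langle a,z_{ab}\rangle$ immediately; the paper's one-line proof is encoding precisely this.

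Your root analysis in $A_{bc}$ has a concrete error: the parity is reversed. For $m_{bc}$ odd the Garside element $\Delta$ conjugates $b$ to $c$, so the centre is generated by $\Delta^2$, and $\Delta$ \emph{is} a proper square root of $z_{bc}$. For $m_{bc}$ even the centre is $\Delta$ itself, and then $z_{bc}=(bc)^{m_{bc}/2}$ still has $bc$ as a proper root whenever $m_{bc}\ge 4$. So the ``no proper roots'' assertion fails in both cases. What actually excludes these roots from $L$ is that they fail to commute with $a$; but verifying this for a general conjugate $hA_{bc}h^{-1}$ (where $a$ need not lie in the dihedral at all, so your proposed reduction to roots of $z_{bc}$ inside $A_{bc}$ does not go through cleanly) is exactly the difficulty that the paper's stronger reduction sidesteps.
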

\begin{proof}
    Let $\{v,w\}$ be an edge of $\ov X$. Up to the action of the group, we can assume that $v=A_{ab}$ is a standard dihedral and $w=C(a)$ is the centraliser of $a$. To prove that $C(a)\cap A_{ab}=\langle a, z_{ab}\rangle$, it is enough to notice that $(C(a)\cap A_{ab})/\langle a\rangle$ must centralise the non-trivial projection of $z_{ab}$ to the free group $C(a)/\langle a\rangle$.
\end{proof}

Finally, we move to stability of product regions:

\begin{lemma}
\label{lem:central_ext_peripheral_plus}
$(A_\Gamma, \ov X)$ has stable product regions, in the sense of Definition~\ref{defn:stable_pr}.
\end{lemma}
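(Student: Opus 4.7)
The plan is to verify the two conditions of Definition~\ref{defn:stable_pr} for the vertex stabilisers of $(A_\Gamma,\ov X)$, which come in two flavours: Dihedral parabolics $A_{ab}$ and centralisers $C(a)\cong\langle a\rangle\times F$ for non-leaf standard generators $a$. I will rely on three structural facts about $A_\Gamma$ in the large hyperbolic type regime: (i) each $P_v$ is self-normalising in $A_\Gamma$, so distinct vertices of $\ov X$ have distinct stabilisers and $hP_vh^{-1}=P_v$ iff $h\in P_v$; (ii) for $\ov X$-adjacent $v,v'$, clean intersections give $P_v\cap P_{v'}=\langle Z_v,Z_{v'}\rangle\cong\Z^2$; (iii) for distinct $v,v'$ of the same $\ov X$-colour (which, by the bipartite structure of $\ov X$, cannot be adjacent), the intersection $P_v\cap P_{v'}$ is virtually cyclic. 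Fact (iii) is the key input and follows from parabolic intersection theorems for large hyperbolic type Artin groups: two distinct conjugates of the Dihedral $A_{ab}$ intersect in at most a cyclic parabolic, and analogously for centralisers $C(a)$.

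For part (\ref{item_stable_1}), assume $K$ is infinite and let $H_0=H\cap P_v$, of finite index in $H$. For any $h\in H$, the subgroup $H_0\cap hH_0h^{-1}$ is again finite-index in $H$, hence is itself a $\Z$-central extension with infinite quotient; such a group is never virtually cyclic (the quotient of a virtually cyclic group by an infinite cyclic central subgroup is finite, contradicting infiniteness of the quotient). Since this intersection lies in $P_v\cap P_{hv}$, and the $A_\Gamma$-invariance of the bipartite colouring of $\ov X$ forces $v$ and $hv$ to share a colour, Fact (iii) yields $v=hv$, so $h\in P_v$ and $H\le P_v$. For the moreover part when $K$ is not virtually cyclic, I would argue by contradiction: if $Z\not\le Z_v$, then $Z$ contains an element $z\in P_v\setminus Z_v$, and the $P_v$-centraliser $C_{P_v}(z)$ is virtually $\Z^2$ (for Dihedral $P_v=A_{ab}$, because $A_{ab}/Z_v$ is non-elementary virtually free so centralisers of non-central elements are virtually cyclic, lifting to virtually $\Z^2$; for $P_v=C(a)=\langle a\rangle\times F$, the centraliser of a non-central element is $\langle a\rangle\times\langle w\rangle\cong\Z^2$). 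Then $H\le C_{P_v}(Z)\le C_{P_v}(z)$ is virtually $\Z^2$, forcing $K=H/Z$ to be virtually cyclic—a contradiction.

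For part (\ref{item_stable_2}), $H$ is virtually infinite cyclic, hence isomorphic to $\Z$ by torsion-freeness of $A_\Gamma$; write $H=\langle h\rangle$ with $h^n\in P_v$ minimal, and aim for a uniform bound on $n$ in two steps. First, the image of $h^n$ in the virtually free quotient $P_v/Z_v$ lies in one of finitely many conjugacy classes of maximal virtually cyclic subgroups (by the finite type of $P_v/Z_v$, combined with the finitely many $A_\Gamma$-orbits of vertices of $\ov X$), so a uniformly bounded further power $h^{nm}$ lies in $Z_v$. Second, the centraliser of any non-trivial element of $Z_v$ in $A_\Gamma$ coincides with $P_v$: for Dihedral $v=A_{ab}$ this is the standard fact $C_{A_\Gamma}(z_{ab}^k)=A_{ab}$ for $k\ne 0$, and for centraliser $v=C(a)$ it is $C_{A_\Gamma}(a^k)=C(a)$. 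Since $h$ commutes with $h^{nm}\in Z_v$, we conclude $h\in P_v$, so $n\le m$. The main obstacle will be extracting the quantitative uniformity of the bound $I=m$ in the first step, independent of the pair $(H,v)$; this should follow from the finitely many isomorphism types of the quotients $P_v/Z_v$ and a uniform analysis of their maximal virtually cyclic subgroups, while the remaining steps reduce to self-normalisation and the centraliser computations above.
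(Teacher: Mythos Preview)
Your approach to part~\eqref{item_stable_1} is workable and pleasantly uniform, but it is not the paper's route: the paper handles the Dihedral case via root-closedness of parabolics, and the centraliser case by showing directly that any $h\in H$ sits in a $\Z^2$ containing some $a^k$, whence $h\in C(a^k)=C(a)$. Your argument instead hinges on Fact~(iii), which you justify only for Dihedrals and dismiss for centralisers with ``analogously''. The centraliser case is not analogous: $C(a)$ is not a parabolic, so parabolic-intersection theorems do not apply. One can show $C(a)\cap C(b)$ is cyclic by combining $C(a)=\langle a\rangle\times F_a$ with the fact that distinct conjugates of standard generators never commute, but this needs to be said. The ``moreover'' clause is essentially the same in both proofs.

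Your argument for part~\eqref{item_stable_2} has a genuine error in Step~1. You assert that a uniformly bounded power $h^{nm}$ lies in $Z_v$, but this is false: the image of $h^n$ in $P_v/Z_v$ typically has infinite order. Concretely, take $P_v=C(a)$, pick $b\in\link_\Gamma(a)$, and set $h=b$. In $A_{ab}/\langle z_{ab}\rangle\cong \Z/p*\Z/q$ the image of $b$ has finite order $q$, so $b^q\in\langle z_{ab}\rangle\le C(a)$ while $b\notin C(a)$. But $z_{ab}$ projects to an infinite-order element of $C(a)/\langle a\rangle$ (since $\langle a,z_{ab}\rangle\cong\Z^2$), so no power of $h^n=b^q$ lies in $Z_v=\langle a\rangle$. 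Your Step~2 is correct but cannot be reached. The paper's fix is structural: using the centraliser classification of Martin--Vaskou, whenever $C(g^n)\supsetneq C(g)$ the element $g$ lies in a conjugate of some Dihedral $A_{bc}$ with $g^n$ in its centre, and then bounded torsion in $A_{bc}/Z(A_{bc})$ gives a uniform $I$ with $g^I\in Z(A_{bc})$, whence $C(g^I)=A_{bc}\ni a$ and $g^I\in C(a)$. This is the missing idea.
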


\begin{proof}
Let $H\le A_\Gamma$ be a subgroup isomorphic to a $\mathbb Z$-central extension of the form $1\to Z\to H\to K\to 1$, and suppose that $H$ is virtually contained in $\Stab{A_\Gamma}{v}$ for some $v\in \ov X^{(0)}$.
\par\medskip
\eqref{item_stable_1} First, we assume that $K$ is infinite, and we want to show that $\Stab{A_\Gamma}{v}$ actually contains $H$. Indeed, up to conjugation, $\Stab{A_\Gamma}{v}$ is either a standard dihedral $A_{ab}$ or the centraliser $C(a)$ of a standard generator $a$. In the first case, $H$ is contained in $\Stab{A_\Gamma}{v}$ because parabolics are root-closed by \cite[Theorem D]{CMV:parabolics}. In the second case, any element $h$ of $H$ is contained in some subgroup $H'$ of $H$ isomorphic to $\mathbb Z^2$, since $A_\Gamma$ is torsion-free (parabolics being root-closed implies this), and $H'$ has a finite-index subgroup $H'_0$ contained in $C(a)$. We have that $H'_0$ needs to contain a non-trivial power $a^k$ of $a$, for otherwise it would embed in the free group $C(a)/\langle a\rangle$. Since $H'$ is Abelian, it is contained in the centraliser $C(a^k)$. By \cite[Corollary 5.3]{paris_artin_groups}, this coincides with $C(a)$, so that $H'$, whence $h$, is contained in $C(a)$.

Now assume in addition that $K$ is not virtually cyclic, so that Lemma~\ref{lem:central_ext_peripheral} tells us that $Z$ is virtually contained in $Z_v$, and we claim that $Z\le Z_v$. If $\Stab{A_\Gamma}{v}=C(a)$ then $H_1\coloneq H/(\langle a\rangle \cap H)$ embeds in a free group and is not virtually cyclic (as otherwise $H$ would be virtually $\Z^2$). This means that $H_1$ must have trivial centre, that is, $Z$ must be contained in $\langle a\rangle$. If instead $\Stab{A_\Gamma}{v}=A_{ab}$, then \cite[Remark 3.6.(2)]{MV:centralisers} tells us that a proper root of $z_{ab}$ has cyclic centraliser, and then again we must have that $Z\le \langle z_{ab}\rangle$.

\par\medskip
\eqref{item_stable_2} Finally, suppose that $K$ is finite, and we claim that there exist $I\in\mathbb{N}_{>0}$, only depending on $\Gamma$, such that $H$ has an index-$\leq I$ subgroup contained in $\Stab{A_\Gamma}{v}$. As the ambient group $A_\Gamma$ is torsion-free, $H$ must be infinite cyclic, say generated by $g$ (see e.g. \cite[Lemma 3.2]{Macpherson_virtcyclic}). If $\Stab{A_\Gamma}{v}$ is conjugate to a dihedral parabolic, then $g\in \Stab{A_\Gamma}{v}$, because parabolics are root-closed; hence we are left to consider the case where $\Stab{A_\Gamma}{v}=C(a)$ is the centraliser of some standard generator $a$. Let $n\in\mathbb{N}_{>0}$ be such that $g^n$ is contained in $C(a)$, which in turn means that $a\in C(g^n)$. Notice that, if $C(g^n)$ coincides with $C(g)$, then $g\in C(a)$ and we are done; so suppose that this is not the case. Centralisers of elements of large-type Artin groups are analysed in detail in \cite[Section 3]{MV:centralisers}, see in particular \cite[Remark 3.6]{MV:centralisers}. An inspection of all the various possibilities reveals that, if $C(g^n)$ strictly contains $C(g)$, then $g$ lies in a conjugate of a dihedral subgroup $A_{bc}$, and $g^n$ belongs to the centre of such conjugate. For simplicity, we can assume that $g$ is contained in $A_{bc}$, as opposed to a conjugate. We now argue that in this case $C(g^n)=A_{bc}$ coincides with $C(g^I)$ for some uniform $I$, which suffices for our purposes.  We have that $g$ maps to a torsion element of $A_{bc}/Z(A_{bc})$, which has bounded torsion as it is a free product of cyclic groups. Therefore, a uniform power of $g$ maps to the trivial element of $A_{bc}/Z(A_{bc})$, that is, said uniform power is contained in the centre of $A_{bc}$ and its centraliser is $A_{bc}$, as required.
\end{proof}

\subsection{Proof of Theorem~\ref{thm:XL_Hopfian}}
We finally move to the core of the argument, which we split into three subcases, depending on whether $\Gamma$ has one, two, or at least three odd components. We start with an observation.

\begin{rem}[Abelianisation of an Artin group]\label{rem:abel_artin}
    Let $A_\Gamma$ be an Artin group. The abelianisation $A_\Gamma^{ab}$ of $A_\Gamma$ is the free Abelian group with one generator for every odd component, and the abelianisation map sends each standard generator to its component. In particular, both standard generators and centres of dihedrals have non-trivial image in the abelianisation, so Lemma \ref{lem:abel_hopf} implies that, for every epimorphism $\phi\colon A_\Gamma\to A_\Gamma$, their $\phi$-images must have infinite order.
    
Now, let $P$ be a vertex stabiliser, and we look at its image inside $A_\Gamma^{ab}$. If $P$ is conjugate to a standard dihedral subgroup $A_{bc}$,  then its image has rank 1 if $b$ and $c$ are in the same odd component, and 2 otherwise. If instead $P$ is conjugate to the centraliser $C(a)$ of some standard generator, then the rank of $C(a)$ in the abelianisation is
\begin{itemize}
\item 2 if $a$ belongs to a hanging component;
\item at least $3$ otherwise.
\end{itemize}
This is because $C(a)$ contains some conjugate of $z_{bc}$ for every dihedral $A_{bc}$ where $b$ is in the same odd component as $a$ and $c$ is not. 
\end{rem}

\subsubsection{One odd component}
\begin{prop}
\label{prop:odd}
    Let $A_\Gamma$ be a large Artin group of hyperbolic type. Assume further that $\Gamma$ is a connected graph on at least three vertices, and has a single odd component. Then $A_\Gamma$ is Hopfian.
\end{prop}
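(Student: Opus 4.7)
The plan is to apply the Hopficity criterion of Lemma~\ref{lem:enough}. Fix a surjection $\phi\colon A_\Gamma\to A_\Gamma$ and an element $g_0\neq 1$; we will construct a Hopfian quotient of $A_\Gamma$ into which $g_0$ injects and to which some iterate $\phi^M$ descends. Endow $A_\Gamma$ with the pruned short HHG structure $(A_\Gamma,\ov X)$ from the previous subsection. Because $\Gamma$ has a single odd component, Paris's conjugacy theorem makes all centraliser-type vertex stabilisers mutually $A_\Gamma$-conjugate, so $\mathcal{PR}(A_\Gamma)$ is a star whose centre is $[C(a)]$ (for any fixed standard generator $a$) and whose leaves are the orbits $[A_{bc}]$ of the standard Dihedral stabilisers. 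Removing the open star of $[C(a)]$ leaves a discrete graph, so via Remark~\ref{rem:hopfian_from_good_stuff} the whole argument reduces to verifying that $C(a)$ is always restrained with respect to $\phi$, in the sense of Definition~\ref{defn:good_pr}.

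The main input is that $A_\Gamma^{ab}\cong\Z$, with every standard generator mapping to $1$. Lemma~\ref{lem:abel_hopf} then forces $\phi^{ab}=\pm\mathrm{id}$, so for each $k\ge 0$ the element $\phi^k(a)$ has abelianisation $\pm 1$, is of infinite order, and is not a proper power. Since each dihedral centre $z_{bc}$ has abelianisation a non-zero multiple of $m_{bc}\ge 3$, no $\phi^k(a)$ can be conjugated into a power of any $z_{bc}$. This leaves two alternatives for $\phi^k(a)$: either (i) it is conjugate to $a^{\pm 1}$, or (ii) it acts loxodromically on the top-level coordinate space of $A_\Gamma$. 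In scenario (i), $\phi^k(C(a))$ is a subgroup of a conjugate of $C(a)\cong\Z\times F_r$ containing the central direction $\langle\phi^k(a)\rangle$, so $\phi^k(C(a))/\langle\phi^k(a)\rangle$ embeds in $F_r$ and is free; a rank argument invoking Lemma~\ref{lem:central_ext_peripheral_plus}\eqref{item_stable_1} together with the abelianisation information would give that either this quotient contains a non-abelian free subgroup (producing the $\Z$-central extension case of Definition~\ref{defn:good_pr}), or it is cyclic and $\phi^k(C(a))$ is virtually $\Z^2$ contained in a conjugate of the edge group $\langle a,z_{ab}\rangle$.

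The harder part will be ruling out scenario (ii). The strategy is to exploit surjectivity: if $\phi^k(a)$ were loxodromic, then by conjugacy every $\phi^k(b)$ with $b$ a standard generator would be loxodromic, so each $\phi^k(C(b))$ would be virtually cyclic. The element $\phi^k(z_{bc})$ must then simultaneously lie in $\phi^k(C(b))\cap \phi^k(C(c))$, play a central role of infinite order in $\phi^k(A_{bc})$, and be hit by $\phi^k$; combining the centraliser classification for large-type Artin groups from \cite{MV:centralisers} with the constraint $\phi^{ab}=\pm\mathrm{id}$ should yield the desired contradiction. Once scenario (ii) is excluded and scenario (i) is verified, Lemma~\ref{lem:quotient_for_good_stuff} applied to $P=C(a)$ produces a colourable short HHG quotient $A_\Gamma/\N$ with discrete product region graph, into which $g_0$ injects and to which an iterate $\phi^M$ descends. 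Lemma~\ref{lem:rel_hyp_check} then exhibits $A_\Gamma/\N$ as hyperbolic relative to $\Z$-central extensions of hyperbolic groups, so Theorem~\ref{prop:rel_hyp_hopf} declares it Hopfian, and Lemma~\ref{lem:enough} completes the argument.
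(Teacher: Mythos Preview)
Your skeleton is right: reduce to Lemma~\ref{lem:enough}, use the star shape of $\mathcal{PR}(A_\Gamma)$, and aim to show $C(a)$ is \good{} so that Remark~\ref{rem:hopfian_from_good_stuff} applies. However, the dichotomy you set up is false, and this causes you to miss most of the actual work.

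You assert that $\phi^k(a)$ is either conjugate to $a^{\pm1}$ or loxodromic on $\C S$. But ruling out $\phi^k(a)\in\langle z_{bc}\rangle$ only shows that $\phi^k(a)$ does not lie in a \emph{dihedral cyclic direction}; it does not force $\phi^k(a)$ into a centraliser-type cyclic direction. The implication ``elliptic $\Rightarrow$ lies in some $Z_v$'' comes from Lemma~\ref{lem:central_ext_peripheral}(2), which needs the base of the extension $\phi^k(C(a))$ to be non-virtually-cyclic. When that base is virtually cyclic or finite, $\phi^k(a)$ may sit in a vertex stabiliser without being in any $Z_v$: for instance an element such as $b^2c^{-1}\in A_{bc}$ has abelianisation $1$, is not a proper power, is not conjugate to a generator, is not central, and is not loxodromic. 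So your two scenarios do not exhaust the possibilities.

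Relatedly, two of your concrete claims fail. In scenario~(i), even when $\phi^k(a)$ is conjugate to $a$ and $\phi^k(C(a))/\langle\phi^k(a)\rangle$ is cyclic, there is no reason the resulting $\Z^2$ lies in any edge group: the second generator can project to an arbitrary primitive element of the free factor of $C(a)$, not to a conjugate of some $\bar z_{ab}$. The paper handles exactly this situation (its case~\textbf{B1}) by passing to a $\phi$-invariant $\Z^2$, invoking \cite[Proposition~4.5]{short_HHG:I} to modify the short structure so that one generator of this $\Z^2$ becomes a cyclic direction, and only then quotienting. In scenario~(ii) you hope to derive a contradiction, but the paper does \emph{not} rule out the possibility that $\phi(C(a))$ becomes virtually cyclic (its case~\textbf{C}); instead it builds by hand a $\Z$-central extension quotient $A(K)$ of a hyperbolic group, using that all $\phi(z_{cd})$ share a common power with $\phi(a)$. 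You also omit the paper's case~\textbf{B2}, where $\phi(C(a))$ is virtually $\Z^2$ inside a unique dihedral, and one propagates along $\ov X$ to contradict surjectivity. Each of these three cases requires a genuinely different mechanism, none of which is captured by the ``\good{}'' framework alone.
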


\begin{proof}
Let $\phi$ be an epimorphism, and let $g_0\in \ker\phi-\{1\}$. Our goal is to produce a Hopfian quotient $A_\gamma\to G$ where the image of $g_0$ is non-trivial, and such that $\phi$ induces a map on $G$. Then we will conclude by Lemma~\ref{lem:enough}. The crucial feature of this case is that all standard generators are conjugate. This means that, given any $a\in \Gamma^{(0)}$, removing the class of $a$ makes the product region graph $\mathcal{PR}(A_\gamma)$ discrete.

Since $\phi^n(a)$ has infinite order for every $n\in\mathbb{N}$, $\phi^n(C(a))$ is a $\mathbb Z$-central extension, say with base $B_n$. There are three possibilities, \textbf{A}, \textbf{B}, and \textbf{C} below, depending on the isomorphism type of $B_n$. The second scenario is furthermore split into two possibilities, \textbf{B1} and \textbf{B2}.
   \par\medskip
\textbf{A.} Suppose first that $B_n$ is always non-elementary. In this case $C(a)$ is \good{} with respect to $\phi$, and by Remark~\ref{rem:hopfian_from_good_stuff} we get a Hopfian quotient $G$ satisfying the requirements.
   \par\medskip
\textbf{B1.} Suppose now that $B_n$ is definitely virtually cyclic. By Lemma \ref{lem:central_ext_peripheral_plus}, $\phi^n(C(a))$ must be contained in some $P=\Stab{A_\Gamma}{v}$. If $\phi^n(C(a))$ is always virtually contained in an edge group then $C(a)$ is \good{}, and we conclude as above. Otherwise, up to replacing $\phi$ by an iterated, assume that $\phi(C(a))$ is virtually $\Z^2$ but is not contained in an edge group, so there exists a unique $P$ containing $\phi(C(a))$.
\par\medskip
Suppose first that $P$ is conjugate to $C(a)$. Up to composition with an inner automorphism, we can actually assume that $H\coloneq \phi(C(a))\subseteq C(a)$. We have $\phi(H)\le\phi(C(a))\le H$. For $\psi=\phi|_H\colon H\to H$ and $H_0$ a finite-index subgroup of $H$ isomorphic to $\mathbb Z^2$ let $H_1=\bigcap_{i\geq 0} \psi^{-i}(H_0)$. It is readily checked that $H_1$ is $\psi$-invariant, whence $\phi$-invariant. We claim that $H_1$ is a finite-index subgroup of $H_0$, which in turn implies that $H_1$ is isomorphic to $\mathbb Z^2$. This is because the index of each $\psi^{-i}(H_0)$ in $H$ is at most the index of $H_0$ in $H$, and since there are only finitely many subgroups of $H$ of index bounded by a given constant, the intersection defining $H_1$ is actually equal to a finite intersection of finite-index subgroups. Furthermore, $H_1$ needs to contain a non-trivial power $a^k$ (for otherwise it would embed in the hyperbolic group $C(a)/\langle a\rangle$); pick $k>0$ minimal. We can find $g$ such that $\{g,a^k\}$ is a basis of $H_1\cong\mathbb Z^2$. Now, every virtually cyclic subgroup of the free group $C(a)/\langle a\rangle$ is cyclic, so $g$ has no hidden symmetries in $C(a)$ by \cite[Remark 5.4]{Short_HHG:I}. Then by \cite[Proposition 5.2]{Short_HHG:I}, there exist $p\in \mathbb{N}_{>0}$ and a short HHG structure in which $g'\coloneq g^p$ spans a cyclic direction. 

 We have that $g^{kp}$ lies in $H_1$, and together with $a^{k^2p}$ it generates $kpH_1$. Since $\phi(H_1)\le H_1$, we have that $\phi(\langle\langle g^{Mkp},a^{Mk^2p}\rangle\rangle)\le \langle\langle g^{Mkp},a^{Mk^2p}\rangle\rangle$ for any integer $M$. By Theorem \ref{thm:quotient_short_HHG}, for a suitable $M$ the quotient $G\coloneq A_\Gamma/\langle\langle g^{Mkp},a^{Mk^2p}\rangle\rangle$ is a colourable short HHG, where the image of $g_0$ is non-trivial. Moreover, as $\mathcal{PR}(G)$ is obtained by removing the class of $a$ from $\mathcal{PR}(A_\Gamma)$, $G$ is Hopfian by Lemma~\ref{lem:rel_hyp_check}, so $G$ satisfies the requirements.
   \par\medskip
\textbf{B2.}
In the same setting as above, suppose now that $\phi(C(a))$ is conjugated into a (unique) dihedral parabolic $P$. Up to conjugation, we can assume that $P=A_{bc}$ for some $\Gamma$-adjacent generators $b,c$. We now argue that $\phi(A_\Gamma)$ is contained in $A_{bc}$, thus contradicting the surjectivity of $\phi$. Say that $C(a)$ is the stabiliser of the vertex $v\in \ov{X}$. For any vertex $w$ of $\ov{X}$ adjacent to $v$ we have that $P_w=\Stab{A_\Gamma}{w}$ is a conjugate of a dihedral group, and the centraliser of a generator $z_w$ of its centre. Since $z_w$ has infinite-order image in the abelianisation, it maps under $\phi$ to a non-trivial element of $A_{bc}$.

Notice that $\phi(z_w)$ is not conjugate to any power of either $b$ or $c$. If this was not true, say without loss of generality that $\phi(z_w)=b^k$ for some non-trivial $k\in\mathbb{Z}$. Then $\phi(a)$ would lie in the centraliser of $b^k$ inside $A_{bc}$, which is the edge group $\langle b, z_{bc}\rangle$. If $\phi(a^r)\in \langle b\rangle$ for some $r\ge 0$, then the whole $\phi(C(a))$ would lie in $\langle b, z_{bc}\rangle$, as every element of $\phi(C(a))$ must centralise $\phi(a)$. If instead $\phi(a)\not\in\langle b\rangle$, then $\langle \phi(a), \phi(z_w)\rangle\cong \Z^2$ would coarsely coincide with $\phi(C(a))$. In both cases, one would contradict the fact that $\phi(C(a))$ is not virtually contained in an edge group. 

As $\phi(z_w)$ is not conjugate to any power of either $b$ or $c$, by \cite[Remark 3.6]{MV:centralisers}) its centraliser is entirely contained in $A_{bc}$. Hence $\phi(P_w)$, which centralises $\phi(z_w)$, is contained in $A_{bc}$. Now, given a vertex $v'$ of $\ov X$ adjacent to $w$, $P_{v'}=\Stab{A_\Gamma}{v'}$ contains $z_w$, so $\phi(P_{v'})$ contains an element in $A_{bc}$ which is not conjugate to any power of either $b$ or $c$. Moreover, $P_{v'}$ is a conjugate of $C(a)$, and as such its image must be contained in a conjugate of $A_{bc}$. By \cite[Theorem A]{CMV:parabolics}, the intersection of $A_{bc}$ and one of its conjugates is either trivial, the whole $A_{bc}$, or is conjugate to either $\langle b\rangle$ or $\langle c\rangle$; hence we must have that in fact $\phi(P_{v'})$ is contained in $A_{bc}$. We can then proceed inductively on the distance in $\ov{X}$ from $a$, and as the support graph $\ov X$ is connected (see Subsection~\ref{subsec:example_artin}) we eventually get that $\phi$ maps every vertex stabiliser $Q$ into $A_{bc}$, as required.
\par\medskip
\textbf{C.} Suppose finally that $B_n$ is definitely finite. Up to replacing $\phi$ by a power, we can assume that $\phi(C(a))$ is virtually infinite cyclic. We first claim that, for every two $\Gamma$-adjacent vertices $c$ and $d$, $\phi(a)$ and $\phi(z_{cd})$ have a non-trivial common power. Indeed, first consider $b\in \link_\gamma(a)$. The fact that $\phi(C(a))$ is virtually $\Z$ ensures that $\phi(a)$ and $\phi(z_{ab})$ have a non-trivial common power. In turn, as $\phi(C(b))$ is conjugate to $\phi(C(a))$ and contains $\phi(z_{ab})$, the same must hold for $\phi(b)$ and $\phi(z_{ab})$. Iterating this procedure, we eventually get that, for each $z_{cd}$, there exist $N_{cd},M_{cd}$ such that $\phi(z_{cd}^{N_{cd}})=\phi(a^{M_{cd}})$, that is, $\phi(z_{cd}^{N_{cd}}a^{-M_{cd}})=1$.  We can in fact take multiples to ensure that all $M_{cd}$ coincide, say $M_{cd}=M$. For $\mathcal N_K=\langle\langle z_{cd}^{KN_{cd}}a^{-KM}\rangle\rangle$ we have $\phi(\mathcal N_K)\le\mathcal N_K$, so $\phi$ induces a homomorphism of $A(K)=A_\Gamma/\mathcal N_K$. We claim that we can choose $K$ so that $A(K)$ is a $\Z$-central extension of a hyperbolic group (hence Hopfian by Theorem \ref{thm:rel_hyp_hopf}) and the image of $g_0$ in it is non-trivial. If this is true then $G=A(K)$ satisfies all requirements.

In order to do so, we consider the auxiliary group $A'(K)=A_\Gamma/\langle\mathcal N_K, a^{KM}\rangle$. Since $\langle\mathcal N_K, a^{KM}\rangle=\langle\langle z_{cd}^{KN_{cd}}, a^{KM} \rangle\rangle$, by Theorem \ref{thm:quotient_short_HHG}, for suitable values of $K$, $A'(K)$ is a HHG; furthermore, from the description of its structure, it is clear that $A'(K)$ has bounded orthogonality, and is therefore a hyperbolic group by e.g. \cite[Corollary 2.14]{quasiflats}. By Corollary~\ref{cor:separating_filling} we can further arrange that the image of $g_0$ is non-trivial in $A'(K)$, and therefore also in its extension $A(K)$. We are left to prove that the natural projection $A(K)\to A'(K)$ is a $\Z$-central extension. By construction, the kernel is normally generated by the image of $a^{KM}$ in $A(K)$, so in turn it suffices to prove that said image commutes with a generating set of $A(K)$. The reason for this is that $A(K)$ is obtained from $A_\Gamma$ by imposing the relations $z_{cd}^{KN_{cd}}=a^{KM}$. As $z_{cd}$ commutes with $c$, the relations make $a^{KM}$ commute with all the standard generators, as required.
\end{proof}

\subsubsection{At least three odd components}
We consider now the general case, postponing the study of Artin groups with two odd components as it is more involved and reuses some techniques from this paragraph.

\begin{prop}\label{prop:3+_odd}
    Let $A_\Gamma$ be a large Artin group of hyperbolic type, where $\Gamma$ is a connected graph on at least three vertices. Suppose that $\Gamma$ has at least three odd components, and every hanging component is either broad or a needle. Then $A_\Gamma$ is Hopfian.
\end{prop}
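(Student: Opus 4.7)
The plan is to apply the criterion of Lemma~\ref{lem:enough}: given a surjective $\phi\colon A_\Gamma\to A_\Gamma$ and $g_0\in A_\Gamma-\{1\}$, I aim to construct a Hopfian quotient $H$ of $A_\Gamma$ in which $g_0$ survives and to which some iterate of $\phi$ descends. I will do this by invoking Lemma~\ref{lem:quotient_for_good_stuff} for a suitable family of always restrained vertex stabilisers (Definition~\ref{defn:good_pr}), and then deduce that $H$ is Hopfian from Lemma~\ref{lem:rel_hyp_check} combined with Theorem~\ref{prop:rel_hyp_hopf}, after verifying that removing the chosen stabilisers makes $\mathcal{PR}(A_\Gamma)$ discrete.

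My always restrained family consists of the centralisers $C(a)$ for every generator $a$ not lying in a needle component. For $a$ in a non-hanging odd component, Remark~\ref{rem:abel_artin} and Lemma~\ref{lem:abel_hopf} give that the image of $\phi^k(C(a))$ in $A_\Gamma^{ab}$ has rank at least $3$ for every $k$, so $\phi^k(C(a))$ is neither virtually cyclic nor virtually $\Z^2$. By Lemma~\ref{lem:central_ext_peripheral_plus} it is then contained in some vertex stabiliser as a $\Z$-central extension whose base has rank $\geq 2$ in its own abelianisation; since the base embeds either in the free group $C(a')/\langle a'\rangle$ or in the hyperbolic triangle group $A_{bc}/\langle z_{bc}\rangle$, it must be non-elementary hyperbolic. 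Thus $C(a)$ satisfies the first bullet of Definition~\ref{defn:good_pr} at every iterate.

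The broad hanging case will be the main obstacle, since there the abelianisation image of $C(a)$ has rank only $2$. After replacing $\phi$ with a power (permitted by Lemma~\ref{lem:enough}) and using finiteness of the conjugacy classes of vertex stabilisers and edge groups, I may assume $\phi$ sends $C(a)$ stably into a fixed vertex stabiliser $P$. The acceptable scenarios are that $\phi(C(a))$ is a $\Z$-central extension of a non-elementary hyperbolic group, or is virtually $\Z^2$ and virtually contained in an edge group. The pathological remaining case, virtually $\Z^2$ but not in any edge group, I plan to rule out by adapting Case B2 of the proof of Proposition~\ref{prop:odd}. The non-edge-group assumption will force each image $\phi(z_{ab'})$ of a Dihedral centre (for $b'$ a $\Gamma$-neighbour of $a$) to be a ``generic'' element of $P$, whose $A_\Gamma$-centraliser is contained in $P$ by \cite[Remark~3.6]{MV:centralisers}; combined with the fact that $\phi(b')$ centralises $\phi(z_{ab'})$, this yields $\phi(b')\in P$. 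Propagating along the connected graph $\Gamma$ until one leaves $C$ (which must happen via an even edge to a non-hanging component, because $\Gamma_{OC}$ is connected on at least three vertices and so no two hanging components can be adjacent) produces a generator $b''$ in a non-hanging component with $\phi(C(b''))\subseteq P$. But then $\phi(C(b''))$ would have rank $\leq 2$ in $A_\Gamma^{ab}$, contradicting the rank-$\geq 3$ conclusion from the non-hanging case.

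Having verified the always restrained property, I apply Lemma~\ref{lem:quotient_for_good_stuff} to obtain a colourable short HHG quotient $H=A_\Gamma/\N$ to which some iterate of $\phi$ descends and in which the image of $g_0$ is nontrivial. To conclude via Lemma~\ref{lem:rel_hyp_check} and Theorem~\ref{prop:rel_hyp_hopf} I need $\mathcal{PR}(H)$ to be discrete. The PR graph of $A_\Gamma$ is bipartite between orbits of odd-component centralisers and orbits of Dihedral stabilisers (no two distinct standard generators commute in large type, and similarly no two distinct Dihedral centres commute under the large-and-hyperbolic-type hypothesis), so removing the entire centraliser side leaves an edgeless graph. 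The only odd components whose centralisers are \emph{not} in my removed family are needle components, but by the setup in Subsection~\ref{subsec:example_artin} their generators, being tips of even leaves, have been pruned from the short HHG structure and so contribute no vertex to $\mathcal{PR}(A_\Gamma)$ in the first place. Hence $\mathcal{PR}(H)$ is discrete, the Hopf property for $H$ follows, and the proof is complete. The hardest step is the broad hanging analysis, where the assumption of at least three odd components is precisely what yields the contradiction with surjectivity in the pathological virtually $\Z^2$ configuration.
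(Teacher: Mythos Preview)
Your overall strategy and the treatment of core centralisers are correct: for $a$ in a non-hanging component the rank-$\geq 3$ argument in the abelianisation shows that every $\phi^k(C(a))$ is a $\Z$-central extension of a non-elementary group, and removing all (non-needle) centraliser classes from the bipartite product-region graph does leave it discrete.

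The gap is in the broad hanging case. Your propagation, modelled on Case~B2 of Proposition~\ref{prop:odd}, does not carry across odd components. In B2 the passage from $\phi(A_{ab'})\subseteq P$ to $\phi(C(b'))\subseteq P$ used that $C(b')$ is \emph{conjugate} to $C(a)$ (single odd component), so that $\phi(C(b'))$ already sits in a conjugate of $P$ and an intersection argument finishes. Once you cross the even edge $\{a',b''\}$ to $b''$ in the core this fails: from $\phi(b'')\in P$ alone you cannot deduce $\phi(C(b''))\subseteq P$, and indeed you have just argued that $\phi(C(b''))$ is a non-elementary extension, so it cannot be the virtually-$\Z^2$ object that the intersection argument requires. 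Even if that step were granted, your rank-$\leq 2$ contradiction needs $P$ itself to have rank $\leq 2$ in $A_\Gamma^{ab}$, which fails precisely when $P=C(c)$ for $c$ in the core---a possibility you have not excluded (your appeal to \cite[Remark~3.6]{MV:centralisers} also only yields ``centraliser contained in $P$'' when $P$ is a Dihedral parabolic). The paper avoids these issues by choosing a \emph{different} always-restrained family: core centralisers together with the Dihedrals $A_{bc}$ lying inside hanging components (shown restrained via the retraction $r\colon A_\Gamma\to A_{C_1}*\cdots*A_{C_k}$, whose target is Hopfian by Proposition~\ref{prop:odd} and \cite{DeyNeumann_free_prod_Hopf}, so that $r\circ\phi$ is an isomorphism on each such $A_{bc}$) and the ``bridge'' Dihedrals $A_{ab}$ with $a$ core and $b$ hanging (Claim~\ref{claim:ponte_tibetano}).
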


\begin{proof}
As usual, given an epimorphism $\phi$, we want to find a collection of \good{} vertex stabilisers whose removal makes $\mathcal{PG}(A_\Gamma, \ov X)$ discrete, and then conclude by Lemma~\ref{lem:quotient_for_good_stuff}. Let $C_1,\ldots, C_k$ be the hanging components of $\Gamma$, and let $\Gamma_{core}$ be the subgraph of $\Gamma$ spanned by all other odd components. 
\par\medskip

In view of Remark~\ref{rem:abel_artin}, if $P$ is a vertex stabiliser for the action on $\ov X$, then its image in the abelianisation of $A_\Gamma$ has rank at least $3$ if and only if $P$ is conjugate to $C(a)$ for some $a\in \Gamma_{core}$. For any such $P$, $\phi(P)$ must be a $\Z$-central extension of a non-elementary hyperbolic group, as its projection to $A_\Gamma^{ab}$ must have rank at least 3; moreover $\phi(P)\le Q$ for some stabiliser $Q$, and the projection of $Q$ to $A_\Gamma^{ab}$ must have rank at least 3 as well. This, together with stability of product regions, implies that, for every $a\in \Gamma_{core}^{(0)}$, $\phi(a)$ is conjugated into $\langle b\rangle$ for some $b\in \Gamma_{core}^{(0)}$. Notice that the above argument also tells us that $C(a)$ is \good{} for every $a\in \Gamma_{core}^{(0)}$.
\par\medskip

Now consider the retraction
$$r\coloneq A_\Gamma\to A_{C_1}*\ldots*A_{C_k},$$
defined by mapping every generator in $\Gamma_{core}$ to the identity and all other generators to themselves. This map is well-defined, because any edge connecting $\Gamma_{core}$ to any hanging component is even; furthermore, $\phi$ induces a self-epimorphism $\ov\phi$ of the quotient, because $\ker(r)$ is normally generated by $\Gamma_{core}^{(0)}$. Notice that, by Proposition~\ref{prop:odd} plus the aforementioned \cite[Theorem 1.1]{DeyNeumann_free_prod_Hopf}, the quotient is Hopfian, so $\ov\phi$ is an isomorphism. Now, $r$ is injective on every dihedral $A_{bc}$ where $b$ and $c$ belong to the same hanging component, thus $\phi(A_{bc})$ must be a $\Z$-central extension of a non-elementary hyperbolic group because its $r$-projection $r(\phi(A_{bc}))=\ov\phi(r(A_{bc}))$ is again isomorphic to $A_{bc}$. As this argument works for every iterate of $\phi$, we get that every dihedral in a hanging component is \good{}.

At this point, removing centralisers of core vertices and dihedrals contained in hanging components is still not enough to make the product region graph discrete, so we need to find more \good{} stabilisers. By inspection of $\mathcal{PR}(A_\Gamma, \ov X)$, it is enough to prove the following: 

\begin{claim}\label{claim:ponte_tibetano}
Let $a,b,c\in \Gamma^{(0)}$ be such that $a\in \Gamma_{core}^{(0)}$, $b,c$ are adjacent vertices in the same hanging component, and $a$ is adjacent to $b$. Then $A_{ab}$ is \good{}. 
\end{claim}

\begin{claimproof}[Proof of Claim~\ref{claim:ponte_tibetano}]
By contradiction, up to passing to an iterated of $\phi$, assume that $\phi(A_{ab})$ is virtually $\Z^2$ and is not contained in an edge group (notice that $\phi(A_{ab})$ cannot be virtually cyclic, as the image of $A_{ab}$ in the abelianisation is isomorphic to $\Z^2$). By stability of product regions, there exists some vertex stabiliser $P$, say with centre generated by $z$, such that $\phi(A_{ab})\le P$. Since we already know that $\phi(a)$ is contained in the centre of some vertex stabiliser, we must have that $\phi(a)\in \langle z\rangle$. Moreover, $\phi(b)$ cannot belong to any edge group $E\le P$, as otherwise $\phi(A_{ab})=\langle\phi(a),\phi(b)\rangle\le E$ as well. Thus $\phi(z_{cb})$, which commutes with $\phi(b)$ and is contained in a centre, must belong to $\langle z\rangle$. This contradicts the fact that $a$ and $z_{bc}$ are non-commensurable in the abelianisation, so their $\phi$-images cannot lie in the same cyclic subgroup.
\end{claimproof}
The proof of Proposition~\ref{prop:3+_odd} is now done.
\end{proof}

\subsubsection{Two odd components}
We finally move to the case where $\Gamma$ has two odd components, which are therefore both hanging components. According to their shapes, we split the Proposition into two sub-lemmas.
\begin{prop}\label{prop:2_odd_needle}
    Let $A_\Gamma$ be a large Artin group of hyperbolic type, where $\Gamma$ is a connected graph on at least three vertices. Suppose that $\Gamma$ has two odd components, one of which is a needle. Then $A_\Gamma$ is Hopfian.
\end{prop}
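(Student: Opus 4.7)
The plan is to mimic the case analysis of Proposition~\ref{prop:odd}, adding a new subcase corresponding to the Dihedral $A_{vw}$ straddling the two odd components. Let $\phi\colon A_\Gamma\to A_\Gamma$ be a surjective homomorphism and fix $g_0\in A_\Gamma-\{1\}$; by Lemma~\ref{lem:enough}, it suffices to produce a Hopfian quotient of $A_\Gamma$ in which $g_0$ survives and on which some iterate of $\phi$ descends. Since $v$ is the tip of an even leaf, $C(v)$ has been pruned from $\ov X$, so the vertex stabilisers of the short HHG structure are, up to conjugation: the centraliser $C(a)$ for $a\in C$ (a single orbit, as all such $a$ are conjugate), the Dihedral $A_{vw}$, and the Dihedrals $A_{bc}$ for edges $\{b,c\}\subseteq C$. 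The product region graph $\mathcal{PR}(A_\Gamma,\ov X)$ is therefore a star with centre $[a]$ and Dihedral orbits as leaves, so removing $[a]$ makes it discrete. By Remark~\ref{rem:abel_artin}, $A_\Gamma^{ab}\cong\mathbb Z x\oplus\mathbb Z y$ with $x=[a]$ and $y=[v]$, while $z_{vw}^{ab}=k(x+y)$ with $k\ge 2$; Lemma~\ref{lem:abel_hopf} then gives $\phi^{ab}\in\operatorname{GL}_2(\mathbb Z)$.

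I would then run the case split of Proposition~\ref{prop:odd} on $\phi(C(a))$. If $\phi^n(C(a))$ is always a $\mathbb Z$-central extension of a non-elementary hyperbolic group, then $C(a)$ is \good{}, and Lemma~\ref{lem:quotient_for_good_stuff} together with Remark~\ref{rem:hopfian_from_good_stuff} yields the desired Hopfian quotient. If $\phi(C(a))$ is eventually virtually cyclic, Case C of Proposition~\ref{prop:odd} adapts directly: for every $\Gamma$-edge $\{c,d\}$ one finds integers $N_{cd},M$ with $\phi(z_{cd}^{N_{cd}})=\phi(a^M)$, and for suitable $K$ the quotient $A_\Gamma/\langle\langle z_{cd}^{KN_{cd}}a^{-KM}\rangle\rangle$ is a $\mathbb Z$-central extension of the hyperbolic group obtained by further killing $a^{KM}$, which is Hopfian by Theorem~\ref{prop:rel_hyp_hopf}. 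Finally, if $\phi(C(a))$ is eventually virtually $\mathbb Z^2$ inside some vertex stabiliser $P$, the rank-$2$ abelianisation of $\phi(C(a))$ forces $P$ to be conjugate to $C(b)$ for some $b\in C$ or $P=A_{vw}$. The first possibility reduces to Case B1 of Proposition~\ref{prop:odd}: after composing $\phi$ with an inner automorphism, $\phi(C(a))\subseteq C(a)$, and the construction of a $\phi$-invariant $\mathbb Z^2$, the realisation of one of its generators as a cyclic direction via \cite[Proposition 4.5]{short_HHG:I}, and the final Dehn-filling via Theorem~\ref{thm:quotient_short_HHG} all go through verbatim.

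The main obstacle I expect is the new subcase $P=A_{vw}$, which I plan to rule out by showing it forces $\phi(A_\Gamma)\subseteq A_{vw}\subsetneq A_\Gamma$, contradicting surjectivity. Writing $\phi(a)=v^p z_{vw}^q\in A_{vw}$, the determinant condition $\det\phi^{ab}=\pm 1$ combined with $k\ge 2$ forces $p\ne 0$, so $\phi(a)\notin\langle z_{vw}\rangle$; by \cite[Remark 3.6]{MV:centralisers}, $C_{A_\Gamma}(\phi(a))=C(v)=\langle v,z_{vw}\rangle\subset A_{vw}$, whence $\phi(C(a))\subseteq A_{vw}$. For every $\Gamma$-edge $\{a,b\}\subset C$, the element $\phi(z_{ab})$ centralises $\phi(a)$ and so lies in $C(v)$; an analogous abelianisation computation rules out $\phi(z_{ab})\in\langle z_{vw}\rangle$, so $C_{A_\Gamma}(\phi(z_{ab}))=C(v)\subseteq A_{vw}$ and hence $\phi(b)\in A_{vw}$. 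Iterating along the full subgraph of $\Gamma$ on $C$ (connected through its odd edges) gives $\phi(c)\in A_{vw}$ and $\phi(C(c))\subseteq C(v)$ for every $c\in C$. Finally, $\phi(v)$ centralises $\phi(z_{vw})\in\phi(C(w))\subseteq C(v)$, which is non-trivial by the determinant condition; in both sub-cases $\phi(z_{vw})\notin\langle z_{vw}\rangle$ (where $C_{A_\Gamma}(\phi(z_{vw}))=C(v)$) and $\phi(z_{vw})\in\langle z_{vw}\rangle\setminus\{1\}$ (where $C_{A_\Gamma}(\phi(z_{vw}))=A_{vw}$), we conclude $\phi(v)\in A_{vw}$, completing the contradiction.
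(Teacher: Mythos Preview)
Your case split is more elaborate than necessary, and your Case B2 contains a genuine gap. You write $\phi(a)=v^p z_{vw}^q$ as an equality in $A_{vw}$, but this is unjustified: a generic element of the dihedral $A_{vw}$ is not of this form, and all you actually know is that $\phi(a)$ lies in the virtually-$\mathbb Z^2$ subgroup $\phi(C(a))\subseteq A_{vw}$, which need not equal $\langle v,z_{vw}\rangle$. Consequently the assertion $C_{A_\Gamma}(\phi(a))=C(v)$ is not established, and the propagation from $a$ through $C$ (and then to $v$) does not go through as written. Your abelianisation computation only shows $\phi(a)\notin\langle z_{vw}\rangle$, which is far weaker than the form you claim.

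The paper's argument is both shorter and sidesteps this difficulty. First, your Case C is vacuous: since $a$ lies in a hanging component, Remark~\ref{rem:abel_artin} gives that $C(a)$ has rank-$2$ image in $A_\Gamma^{ab}$, so $\phi(C(a))$ can never be virtually cyclic. Second, and more importantly, the paper does not split Case B according to the type of $P$. Once $C(a)$ fails to be \good{} (so that, after passing to an iterate, $\phi(C(a))$ is a non-edge virtually-$\mathbb Z^2$ subgroup of a \emph{unique} vertex stabiliser $P$), a single propagation works regardless of $P$: for odd-adjacent $a'\in C$, the element $\phi(z_{aa'})\in\phi(C(a))$ has centraliser containing the non-edge $\phi(C(a))$, so by stability of product regions that centraliser is trapped in $P$, giving $\phi(A_{aa'})\subseteq P$ and hence $\phi(C(a'))\subseteq P$ (again non-edge virtually $\mathbb Z^2$, being a $P$-conjugate of $\phi(C(a))$). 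Iterating through the odd component $C$, and then applying the same step to $z_{ab}$ with $b$ the needle, yields $\phi(A_\Gamma)\subseteq P$, contradicting surjectivity. This disposes of your B1 and B2 simultaneously, so the entire Case-B1 machinery borrowed from Proposition~\ref{prop:odd} is unnecessary here.
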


\begin{proof} Pick a self-epimorphism $\phi$, and let $g_0\in \ker\phi-\{1\}$. Again, the goal is to find a quotient $A_\gamma\to G$ satisfying the requirements of Lemma~\ref{lem:enough}. Let $C$ and $C'$ be the two odd components, and assume without loss of generality that $|C|>1$. As $C'$ is a needle, the only vertex of $C'$, call it $b$, is adjacent to a unique vertex $a\in C$. Now, if $C(a)$ is \good{}, then Lemma~\ref{lem:quotient_for_good_stuff} produces a Hopfian quotient with the required properties (notice that, as $b$ is a leaf, its centraliser is not a vertex of $\ov X$, so removing $C(a)$ makes $\mathcal{PR}(A_\Gamma, \ov X)$ discrete). 
\par\medskip

Thus suppose that $C(a)$ is not \good{}. Up to replacing $\phi$ with a power, we can assume that $\phi(C(a))$ is virtually $\Z^2$ but not contained in an edge group (notice that $\phi(C(a))$ cannot be virtually cyclic, as the image of $C(a)$ in the abelianisation is isomorphic to $\Z^2$). By stability of product regions, $\phi(C(a))$ must be contained in a unique vertex stabiliser $P$. Furthermore, we claim that the whole component $C$ is mapped inside $P$. Indeed, let $a'\in C$ be connected to $a$ by an odd edge. Then $\phi(A_{aa'})\le P$, as it must centralise $\phi(z_{aa'})$ which lies in a non-edge $\Z^2$ subgroup of $P$. But then, since $a$ is conjugate to $a'$ by an element of $A_{aa'}$, we get that $\phi(C(a'))\le P$ as well, and it is again a virtually $\Z^2$ subgroup not contained in any edge group. As $C$ is an odd component, any two vertices are connected by a path with odd labels, so we get that $\phi(a'')\in P$ for every $a''\in C$.

Similarly, notice that $\phi(A_{ab})\le P$ as well, as it must be contained in the centraliser of $\phi(z_{ab})\le \phi(C(a))$. But this violates surjectivity, as then $\phi(A_\Gamma)$ is totally contained in $P$.
\end{proof}

\begin{prop}\label{prop:2_odd_broad}
    Let $A_\Gamma$ be a large Artin group of hyperbolic type, where $\Gamma$ is a connected graph on at least three vertices. Suppose that $\Gamma$ has two odd components, which are both broad. Then $A_\Gamma$ is Hopfian.
\end{prop}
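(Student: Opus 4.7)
The plan parallels the arguments in Propositions~\ref{prop:2_odd_needle} and~\ref{prop:3+_odd}. Given a surjective endomorphism $\phi\colon A_\Gamma\to A_\Gamma$ and $g_0\in A_\Gamma-\{1\}$, the goal is to exhibit a collection of \good{} vertex stabilisers whose removal turns the product region graph discrete, so that Lemma~\ref{lem:quotient_for_good_stuff} and Remark~\ref{rem:hopfian_from_good_stuff} will produce (via Lemma~\ref{lem:rel_hyp_check}) a Hopfian quotient in which $g_0$ survives, and Lemma~\ref{lem:enough} will conclude. Since both odd components are broad and $\Gamma$ is connected, there is an even edge $\{a,b\}$ with $a\in C_1$ and $b\in C_2$; broadness ensures that $a$ and $b$ each have odd neighbours inside their own components, so $C(a)$ and $C(b)$ are both vertices of $\ov X$. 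The only conjugacy classes of centralisers in $\mathcal{PR}(A_\Gamma,\ov X)$ are $[C(a_1)]$ and $[C(a_2)]$ for $a_i\in C_i$, so showing that $C(a)$ and $C(b)$ are both \good{} will suffice.

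The step I expect to be the main obstacle is showing that $C(a)$ (and symmetrically $C(b)$) is \good{}, i.e., that no iterate $\phi^k(C(a))$ is virtually $\Z^2$ while sitting outside every edge group. Lemma~\ref{lem:abel_hopf} makes $\bar\phi$ an automorphism of $A_\Gamma^{ab}\cong\Z^2$, and Remark~\ref{rem:abel_artin} gives $[C(a)]^{ab}$ rank $2$ (the $e_2$-direction is provided by the even edge $\{a,b\}$), which rules out virtual cyclicity for every $\phi^k(C(a))$. Replacing $\phi$ by an iterate, I will argue by contradiction from the assumption that $\phi(C(a))$ is virtually $\Z^2$ outside every edge group. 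Stability of product regions (Lemma~\ref{lem:central_ext_peripheral_plus}) places $\phi(C(a))$ in a unique $P=\Stab{A_\Gamma}{v}$ with $\phi(a)\in\langle v\rangle$, and the propagation along odd edges of $C_1$ used in Proposition~\ref{prop:2_odd_needle} extends this to $\phi(C_1)\subseteq P$; the centraliser argument on the non-edge element $\phi(z_{ab})\in\phi(C(a))$ then yields $\phi(A_{ab})\le P$, hence $\phi(b)\in P$.

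The final case analysis will be carried out on $\phi(C(b))$, which by the same reasoning has rank-$2$ abelianisation image. If $\phi(C(b))$ lies in a unique vertex stabiliser $P'=\Stab{A_\Gamma}{w}$ (either as a $\Z$-central extension of a non-elementary hyperbolic group, in which case $\phi(b)\in\langle w\rangle$ via Definition~\ref{defn:stable_pr}, or as a virtually $\Z^2$ non-edge subgroup, in which case the centraliser argument on $\phi(z_{ab})\in\phi(C(b))$ also forces $\phi(A_{ab})\le P'$), then one has $\phi(A_{ab})\le P\cap P'$. Here $P=P'$ leads to $\phi(A_\Gamma)\subseteq P$ via a second round of propagation inside $C_2$, contradicting surjectivity; while $P\ne P'$ is ruled out either because the identity $C(w^n)=C(w)$ from~\cite[Corollary 5.3]{paris_artin_groups} would force $v$ to commute with $w$ (impossible in large type), or because the image of the edge group $\langle v,z_{vw}\rangle$ in $A_\Gamma^{ab}$ is a proper finite-index subgroup of index $m_{vw}/2\ge2$, contradicting $\phi(A_{ab})^{ab}=\bar\phi(A_\Gamma^{ab})=A_\Gamma^{ab}$. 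The remaining case $\phi(C(b))\le E=\langle u,z_{uu'}\rangle$ reduces to the previous ones: $\phi(b)\in E\cap P$ being of infinite order forces $v\in\{u,u'\}$ (by the no-commuting principle again), and then writing $\phi(b)=v^\alpha z_{vu'}^\beta$, a short abelianisation computation using $\det\bar\phi=\pm 1$ rules out every admissible pair $(\alpha,\beta)$. The symmetric argument handles $C(b)$, and the general framework recalled in the first paragraph completes the proof.
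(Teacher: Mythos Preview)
There is a genuine gap in the non-elementary case of your analysis of $\phi(C(b))$, which corresponds to the paper's scenario~\textbf{A}. You assert that ``one has $\phi(A_{ab})\le P\cap P'$'', but in the non-elementary sub-case your parenthetical only yields $\phi(b)\in\langle w\rangle$. That is not enough: while $\phi(A_{ab})$ centralises $\phi(z_{ab})\in\phi(C(b))\le P'$, the centraliser of $\phi(z_{ab})$ is governed by the fact that $\phi(z_{ab})$ sits inside the \emph{non-edge} virtually $\Z^2$ group $\phi(C(a))\le P$, and that forces the centraliser into $P$, not into $P'$. So when $P\ne P'$ you cannot conclude $\phi(A_{ab})\le P'$; and when $P=P'$ your ``second round of propagation inside $C_2$'' fails, since that propagation relied on $\phi(C(b))$ being virtually $\Z^2$ and non-edge, which is precisely what is \emph{not} the case here. (A smaller issue: your claim $\phi(a)\in\langle v\rangle$ is not delivered by Definition~\ref{defn:stable_pr}\eqref{item_stable_1}, since the quotient of $\phi(C(a))$ by $\langle\phi(a)\rangle$ is virtually cyclic.)

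The paper does \emph{not} derive a contradiction in all of scenario~\textbf{A}. When $\phi(b_1)$ is conjugate to $b_1$, it instead observes that $\phi$ preserves the normal closure of $C'$, hence descends to a self-epimorphism of $A_C$; then the argument of Proposition~\ref{prop:3+_odd} (using this retraction together with Claim~\ref{claim:ponte_tibetano}) produces a different collection of \good{} stabilisers---dihedrals inside the odd components and the relevant bridging dihedrals---whose removal makes $\mathcal{PR}$ discrete. In the remaining sub-case $\phi(b_1)$ conjugate to $a_1$, the paper iterates once more and does a case split on the conjugacy type of $P$. Your attempt to short-circuit all of this by a uniform ``$\phi(A_{ab})\le P\cap P'$'' contradiction does not go through.
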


\begin{proof}
    Let $C=\{a_1,\ldots, a_k\}$ and $C'=\{b_1,\ldots, b_r\}$ be the odd components, let $\phi$ be an epimorphism, and let $g_0\in\ker\phi-\{1\}$. Again, if both $C(a_1)$ and $C(b_1)$ are \good{}, then we can invoke Remark~\ref{rem:hopfian_from_good_stuff} and conclude. So suppose that $C(a_1)$ is not \good{}, so there exists a vertex stabiliser $P$, say with centre generated by $z$, such that $\phi(C(a_1))$ is a virtually $\Z^2$ subgroup of $P$ not contained in any edge group. Arguing as in Proposition~\ref{prop:2_odd_needle}, one gets that $\phi(C)\le P$, and $\phi(A_{a_ib_j})\le P$ for every $i,j$ such that $a_i$ and $b_j$ are joined by an even edge. We now consider three scenarios, A, B, and C, depending on the shape of $\phi(C(b_1))$. 
    \par\medskip
    \textbf{A.}
    Suppose first that $\phi(C(b_1))$ is a $\Z$-central extension of a non-elementary hyperbolic group, so $b_1$ must be sent inside some centre. As standard generators have primitive image in the abelianisation while centres of dihedrals do not, we can assume up to conjugation that $\phi(b_1)=b_1$ or $\phi(b_1)=a_1$. In the former case, $\phi$ preserves the normal closure of $C'$, so it induces a self-map $\ov\phi$ of the quotient $A_C$, obtained by retracting onto $C$. Then one can run the proof of Proposition~\ref{prop:3+_odd} verbatim, to get that $A_\Gamma$ has “enough” \good{} vertex stabilisers. 
\par\medskip

    In the latter case, $\phi^2(C(b_1))\le \phi(C(a_1))\le P$. It now matters what $P$ is, taking into account that it cannot be an odd dihedral as $\phi(C(a_1))$ must map to $\Z^2$ in the abelianisation. If $P$ is a conjugate of $C(b_1)$ then we can argue as above, with $\phi^2$ replacing $\phi$. If instead $P$ is conjugated to $Q\in\{C(a_1),A_{a_ib_j}\}$ by some $g\in G$, up to composing $\phi$ with the conjugation by $g^{-1}$ we can assume that $\phi(Q)\le Q$. Then we have that:
    \begin{itemize}
        \item $\phi^2(C)\le \phi(Q)\le Q$;
        \item $\phi^2(C')\le Q$, because $\phi^2(C(b_1))\le \phi(C(a_1))$ is contained in a $\Z^2$ subgroup which is not an edge group. 
    \end{itemize}
    Hence $\phi^2(A_\Gamma)\le Q$, violating surjectivity.

\par\medskip
    \textbf{B.}
    Suppose now that $\phi(C(b_1))$ is virtually $\Z^2$, but not contained in an edge group. Since $\phi(C(b_1))$ contains $\phi(z_{a_1b_1})$, which belongs to the non-edge $\Z^2$ subgroup $\phi(C(a_1))$ of $P$, we must have that $\phi(C(b_1))\le P$. Then again, using that $\phi(C(b_1))$ is not contained in an edge group, we get that $\phi(C')\le P$, which combined with $\phi(C)\le P$ violates surjectivity.
    \par\medskip
    \textbf{C.}
    Suppose finally that $\phi(C(b_1))$ is a finite-index subgroup of an edge group. As above, since $\phi(C(b_1))$ contains $\phi(z_{a_1b_1})$ we must have that $\phi(C(b_1))\le P$. We now consider the possible conjugacy types of $P$.
    \begin{itemize}
        \item Suppose first that $P$ is conjugate to $C(a_1)$, and up to composing $\phi$ with an inner automorphism we can indeed assume that $P=C(a_1)$. Then $\phi^2(C(b))\le \phi(C(a))\le P$ is a non-edge, virtually $\Z^2$ subgroup, and again this implies that $\phi(C')\le P$, contradicting surjectivity.
        \item Suppose now that $P$ is conjugate to some dihedral, which must be of the form $A_{a_ib_j}$ (again because any other dihedral has cyclic image in $A_\Gamma^{ab}$). Pick $b_2\in C'$ which is connected to $b_1$ by an odd edge. Then $\phi(z_{b_1b_2})\in P$, and as parabolics are root closed we must have that $\phi(b_1b_2)\in P$. Hence $\phi(C(b_2))$, which is obtained by conjugating $\phi(C(b_1))$ by a power of $\phi(b_1b_2)$, is also nested in the subgroup $P$. Proceeding this way, we eventually get that $\phi(C')\le P$, and once more $\phi$ could not be surjective. 
        \item The only case left is when $P$ is conjugate to $C(b_1)$, and again we can indeed assume that $P=C(b_1)$ up to composing $\phi$ with a conjugation. Say $C(b_1)=\Stab{A_\Gamma}{v}$ for some $v\in \ov X^{(0)}$. As $\phi(C(b_1))$ lies in some edge group, there must be some $w\in\link_{\ov X}(v)$ such that, if we set $Q=\Stab{A_\Gamma}{w}$, then $\phi(C(b_1))\le C(b_1)\cap Q$. Since $\ov X$ is bipartite, $Q$ must be conjugate to a dihedral, so the same trick as above shows that $\phi(C')\le Q$. If we show that $\phi(Q)\le C(b_1)$ then $\phi^2(C')\le C(b_1)$, and $\phi^2(C)\le \phi(C(b_1))\le C(b_1)$. This would then again contradict surjectivity. 
        \\
        To prove that $\phi(Q)\le C(b_1)$, we first notice that the image of $Q$ in the abelianisation must have rank 2, or it could not contain $\phi(C(b_1))$; hence there exists $i,j$ such that $Q$ is conjugate to $A_{a_ib_j}$. In turn, since $\phi(A_{a_ib_j})\le C(b_1)$, there must be some $P'$, which is a conjugate of $C(b_1)$, such that $\phi(Q)\le P'$. But $\phi(Q)$ contains $\phi^2(C(b_1))$, which is virtually $\Z^2$ and lies inside $\phi(C(b_1))\le C(b_1)$. As $\ov X$ is bipartite, any two different conjugates of $C(b_1)$ intersect along a virtually cyclic subgroup, so we must have that $P'=C(b_1)$, as required.
    \end{itemize}
This concludes the proof of Proposition~\ref{prop:2_odd_broad}, and in turn of Theorem~\ref{thm:XL_Hopfian}. \end{proof}

\subsection{Comments and previous results}
\begin{rem}[Generic Artin groups are Hopfian]\label{rem:generic_artin}
In \cite{GV_random}, Goldsborough and Vaskou devised a model of random Artin groups, where, given a complete graph on $n$ vertices, each edge label is chosen with uniform probability from the set $\{\infty, 2, \ldots, f(n)\}$, for some non-decreasing divergent function $f\colon \mathbb{N}\to \mathbb{N}$. A property of Artin groups is \emph{generic} if there exists a function $f_0\colon \mathbb{N}\to \mathbb{N}$ such that, for every choice of function $f\ge f_0$, the property holds with probability approaching $1$ as $n\to +\infty$. In the same paper, the authors prove that the class of extra-large Artin groups is generic. Moreover, let $p(n)=e(n)/f(n)$, where $e(n)$ is the cardinality of odd numbers in the set $\{\infty, 2, \ldots, f(n)\}$. For any choice of $f$, the probability that a random Artin group has a single odd component is the same as the probability that a random (unlabelled) graph on $n$ vertices, where each edge exists with probability $p(n)$, is connected. Such probability is known to approach $1$ as $n\to +\infty$ (see e.g. \cite{erdosrenyi}); hence, as our Theorem \ref{thm:XL_Hopfian} applies to XL Artin groups with a single odd component, we get that a generic Artin group is Hopfian, thus proving Corollary~\ref{corintro:generic} from the Introduction.
\end{rem}

\begin{rem}[Other generic classes]\label{rem:bmv}
   In \cite{BMV}, Blufstein, Martin, and Vaskou established the Hopf property for large hyperbolic type Artin groups which are either \emph{free-of-infinity} (the defining graph is complete) or \emph{XXXL} (all edge labels are at least 6). Both classes are generic, in the sense of Remark \ref{rem:generic_artin}. Figure~\ref{fig:comparison} provides examples of Artin groups covered by our result, by theirs, and by none of them. We stress that the techniques from \cite{BMV} are very different from ours, as they involve a full description of all homomorphisms between groups in their families; this also allows them to determine when such groups are co-Hopfian (every \emph{injective} homomorphism is an isomorphism).
\end{rem}

\begin{figure}[htp]
    \centering
    \includegraphics[width=0.75\textwidth, alt={examples of Artin groups which belong to Hopfian classes, by the aforementioned results}]{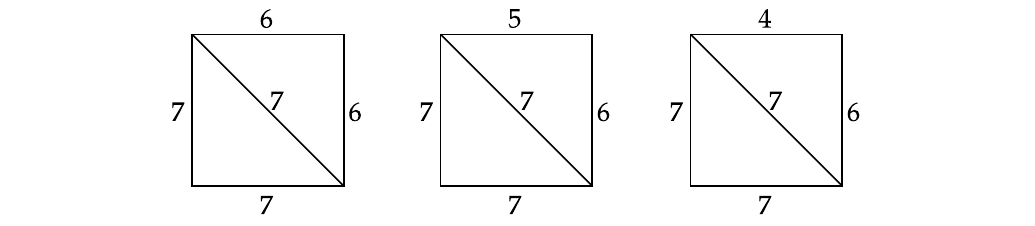}
    \caption{From left to right, an Artin group which is Hopfian by \cite{BMV} (it is XXXL), an Artin group which is Hopfian by our Theorem~\ref{thm:XL_Hopfian} (it has a single odd component), and an Artin group which is not covered by either methods. Notice that none of these Artin groups is known to be residually finite (see Remark~\ref{rmk:resfin} below).}
    \label{fig:comparison}
\end{figure}

\begin{rem}[Explicit residual hyperbolicity]\label{rmk:Shephard}
    As a special case of Corollary~\ref{cor:if_hyp_resfin}, if all hyperbolic groups are residually finite, then every Artin group $A_\Gamma$ of large and hyperbolic type is residually finite, hence Hopfian. On the one hand, it is common belief that there exists a non-residually finite hyperbolic group. On the other, the proof of Corollary~\ref{cor:fullres_hyp} shows that it would suffice that “enough” hyperbolic quotients of $A_\Gamma$ are residually finite, namely those with the following presentation, for a suitable choice of $N$: 
    $$\langle \Gamma^{(0)}\,|\,\forall c\in \Gamma^{(0)},\,\forall \{a,b\}\in E,\, \mathrm{prod}(a,b,m_{ab})=\mathrm{prod}(b,a,m_{ab}), \, c^N=(ab)^{m_{ab}N}=1\rangle.$$
    An intermediate quotient, falling in the family of \emph{Shephard groups}, is the following:
    $$S_\Gamma^N\coloneq \langle \Gamma^{(0)}\,|\,\,\forall c\in \Gamma^{(0)},\,\forall \{a,b\}\in E,\, \mathrm{prod}(a,b,m_{ab})=\mathrm{prod}(b,a,m_{ab}), \, c^N=1\rangle.$$
    The latter groups were studied in \cite{goldman20242dimensionalshephardgroups}, where the author proved that, if $\Gamma$ is triangle-free and large type, then $S_\Gamma^N$ is residually finite for all large enough $N$ (see \cite[Corollary F]{goldman20242dimensionalshephardgroups}). In turn, this is used in \cite[Theorem G]{goldman20242dimensionalshephardgroups} to prove that the corresponding Artin group $A_\Gamma$ is residually finite (notice that this now follows easily from the fact that every $g\in A_\Gamma$ survives in some $S_\Gamma^N$, as a consequence of  Corollary~\ref{cor:fullres_hyp}).
\end{rem}

\begin{rem}[Overview on residual finiteness for Artin groups]\label{rmk:resfin}
    Few classes of Artin groups are known to be residually finite, among which:
    \begin{itemize}
        \item Artin groups whose defining graph is triangle-free and contains no square whose edge labels are all 2 (this is the full statement of the aforementioned \cite[Theorem G]{goldman20242dimensionalshephardgroups});
        \item even Artin groups of FC type (including RAAGs, see \cite{eafc_polyfree});
        \item spherical Artin groups (because they are linear, by e.g. \cite{CohenWales_linear_spherical} or \cite{digne});
        \item certain 2-dimensional Artin groups, including most Artin groups on three generators and even XXXL Artin groups on graphs admitting a “partial orientation” (see \cite{Resfin_2D});
        \item “forests” of residually finite parabolic subgroups (see \cite{Blascoetal} for details).
    \end{itemize}
    Remarkably, none of the above families is generic in the sense of \cite{GV_random}.
\end{rem}

\begin{rem}[Equational Noetherianity]\label{rem:no_we_cant_barak}
Barak \cite{barak2024equational} recently established that, if $G$ is a colourable,  \emph{strictly acylindrical} HHG, then $G$ is \emph{equationally Noetherian}, hence Hopfian by e.g. \cite[Corollary 3.14 and Theorem D]{Groves_Hull:eq_noetherian}. For our purposes, the only consequence of strict acylindricity to keep in mind is that, for every $U\in \frakS$, its stabiliser acts acylindrically on $\C U$, as a corollary of \cite[Theorem 6.3]{bbfs}; in particular, if $\C U$ is unbounded, then $\Stab{G}U$ is either virtually cyclic or acylindrically hyperbolic.

In our setting, it is clear that an Artin group $A_\Gamma$ of large and hyperbolic type is colourable, as it is colourable as a short HHG. However, if $\Gamma$ has at least one edge, then the short HHG structure is not strictly acylindrical, as the centraliser of a vertex has infinite centre and is therefore not acylindrically hyperbolic.
\end{rem}

\section{Quotients of the five-holed sphere mapping class group}\label{sec:mcg_quot}
Let $S=S_{0,5}$ be a five-punctured sphere, and let $\MCG(S)$ be its extended mapping class group, which is a short HHG as pointed out in \cite[Subsection 2.3.1]{Short_HHG:I}. Our last theorem proves that almost every way of adding finitely many relations to $\MCG(S)$ results in a hierarchically hyperbolic group, up to stabilising by taking sufficient powers of the relators. This provides an almost complete answer to \cite[Question 3]{rigidity_mcg_mod_dt}, in the case of a five-punctured sphere.

\begin{thm}\label{thm:S5_quotient}
    Let $S=S_{0,5}$, and let $g_1,\dots,g_l\in \MCG(S)$. 
    Suppose the collection $\{g_i\}$ contains no pseudo-Anosovs with hidden symmetries, in the sense of Definition~\ref{defn:hiddensymm_general}. Then there exists $N\in\mathbb{N}-\{0\}$ such that, for all $K_1,\dots, K_l\in \mathbb{Z}-\{0\}$, we have that $\MCG(S)/\langle\langle \{g_i^{K_i N}\}\rangle\rangle$ is hierarchically hyperbolic.
\end{thm}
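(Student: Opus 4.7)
The strategy is to reduce Theorem~\ref{thm:S5_quotient} to our Dehn filling machinery (Theorem~\ref{thm:quotient_short_HHG}), combined with a rotating-family treatment at the top of the hierarchy for the pseudo-Anosov elements. I would first invoke the Nielsen--Thurston classification on $S_{0,5}$: each $g_i$ is either periodic, a multi-twist, a partial pseudo-Anosov, or a pseudo-Anosov. Since $\MCG(S_{0,5})$ has a uniform bound on orders of torsion elements, taking $N$ to be a multiple of every such order reduces each periodic $g_i$ to the identity after raising to the $K_iN$-th power, so we may assume every $g_i$ has infinite order.

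For multi-twists, $g_i$ is supported on a multicurve $\mu$ and is already a product of (powers of) Dehn twists $\tau_\gamma$ for $\gamma\in\mu$. These Dehn twists are exactly the generators of cyclic directions in the natural short HHG structure on $\MCG(S_{0,5})$ (with support graph the curve graph, cf.\ \cite[Section 2.3.1]{short_HHG:I}). Hence for $N$ divisible by a suitable factor, $g_i^{K_iN}$ lies in the relevant iterated commutator/normal closure of powers of cyclic direction generators, and is absorbed by the Dehn filling kernel of Notation~\ref{notation:kernel}.

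For a partial pseudo-Anosov $g_i$ fixing a curve $\gamma_i$ and having no hidden symmetries, the image $\ov{g_i}$ in $\Stab{\MCG(S)}{\gamma_i}/\langle \tau_{\gamma_i}\rangle$ (which is commensurable with $\MCG(S_{0,4})$) has infinite order and, by the hidden-symmetries hypothesis, maximal cyclic commensurator. This is exactly the input required by \cite[Proposition~4.5]{short_HHG:I}: applying it with $a=\tau_{\gamma_i}$ produces integers $p_i>0$, $q_i$, and a \emph{new} short HHG structure on $\MCG(S_{0,5})$ in which $g_i'\coloneq g_i^{p_i}\tau_{\gamma_i}^{q_i}$ spans a cyclic direction. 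Doing this iteratively for every partial pA in the list (checking that the modifications of squid materials are mutually compatible, since the curves $\gamma_i$ are only finitely many up to $\MCG(S)$-action) yields a modified short HHG structure where every non-pA $g_i$ has a uniform power spanning a cyclic direction. Then Theorem~\ref{thm:quotient_short_HHG} produces a short HHG quotient in which all such $g_i^{K_iN}$ die, for arbitrary $K_i\neq 0$ and sufficiently deep $N$.

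The main obstacle, as I see it, is the pseudo-Anosov case: a pA $g_i$ fixes no vertex of the support graph, so it cannot be packaged as a cyclic direction of a vertex extension. The remedy is to exploit that each pA acts loxodromically with the WPD property on the top-level hyperbolic graph $\ov X^{+\W}$ (quasi-isometric to the curve graph of $S_{0,5}$). I would enlarge the strong composite projection graph structure of Section~\ref{sec:rotating} by adjoining, for each pA $g_i$, the $\MCG(S)$-orbit of a projection vertex carrying a rotating family generated by $\langle g_i^{p_i}\rangle$ for some $p_i$, in the spirit of the top-level Dehn filling pioneered in \cite{dahmani:rotating}. The lifting and shortening-pair arguments developed in Section~\ref{sec:rotating} apply verbatim to this enlarged SCPG, so that Theorem~\ref{thm:rotating} gives hyperbolicity of the quotient of $\ov X^{+\W}$, and then the combinatorial HHS criterion (Theorem~\ref{thm:hhs_links}) upgrades this to an HHG structure on the full quotient $\MCG(S)/\langle\langle\{g_i^{K_iN}\}\rangle\rangle$. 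Choosing $N$ large enough so that rotation control is simultaneously sufficient for all the modifications (cyclic-direction Dehn filling for multi-twists and partial pAs, rotating families for pAs) completes the proof.
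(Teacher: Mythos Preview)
Your proposal has two genuine gaps, one in each of the cases you flag as handled.

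\textbf{Multitwists.} You write that a multitwist $g_i=\tau_\gamma^a\tau_\beta^b$ has a power lying in a Dehn filling kernel $\N=\langle\langle \tau_\gamma^M,\tau_\beta^M\rangle\rangle$, and conclude. But this only gives an inclusion $\langle\langle g_i^{K_iN}\rangle\rangle\le \N$, hence a surjection $\MCG(S)/\langle\langle g_i^{K_iN}\rangle\rangle\to \MCG(S)/\N$. The latter being HHG says nothing about the former; quotients of HHGs are HHGs, but extensions need not be. The normal closure of $(\tau_\gamma^a\tau_\beta^b)^{K_iN}$ is genuinely smaller than any kernel of the form in Notation~\ref{notation:kernel}, so Theorem~\ref{thm:quotient_short_HHG} does not apply to it. The paper addresses exactly this: it shows that the quotient $G_1$ by the multitwist relation is a $\Z$-central extension of the hyperbolic group $G_2=\MCG(S)/\N$ (the image of $\tau_\gamma^{dN}$ becomes central in $G_1$ because, modulo the multitwist relation, it agrees with powers of twists around \emph{all} curves disjoint from $\gamma$, hence commutes with enough half-twists to generate), and then invokes \cite[Corollary~4.3]{HRSS_3manifold}. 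This central-extension step is the crux for type-(2) elements and is entirely absent from your sketch.

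\textbf{Pseudo-Anosovs.} Your plan is to enlarge the SCPG with rotating families for the pA elements and claim the machinery of Section~\ref{sec:rotating} ``applies verbatim''. It does not: the SCPG in this paper has vertex set $\ov X^{(0)}$ (curves), with projections coming from the short HHG structure, and the entire construction of $(\h X,\h\W)$ in Section~\ref{sec:quotient_is_short} uses that $\N$ is generated by cyclic directions so that $X/\N$ is again a blowup (Lemma~\ref{lem:blowup_for_quotient}). Throwing in pA-orbit vertices breaks both the projection system (pAs have no natural projection to curves in the required sense) and the blowup description of the quotient. The paper avoids all of this by a single appeal to \cite[Proposition~4.3]{short_HHG:I}: one can modify the short HHG structure so that each pseudo-Anosov (up to a power) \emph{generates a cyclic direction} in the new structure. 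Then pAs are on exactly the same footing as Dehn twists and partial pAs, and Theorem~\ref{thm:quotient_short_HHG} handles them directly. You should also note that for partial pAs, the no-hidden-symmetries hypothesis is used via \cite[Remark~4.6]{short_HHG:I} to ensure $q_i=0$ in your $g_i'=g_i^{p_i}\tau_{\gamma_i}^{q_i}$; otherwise killing powers of $g_i'$ does not kill powers of $g_i$.
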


\begin{proof} Fix a curve $\gamma$ on $S$. Given $g_1,\dots,g_l\in \MCG(S)$, it suffices to prove the statement replacing each $g_i$ with $g_i^K$ for some $K\neq 0$. Therefore, as a consequence of Nielsen-Thurston classification (see \cite[Corollary 13.3]{FarbMargalit}), up to conjugation we can assume that each $g_i$ is of one of the following types:
 \begin{enumerate}
  \item A power of the Dehn twist $\tau_\gamma$ around $\gamma$;
  \item A power of a multitwist with associated multicurve $\{\gamma,\beta\}$, for a fixed curve $\beta$ which is disjoint from $\gamma$;
  \item A partial pseudo-Anosov without hidden symmetries, supported on the unique component $Y$ of $S-\gamma$ which is homeomorphic to $S_{0,4}$;
  \item A pseudo-Anosov.
 \end{enumerate}
In particular, every $g_i$ has infinite order. Up to taking further powers, we can make the following modifications to the collection of elements under consideration:
 \begin{itemize}
     \item  Suppose that two elements are commensurable up to conjugation, say for simplicity of notation $g_1$ and $g_2$. Then, up to taking a common power of all the elements in our collection, we can find $g\in \MCG(S)$ and integers $a,b$ such that $g_1=g^a$ and $g_2$ is conjugate to $g^b$. We can then replace $g_1$ and $g_2$ with $g^{gcd(|a|,|b|)}$, without changing the subgroup normally generated by the collection.
     \item  Similarly, suppose that two elements of type (3) are not commensurable in $\MCG(S)$ but have commensurable images in $\MCG(Y)$. Again, denote them by $g_1$ and $g_2$. Up to taking powers of all elements, and replacing $g_2$ with a conjugate, this means that there exists $g\in \MCG(S)$ of type (3) such that $g_1=g^a$ and $g_2=g^b\tau_\gamma^c$, for some integers $a,b,c$. The subgroup $\langle g_1,g_2\rangle$ coincides with $\langle\tau_\gamma^r, g^s\tau_\gamma^t\rangle$ for some integers $r,s,t$, since any subgroup of $\mathbb Z^2\cong \langle \tau_\gamma,g\rangle$ is of that form. We can then replace $g_1,g_2$ with $\tau_\gamma^r$ and $g^s\gamma^t$, and then possibly repeat the procedure in the previous bullet if the new collection contains commensurable elements.
 \end{itemize}
Assume first that there are no elements of type (2). By \cite[Proposition~5.1]{Short_HHG:I}, there exists a colourable short HHG structure for $\MCG(S)$ where every $g_i$ of type (4) generates a cyclic direction, up to taking a suitable power. Furthermore, by \cite[Proposition 5.2]{Short_HHG:I}, we can also assume that every element of type (3) has a power that generates a cyclic direction. Then Theorem~\ref{thm:quotient_short_HHG} ensures that we can find some integer $N>0$ such that $\MCG(S)/\langle\langle \{g_i^{K_i N}\}\rangle\rangle$ is a short HHG, and in particular hierarchically hyperbolic.
\par\medskip
    Now assume that there are elements of type (2), say $g_1=\tau_\gamma^{a_1}\tau_\beta^{b_1}, \ldots, g_r=\tau_\gamma^{a_r}\tau_\beta^{b_r}$, where all $a_i$ and $b_i$ are non-zero integers. Consider some choice of integers $N\neq 0$ and $K_i$. If some element, say $g_{r+1}$, is of the form $\tau_\gamma^c$, set $$d=gcd(a_1K_1,b_1K_1,\ldots, a_rK_r,b_rK_r,cK_{r+1}),$$ otherwise set $$d=gcd(a_1K_1,b_1K_1,\ldots, a_rK_r,b_rK_r).$$ Since $\tau_\beta$ is conjugate to $\tau_\gamma$, we have that $\mathcal N=\langle\langle g_i^{N K_i}\rangle\rangle \le \langle\langle \tau_\gamma^{dN}, g_{r+1}^{NK_{r+1}},\dots,g_l^{NK_l}\rangle\rangle$. Let $G_1$ and $G_2$ be the quotients of $\MCG(S)$ by the first and second group respectively, so that, similarly to above, $G_2$ is a short HHG if we choose $N$ suitably. In fact, in the short HHG structure for $\MCG(S)$ given by \cite[Propositions 5.1 and 5.2]{Short_HHG:I}, the cyclic directions are conjugate to powers of $\tau_\gamma, g_{r+1},\dots,g_l$, so $G_2$ is hyperbolic because we modded out all cyclic directions. Also, by the inclusion of kernels, we have a surjective homomorphism $\phi\colon G_1\to G_2$, and the kernel is normally generated by the image $\tau$ of $\tau_\gamma^{dN}$ in $G_1$. 
\par\medskip
We claim that $\tau$ is a central element of $G_1$. To see this, using that all Dehn twists are conjugate (in the mapping class groups of $S_{0,4}$, as well as of $S_{0,5}$), we first notice that there exists an integer $d'$ such that $\mathcal N$ contains $\tau_\gamma^{dN}\tau^{d'}_\alpha$, for any curve $\alpha$ disjoint from $\gamma$. In terms of $G_1$, this implies that $\tau$ coincides with the image of $\tau^{-d'}_\alpha$, and in particular $\tau$ commutes with the image of the half-twist around $\alpha$ or any curve disjoint from $\alpha$. Varying $\alpha$, the images of said half-twists generate $G_1$, so that $\tau$ commutes with a generating set of $G_1$, and is therefore central. 
Hence $G_1$ is a cyclic central extension of a hyperbolic group, and therefore it is a HHG by \cite[Corollary 4.3]{HRSS_3manifold}, as required.
\end{proof}

\begin{ex}\label{ex:symmetry}
We now exhibit a partial pseudo-Anosov without hidden symmetries, in order to show that the hypothesis of Theorem~\ref{thm:S5_quotient} is not vacuous. Let $\gamma$ be a curve on $S=S_{0,5}$, let $\alpha,\beta$ be disjoint from $\gamma$, and let $\tau_\alpha,\tau_\beta,\tau_\gamma$ be the associated Dehn twists. Let $Y$ be the connected component of $S-\gamma$ which is homeomorphic to $S_{0,4}$, let $p$ be the puncture of $Y$ coming from $\gamma$, and let $q$ be the puncture which is separated from $p$ by both $\alpha$ and $\beta$. By e.g. \cite[Proposition 3.19]{FarbMargalit}, the quotient $\Stab{\MCG(S)}{\gamma}/\langle \tau_\gamma\rangle$ is an index two overgroup of $\MCG(Y, \{p\})$, that is, the subgroup of $\MCG(Y)$ spanned by all elements that fix the puncture $p$. Let $i\in\Stab{\MCG(S)}{\gamma}$ be an-orientation-preserving mapping class whose image
$\ov i\in \MCG(Y, \{p\})$ swaps $\alpha$ and $\beta$ (for example, $i$ could be a rotation of angle $\pi$ around the axis passing through $q$). Let $g=\tau_\gamma\tau_\alpha\tau_\beta^{-1}$, which has a hidden symmetry because $\ov i\ov g\ov i^{-1}=\ov \tau_\beta\ov \tau_\alpha^{-1}=\ov g^{-1}$ while $igi^{-1}=\tau_\gamma\tau_\beta\tau_\alpha^{-1}=\tau_\gamma^2g^{-1}$.

This example should make the terminology "hidden symmetry" clearer, as an axis for $\ov g$ is “flipped” by the conjugation by $\ov i$ but this symmetry is not witnessed in the extension. Furthermore, the problem persists even if we only consider the orientable mapping class group $\mathcal{MCG}(S)$, as both $i$ and $g$ are orientation-preserving.
\end{ex}

\bibliography{biblio}
\bibliographystyle{alpha}
\end{document}